\newtheorem{theorem}{Theorem}           
\newtheorem{lemma}[theorem]{Lemma}
\newtheorem{prop}[theorem]{Proposition}
\newtheorem{mainthm}{Theorem}           
\newtheorem{mainprop}[mainthm]{Proposition} %
\theoremstyle{definition}              
\newtheorem{definition}{Definition}
\theoremstyle{remark}                  
\newtheorem{step}{Step}
\newtheorem{remark}{Remark}
\newtheorem{example}{Example}
\DeclareMathOperator{\dist}{dist}                                   
\DeclareMathOperator{\sign}{sign}                                   
\DeclareMathOperator{\spt}{spt}                                     
\DeclareMathOperator{\curl}{\curl}                                  
\DeclareMathOperator{\hc}{hc}
\newcommand{\abs}[1]{\left| #1 \right|}                             
\newcommand{\norm}[1]{\left\| #1 \right\|}                          
\newcommand{\one}{\mathbbm{1}}                                      
\newcommand{\mres}
{\mathbin{\vrule height 1.6ex depth 0pt width 0.13ex\vrule height 0.13ex depth 0pt width 1.3ex}}
\newcommand{\csubset}{\subset\!\subset}                             
\DeclareMathAlphabet{\mathpzc}{OT1}{pzc}{m}{it}
\renewcommand{\d}{\mathrm{d}}
\newcommand{\N}{\mathbb{N}}       
\newcommand{\R}{\mathbb{R}}
\newcommand{\Z}{\mathbb{Z}}
\newcommand{\M}{\mathbb{M}}
\newcommand{\F}{\mathbb{F}}
\newcommand{\I}{\mathbb{I}}
\renewcommand{\SS}{\mathbb{S}}
\newcommand{\G}{\mathbf{G}} 
\newcommand{\e}{\mathbf{e}}
\newcommand{\NN}{\mathscr{N}}     
\newcommand{\GG}{\mathscr{G}}
\renewcommand{\H}{\mathscr{H}}
\renewcommand{\L}{\mathscr{L}}
\newcommand{\eps}{\varepsilon}
\newcommand{\PR}{\mathbb{R}\mathrm{P}^2}  
\newcommand{\RR}{\varrho}
\newcommand{\X}{\mathscr{X}}
\newcommand{\GN}{\pi_{k-1}(\NN)} 
\newcommand{\Sg}{\mathfrak{S}}
\newcommand{\nablaT}{\nabla_{\top}}
\renewcommand{\S}{\mathbf{S}}
\definecolor{lightblue}{rgb}{0.22,0.45,0.70}   
\definecolor{darkgray}{gray}{0.4}    
\definecolor{lightgray}{gray}{0.8}
\newcommand{\BBB}{}
\title{Topological singular set of vector-valued maps, II: \\
$\Gamma$-convergence for Ginzburg-Landau type functionals}
\author{Giacomo Canevari
and Giandomenico Orlandi\thanks{
Dipartimento di Informatica, Universit\`a di Verona,
Strada le Grazie 15, 37134 Verona, Italy. \\
\emph{E-mail addresses}: \texttt{giacomo.canevari@univr.it},
\texttt{giandomenico.orlandi@univr.it}}}
\date{\today}
\begin{document}

\maketitle

\begin{abstract} 
 We prove a~$\Gamma$-convergence result for a class of Ginzburg-Landau type functionals
 with~$\NN$-well potentials, where~$\NN$ is a closed and $(k-2)$-connected submanifold of~$\R^m$,
 in arbitrary dimension. This class includes, for instance, the 
 Landau-de Gennes free energy for nematic liquid crystals.
 The energy density of minimisers, subject to Dirichlet boundary conditions, converges to 
 a generalised surface (more precisely, a flat chain with coefficients in~$\GN$)
 which solves the Plateau problem in codimension~$k$.
 The analysis relies crucially on the set of topological singularities, 
 that is, the operator~$\S$ we introduced in the companion paper~\cite{CO1}.
 
 \medskip
 \noindent{\bf Keywords.} Ginzburg-Landau type functionals $\cdot$ $\Gamma$-convergence 
 $\cdot$ Topological singularities $\cdot$ Flat chains $\cdot$ Minimal surfaces 
 
 \noindent{\bf 2010 Mathematics Subject Classification.} 
              58C06  
      $\cdot$ 49Q15  
      $\cdot$ 49Q20 
      $\cdot$ 58E15. 
\end{abstract}

\section{Introduction}

Let~$n\geq 0$, $k\geq 2$, $m\geq 2$ be integers, and
let~$\Omega\subseteq\R^{n+k}$ be a bounded, smooth domain.
Let~$\eps>0$ be a small parameter. For~$u\in W^{1,k}(\Omega, \, \R^m)$, 
we define the functional
\begin{equation} \label{energy} 
 E_\eps(u) 
 := \int_\Omega \left(\frac{1}{k}\abs{\nabla u}^k + \frac{1}{\eps^k} f(u) \right) \!.
\end{equation}
Here, $f\colon\R^m\to\R$ is a non-negative, continuous potential,
whose zero-set~$\NN := f^{-1}(0)$ is assumed to be a smooth, 
compact, $(k-2)$-connected manifold without boundary. The aim of this paper 
is to understand the asymptotic behaviour
of the functionals~$E_\eps$ in the limit as~$\eps\to 0$,
by a $\Gamma$-convergence approach. Our analysis
builds upon the results obtained in a companion paper,~\cite{CO1}.

Functionals of the form~\eqref{energy},
which describe a kind of penalised $k$-harmonic 
map problem (see e.g.~\cite{ChenStruwe, LinWang}),
arise naturally in different contexts.
A well-known example is the Ginzburg-Landau 
functional, which corresponds to the case $k=m=2$ and~$f(u) := (\abs{u}^2-1)^2$,
so that the zero-set of~$f$ is the unit circle, $\NN = \SS^{1}\subseteq\R^2$.
The Ginzburg-Landau functional was originally introduced 
as a (simplified) model for superconductivity, but has attracted 
considerable attention in the mathematical community
since the pioneering work by Bethuel, Br\'ezis and H\'elein~\cite{BBH}.
Another example, arising from materials science,
is the Landau-de Gennes model for nematic liquid crystals
(in the so-called one-constant approximation, see e.g.~\cite{deGennes}).
In this case, $k=2$ and the zero-set of~$f$ is a real projective plane~$\NN = \PR$,
whose elements can be interpreted as the preferred configurations 
for the material. Functionals of the form~\eqref{energy}
have also applications to mesh generation in numerical analysis, via the so-called 
cross-field algorithms (see e.g.~\cite{CrossFields}). 

Minimisers of~\eqref{energy} subject to a boundary condition
$u_{|\partial\Omega} = v\in W^{1-1/k, k}(\partial\Omega, \, \NN)$
may not satisfy uniform energy bounds, due to topological obstructions
carried by the boundary datum~$v$. When this phenomenon occurs,
the energy of minimisers is of order~$\abs{\log\eps}$ 
(see e.g. \cite{BBH, BourgainBrezisMironescu2000, Riviere-DenseSubsets}
in case~$k=2$, $\NN=\SS^1$). A similar phenomenon arises 
for tangent vector fields on a closed manifold, due to the Poincar\'e-Hopf theorem
(see e.g.~\cite{IgnatJerrard}).
The analysis of the Ginzburg-Landau case shows that the energy 
of minimisers (and other critical points) concentrates, 
to leading order, on a $n$-dimensional surface;
see e.g. \cite{BBH,LinRiviere,BethuelBrezisOrlandi,SandierSerfaty}.
From a variational viewpoint, the Ginzburg-Landau
functional itself can be considered an approximation of 
an $n$-dimensional ``weighted area'' functional, in a sense that can 
be made precise by $\Gamma$-convergence~\cite{JerrardSoner-GL, ABO2, 
SandierSerfaty, AlicandroPonsiglione}.
Therefore, the Ginzburg-Landau functional and its variants
have been proposed as tools to construct ``weak minimal surfaces'' 
or, more precisely, stationary varifolds of codimension greater than~one 
\cite{AmbrosioSoner, LinRiviere-Quant, BethuelBrezisOrlandi, Stern, PigatiStern}.
Energy concentration results have also been established for Landau-de Gennes 
minimisers~\cite{MajumdarZarnescu, NguyenZarnescu, BaumanParkPhillips, GolovatyMontero, pirla, INSZ-Hedgehog, pirla3, INSZ-1/2, ContrerasLamy-conv}.
To our best knowledge, minimisers of functionals associated with more general manifolds~$\NN$, in the logarithmic energy regime, have been studied only in case~$n=0$, $k=2$ so far~\cite{pirla, MonteilRodiacVanSchaftingen, MonteilRodiacVanSchaftingen2}.

In this paper, we show that the re-scaled 
functionals~$\abs{\log\eps}^{-1} E_\eps$ do converge
to an $n$-dimensional weighted area functional, thus extending
the results in~\cite{ABO2, JerrardSoner-GL} to more general 
potentials~$f$. The key tool is the topological singular set 
of vector-valued maps, that is, the operator~$\S$ we introduced in~\cite{CO1},
which identifies the appropriate topology of the $\Gamma$-convergence.
The operator~$\S$ effectively serves as a replacement, or rather a generalisation,
of the distributional Jacobian, which is commonly used
when the distinguished manifold is a sphere,~$\NN=\SS^{k-1}$.
In order to overcome the algebraic issues that 
make the distributional Jacobian incompatible with the topology of
other manifolds~$\NN$, we work in the setting 
of \emph{flat chains} with coefficients in~$\GN$~\cite{Fleming}.
In the context of manifold-constrained problems, the 
use of flat chains with coefficients in an Abelian group
was proposed by Pakzad and Rivi\`ere~\cite{PakzadRiviere}
and traces its roots back in the earlier literature on the subject:
the very notion of ``minimal connection'', introduced by Brezis, 
Coron and Lieb~\cite{BrezisCoronLieb}, can be interpreted as 
the flat norm of the distributional Jacobian.

We state our main $\Gamma$-convergence result,
Theorem~\ref{th:main}, in Section~\ref{sect:setting},
after introducing some background and notation.
Here, we present an application (Theorem~\ref{th:mainmin} below)
to the asymptotic analysis of minimisers of~\eqref{energy}
in the limit as~$\eps\to 0$.
We make the following assumptions on the potential~$f$:
\begin{enumerate}[label=(H\textsubscript{\arabic*}), ref=H\textsubscript{\arabic*}]
 \setcounter{enumi}{0}
 
 \item \label{hp:f} \label{hp:first} $f\in C^1(\R^m)$ and~$f\geq 0$. 
 
 \item \label{hp:N} The set $\NN := f^{-1}(0)\neq\emptyset$ is a 
 smooth, compact manifold without boundary. 
 Moreover, $\NN$ is  $(k-2)$-connected, that is
 $\pi_0(\NN) = \pi_{1}(\NN) = \ldots = \pi_{k-2}(\NN) = 0$, 
 and~$\pi_{k-1}(\NN)\neq 0$. In case~$k=2$, we also assume
 that~$\pi_1(\NN)$ is Abelian.
 
 \item \label{hp:non-degeneracy} 
 There exists a positive constant~$\lambda_0$
 such that $f(y) \geq \lambda_0\dist^2(y, \, \NN)$ for any~$y\in\R^m$.
\end{enumerate}
The assumption~\eqref{hp:N} is consistent with the setting of~\cite{CO1}
and is satisfied, for instance, when~$k=2$ and~$\NN=\SS^1$
(the Ginzburg-Landau case) or~$k=2$ and~$\NN=\PR$ (the Landau-de Gennes case).
The assumption~\eqref{hp:non-degeneracy} is both a non-degeneracy condition
around the minimising set~$\NN$ and a growth condition.

\begin{remark}
 {\BBB We do not expect the assumption~\eqref{hp:non-degeneracy} to be sharp.
 In fact, \eqref{hp:non-degeneracy} may probably be relaxed 
 so as to include potentials that behave as~$\dist^s(\cdot, \, \NN)$,
 for some~$s>2$, in a neighbourhood of~$\NN$.}
\end{remark}

We consider minimisers~$u_{\eps,\min}$ of~\eqref{energy},
subject to the boundary condition~$u = v$ on~$\partial\Omega$. 
On the boundary datum~$v$, we assume
\begin{enumerate}[label=(H\textsubscript{\arabic*}), ref=H\textsubscript{\arabic*}, resume]
 \item \label{hp:v} \label{hp:last}
 $v\in W^{1-1/k,k}(\partial\Omega,\,\NN)$
 --- that is, $v\in W^{1-1/k,k}(\partial\Omega,\, \R^m)$
 and~$v(x)\in\NN$ for~$\H^{n+k-1}$-a.e.~$x\in\partial\Omega$.
\end{enumerate}
Under the assumptions~\eqref{hp:first}--\eqref{hp:last},
the rescaled energy densities
\begin{equation*} 
 \mu_{\eps,\min} := \left( \frac{1}{k}|\nabla u_{\eps,\min}|^k
 + \frac{1}{\eps^k} f(u_{\eps,\min}) \right) \frac{\d x\mres\Omega}{\abs{\log\eps}}
\end{equation*}
have uniformly bounded mass
(see e.g.~Remark~\ref{rk:extension} below; here, 
$\d x \mres\Omega$ denotes the Lebesgue
measure restricted to~$\Omega$).
Up to extraction of a subsequence,
we may assume that $\mu_{\eps,\min}$ converges weakly$^*$ 
(as measures in~$\R^{n+k}$) 
to a non-negative measure~$\mu_{\min}$, as~$\eps\to 0$.
We provide a variational characterisation of~$\mu_{\min}$
in terms of flat chains with coefficients 
in~$(\GN, \, |\cdot|_*)$, where~$|\cdot|_*$
is a suitable norm, defined in Section~\ref{sect:setting} below.
(For instance, in case~$k=2$ and~$\NN=\SS^1$,
$\abs{d}_* = \pi\abs{d}$ for any~$d\in\pi_1(\SS^1)\simeq\Z$.)
We denote the mass of such a flat chain~$S$ by~$\M(S)$,
and the restriction of~$S$ to a set~$E$ by~$S\mres E$. We have

\begin{mainthm} \label{th:mainmin}
 Under the assumptions~\eqref{hp:first}--\eqref{hp:last},
 there exists a finite-mass~$n$-chain~$S_{\min}$,
 with coefficients in~$(\GN,\,|\cdot|_*)$ and 
 support in~$\overline{\Omega}$, such that 
 $\mu_{\min}(E) = \M(S_{\min}\mres E)$
 for any Borel set~$E\subseteq\R^{n+k}$.
 Moreover, $S_{\min}$ minimises the mass in its homology class 
 --- that is, for any~$(n+1)$-chain~$R$
 with coefficients in~$(\GN,\,|\cdot|_*)$ and 
 support in~$\overline{\Omega}$, we have
 \[
  \M(S_{\min}) \leq \M(S_{\min} + \partial R).
 \]
\end{mainthm}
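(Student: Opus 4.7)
The theorem should follow from the main $\Gamma$-convergence result (Theorem~\ref{th:main}) via the classical ``minimisers of the approximating functionals converge to minimisers of the limit'' argument, refined to extract both the structural information on $\mu_{\min}$ and the mass-minimality of $S_{\min}$. The plan breaks into three steps.

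First, apply the compactness half of Theorem~\ref{th:main}. The rescaled energies $\abs{\log\eps}^{-1} E_\eps(u_{\eps,\min})$ are uniformly bounded (compare with an $\NN$-valued extension of $v$ into $\Omega$), so, up to extraction, the topological singular sets $\S(u_{\eps,\min})$ converge in the flat norm to some $n$-chain $S_{\min}$ with coefficients in $(\GN, |\cdot|_*)$ and support in $\overline{\Omega}$. The $\Gamma$-liminf inequality of Theorem~\ref{th:main}, applied both globally and in its localised form to arbitrary open sets $U\subseteq\R^{n+k}$, gives
\[
 \M(S_{\min}\mres U) \leq \mu_{\min}(U).
\]

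Second, establish mass-minimality by the standard comparison trick. Fix any $(n+1)$-chain $R$ with coefficients in $(\GN, |\cdot|_*)$ and support in $\overline{\Omega}$, and set $S' := S_{\min} + \partial R$. Since $R$ is supported in $\overline{\Omega}$, $S'$ shares the same boundary trace on $\partial\Omega$ as $S_{\min}$, hence is a legitimate competitor compatible with the Dirichlet condition $v$. The $\Gamma$-limsup inequality of Theorem~\ref{th:main} then produces a recovery sequence $u'_\eps$ with $u'_\eps = v$ on $\partial\Omega$ and $\limsup_{\eps\to 0} \abs{\log\eps}^{-1} E_\eps(u'_\eps) \leq \M(S')$. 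By the minimality of $u_{\eps,\min}$ and the global liminf inequality,
\[
 \M(S_{\min}) \leq \liminf_{\eps\to 0} \abs{\log\eps}^{-1} E_\eps(u_{\eps,\min}) \leq \liminf_{\eps\to 0} \abs{\log\eps}^{-1} E_\eps(u'_\eps) \leq \M(S'),
\]
which is the desired mass-minimality. Taking $R=0$ in particular yields $\mu_{\min}(\overline{\Omega}) = \M(S_{\min})$.

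Third, upgrade the open-set bound to the identification $\mu_{\min}(E) = \M(S_{\min}\mres E)$ on Borel sets. Both $\M(S_{\min}\mres\,\cdot)$ and $\mu_{\min}$ are finite Radon measures on $\R^{n+k}$ supported in $\overline{\Omega}$, with equal total mass by the previous step. The localised liminf inequality applied to an open set $U$ and, by subtraction, to $\R^{n+k}\setminus\overline{U}$, forces equality on open sets; outer regularity then propagates it to all Borel sets.

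The main obstacle is internal to Theorem~\ref{th:main} rather than to the argument above. One needs a recovery sequence that \emph{exactly} matches the boundary datum $v$ on $\partial\Omega$ (not merely converges to it in trace), which requires a careful boundary-layer interpolation whose energy cost is absorbed by the factor $\abs{\log\eps}^{-1}$; and one needs a \emph{localised} $\Gamma$-liminf, which is delicate because, a priori, energy concentrating near $\partial U$ may migrate in or out of $U$ in the limit. Both points are standard but nontrivial features of $\Gamma$-convergence for Ginzburg-Landau type functionals, already present in \cite{ABO2, JerrardSoner-GL} for the case $\NN=\SS^{k-1}$.
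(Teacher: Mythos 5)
Your proposal follows the same three-step structure as the paper's proof — extract the limit chain $S_{\min}$ via the compactness/liminf half of Theorem~\ref{th:main}, establish mass-minimality by the standard comparison with a recovery sequence for $S_{\min}+\partial R$ (which the paper leaves implicit but is exactly the intended argument), and then pass from the open-set bound to equality of measures. The one point worth tightening is the chain $\M(S_{\min}\mres U)\leq\mu_{\min}(U)$ in your Step~1: the $\Gamma$-liminf gives $\M(S_{\min}\mres U)\leq\liminf_\eps\mu_{\eps,\min}(U)$, and weak$^*$ convergence alone gives only $\mu_{\min}(U)\leq\liminf_\eps\mu_{\eps,\min}(U)$ (the inequality goes the same way, not the other), so one must first restrict to open sets $A$ with $\mu_{\min}(\partial A)=0$ — where $\mu_{\eps,\min}(A)\to\mu_{\min}(A)$ — and then exhaust a general $U$ by such sets via the a.e.-level-set argument, as the paper does with the sets $U_t$; your ``subtraction'' remark in Step~3 does not by itself close this gap, but the outer-regularity route you mention, made precise in this way, does.
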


In other words, in the limit as~$\eps\to 0$ the energy of minimisers
concentrates, to leading order, on the support of a flat chain~$S_{\min}$
that solves a homological Plateau problem. 
The homology class of~$S_{\min}$ is uniquely determined by 
the domain~$\Omega$ and the boundary datum~$v$ (that is, 
$S_{\min}$ belongs to the class~$\mathscr{C}(\Omega, \, v)$
defined by~\eqref{C_Omega_v} below). {\BBB We stress that 
Theorem~\ref{th:mainmin} does not require any topological assumption,
such as simply connectedness, on the domain~$\Omega$. However,
the homology class of~$S_{\min}$ does depend on the topology of the domain
and it can be described more easily if~$\Omega$ has a simple topology
(see the examples in Section~\ref{sect:setting} below).
On the other hand, the topological assumption~\eqref{hp:N}
on the manifold~$\NN$ is essential. An analogue of Theorem~\ref{th:mainmin} 
in case~$k=2$ and the fundamental group of~$\NN$ is non-Abelian
would already be of interest in terms of the applications;
manifolds with non-Abelian fundamental group arise quite
naturally, for instance, in materials science (e.g., as a
model for biaxial liquid crystals). Unfortunately,
the very statement of Theorem~\ref{th:mainmin} does not make sense
in the non-Abelian setting, because homology
requires the coefficient group to be Abelian.
Convergence results in case~$n=0$, $k=2$
(see e.g.~\cite{pirla, MonteilRodiacVanSchaftingen})
suggest that the energy concentration set may inherit some
minimality properties, even if~$\pi_1(\NN)$ is non-Abelian.
However, a general convergence result in the non-Abelian setting,
along the lines of Theorem~\ref{th:mainmin}, would presumably 
require some `ad-hoc' tools from Geometric Measure Theory.}

\begin{remark}
 {\BBB Theorem~A characterises the asymptotic behaviour
 of the energy of minimisers, to leading order:
 \[
  E_\eps(u_{\eps,\min}) = \M(S_{\min}) \abs{\log\eps} 
  + \mathrm{o}(\abs{\log\eps}) \qquad\textrm{as } \eps\to 0.
 \]
 In some cases, the next-to-leading order term can be characterised, too.
 For instance, when~$n=0$, $k=2$, the energy concentrates on 
 a finite number of points and the next-to-leading order term in the
 energy expansion is a `renormalised energy' which describes the
 interaction among the singular points. The renormalised energy
 was introduced, in the Ginzburg-Landau setting, 
 by Bethuel, Brezis and H\'elein~\cite{BBH}
 and it was extended very recently by Monteil, Rodiac and 
 Van Schaftingen~\cite{MonteilRodiacVanSchaftingen, 
 MonteilRodiacVanSchaftingen2} to more general functionals.
 This raises the question as to whether a renormalised energy
 may be derived in case~$n = 0$, $k > 2$. 
 A higher-order energy expansion
 for the three-dimensional Ginzburg-Landau functional 
 ($n = 1$, $k=2$, $\NN=\SS^1$) was obtained 
 by Contreras and Jerrard~\cite{ContrerasJerrard},
 in a setting where the energy concentrates on a cluster 
 of `nearly parallel' vortex filaments.}
\end{remark}

We deduce Theorem~\ref{th:mainmin} from our $\Gamma$-convergence
result, Theorem~\ref{th:main} in Section~\ref{sect:setting}.
The proof of the $\Gamma$-lower bound is based on the same 
strategy as in~\cite{ABO2}. However, the construction of a recovery sequence
is rather different from~\cite{ABO2}.
The main building block, Proposition~\ref{prop:dipole_simpl}
in Section~\ref{sect:dipole_simpl}, is inspired by the
``dipole construction'' \cite{BrezisCoronLieb, Bethuel1990, BethuelbrezisCoron}.
Here, dipoles are suitably inserted into a non-constant
and, in fact, singular background.

As an auxiliary result, we prove the following lower energy bound,
which may be of independent interest.

\begin{mainprop} \label{prop:JS}
 Suppose that~\eqref{hp:first}--\eqref{hp:last} hold.
 Let~$\Omega\subseteq\R^k$ be a bounded, Lipschitz domain
 that is homeomorphic to a ball. 
 Then, for any~$u\in W^{1,k}(\Omega,\,\R^m)$ 
 such that $u=v$ on~$\partial\Omega$, there holds
 \[
  E_\eps(u) \geq 
  \abs{\sigma}_*\abs{\log\eps} - C,
 \]
 where~$\sigma\in\GN$ is the homotopy class of~$v$
 and~$C$ is a {\BBB positive} constant that depends only on~$\Omega$, $v$.
\end{mainprop}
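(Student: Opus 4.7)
The statement is a Jerrard--Soner type lower energy bound, with the sharp leading constant $|\sigma|_*$ dictated by the boundary homotopy class. My approach follows a Sandier--Jerrard style ball-growth, using the topological singular set $\S$ from \cite{CO1} as a substitute for the distributional Jacobian. To $u\in W^{1,k}(\Omega,\R^m)$ I would first associate the $0$-chain $\S(u)=\sum_i d_i\,\delta_{x_i}$, with $d_i\in\pi_{k-1}(\NN)$, provided by the companion paper. Since $\Omega$ is a topological ball and $u=v$ on $\partial\Omega$ with $[v]=\sigma$, the total charge satisfies $\sum_i d_i=\sigma$, so by subadditivity of $|\cdot|_*$ one has $\M(\S(u))=\sum_i|d_i|_*\geq|\sigma|_*$.

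\textbf{Sphere inequality.} For each singular point $x_i$ and a.e.\ radius $r\in[\eps,\eta]$ selected by a coarea argument, the trace $u|_{\partial B_r(x_i)}$ lies, away from a controlled exceptional subset (thanks to \eqref{hp:non-degeneracy}), inside the tubular neighbourhood of~$\NN$; after composition with the nearest-point retraction and a surgery on the exceptional set it represents a class in~$\pi_{k-1}(\NN)$ equal to the sum of charges enclosed by $\partial B_r(x_i)$. The definition of~$|\cdot|_*$ as an appropriately normalised infimum of $(k-1)$-Dirichlet energies over $\NN$-valued representatives, combined with H\"older's inequality and the scaling $\partial B_r\simeq r\,S^{k-1}$, then yields
\begin{equation*}
 \int_{\partial B_r(x_i)}\!\Bigl(\tfrac{1}{k}|\nablaT u|^k + \tfrac{1}{\eps^k}f(u)\Bigr)\,d\H^{k-1} \;\geq\; \frac{|\mathrm{enclosed}|_*}{r} - o_\eps(r^{-1}).
\end{equation*}

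\textbf{Ball growth and integration.} Starting from disjoint $\eps$-balls around the $x_i$, grow the radii simultaneously and merge pairs upon collision. At every instant the currently open balls enclose every $x_i$ and their charges group-sum to~$\sigma$; hence $\sum_j|\mathrm{ch}_j|_*\geq|\sigma|_*$ throughout. Aggregating the sphere inequality across all concurrent balls and integrating in $r$ from $\eps$ to a fixed $\eta\ll 1$ depending only on $\Omega$,
\begin{equation*}
 E_\eps(u) \;\geq\; |\sigma|_*\log(\eta/\eps) - C \;=\; |\sigma|_*|\log\eps| - C,
\end{equation*}
after absorbing the fixed-radius and boundary-layer contributions (where $u=v$ is bounded in $W^{1,k}$) into~$C$.

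\textbf{Main obstacle.} The delicate point is recovering the \emph{sharp} constant $|\sigma|_*$ on spheres. A plain single-slice H\"older bound only delivers an exponent $k/(k-1)$ of~$|\cdot|_*$, insufficient to yield~$|\sigma|_*$ upon integration. Sharpness is restored by the interplay between the ball-growth --- which redistributes charge across concurrent balls at each radius --- and subadditivity of~$|\cdot|_*$, so that the instantaneous total $|\cdot|_*$-charge of the open balls never drops below~$|\sigma|_*$. A secondary technicality is the controlled retraction on spheres where $f(u)$ is only integrally small: the surgery must preserve the enclosed-charge invariant while producing an error that remains integrable against~$dr/r$.
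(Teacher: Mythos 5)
Your plan follows the same overall strategy as the paper's proof of Proposition~\ref{prop:lowerbounds} in Appendix~\ref{sect:jerrard}: a Sandier--Jerrard ball construction using the topological singular set $\S$ as a Jacobian substitute, a sphere estimate with the sharp constant, and subadditivity of $|\cdot|_*$ under merging. But the central step, your displayed ``sphere inequality'' with error $o_\eps(r^{-1})$, is asserted rather than proved, and as written it fails without additional structure. The scaling of the minimal $k$-Dirichlet energy gives $\frac1k\int_{\partial B_r}|\nablaT(\pi\circ u)|^k\geq E_{\min}(\gamma)/r\geq|\gamma|_*/r$ only for $\NN$-valued maps $\pi\circ u$; for a generic slice, $|\nabla u|^k$ controls $|\nabla(\pi\circ u)|^k$ only up to a factor that degenerates as $\dist(u,\NN)$ grows (Lemma~\ref{lemma:proj}). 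Where $u$ exits the tubular neighbourhood of $\NN$ the angular part gives no information at all, and a ``surgery on the exceptional set'' does not convert this deficit into an $o_\eps(r^{-1})$ correction; the compensating term must come from the potential $f(u)/\eps^k$, with a quantitative tradeoff.

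The paper's cone-valued reduction (Lemma~\ref{lemma:cone}) is exactly what supplies this tradeoff: the energy density pointwise dominates $\alpha|\nabla s|^k+\tfrac{s^k}{k}|\nabla(\pi\circ u)|^k+\tfrac{\beta}{\eps^k}(1-s)^2$ with $s=\phi\circ u$ a scalar modulus, and Jerrard's optimisation over the minimal modulus on each sphere (the functions $\lambda_\eps$, $\Lambda_\eps$ in Lemma~\ref{lemma:annulus}) gives the sharp per-slice bound with no extraneous error. A second gap is the initialisation of the ball growth: for a general $u\in W^{1,k}$ the ``singular points'' are not a priori a finite set, and one must first cover the essential part of $\{s\leq1/2\}$ by balls of radius $\geq\eps$ with controlled energy, which the paper does in Lemma~\ref{lemma:SE} via the mass estimate~\eqref{S:mass} for $\S_y(u)$. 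Third, with $v\in W^{1-1/k,k}(\partial\Omega,\NN)$ only a trace there is no boundary layer on which $u$ is bounded in $W^{1,k}$; the paper first proves the interior estimate under hypothesis~\eqref{bd-close-N} and deduces Proposition~\ref{prop:JS} by an extension argument, a reduction your plan needs to make explicit. Finally, note that $|\cdot|_*$ is not itself the sphere-Dirichlet infimum --- that is $E_{\min}$; the norm $|\cdot|_*$ is the sub-additive envelope of $E_{\min}$ defined by~\eqref{group_norm-intro}, and the sphere estimate produces $E_{\min}(\gamma)/\rho$ first, after which one passes to $|\gamma|_*/\rho$ by $|\gamma|_*\leq E_{\min}(\gamma)$.
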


If~$\Omega\subseteq\R^k$ is homeomorphic to a ball 
and~$v\in W^{1-1/k,k}(\partial\Omega, \, \NN)$,
the homotopy class of~$v$ can be defined as in~\cite{BN1}.
In the Ginzburg-Landau case, this inequality was proved by Sandier~\cite{Sandier}
(with~$k=2$) and Jerrard~\cite{Jerrard}; for the Landau-de Gennes functional,
see e.g.~\cite{BaumanParkPhillips,pirla3}. The proof of 
Proposition~\ref{prop:JS} in contained in Appendix~\ref{sect:jerrard}
(in fact, a slightly stronger statement is given there).

\begin{remark}
 {\BBB In case~$\sigma=0$, Proposition~\ref{prop:JS} does not provide
 any information. However, there could be critical points of
 the functional~$E_\eps$ whose energy diverges logaritmically
 even if the boundary datum is homotopically trivial. In other words,
 energy concentration may happen not only because of global topological
 contraints, but also for other reasons, such as symmetry.
 See, for instance, \cite{INSZ2020} for an analysis of two-dimensional
 Landau-de Gennes solutions ($n = 0$, $k=2$, $\NN=\PR$). }
\end{remark}

The paper is organised as follows.
In Section~\ref{sect:setting}
we recall some notation from~\cite{CO1}
and we state the main $\Gamma$-convergence result, Theorem~\ref{th:main}.
We prove the $\Gamma$-upper bound first, 
in Section~\ref{sect:limsup}, and give the proof of the
$\Gamma$-lower bound in Section~\ref{sect:liminf}.
Theorem~\ref{th:mainmin} is deduced
from Theorem~\ref{th:main} in Section~\ref{sect:mainmin}.
A series of appendices, with proofs of technical results, completes the paper.

\numberwithin{equation}{section}
\numberwithin{definition}{section}
\numberwithin{theorem}{section}
\numberwithin{remark}{section}
\numberwithin{example}{section}

\section{Setting of the problem and statement of the \texorpdfstring{$\Gamma$}{Gamma}-convergence result}
\label{sect:setting}

Throughout the paper, we will write $A\lesssim B$ as a shorthand for
$A\leq C B$, where $C$ is a positive constant that only depends on~$n$,
$k$, $f$, $\NN$, and~$\Omega$. If~$F\subseteq\R^{n+k}$ is a rectifiable set 
of dimension~$d$ and~$u\in W^{1,k}_{\mathrm{loc}}(\R^{n+k}, \, \R^m)$ we will write
\[
 E_\eps(u, \, F) := \int_{F}
 \left(\frac{1}{k} \abs{\nabla u}^k + \frac{1}{\eps^k} f(u) \right)\d\H^{d} \! .
\]
Additional notation will be set later on.
Throughout the paper, we assume 
that~\eqref{hp:first}--\eqref{hp:last} are satisfied.

\subsection{Choice of the norm on~\texorpdfstring{$\GN$}{the coefficient group}}

Under the assumption~\eqref{hp:N}, the group~$\GN$
is Abelian (and we use additive notation for the group operation).
We recall that a function $|\cdot|\colon\GN\to [0, \, +\infty)$ 
is called a norm if it satisfies the following properties:
\begin{enumerate}[label=(\roman*)]
 \item $|\sigma| = 0$ if and only if~$\sigma=0$
 \item $|-\sigma| = |\sigma|$ for any~$\sigma\in\GN$
 \item $|\sigma_1 + \sigma_2|\leq |\sigma_1|+|\sigma_2|$ for any~$\sigma_1$, $\sigma_2\in\GN$.
\end{enumerate}
As in~\cite{CO1}, we assume that the norm satisfies
\begin{equation} \label{discrete_norm-intro}
  \inf_{\sigma\in\GN\setminus\{0\}} \abs{\sigma} > 0,
\end{equation}
that is, $|\cdot|$ induces the discrete topology on~$\GN$.
\begin{remark}
 We do \emph{not} require that $|n\sigma| = n|\sigma|$ 
 for any~$n\in\N$, $\sigma\in\GN$;
 this is consistent with the theory of flat chains as developed 
 in~\cite{Fleming, White-Rectifiability}. 
\end{remark}

While the results of~\cite{CO1} hold for 
any norm on~$\GN$ that satifies~\eqref{discrete_norm-intro},
Theorem~\ref{th:mainmin} only holds for a specific choice of the norm.
Let us define such a norm, following
the approach in~\cite[Chapter~6]{Chiron}.
A natural attempt, motivated by the analogy with the
functional~\eqref{energy}, is to define
\begin{equation} \label{I_min-intro}
 E_{\min}(\sigma) := \inf\left\{
 \frac{1}{k}\int_{\SS^{k-1}}\abs{\nablaT v}^k
 \colon v\in W^{1,k}(\SS^{k-1}, \, \NN)\cap\sigma \right\} 
\end{equation}
for any~$\sigma\in\GN$.
Here~$\nablaT$ denotes the tangential gradient on~$\SS^{k-1}$,
that is, the restriction of the Euclidean gradient~$\nabla$
to the tangent plane to the sphere. Due to the compact 
embedding~$W^{1,k}(\SS^{k-1}, \NN)\hookrightarrow C(\SS^{k-1}, \, \NN)$,
the set~$W^{1,k}(\SS^{k-1}, \, \NN)\cap\sigma$ is 
sequentially $W^{1,k}$-weakly closed and hence,
the infimum in~\eqref{I_min-intro} is achieved.
However, the function~$E_{\min}$ fails to be a norm, in general, because
it may not satisfy the triangle inequality~(iii). To overcome this issue,
for any~$\sigma\in\GN$ we define 
\begin{equation} \label{group_norm-intro}
 |\sigma|_* := 
   \inf\left\{\sum_{i=1}^q E_{\min}(\sigma_i)\colon q\in\N, \ (\sigma_i)_{i=1}^q\in\GN^q,
   \ \sum_{i=1}^q \sigma_i = \sigma\right\} \!.
\end{equation}

\begin{prop} \label{prop:group_norm}
 The function $|\cdot|_*$ is a norm on~$\GN$ that
 satisfies~\eqref{discrete_norm-intro}
 and~$\abs{\sigma}_*\leq E_{\min}(\sigma)$ for any~$\sigma\in\GN$.
 The infimum in~\eqref{group_norm-intro} is achieved,
 for any~$\sigma\in\GN$. Moreover, the set
 \begin{equation} \label{generators-intro}
  \Sg := \left\{\sigma\in\GN\colon 
  |\sigma|_* = E_{\min}(\sigma) \right\}
 \end{equation}
 is finite, and for any~$\sigma\in\GN$ there exists 
 a decomposition~$\sigma = \sum_{i=1}^q\sigma_i$
 such that $|\sigma|_* = \sum_{i=1}^q|\sigma_i|_*$ 
 and~$\sigma_i\in\Sg$ for any~$i$.
\end{prop}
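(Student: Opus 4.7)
The key ingredient is that $k$ is supercritical for the $(k-1)$-dimensional sphere, so Morrey's embedding gives $W^{1,k}(\SS^{k-1}) \hookrightarrow C^{0,1/k}(\SS^{k-1})$, and homotopy classes of $W^{1,k}$ maps into $\NN$ are therefore stable under uniform convergence (through the nearest-point projection onto $\NN$). My plan is to address the assertions in four steps.

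First I would collect three preliminary facts about $E_{\min}$: (a) the infimum in~\eqref{I_min-intro} is attained, since a minimizing sequence is uniformly H\"older bounded by Morrey, hence admits a uniformly convergent subsequence whose limit remains in the class $\sigma$, and $\int|\nablaT \cdot|^k$ is weakly lower semicontinuous; (b) the sublevel set $\{\sigma \in \GN : E_{\min}(\sigma) \leq C\}$ is finite for every $C>0$, by applying the same Morrey--Arzel\`a argument to minimizers of classes with bounded energy; (c) consequently $c_0 := \inf_{\sigma \neq 0} E_{\min}(\sigma) > 0$, since a map with vanishing $L^k$-gradient norm is uniformly close to a constant and is thus homotopically trivial. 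Finally, $E_{\min}(-\sigma) = E_{\min}(\sigma)$ by pre-composing with an orientation-reversing isometry of $\SS^{k-1}$, which preserves $|\nablaT v|$ pointwise.

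Next I would verify the norm properties and the attainment of~\eqref{group_norm-intro}. Symmetry, the triangle inequality, and $|\sigma|_* \leq E_{\min}(\sigma)$ are immediate from the definition (concatenating decompositions and taking $q=1$). Nondegeneracy $|\sigma|_* \geq c_0 > 0$ for $\sigma \neq 0$ follows because any decomposition contains a nonzero term. Any decomposition with $\sum E_{\min}(\sigma_i) \leq |\sigma|_* + 1$ has length $q \leq (|\sigma|_*+1)/c_0$ and each $\sigma_i$ lies in the finite set $\{E_{\min} \leq |\sigma|_* + 1\}$, so the infimum ranges over finitely many admissible decompositions and is a minimum. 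The same counting shows that $\{\sigma \in \GN : |\sigma|_* \leq C\}$ is finite for every $C$: such a $\sigma$ is a sum of at most $(C+1)/c_0$ elements from the finite set $\{E_{\min} \leq C + 1\}$.

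The hardest step is finiteness of $\Sg$. By Hurewicz, $\GN \cong H_{k-1}(\NN;\Z)$ is finitely generated abelian, with finite torsion part contributing only finitely many elements. For the free part I would invoke a super-linear bound
\[
 E_{\min}(\sigma) \gtrsim |\sigma|_*^{k/(k-1)}
 \qquad \text{for all $\sigma$ with $|\sigma|_*$ large,}
\]
obtained by combining the flat-norm control on the topological singular set $\S(v)$ of~\cite{CO1} (which bounds the class of $v$ in terms of $\|\nabla v\|_{k-1}$) with H\"older's inequality $\|\nabla v\|_{k-1} \lesssim \|\nabla v\|_k$ on the finite-volume sphere. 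Since $k/(k-1) > 1$, the identity $|\sigma|_* = E_{\min}(\sigma)$ on $\Sg$ forces $|\sigma|_*$ to be bounded on $\Sg$, and the previous step then gives $\Sg$ finite. For the final decomposition statement, I would take a minimizing decomposition $\sigma = \sum \sigma_i$ realizing $|\sigma|_*$: if some $\sigma_i$ lay outside $\Sg$ then $|\sigma_i|_* < E_{\min}(\sigma_i)$, and a strictly finer decomposition of $\sigma_i$ substituted into $\sigma$'s decomposition would give a total energy strictly smaller than $|\sigma|_*$, a contradiction. Hence every $\sigma_i \in \Sg$ and $|\sigma|_* = \sum E_{\min}(\sigma_i) = \sum |\sigma_i|_*$. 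The main obstacle throughout is the super-linear lower bound, which encodes the quantitative topological content of the operator $\S$ from~\cite{CO1}.
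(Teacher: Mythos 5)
Your overall strategy coincides with the paper's: establish the norm properties directly, show a super-linear lower bound $E_{\min}(\sigma)\gtrsim |\sigma|_*^{k/(k-1)}$ (or equivalently a super-linear bound in the torsion-free coordinates of $\sigma$), compare it with the sub-linear upper bound on $|\cdot|_*$ to bound $\Sg$, and extract the minimal decomposition by the same counting argument. The preliminary observations about $E_{\min}$ and the proof that the infimum in~\eqref{group_norm-intro} is attained are likewise in agreement.

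The one place where you diverge, and where your proposal is vague enough to be a genuine gap, is the mechanism for the super-linear bound. You want to derive it from ``flat-norm control on $\S(v)$'' from~\cite{CO1}. But $\S$ is defined for maps on $(n+k)$-dimensional domains, not for maps $v\colon\SS^{k-1}\to\NN$; you would have to cap off $\SS^{k-1}$ or pass to an extension $u\colon B^k\to\R^m$, and then the mass estimate from~\cite{CO1} controls $\|\nabla u\|_k^k$, not $\|\nablaT v\|_{k-1}^{k-1}$. Moreover, the norm $|\cdot|_*$ does not exist in~\cite{CO1}, so the phrase ``bounds the class of $v$ in terms of $\|\nabla v\|_{k-1}$'' is at best imprecise: you need the inequality with respect to a specific norm, and that norm is precisely the one being constructed here. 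The paper sidesteps this by working directly with the de Rham pairing. Using Hurewicz to identify $\GN\cong H_{k-1}(\NN)\cong \Z^p\oplus T$ and choosing closed $(k-1)$-forms $\omega_i$ dual to a basis of the free part, one gets $|d_i| = |\int_{\SS^{k-1}} v^*\omega_i|\lesssim \int_{\SS^{k-1}}|\nablaT v|^{k-1}\lesssim (\int |\nablaT v|^k)^{(k-1)/k}$, which yields $E_{\min}(\sigma)\gtrsim(\sum_i|d_i|)^{k/(k-1)}$. The corresponding linear upper bound $|\sigma|_*\lesssim\sum_i|d_i| + C$ follows directly from the definition of $|\cdot|_*$ using the obvious decomposition of $\sigma$ over the generators. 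Combining these two is what forces $\Sg$ to be finite. If you want your version to go through, you should replace the appeal to $\S$ with this elementary cohomological pairing, which is both more direct and self-contained.
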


The proof of this result will be given in Appendix~\ref{sect:norm}.
In case~$\NN=\SS^{k-1}$, the group $\pi_{k-1}(\SS^{k-1})$ 
is isomorphic to~$\mathbb{Z}$, $\Sg = \{-1, \, 0, \, 1\}$, 
and for any~$d\in\Z$ we have
\[
 \abs{d}_* = (k-1)^{k/2}\L^k(B^k_1) \abs{d},
\]
where~$\L^k(B^k_1)$ is the Lebesgue measure of the unit ball in~$\R^k$
and~$\abs{d}$ is the standard absolute value of~$d$
(see Example~\ref{example:sphere}).

\begin{remark}
 {\BBB When~$k=2$, the infimum in~\eqref{I_min-intro} is achieved by a minimising 
 geodesic in the homotopy class~$\sigma$, parametrised by multiples of arc-length.
 As a consequence, $E_{\min}(\sigma)$ is --- up to a multiplicative constant ---
 the length squared of a minimising geodesic in the class~$\sigma$,
 and~$E_{\min}^{1/2}$ is a norm on~$\pi_1(\NN)$. However, $E_{\min}^{1/2}$
 may not coincide with~$|\cdot|_*$, not even up to a multiplicative constant.
 For instance, when~$\NN$ is the flat torus, 
 $\NN = \R^2/(2\pi\Z)^2 = \SS^1\times\SS^1$, we have $\pi_1(\NN) \simeq\Z\times\Z$,
 \[
  E_{\min}^{1/2}(d_1, \, d_2) = \pi^{1/2}\left(d_1^2 + d_2^2\right)^{1/2}
  \qquad \textrm{and} \qquad
  \abs{(d_1, \, d_2)}_* = \pi\left(|d_1| + |d_2|\right)
 \]
 for any~$(d_1, \, d_2)\in\Z\times\Z$. 
 We did not investigate whether, for arbitrary~$k> 2$ and~$\NN$,
 $E_{\min}^{1/k}$ is a norm on~$\GN$.}
\end{remark}

\subsection{Notation for flat chains}

We follow the notation adopted in~\cite[Section~2]{CO1}.
In particular,
we denote by $\F_q(\R^{n+k}; \, \GN)$ the space of flat 
$q$-dimensional chains in~$\R^{n+k}$ 
with coefficients in the normed group~$(\GN, \, |\cdot|_*)$.
We denote the flat norm by~$\F$, and the mass by~$\M$. 
The support of a flat chain~$S$ is denoted by~$\spt S$.
The restriction of~$S$ to a Borel set~$E\subseteq\R^{n+k}$
is denoted $S\mres E$. Given~$f\in C^1(\R^{n+k}, \, \R^{n+k})$,
we write~$f_{*}S$ for the push-forward of~$S$ through~$f$.
(The reader is referred e.g. to \cite{Fleming, White-Rectifiability} 
for the definitions of these objects.) 

Given a domain~$\Omega\subseteq\R^{n+k}$,
we define $\F_q(\overline{\Omega}; \, \GN)$ as the set
of flat chains such that $\spt S\subseteq\overline{\Omega}$.
We also define $\M_q(\overline{\Omega}; \, \GN)$ as the set
of flat chains $S\in\F_q(\overline{\Omega}; \, \GN)$ such that $\M(S)<+\infty$.
We will say that two chains $S_1$,
$S_2\in \M_q(\overline{\Omega}; \, \GN)$ are 
\emph{cobordant in~$\overline{\Omega}$} if and only if there exists 
a finite-mass chain $R\in\M_{q+1}(\overline{\Omega}; \, \GN)$ such that 
\[
 S_2 - S_1 = \partial R.
\]
In this case, we write $S_1\sim_{\overline{\Omega}} S_2$.
The cobordism in~$\overline{\Omega}$ defines an equivalence relation
on the space of finite-mass chains, $\M_q(\overline{\Omega}; \, \GN)$.
Moreover, due to the isoperimetric inequality (see e.g.~\cite[7.6]{FedererFleming}),
cobordism classes are closed with respect to the $\F$-norm.

The group of flat $q$-chains relative to a 
domain~$\Omega\subseteq\R^{n+k}$ is defined 
as the quotient
\[
 \F_q(\Omega; \, \GN) := \F_q(\R^{n+k}; \, \GN)/\{S\in\F_q(\R^{n+k}; \, \GN)\colon
 \spt S \subseteq \R^{n+k}\setminus\Omega\} \! .
\]
To avoid notation, the equivalence class of a chain~$S\in\F_q(\R^{n+k}; \, \GN)$
will still be denoted by~$S$.
The quotient norm may equivalently be rewritten as
\begin{equation} \label{flat_relative}
 \begin{split}
  \F_\Omega(S) &= \inf\big\{\M(P\mres\Omega) + \M(Q\mres\Omega)
  \colon P\in\F_{q+1}(\R^{n+k}; \, \GN), \\
  &\qquad Q\in\F_q(\R^{n+k}; \, \GN), \ 
  \spt(S - \partial P - Q)\subseteq\R^{n+k}\setminus\Omega\big\} 
 \end{split}
\end{equation} 
(see \cite[Section~2.1]{CO1}).

For any~$S\in \F_n(\Omega; \, \GN)$ and~$R\in\F_k(\R^{n+k}; \, \Z)$
such that~$\M(R) + \M(\partial R)<+\infty$, $\spt R \subseteq\Omega$,
and $\spt(\partial S)\cap \spt R = \spt S\cap\spt(\partial R) = \emptyset$,
we denote the intersection index of~$S$ and~$R$ 
{\BBB (as defined in~\cite[Section~2.1]{CO1})} by~$\I(S, \, R)\in\GN$.
{\BBB For instance, if~$S$ is carried by a $n$-polyhedron
with constant multiplicity~$\sigma\in\GN$, $R$ is carried by
a~$k$-polyhedron with unit multiplicity and (the supports of) 
$S$, $R$ intersect transversally, then~$\I(S, \, R) = \pm\sigma$, 
where the sign depends on the relative orientation of~$S$ and~$R$.
The intersection index~$\I$ is a bilinear pairing and satisfies
suitable continuity properties (see e.g.~\cite[Lemma~8]{CO1}).}

\subsection{The topological singular set}

In~\cite{CO1}, we constructed the topological singular set, $\S_y(u)$,
for~$u\in (L^\infty\cap W^{1,k-1})(\Omega, \, \R^m)$ and~$y\in\R^m$. 
Here, we introduce a variant of that construction and define~$\S_y(u)$ in
case~$u\in W^{1,k}(\Omega, \, \R^m)$, without assuming that~$u\in L^\infty(\Omega, \, \R^m)$.
In both cases, the operator~$\S_y(u)$ generalises the Jacobian determinant of~$u$
--- and indeed, the Jacobian of~$u\colon\R^k\to\R^k$ is well-defined in a distributional sense
if~$u\in (L^\infty\cap W^{1,k-1})(\R^k, \, \R^k)$,
and in a pointwise sense if $u\in W^{1,k}(\R^k, \, \R^k)$.
The starting point of the construction is the following topological property.

\begin{prop}[\cite{HardtLin-Minimizing}]\label{prop:X}
 Under the assumption~\eqref{hp:N}, there
 exist a compact, \emph{polyhedral} complex~$\X\subseteq\R^m$ 
 of dimension~$m-k$ and a smooth map 
 $\RR\colon\R^m\setminus\X\to\NN$ such that $\RR(z) = z$ for any~$z\in\NN$, and
 \[
  |\nabla\RR(z)| \leq \frac{C}{\dist(z, \, \X)}
 \]
 for any~$z\in\R^m\setminus\X$ and some constant~$C=C(\NN, \, m, \, \X)>0$.
\end{prop}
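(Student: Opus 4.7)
The plan is to construct $\RR$ by the classical cellular-extension argument that underlies Hardt--Lin's result. Outside a large ball containing~$\NN$, fix $\RR$ equal to some constant point of~$\NN$, so the problem reduces to building $\RR$ on a bounded polyhedral region $K\subseteq\R^m$ that contains a tubular neighborhood of~$\NN$. On that tubular neighborhood, take $\RR$ to be the smooth nearest-point projection onto $\NN$. The job is then to extend $\RR$ to the complement of a $(m-k)$-dimensional polyhedral set.

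Fix a fine polyhedral cell decomposition $\mathcal{K}$ of~$K$ that is compatible with the tubular neighborhood. Build $\RR$ inductively on the skeleta $\mathcal{K}^{(j)}$. Send each vertex $v\in\mathcal{K}^{(0)}$ to a nearest point of $\NN$, then propagate from $\mathcal{K}^{(j-1)}$ to $\mathcal{K}^{(j)}$ for $j=1,\ldots,k-1$: for any $j$-cell $c$, the map $\RR|_{\partial c}\colon\partial c\simeq\SS^{j-1}\to\NN$ extends continuously (and may then be smoothed) to~$c$ because the obstruction lies in $\pi_{j-1}(\NN)$, which vanishes by~\eqref{hp:N} as long as $j\leq k-1$.

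For a cell $c$ of dimension $j\geq k$, this obstruction may be non-trivial and no continuous extension to all of~$c$ exists in general. Instead, in the interior of each such $c$ fix a subpolyhedron $X_c$ of dimension $j-k$ (for instance, the $(j-k)$-skeleton of a dual subdivision, suitably perturbed), and use the radial deformation retraction of $c\setminus X_c$ onto $\partial c$ --- obtained by projecting radially away from $X_c$ --- to propagate the already-defined $\RR|_{\partial c}$ to $c\setminus X_c$. Setting $\X:=\bigcup_{\dim c\geq k}X_c$ yields a polyhedral complex of dimension $m-k$, and the construction is performed in order of increasing cell dimension so that radial retractions match across shared faces. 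Near $X_c$ the radial projection has derivative of order $1/\dist(\cdot,X_c)$, while $\RR|_{\partial c}$ is uniformly Lipschitz; composing and gluing gives $|\nabla\RR(z)|\lesssim 1/\dist(z,\X)$. A final mollification at scale comparable to $\dist(\cdot,\X)$ upgrades the result from Lipschitz (off $\X$) to smooth without affecting the estimate.

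The essential difficulty, and the place where the hypothesis on $\NN$ is used in full strength, is the skeletal extension step: it is precisely the vanishing of $\pi_0(\NN),\ldots,\pi_{k-2}(\NN)$ that allows one to push the construction all the way up to the $(k-1)$-skeleton before a singular set must be introduced, thereby producing $\X$ of the optimal dimension~$m-k$. The remaining technical care is combinatorial --- choosing the $X_c$ consistently across the filtration of $\mathcal{K}$ so that the radial projections agree on common faces --- and a quantitative scale analysis near the singular set to verify the $1/\dist$ bound uniformly over cells.
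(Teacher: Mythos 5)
The paper does not prove Proposition~\ref{prop:X}; it is quoted from \cite{HardtLin-Minimizing} (see also \cite{BousquetPonceVanSchaftingen} and \cite{Hopper}). Your sketch follows the standard cellular-extension strategy and correctly identifies the role of $(k-2)$-connectedness: there is no obstruction to extending continuously over cells of dimension up to $k-1$, so singularities are only needed from codimension~$k$ onward. However, two specific claims in your treatment of cells of dimension $j\geq k$ are incorrect, and as written the argument would not close.

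The assertion that ``$\RR|_{\partial c}$ is uniformly Lipschitz'' is false as soon as $j>k$: then $\partial c$ contains faces of dimension $\geq k$ on which $\RR$ is already singular, with $\abs{\nabla\RR}\sim\dist(\cdot,\X\cap\partial c)^{-1}$. Consequently $\X$ cannot be taken to be $\bigcup_{\dim c\geq k}X_c$ with each $X_c$ chosen independently in the interior of $c$: if $r_c\colon c\setminus X_c\to\partial c$ is a retraction and $\RR$ is undefined on $\X\cap\partial c$, then $\RR\circ r_c$ is undefined on the larger set $X_c\cup r_c^{-1}(\X\cap\partial c)$, and it is precisely these preimages that supply the dimensions. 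In the classical version of the argument $X_c$ is just the barycenter $b_c$, $r_c$ is homogeneous (radial) extension, $\X\cap c$ is $\{b_c\}$ together with the cone with vertex $b_c$ over $\X\cap\partial c$ (so $\dim(\X\cap c)=j-k$ by induction on the boundary), and the gradient bound is \emph{propagated} via a per-cell similarity estimate
\[
 \frac{1}{\dist\bigl(r_c(x),\,\X\cap\partial c\bigr)}\cdot\frac{1}{\abs{x-b_c}}
 \ \lesssim\ \frac{1}{\dist(x,\,\X\cap c)}\, ,
\]
rather than deduced from Lipschitzness of the boundary data; this is the ``quantitative scale analysis'' you defer, and the Lipschitz claim tries (incorrectly) to shortcut it. You appear to have conflated this construction with a second, structurally different one: retracting the complement of the dual $(m-k)$-skeleton of a barycentric subdivision directly onto the $(k-1)$-skeleton, where $\RR$ really \emph{is} Lipschitz, and where no cell-by-cell coning occurs. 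In that variant there is also no ``radial projection away from $X_c$ onto $\partial c$'' --- a map that is only well-defined when $X_c$ is a single point. Either route can be made rigorous, but the mixture described in your sketch does not produce the stated $\X$ nor the stated gradient bound.
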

This result, or variants thereof, was proved in
\cite[Lemma~6.1]{HardtLin-Minimizing}, 
\cite[Proposition~2.1]{BousquetPonceVanSchaftingen}, \cite[Lemma~4.5]{Hopper}.
\emph{While in our previous paper~\cite{CO1} we required~$\X$ to be a smooth complex, 
in this paper we require~$\X$ to be polyhedral,} 
because this will simplify some technical points in the proofs.

Let us fix once and for all a polyhedral complex~$\X$ and a map~$\RR$,
as in Proposition~\ref{prop:X}. Let~$\delta^*\in (0, \, \dist(\NN, \, \X))$
be fixed, and let $B^* := B^m(0, \, \delta^*)\subseteq\R^m$.
Let*
\begin{equation*} 
 Y := L^1(B^*, \, \F_{n}(\Omega; \, \GN)), \qquad
 \overline{Y} := L^1(B^*, \, \F_{n}(\overline{\Omega}; \, \GN))
\end{equation*}
be the set of Lebesgue-measurable
maps~$S\colon B^*\to\F_{n}(\Omega; \, \GN)$, respectively
$S\colon B^*\to\F_{n}(\overline{\Omega}; \, \GN)$
(we use the notation $y\in B^*\mapsto S_y$ in both cases), such that
\[
 \norm{S}_{Y} := \int_{B^*} \F_{\Omega}(S_y) \, \d y < +\infty,
 \ \textrm{ respectively } \
 \norm{S}_{\overline{Y}} := \int_{B^*} \F(S_y) \, \d y < +\infty.
\]
The sets~$Y$, $\overline{Y}$ are complete normed moduli,
with the norms~$\|\cdot\|_{Y}$, $\|\cdot\|_{\overline{Y}}$ respectively. 
The space $\F_{n}(\Omega; \, \GN)$, respectively~$\F_{n}(\overline{\Omega}; \, \GN)$,
embeds canonically into~$Y$, respectively~$\overline{Y}$.
If need be, we will identify a chain~$S\in\F_{n}(\overline{\Omega}; \, \GN)$
with an element of~$\overline{Y}$, i.e. the constant map~$y\mapsto S$.

By~\cite[Theorem~3.1]{CO1}, there exists a unique 
operator $\S\colon (L^\infty\cap W^{1,k-1})(\Omega, \, \R^m)\to Y$
that is continuous (if~$u_j\to u$ strongly in~$W^{1,k-1}(\Omega)$
and $\sup_j\|u_j\|_{L^\infty(\Omega)}<+\infty$, then $\S(u_j)\to\S(u)$ in~$Y$) 
and satisfies
\begin{enumerate}[label=(P\textsubscript{\arabic*}), ref=P\textsubscript{\arabic*}]
 \setcounter{enumi}{-1}
 \item \label{S:intersection} 
 for any smooth~$u$, a.e.~$y\in B^*$ and any~$R\in\F_{k}(\R^{n+k}; \, \Z)$
 such that $\M(R)+\M(\partial R)<+\infty$,
 $\spt(R)\subseteq\Omega$, $\spt(\partial R)\subseteq\Omega\setminus\spt\S_y(u)$, there holds
 \[
  \I(\S_y(u), \, R) = 
  \textrm{homotopy class of } \RR\circ(u - y) \textrm{ on } \partial R.
 \]
\end{enumerate}
{\BBB We recall that~$\I$ denotes the intersection index, 
defined as in~\cite[Section~2.1]{CO1}.}

\begin{prop} \label{prop:S}
 There exists a (unique) continuous 
 operator~$\overline{\S}\colon W^{1,k}(\Omega, \, \R^m)\to \overline{Y}$ 
 that satisfies~\eqref{S:intersection} and the 
 following properties:
 \begin{enumerate}[label=\emph{(P\textsubscript{\arabic*})},
 ref=P\textsubscript{\arabic*}]
  \item \label{S:Sbar} 
  For any~$u\in (L^\infty\cap W^{1,k})(\Omega, \, \R^m)$ and a.e~$y\in B^*$,
  $\overline{\S}_y(u) = \S_y(u)$ --- more precisely, the chain~$\overline{\S}_y(u)$
  belongs to the equivalence class~$\S_y(u)\in\F_{n}(\Omega; \, \GN)$.
  
  \item \label{S:mass} For any~$u\in W^{1,k}(\Omega, \, \R^m)$
  and any Borel subset~$E\subseteq\overline{\Omega}$, there holds
  \[
   \int_{B^*} \M(\overline{\S}_y(u)\mres E) \,\d y \lesssim \int_{E} \abs{\nabla u}^k.
  \]
  
  \item \label{S:cobord} If~$u_0$, $u_1\in W^{1,k}(\Omega, \, \R^m)$
  are such that $u_{0|\partial\Omega} = 
  u_{1|\partial\Omega}\in W^{1-1/k, k}(\partial\Omega, \, \NN)$
  (in the sense of traces), then
  $\overline{\S}_{y_0}(u_0) \sim_{\overline{\Omega}} \overline{\S}_{y_1}(u_1)$ for a.e.~$y_0$, $y_1\in B^*$.
\end{enumerate}
\end{prop}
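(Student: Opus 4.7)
The plan is to construct $\overline{\S}$ by approximating $u\in W^{1,k}(\Omega,\R^m)$ with bounded truncations and then passing to the limit in $\overline{Y}$. For $T>0$, let $\tau_T\colon\R^m\to\overline{B^m(0,T)}$ denote the nearest-point projection and set $u_T := \tau_T\circ u$, so that $u_T\in (L^\infty\cap W^{1,k})(\Omega,\R^m)$, $|\nabla u_T|\leq|\nabla u|$ almost everywhere, and $u_T\to u$ strongly in $W^{1,k}$ as $T\to\infty$. The operator $\S(u_T)\in Y$ of~\cite[Theorem~3.1]{CO1} is then defined; working first with smooth $u_T$ and exploiting the polyhedral characterisation afforded by~\eqref{S:intersection}, one produces a canonical representative of $\S(u_T)$ supported in~$\overline{\Omega}$, i.e.~a lift to~$\overline{Y}$. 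Setting $\overline{\S}(u):=\lim_{T\to\infty}\S(u_T)$ in~$\overline{Y}$ (once existence is established), uniqueness follows: $(L^\infty\cap W^{1,k})(\Omega,\R^m)$ is $W^{1,k}$-dense via truncations, and (P1) together with $\overline{Y}$-continuity forces any two candidates to coincide on this dense subset and hence everywhere.

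The technical heart is the mass bound (P2) for smooth, bounded~$u$. The strategy is to derive it directly from~\eqref{S:intersection}: for almost every $y\in B^*$, the chain $\overline{\S}_y(u)$ is carried by the preimage $u^{-1}(\X+y)\cap\overline{\Omega}$, with multiplicity in~$\GN$ dictated by the local homotopy classes of~$\RR\circ(u-y)$ around small transverse $k$-discs; integrating in~$y$, slicing~$B^*$ by a fine grid compatible with the polyhedral structure of~$\X$, and invoking the coarea formula together with Young's inequality would then yield
\[
 \int_{B^*}\M(\overline{\S}_y(u)\mres E)\,\d y \lesssim \int_E|\nabla u|^k
\]
for smooth, bounded~$u$. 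By $W^{1,k}$-density and continuity in~$Y$, the estimate extends to $(L^\infty\cap W^{1,k})(\Omega,\R^m)$. Applied to the linear interpolation $(1-s)u_T+s\,u_{T'}$ between two truncations, together with a slicewise cobordism argument producing an $(n+1)$-chain that bounds $\S_y(u_T)-\S_y(u_{T'})$, the same estimate then shows $\{\S(u_T)\}_T$ is Cauchy in~$\overline{Y}$, with error controlled by $\int_{\{|u|>T\}}|\nabla u|^k\to 0$ as $T\to\infty$.

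Once $\overline{\S}(u)$ is defined, property (P0) passes to the limit by continuity of the intersection index~$\I$ in the flat topology; (P1) holds by construction; and (P2) follows for arbitrary $u\in W^{1,k}$ from its validity on truncations together with lower semicontinuity of the mass under flat convergence. For (P3), the approach is to extend $u_0,u_1$ across~$\partial\Omega$ by a common $W^{1,k}$-extension of the shared trace~$v$ in a collar neighbourhood, apply the truncation and the already-known $L^\infty\cap W^{1,k}$ cobordism property of~$\S$ from~\cite{CO1} inside the enlarged domain, and then restrict to $\overline{\Omega}$. The main obstacle lies precisely in controlling boundary behaviour: one must ensure that no mass of $\S(u_T)$ escapes to~$\partial\Omega$ as $T\to\infty$ (so that convergence is genuinely in~$\overline{Y}$ and not only in~$Y$), and that for $u_0,u_1$ with identical trace the respective boundary pieces of the cobordism chains cancel exactly. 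This is the step where the polyhedrality of~$\X$ and the sharp form of~(P2) play a decisive role, since together they prevent boundary concentrations in the flat limit.
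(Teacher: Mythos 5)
Your plan has a genuine gap at the point you yourself flag as the main obstacle. The operator $\S$ of \cite[Theorem~3.1]{CO1} takes values in $Y = L^1(B^*, \F_n(\Omega; \GN))$, the space of \emph{relative} flat chains — equivalence classes modulo chains supported outside $\Omega$. Your truncations $u_T = \tau_T\circ u$ are bounded $W^{1,k}$ maps but not smooth (projection onto a ball is only Lipschitz), so the explicit polyhedral formula for $\S_y$ that would give a genuine, $\overline{\Omega}$-supported chain is available only after a further approximation by smooth maps, and that approximation converges only in the relative flat norm. The "canonical lift to $\overline{Y}$" of $\S(u_T)$ is therefore asserted but never constructed. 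Likewise you claim $\{\S(u_T)\}_T$ is Cauchy in $\overline{Y}$ and credit "the polyhedrality of $\X$ and the sharp form of (P2)" with preventing boundary concentrations, but no mechanism is supplied. What is actually needed is an estimate of the form
\[
\F(T\mres\overline{\Omega}) \leq (1+\rho^{-1})\F(T) + \M(T\mres\Gamma_\rho)
\]
($\Gamma_\rho$ a boundary collar of width~$\rho$), so that relative flat convergence plus the $L^\infty$-independent mass bound can be combined to kill the collar term; this is exactly Lemma~\ref{lemma:flat_restr} in the paper, and it is a general fact about flat norms, not a consequence of the polyhedrality of~$\X$.

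The paper's route sidesteps the lifting problem altogether: it works with $u\in C^\infty_{\mathrm{c}}(\R^{n+k},\R^m)$ on the whole space, where $\S_y(u)$ is a genuine (not relative) chain; it proves the mass and flat-norm estimates~\eqref{mass}--\eqref{flat} with constants \emph{independent of $\|u\|_{L^\infty}$} via the coarea argument you sketch (projecting onto the $k$-plane orthogonal to each $(m-k)$-cell of~$\X$ and integrating $y$ over the bounded ball $B^*$, so boundedness of $u$ is never used); it then extends $\S$ to all of $W^{1,k}(\R^{n+k},\R^m)$ by density, and finally sets $\overline{\S}_y(u):=\S_y(Tu)\mres\overline{\Omega}$ using a linear Sobolev extension operator~$T$. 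Once the $L^\infty$-independent estimates are in hand, no truncation of the target is necessary — density of smooth maps in $W^{1,k}$ suffices — and the restriction to $\overline{\Omega}$ is well-defined without any lifting step. The real substance of the proof is the $L^\infty$-independence of the two estimates and the flat-restriction lemma, and neither is established in your proposal.
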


The proof of Proposition~\ref{prop:S} will be given in Apprendix~\ref{appendix:S}.
Taking account of~\eqref{S:Sbar}, we abuse of notation
and write~$\S$ instead of~$\overline{\S}$ from now on.
As a consequence of~\eqref{S:cobord}, for any
boundary datum~$v\in W^{1-1/k, k}(\partial\Omega, \, \NN)$
there exists a unique cobordism 
class~$\mathscr{C}(\Omega, \, v)\subseteq\M_n(\overline{\Omega}; \, \GN)$
such that
\begin{equation} \label{C_Omega_v}
 \S_y(u)\in\mathscr{C}(\Omega, \, v) \quad
 \textrm{ for any } u\in W^{1,k}(\Omega, \, \R^m) \textrm{ with trace } v
 \textrm{ on } \partial\Omega
 \textrm{ and a.e. } y\in B^*.
\end{equation}

\subsection{The \texorpdfstring{$\Gamma$}{Gamma}-convergence result}

The main result of this paper is a generalisation of~\cite[Theorem~5.5]{ABO2}.
We let $W^{1,k}_v(\Omega, \, \R^m)$ denote the set of maps
$u\in  W^{1,k}(\Omega, \, \R^m)$ such that $u = v$ on~$\partial\Omega$
(in the sense of traces).

\begin{mainthm} \label{th:main}
 Suppose that the assumptions~\eqref{hp:first}--\eqref{hp:last}
 are satisfied. Then, the following properties hold.
 \begin{enumerate}[label=(\roman*)]
  \item \emph{Compactness and lower bound.}
  Let $(u_\eps)_{\eps > 0}$ be a sequence 
  in~$W^{1,k}_v(\Omega, \, \R^m)$ that satisfies
  $\sup_{\eps>0}\abs{\log\eps}^{-1} E_\eps(u_\eps) < +\infty$.
  Then, there exists a (non relabelled) countable
  subsequence and a finite-mass chain
  $S\in\mathscr{C}(\Omega, \, v)$ such that~$\S(u_\eps)\to S$ in~$\overline{Y}$
  and, for any open subset~$A\subseteq\R^{n+k}$,
  \[
   \M(S\mres A) \leq \liminf_{\eps\to 0}
   \frac{E_\eps(u_\eps, \, A\cap\Omega)}{\abs{\log\eps}} .
  \]
  
  \item \emph{Upper bound.} For any finite-mass chain $S\in\mathscr{C}(\Omega, \, v)$,
  there exists a sequence $(u_\eps)$ in~$W^{1,k}_v(\Omega, \, \R^m)$ 
  such that~$\S(u_\eps)\to S$ in~$\overline{Y}$ and
  \[
   \limsup_{\eps\to 0} \frac{E_\eps(u_\eps)}{\abs{\log\eps}}
   \leq 
   \M(S).
  \]
 \end{enumerate}
\end{mainthm}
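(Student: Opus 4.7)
I would follow the strategy of Alberti--Baldo--Orlandi~\cite{ABO2}, using the operator~$\S$ in place of the distributional Jacobian. The energy bound $E_\eps(u_\eps)\leq C|\log\eps|$ together with \eqref{S:mass} only yields $\int_{B^*}\M(\S_y(u_\eps))\lesssim|\log\eps|$, which is too weak for compactness. The upgrade comes from a \emph{localised} application of Proposition~\ref{prop:JS}: for a.e.\ $y\in B^*$, I would cover $\spt\S_y(u_\eps)$ by disjoint ``cylinders'' of the form $\Delta\times Q$ with $\Delta$ an $n$-disk and $Q$ a transverse $k$-disk of radius $r\gg\eps$. By Fubini in the $n$ tangential directions and applying Proposition~\ref{prop:JS} to each transverse slice, each cylinder carries energy at least $|\sigma|_*(\log(r/\eps)-C)\H^n(\Delta)$, where $\sigma\in\GN$ is the homotopy class of $u_\eps$ on the transverse sphere $\partial Q$; by \eqref{S:intersection}, this $\sigma$ is precisely the intersection index $\I(\S_y(u_\eps),Q)$. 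Summing over the cover yields a \emph{uniform} bound on $\int_{B^*}\M(\S_y(u_\eps))$ and, letting $r\to 0$ after $\eps\to 0$, also the localised lower bound $\M(S\mres A)\leq\liminf|\log\eps|^{-1}E_\eps(u_\eps,A\cap\Omega)$. Standard Fleming-type compactness for flat chains then produces the limit $S$ along a subsequence, and \eqref{S:cobord} together with the common trace $v$ places $S$ in $\mathscr{C}(\Omega,v)$.

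\textbf{Strategy for (ii).} By lower semicontinuity of~$\M$ along cobordant sequences (guaranteed by the closedness of cobordism classes in the $\F$-norm) and a diagonal argument, it suffices to construct recovery sequences for a dense subclass of~$\mathscr{C}(\Omega,v)$. Using Proposition~\ref{prop:group_norm}, I would first approximate the target $S$ (up to small mass and flat-norm error) by a polyhedral chain whose simplices carry multiplicities in the finite set~$\Sg$, on which $|\cdot|_*$ and $E_{\min}$ coincide. Around each $n$-simplex $\Delta$ of multiplicity $\sigma\in\Sg$, I would build $u_\eps$ so that, on small transverse $(k-1)$-spheres of radius~$r$ with $\eps\ll r\ll 1$, its restriction is a near-optimiser in the definition~\eqref{I_min-intro} of~$E_{\min}(\sigma)$, rescaled radially. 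Integrating on spherical shells $\eps<r<r_0$ produces the logarithmic contribution $E_{\min}(\sigma)\,\H^n(\Delta)\,|\log\eps|=|\sigma|_*\H^n(\Delta)|\log\eps|$; inside the $\eps$-core one pays only $O(1)$ by composing with a fixed profile into~$\NN$. The background, which must attain the trace~$v$ on~$\partial\Omega$ and is itself generically singular (its $\S$ is a chain in $\mathscr{C}(\Omega,v)$ that differs from~$S$), is prescribed by composing a $W^{1,k}$-extension of~$v$ with~$\RR$ from Proposition~\ref{prop:X}; the discrepancy between the background's topological content and $S$ is then cancelled by the \emph{dipole-in-singular-background} construction of Proposition~\ref{prop:dipole_simpl}, producing the correct~$\S$-limit in~$\overline{Y}$ at negligible extra energy.

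\textbf{Main obstacle.} The hard part on the lower-bound side is the compatibility between the ``homotopy-class'' information that Proposition~\ref{prop:JS} extracts from $u_\eps$ on each $k$-slice and the ``multiplicity as a flat chain'' of $\S_y(u_\eps)$ on that slice; matching them in the limit is exactly what axiom~\eqref{S:intersection} provides, and the definition~\eqref{group_norm-intro} of $|\cdot|_*$ is tailored so that the slicewise bound in Proposition~\ref{prop:JS} aggregates into the mass of the limit chain in the normed group $(\GN,|\cdot|_*)$ --- this is why subadditivity, rather than $E_{\min}$ itself, must appear in the coefficient norm. On the upper-bound side, the delicate point is that the construction cannot be carried out simplex by simplex on top of a \emph{smooth} background (such a background would generally not be homotopic to~$v$ on~$\partial\Omega$); one must instead insert dipoles on top of a singular background without creating uncontrolled interference, which is the content of the forthcoming Proposition~\ref{prop:dipole_simpl}.
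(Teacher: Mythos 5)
Your strategy for the upper bound (ii) is essentially the paper's: locally piecewise-affine extension of~$v$ composed with~$\RR$ as singular background, approximation of the cobordism class in~$\mathscr{C}(\Omega,v)$ by polyhedral chains with multiplicities in~$\Sg$ (Propositions~\ref{prop:approx-noappendix}--\ref{prop:approx_mult_noappendix}), dipole insertion (Proposition~\ref{prop:dipole_simpl}), near-optimal transverse profiles ($\eta$-minimality, Lemma~\ref{lemma:ABO-minimal}), and $\eps$-regularisation. The point you flag --- that one cannot build on top of a smooth background --- is precisely why the construction is organised this way.

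For the lower bound (i), the overall thrust is right but there is a genuine gap. You claim that the cylinder-by-cylinder application of Proposition~\ref{prop:JS} ``yields a uniform bound on $\int_{B^*}\M(\S_y(u_\eps))$.'' This does \emph{not} hold, and the paper deliberately avoids asserting it: a sequence can carry many small-scale dipoles whose contribution to $\M(\S_y(u_\eps))$ is large (of order $|\log\eps|$, as allowed by \eqref{S:mass}) but whose transverse homotopy class on any not-too-small sphere is~$0$, so Proposition~\ref{prop:JS} extracts no energy from them and your cylinder cover does not see them. Compactness therefore cannot be applied to $\S_y(u_\eps)$ directly. What the paper does instead is replace $\S_y(u_\eps)$ by a \emph{polyhedral approximation} $T^\eps$ supported on the dual $n$-skeleton of a grid of mesh $h(\eps)$ with $\eps^\alpha\ll h(\eps)\ll|\log\eps|^{-1}$: $T^\eps$ records only the intersection indices $\gamma^\eps(K)$ on the transverse $k$-cells, which is exactly the ``essential'' topological content. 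The Federer--Fleming deformation theorem (Lemma~\ref{lemma:projection}) controls $\F_U(\S_y(u_\eps)\mres U - T^\eps)$, the cube-wise version of Proposition~\ref{prop:JS} (Lemma~\ref{lemma:lowerbounds}) bounds $\M(T^\eps)$ uniformly, and compactness is applied to~$T^\eps$.

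Two further gaps in the slicing step itself. First, ``each cylinder carries energy at least $|\sigma|_*(\log(r/\eps)-C)\H^n(\Delta)$ where $\sigma$ is the homotopy class of $u_\eps$ on $\partial Q$'' presupposes that this class is constant as the base point moves along $\Delta$, i.e.\ that $u_\eps$ stays close to $\NN$ on the lateral boundary $\Delta\times\partial Q$. This needs a quantitative argument; in the paper it is the averaged choice of grid (Lemma~\ref{lemma:grid}) together with the clearing-out estimate on the $(k-1)$-skeleton (Lemma~\ref{lemma:grid_conv}). Second, even with a good cover, what the slicewise bound controls is the mass of the \emph{projection} $\pi_{L,*}(\S_y(u_\eps)\mres(\Delta\times Q))$ onto the $n$-plane through $\Delta$, not the mass of the chain itself; recovering $\M(S\mres A)$ requires varying the slicing direction over a partition of the domain and applying Lemma~\ref{lemma:mass_chain-nointro}. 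Without these ingredients the argument as written does not close.
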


Theorem~\ref{th:mainmin} follows almost immediately from
Theorem~\ref{th:main}, combined with general properties of
the $\Gamma$-convergence and standard facts in measure theory.
There is a variant of Theorem~\ref{th:main} for the problem with no boundary conditions,
which is analogue to~\cite[Theorem~1.1]{ABO2}.
We will say that a chain~$S$ is a finite-mass, $n$-dimensional relative boundary
if it has form~$S = (\partial R)\mres\Omega$, where~$R\in\M_{n+1}(\R^{n+k}; \, \GN)$
is such that~$\M(\partial R)<+\infty$.

\begin{mainprop} \label{prop:main-nobd}
 Suppose that the assumptions~\eqref{hp:first}--\eqref{hp:non-degeneracy}
 are satisfied. Then, the following properties hold.
 \begin{enumerate}[label=(\roman*)]
  \item \emph{Compactness and lower bound.}
  Let $(u_\eps)_{\eps > 0}$ be a sequence 
  in~$W^{1,k}(\Omega, \, \R^m)$ that satisfies
  $\sup_{\eps>0}\abs{\log\eps}^{-1} E_\eps(u_\eps) < +\infty$.
  Then, there exists a (non relabelled) countable
  subsequence and a finite-mass, $n$-dimensional relative 
  boundary~$S$ such that~$\S(u_\eps)\to S$ in~$Y$
  and, for any open subset~$A\subseteq\Omega$,
  \[
   \M(S\mres A) \leq \liminf_{\eps\to 0}
   \frac{E_\eps(u_\eps, \, A\cap\Omega)}{\abs{\log\eps}} .
  \]
  
  \item \emph{Upper bound.} For any finite-mass, $n$-dimensional
  relative boundary~$S$,
  there exists a sequence $(u_\eps)$ in~$W^{1,k}(\Omega, \, \R^m)$ 
  such that~$\S(u_\eps)\to S$ in~$Y$ and
  \[
   \limsup_{\eps\to 0} \frac{E_\eps(u_\eps)}{\abs{\log\eps}}
   \leq \M(S).
  \]
 \end{enumerate}
\end{mainprop}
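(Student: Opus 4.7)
The strategy is to deduce Proposition~\ref{prop:main-nobd} from Theorem~\ref{th:main} by enlarging the domain and imposing a trivial constant boundary datum.

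\emph{Part (i): compactness and lower bound.} Fix a smooth domain~$\Omega^\prime$ with $\Omega\csubset\Omega^\prime$ and a point~$y_0\in\NN$. For each~$\eps$, extend~$u_\eps$ to~$\tilde u_\eps\in W^{1,k}_{y_0}(\Omega^\prime,\,\R^m)$ by reflecting across~$\partial\Omega$ in a tubular neighbourhood and then homotoping the resulting trace to~$y_0$ inside a thin collar of~$\partial\Omega^\prime$. A Fubini/mean-value argument on the level hypersurfaces inside~$\Omega^\prime\setminus\Omega$ locates a surface on which~$u_\eps$ is $L^k$-close to~$\NN$, so that the final retraction via~$\RR$ contributes only bounded energy and overall $E_\eps(\tilde u_\eps,\,\Omega^\prime)\leq C\abs{\log\eps}$. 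Applying Theorem~\ref{th:main}(i) to~$\tilde u_\eps$ yields, up to extraction, $\S(\tilde u_\eps)\to\tilde S$ in~$\overline{Y}$ for some~$\tilde S\in\mathscr{C}(\Omega^\prime,\,y_0)$ together with the local lower bound. Letting $S\in\F_n(\Omega;\,\GN)$ denote the image of~$\tilde S$ under the quotient map, continuity of the quotient gives $\S(u_\eps)\to S$ in~$Y$, and the lower bound on open $A\subseteq\Omega$ follows at once, because~$\tilde u_\eps=u_\eps$ on~$\Omega$ and $\M(S\mres A)\leq\M(\tilde S\mres A)$.

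To see that~$S$ is a relative boundary, observe that any~$\tilde u\in W^{1,k}_{y_0}(\Omega^\prime,\,\R^m)$ extends by the constant~$y_0$ to a map in~$W^{1,k}_{\mathrm{loc}}(\R^{n+k},\,\R^m)$; its singular chain is then carried by a finite-mass absolute $n$-cycle in~$\R^{n+k}$, which is a boundary by the contractibility of~$\R^{n+k}$ (combined with the isoperimetric inequality for flat chains, see~\cite{Fleming}). Since the class of finite-mass boundaries is closed under the flat norm, the limit~$\tilde S$ is a relative boundary in~$\Omega^\prime$, and hence~$S$ is a relative boundary in~$\Omega$.

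\emph{Part (ii): upper bound.} Given a finite-mass relative boundary~$S=(\partial R)\mres\Omega$, pick a minimal-mass representative $S^\prime\in\M_n(\overline{\Omega};\,\GN)$ of its equivalence class; by the remark just made, $S^\prime\in\mathscr{C}(\Omega,\,y_0)$ for any constant~$y_0\in\NN$. Applying Theorem~\ref{th:main}(ii) to~$S^\prime$ produces $u_\eps\in W^{1,k}_{y_0}(\Omega,\,\R^m)\subseteq W^{1,k}(\Omega,\,\R^m)$ with $\S(u_\eps)\to S^\prime$ in~$\overline{Y}$, hence $\S(u_\eps)\to S$ in~$Y$ by continuity of the quotient, and $\limsup_\eps\abs{\log\eps}^{-1}E_\eps(u_\eps)\leq\M(S^\prime)=\M(S)$.

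\emph{Main obstacle.} The delicate step is the collar extension in Part~(i), where one must control the energy cost of the homotopy to the constant~$y_0$ without any pointwise information on~$u_\eps$ near~$\partial\Omega$. A cleaner route may be to bypass the extension altogether: the lower-bound argument underlying Theorem~\ref{th:main}(i) is intrinsically local on balls $B\csubset\Omega$ and hence applies without boundary data, while the relative-boundary structure of the limit can be extracted by reflecting each~$u_\eps$ across~$\partial\Omega$ into a map in~$W^{1,k}_{\mathrm{loc}}(\R^{n+k},\,\R^m)$, whose singular chain lifts $\S_y(u_\eps)$ to a finite-mass absolute cycle, and then passing to the flat-norm limit.
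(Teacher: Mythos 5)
Your strategy of reducing Proposition~\ref{prop:main-nobd} to Theorem~\ref{th:main} with a constant datum $y_0$ is genuinely different from the paper's, and unfortunately both halves contain gaps.

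\emph{Part (i).} You already flag the difficulty yourself, and correctly: to apply Theorem~\ref{th:main}(i) on $\Omega'$ you must extend $u_\eps$ so that its trace on $\partial\Omega'$ is the constant $y_0$, while still paying only $O(|\log\eps|)$ energy. With no boundary data imposed, the traces $u_{\eps|\partial\Omega}$ (or on nearby level surfaces found by Fubini) may carry non-trivial, $\eps$-dependent topology on $(k-1)$-spheres; killing it down to the constant $y_0$ forces new singularities in the collar whose logarithmic cost is not a priori uniformly controlled. The paper avoids this entirely: it derives Proposition~\ref{prop:main-nobd}(i) from the purely local Proposition~\ref{prop:liminf-local}, which makes no reference to boundary data. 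Your closing "Main obstacle" remark is in fact the paper's route, and the observation that $S$ is a relative boundary (extend $u_\eps$ to $\R^{n+k}$, note that the resulting singular chain is a finite-mass cycle, hence a boundary by contractibility of $\R^{n+k}$, and this class is flat-closed) is sound and is essentially how the paper obtains the relative-boundary structure.

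\emph{Part (ii).} Here the gap is sharper and concrete. You want a representative $S'\in\M_n(\overline\Omega;\,\GN)$ of the relative class of $S$ that lies in $\mathscr{C}(\Omega,y_0)$ (i.e.\ is an \emph{absolute} boundary of a finite-mass chain supported in $\overline\Omega$) and satisfies $\M(S')=\M(S)$. This fails already in the simplest non-contractible setting. Take $n=0$, $k=2$, $\NN=\SS^1$, $\Omega=\{1<|x|<2\}\subset\R^2$, and let $S=-\llbracket(3/2,0)\rrbracket$, a relative boundary with $R$ the segment from $(3/2,0)$ to any point outside $\overline\Omega$, so $\M(S)=|{-1}|_*=\pi$. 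Any $S'\in\mathscr{C}(\Omega,y_0)$ representing the same relative class must differ from $S$ by a $0$-chain $T$ supported on $\partial\Omega$, and since $\overline\Omega$ is connected, $\partial R'=S'$ forces the total multiplicity of $S'$ to vanish, i.e.\ $T$ must carry total degree $+1$; hence $\M(S')=\M(S)+\M(T)\geq 2\pi>\M(S)$. Your approach therefore proves a strictly weaker upper bound. The paper instead takes $w_*$ constant, modifies it in a neighbourhood of $\spt R$ \emph{inside all of} $\R^{n+k}$ so that $\S(w)=\partial R$, regularises, and only then restricts to $\Omega$; the energy concentrates on $\spt(\partial R)\cap\Omega$ with mass $\M((\partial R)\mres\Omega)=\M(S)$, achieving the sharp bound without needing an absolute-boundary representative in $\overline\Omega$.
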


Proposition~\ref{prop:main-nobd} is not quite informative as it stands, 
because minimisers of the functional~\eqref{energy} under no boundary conditions
are constant. However, since~$\Gamma$-convergence is stable with respect to continuous perturbations,
Proposition~\ref{prop:main-nobd} can be extended to non-trivial minimisation problems
with lower-order terms or under integral constraints, as long as these are
compatible with the topology of $\Gamma$-convergence.

\subsection{A few examples}

{\BBB We illustrate our results by means of a few simple examples. 
If~$A\subseteq\R^{n+k}$ is an~$n$-dimensional polyhedral (or smooth) set,
with a given orientation, the unit-multiplicity chain
carried by~$A$ will be denoted
$\llbracket A\rrbracket\in\M_{n}(\R^{n+k}; \, \Z)$.}

\begin{example} 
{\BBB First, we suppose the domain is the unit ball in the critical dimension,
i.e.~$n=0$ and~$\Omega = B^k$, and consider the 
target~$\NN=\SS^{k-1}\subseteq\R^k$. We need to 
identify the class~$\mathscr{C}(\Omega, \, v)$ defined by~\eqref{C_Omega_v}.
For simplicity, suppose that the boundary datum~$v\colon\partial B^k\to\SS^{k-1}$
is smooth, of degree~$d$.
(General data $v\in W^{1-1/k,k}(\partial B^k, \, \SS^{k-1})$
could also be considered, by appealing to Brezis and Nirenberg's 
theory of the degree in~VMO, \cite{BN1}). 
Let~$u\colon B^k\to\R^k$ be any smooth extension of~$v$.
Let~$y\in\R^k$ be a regular value for~$u$ (i.e., $\det\nabla u(x) \neq 0$
for any~$x\in u^{-1}(y)$) such that~$\abs{y} <1$. Then,
the inverse image~$u^{-1}(y)$ consists of a finite number points.
Let~$r>0$ be a sufficiently small radius. 
By definition of~$\S$, we have
\begin{equation*} 
 \S_y(u) = \sum_{x\in u^{-1}(y)} d(x)
 \llbracket x\rrbracket\in\M_0(\bar{B}^k; \, \Z),
\end{equation*}
where~$d(x)$ is the degree of the
map~$(u - y)/|u - y|\colon\partial B_r(x)\to\SS^{k-1}$.
The class~$\mathscr{C}(\Omega, \, v)$ consists of all and only the chains
that differ from~$\S_y(u)$ by a boundary. It is not difficult
to characterise~$\mathscr{C}(\Omega, \, v)$ using
the following topological property,
which holds true for any (normed, Abelian) coefficient group~$\G$
and any connected, open set~$D\subseteq\R^d$.

\medskip
\noindent
\textbf{Fact.}
 \emph{Let~$T$ be a $0$-chain of the 
 form~$T = \sum_{i=1}^q\sigma_j\llbracket z_i\rrbracket$,
 for~$z_j\in \bar{D}$, $\sigma_j\in\G$.
 Then, there exists~$R\in\M_1(\bar{D}; \, \G)$
 such that~$\partial R = T$ if and only if~$\sum_{j=1}^q \sigma_j = 0$.}
 
\medskip
For a proof of this fact, see e.g.~\cite[Proposition~2.7]{Hatcher}.
Now, Brouwer's theory of the degree (or Property~\eqref{S:intersection} above)
implies that
\begin{equation*} 
 \sum_{x\in u^{-1}(y)}
 d(x) = \sum_{x\in u^{-1}(y)} \sign(\det\nabla u(x)) = d,
\end{equation*}
therefore
\[
 \mathscr{C}(\Omega, \, v)
 = \left\{\sum_{j = 1}^q \sigma_j\llbracket z_j\rrbracket \colon 
 q\in\N, \ (\sigma_j)_{j=1}^q\in\Z^q, 
 \ (z_j)_{j=1}^q\in (\bar{B}^k)^q, \ \sum_{j=1}^q \sigma_j = d\right\} \! .
\]
In agreement with the Ginzburg-Landau theory,
mass-minimising chains in~$\mathscr{C}(\Omega, \, v)$
consist of exactly~$|d|$ points, with multiplicities equal to~$1$ or~$-1$
according to the sign of~$d$. This argument extends to more general 
manifolds~$\NN$, with no essentially change; we obtain
\[
 \mathscr{C}(\Omega, \, v)
 = \left\{\sum_{j = 1}^q \sigma_j\llbracket z_j\rrbracket \colon 
 q\in\N, \ (\sigma_j)_{j=1}^q\in(\GN)^q, 
 \ (z_j)_{j=1}^q\in (\bar{B}^k)^q, \ \sum_{j=1}^q \sigma_j = \sigma\right\} \! ,
\]
where~$\sigma\in\GN$ is the homotopy class of
the boundary datum~$v\colon\partial B^k\to\NN$.
Mass-minimising chains in~$\mathscr{C}(\Omega, \, v)$
have the form~$\sum_{j = 1}^q \sigma_j\llbracket z_j\rrbracket$,
where the multiplicities~$\sigma_j$ belong to the set~$\mathfrak{S}$
defined in~\eqref{generators-intro} and satisfy
$\sum_{j = 1}^q E_{\min}(\sigma_j) = \abs{\sigma}_*$.}
\end{example}

\begin{example}
{\BBB Next, we discuss the case~$n=1$, $\Omega = B^{k+1}$.
Suppose that the boundary datum~$v\colon \partial B^{k+1}\to\NN$
is smooth, except for finitely many isolated singularities
at the points~$x_1$, \ldots, $x_p$. Let~$D_1$, \ldots, $D_p$
be pairwise-disjoint closed geodesic disks in~$\partial B^{k+1}$,
centred at the points~$x_1$, \ldots, $x_p$. Each~$D_i$
is given the orientation induced by the outward-pointing unit normal to~$B^{k+1}$.
Using orientation-preserving coordinate charts, we may identify
$v_{|\partial D_i}\colon \partial D_i\to\NN$ with a map~$\SS^{k-1}\to\NN$;
the homotopy class of the latter is an element of~$\GN$, 
which we denote~$\sigma_i$. The coefficents~$\sigma_i$ must satisfy the
topological constraint
\begin{equation} \label{example2,1}
 \sum_{i=1}^p \sigma_i = 0.
\end{equation}
Indeed, let~$D^+\subseteq\partial B^{k+1}$ be a small geodesic disk
that does not contain any singular point~$x_i$,
and let~$D^- := \partial B^{k+1}\setminus D^+$. Topologically, 
$D^-$ is a disk which contains all the singular points of~$v$;
therefore, the homotopy class of~$v$ restricted
to~$\partial D^-$ is the sum of all the~$\sigma_i$'s above.
However, the homotopy class of~$v$ on~$\partial D^+$
must be trivial, because~$v$ is smooth in~$D^+$. 
Thus, \eqref{example2,1} follows.

We consider the chain
\[
 \S^{\mathrm{bd}}(v) := \sum_{i=1}^p \sigma_i \llbracket x_i\rrbracket
 \in\M_0(\partial\Omega; \, \GN).
\]
Thanks to~\eqref{example2,1}, $\S^{\mathrm{bd}}(v)$ is the boundary
of some~$1$-chain supported in~$\bar{B}^{k+1}$.
More precisely, let~$u\in W^{1,k}(B^{k+1}, \, \R^m)$ be any extension of~$v$.
The results of~\cite{CO1} (see, in particular, Proposition~1,
Proposition~3 and Lemma~18) imply that
\[
 \partial \S_y(u) = \S^{\mathrm{bd}}(v)
\]
for a.e.~$y\in\R^m$ of norm small enough. 
Chains in the same homology class have the same boundary;
therefore, for any chain~$T\in\mathscr{C}(\Omega, \, v)$, there holds
$\partial T = \S^{\mathrm{bd}}(v)$. Conversely, two chains in~$\bar{B}^{k+1}$
that have the same boundary belong to same homology class 
(relative to~$\bar{B}^{k+1}$), because the domain~$\bar{B}^{k+1}$
is contractible. As a consequence, we have
\begin{equation}\label{example2}
 \mathscr{C}(\Omega, \, v) = \left\{T\in \M_{1}(\overline{\Omega}; \, \GN)\colon 
 \partial T= \S^{\mathrm{bd}}(v)\right\} \! .
\end{equation}
In particular, mass-minimising chains in~$\mathscr{C}(\Omega, \, v)$
will be carried by a finite union of segments, connecting
the singularities of the boundary datum according to
their multiplicities. In case~$\NN=\SS^{k-1}$, 
such union of segments realises a `minimising connection', in the 
sense of Brezis, Coron and Lieb~\cite{BrezisCoronLieb}.
For~$k=2$ and~$\NN=\PR$, the condition~\eqref{example2,1}
implies that~$v$ has an \emph{even} number of non-orientable singularities;
mass-minimising chains connect the non-orientable singularities in pairs.

The characterisation~\eqref{example2} extends to 
general data~$v\in W^{1-1/k,k}(\partial B^{k+1}, \, \NN)$,
provided that we define~$\S^{\mathrm{bd}}(v)$ in a suitable way
(see~\cite[Section~3]{CO1}). It also extend to more general 
domains~$\Omega\subseteq\R^{n+k}$, \emph{so long as} the $n$-th 
homology group~$H_n(\Omega; \, \GN)$ is trivial.}
\end{example}

\begin{example}
 {\BBB If the domain has a non-trivial topology, then 
 $\mathscr{C}(\Omega, \, v)$ may contain non-trivial chains
 even if the boundary datum is smooth. For instance, take~$n=1$,
 $k=2$, $\NN=\SS^1$. Let~$\Omega\subseteq\R^3$ be a solid torus of revolution,
 defined as the image of the map~$\Psi\colon B^2\times\R\to\R^3$,
 \[
  \Psi(x, \, \theta) := 
  \left((x_1 + 2) \cos\theta, \, (x_1 + 2)\sin\theta, \, x_2 \right)
  \qquad \textrm{for } x = (x_1, \, x_2)\in B^2, \ \theta\in\R.
 \]
 We consider the smooth map~$u\colon\Omega\to\R^2$ given by
 $u(\Psi(x, \, \theta)) := x$ for~$(x, \, \theta)\in B^2\times\R$.
 The trace of~$u$ at the boudary, $v$, takes its values in~$\SS^1$
 and its restriction on each meridian curve of the torus~$\partial\Omega$
 has degree~$1$. Therefore, $\mathscr{C}(\Omega, \, v)$ is the homology class
 of~$\llbracket u^{-1}(0) \rrbracket
 \in\M_1(\overline{\Omega}; \, \Z)$,
 where~$u^{-1}(0)$ is the zero-set of~$u$ (i.e.~the
 circle~$\Psi(\{(0, \, 0)\}\times\R)$)
 with the orientation induced by~$\Psi$.
 The elements of~$\mathscr{C}(\Omega, \, v)$ can be characterised
 by means of the intersection index~$\I$. More precisely, 
 let~$D$ be the closure of~$\Psi(B^2\times\{0\})$. 
 $D$ is a $2$-disk in the plane orthogonal to~$(0, \, 1, \, 0)$;
 we give~$D$ the orientation induced by~$(0, \, 1, \, 0)$.
 By the Poincar\'e-Lefschetz duality (see 
 e.g.~\cite[Theorem~3, p.~631]{GiaquintaModicaSoucek-I}),
 for any~$T\in\M_{1}(\overline{\Omega}; \, \Z)$ we have
 \[
  T\in\mathscr{C}(\Omega, \, v) 
  \qquad \textrm{ if and only if } \qquad
  \partial T = 0 \ \textrm{ and } \ \I(T, \, \llbracket D\rrbracket) = 1.
 \]
 By a slicing argument, we deduce that the (unique) 
 mass-minimising chain~$S_{\min}$
 in~$\mathscr{C}(\Omega, \, v)$ is carried by an
 equator of~$\partial\Omega$:
 \[
  S_{\min} := \llbracket \Psi(\{(-1, \, 0)\}\times\R) \rrbracket,
 \]
 with the orientation induced by~$\Psi$. (See, e.g., \cite[Section~5.4]{pirla3}
 for a similar example, in case~$\NN=\PR$.)}
\end{example}

\section{Upper bounds}
\label{sect:limsup}

\subsection{Notations and sketch of the construction}

We say that a map $u\colon\Omega\to\R^m$ is \emph{locally piecewise affine}
if~$u$ is continuous in~$\Omega$ and, for any polyhedral set~$K\csubset\Omega$,
the restriction~$u_{|K}$ is piecewise affine.
A set~$P\subseteq\Omega$ is called \emph{locally $n$-polyhedral} if, for any
compact set~$K\subseteq\Omega$, there exists a finite union~$Q$ of 
convex, compact, $n$-dimensional polyhedra such that $P\cap K = Q\cap K$.
In a similar way, we say that a finite-mass chain $S\in\M_n(\overline{\Omega}; \, \GN)$ is 
\emph{locally polyhedral} if, for any compact set 
$K\subseteq\Omega$, there exists a polyhedral 
chain~$T$ such that $(S - T)\mres K = 0$.
If~$M$ is a polyhedral complex and~$j\geq 0$ is an integer,
we denote by~$M_j$ the $j$-skeleton of~$M$, i.e. the union of all its 
faces of dimension less than or equal to~$j$. We set~$M_{-1} := \emptyset$.

\paragraph*{Maps with nice and~$\eta$-minimal singularities.}
To construct a recovery sequence, we will work with 
$\NN$-valued maps with well-behaved singularities,
in a sense that is made precise by the definition below.
Let~$M$, $S$ be polyhedral sets in~$\R^{n+k}$ of 
dimension~$n$, $n-1$ respectively, 
and let~$u\colon\Omega\subseteq\R^{n+k}\to\R^m$.

\begin{definition}[\cite{ABO1, ABO2}]
 We say that~$u$ has a \emph{nice singularity at~$M$} if~$u$ is locally Lipschitz
 on~$\overline{\Omega}\setminus M$ and there exists a constant~$C$ such that
 \[
  \abs{\nabla u(x)} \leq C\dist^{-1}(x, \, M)
  \qquad \textrm{for a.e. } x\in\Omega\setminus M.
 \]
 We say that~$u$ has a \emph{nice singularity at~$(M, \, S)$} if~$u$ is locally
 Lipschitz on~$\overline{\Omega}\setminus(M\cup S)$ and, for any~$p>1$,
 there is a constant~$C_p$ such that
 \[
  \abs{\nabla u(x)} \leq C_p\left(\dist^{-1}(x, \, M)
  + \dist^{-p}(x, \, S)\right)
  \quad \textrm{for a.e. } x\in\Omega\setminus (M\cup S).
 \]
 We say that~$u$ has a \emph{locally nice singularity at~$M$}
 (respectively, at $(M, \, S)$)
 if, for any open subset~$W\csubset\Omega$,
 the restriction~$u_{|W}$
 has a nice singularity at~$M$ (respectively, at~$(M, \, S)$).
\end{definition}

\begin{remark}\label{remark:nice_sing}
 If~$u$ has a nice singularity at~$(M, \, S)$ then 
 $u\in W^{1,k-1}(\Omega, \R^m)$, since both~$M$ and~$S$
 have codimension strictly larger than~$k-1$ 
 (see e.g.~\cite[Lemma~8.3]{ABO2} for more details).
 In particular, if~$u\colon\Omega\to\NN$ has a nice 
 singularity at~$(M, \, S)$, then~$\S_y(u)\in\F_{n}(\Omega; \, \GN)$
 is well-defined for a.e.~$y\in B^*$. 
 Actually, $\S_{y_1}(u) = \S_{y_2}(u)$ for a.e.~$y_1$, $y_2\in B^*$
 \cite[Proposition~3]{CO1}, and we will write
 $\S(u) := \S_{y_1}(u) = \S_{y_2}(u)$. The chain~$\S(u)$ is
 supported on~$M$, and its multiplicities coincide with 
 the homotopy class of $u$ around each $n$-face of~$M$ 
 (see \cite[Lemma~18]{CO1}). 
\end{remark}

Throughout Section~\ref{sect:limsup}, we will work with 
maps with nice (or locally nice) singularities.
However, in order to obtain sharp energy estimates,
we will need to impose a further restriction
on the behaviour of our maps near the singularities.
Let~$u\colon \Omega\to\NN$ be a map with nice singularity at~$(M, \, S)$,
where~$M$, $S$ are polyhedral sets of dimension~$n$, $n-1$ respectively.
We triangulate~$M$, i.e.~we write~$M$ as a finite union of closed simplices
such that, if~$K^\prime$, $K$ are simplices with~$K\neq K^\prime$,
$K\cap K^\prime\neq\emptyset$, then~$K\cap K^\prime$ is a 
boundary face of both~$K$ and~$K^\prime$.
Let~$K\subseteq M$ be a $n$-dimensional
simplex of the triangulation, and let $K^\perp$ be 
the $k$-plane orthogonal to~$K$ through the origin.
Given positive parameters~$\delta$, $\gamma$,
we define the set
\begin{equation} \label{U-diamond}
 U(K, \, \delta, \, \gamma) := 
 \left\{x^\prime + x^{\prime\prime}\colon
 x^\prime\in K, \ x^{\prime\prime}\in K^\perp, \ 
 |x^{\prime\prime}|\leq \min\left(\delta, \,
 \gamma\dist(x^\prime, \, \partial K)\right)\right\}
\end{equation}
(see Figure~\ref{fig:diamond}).
We will identify each~$x\in U(K, \, \delta, \, \gamma)$
with a pair~$x = (x^\prime, \, x^{\prime\prime})$,
where~$x^\prime$, $x^{\prime\prime}$ are as in~\eqref{U-diamond}.
By choosing~$\delta$, $\gamma$ small enough (uniformly in~$K$),
we can make sure that the sets~$U(K, \, \delta, \, \gamma)$
have pairwise disjoint interiors.

\begin{figure}[t]
	\centering
    \begin{subfigure}{.48\textwidth}
     \includegraphics[height=.35\textheight,angle=90]{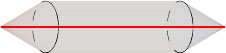}
    \end{subfigure}
    \begin{subfigure}{.48\textwidth}
     \includegraphics[height=.18\textheight]{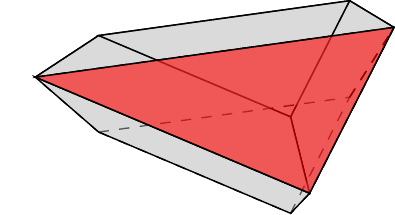}
    \end{subfigure}
	\caption{The set~$U(K, \, \delta, \, \gamma)$, in case 
	$n = 1$, $k=2$ (left) and~$n = 2$, $k=1$ (right). 
	In both cases, the polyhedron~$K$ is in red.}
	\label{fig:diamond}
\end{figure}

\begin{definition} \label{def:minimal}
 Let~$u\colon\Omega\to\NN$ be a map with nice 
 singularity at~$(M, \, S)$, and let~$\eta>0$.
 We say that~$u$ is \emph{$\eta$-minimal} 
 if there exist positive numbers~$\delta$, $\gamma$, 
 a triangulation of~$M$ and, for any~$n$-simplex~$K$ of the triangulation, 
 a Lipschitz map~$\phi_K\colon\SS^{k-1}\to\NN$
 that satisfy the following properties.
 \begin{enumerate}[label=(\roman*)]
  \item If~$K\subseteq M$, $K^\prime\subseteq M$ 
  are $n$-simplices with~$K\neq K^\prime$,
  then $U(K, \, \delta, \, \gamma)$ and~$U(K^\prime, \, \delta, \, \gamma)$
  have disjoint interiors.
  
  \item For any $n$-dimensional simplex $K\subseteq M$
  and a.e.~$x = (x^\prime, \, x^{\prime\prime})\in U(K, \, \delta, \, \gamma)$,
  we have $u(x) = \phi_K(x^{\prime\prime}/|x^{\prime\prime}|)$.
  
  \item For any $n$-dimensional simplex $K\subseteq M$
  and any map~$\zeta\in W^{1,k}(\SS^{k-1}, \, \NN)$
  that is homotopic to~$\phi_K$, we have
  \begin{equation*} 
   \int_{\SS^{k-1}} \abs{\nablaT\phi_K}^k \d\H^{k-1}
   \leq \int_{\SS^{k-1}} \abs{\nablaT\zeta}^k \d\H^{k-1} + \eta.
  \end{equation*}
 \end{enumerate}
\end{definition}
The operator~$\nablaT$ is the tangential gradient on~$\SS^{k-1}$,
i.e.~the restriction of the Euclidean gradient~$\nabla$
to the tangent plane to the sphere.

\begin{remark} \label{remark:phi-sigma}
 Thanks to the Sobolev embedding $W^{1,k}(\SS^{k-1}, \,\NN)
 \hookrightarrow C(\SS^{k-1}, \, \NN)$,
 smooth maps are dense in~$W^{1,k}(\SS^{k-1}, \, \NN)$. 
 Therefore, for any~$\eta>0$
 and any homotopy class~$\sigma\in\pi_{k-1}(\NN)$, 
 there exists a smooth map~$\phi\colon\SS^{k-1}\to\NN$
 in the homotopy class~$\sigma$ that satisfies
 \begin{equation} \label{phi-sigma}
   \int_{\SS^{k-1}} \abs{\nablaT\phi}^k \d\H^{k-1}
   \leq \int_{\SS^{k-1}} \abs{\nablaT\zeta}^k \d\H^{k-1} + \eta
 \end{equation}
 for any~$\zeta\in W^{1,k}(\SS^{k-1}, \, \NN)\cap\sigma$.
\end{remark}

\begin{remark}
 It is possible to find $C^1$-maps that satisfy
 a stronger version of~\eqref{phi-sigma}, with~$\eta=0$.
 Indeed, the compact Sobolev emebedding $W^{1,k}(\SS^{k-1}, \,\NN)
 \hookrightarrow C(\SS^{k-1}, \, \NN)$ implies that homotopy classes
 of maps~$\SS^{k-1}\to\NN$ are sequentially closed with respect
 to the weak~$W^{1,k}$-convergence.
 Then, for each homotopy class~$\sigma\in\GN$,
 there exists a map~$\phi_\sigma$ the minimises the 
 $L^{k}$-norm of the gradient in~$\sigma$.
 The map~$\phi_\sigma$ solves the $k$-harmonic map equation
 and, by Sobolev embedding, is continuous.
 Then, regularity results for $k$-harmonic maps
 (e.g.~\cite[Proposition~5.4]{diestrover-JDE}) imply
 that~$\phi_\sigma\in C^{1,\alpha}(\SS^{k-1}, \, \NN)$. However, the weaker
 condition~\eqref{phi-sigma} is enough for our purposes.
\end{remark}

\paragraph*{Construction of a recovery sequence: a sketch.}

In most of this section, we focus on the proof of Theorem~\ref{th:main}.(ii),
i.e.~we study the problem in the presence of boundary conditions;
only at the end of section, we present the
proof of Proposition~\ref{prop:main-nobd}.(ii).
As in~\cite{ABO2}, in order to define a recovery sequence,
we first construct a map~$w\colon\Omega\to\NN$
with (locally) nice singularity and prescribed singular set~$\S(w) = S$.
However, $w$ must also satisfy the boundary condition, 
$w = v$ on~$\partial\Omega$, where~$v\in W^{1-1/k,k}(\partial\Omega, \, \NN)$
is a datum. This boundary condition makes the construction
of~$w$ substantially harder.
For such a~$w$ to exists, we need a topological assumption on~$S$,
namely, that $S$ belongs to the homology class~\eqref{C_Omega_v}
determined by~$\Omega$ and~$v$.
Our approach is rather different from that of~\cite[Theorem~5.3]{ABO2}.
In~\cite{ABO2}, the authors first construct~$w$ inside~$\Omega$,
then interpolate near~$\partial\Omega$,
using the symmetries of the target~$\SS^{k-1}$,
so as to match the boundary datum.
On the contrary, we start from a map that satifies the boundary conditions
and we modify it inside~$\Omega$ so to obtain~$\S(w) = S$.
Before giving the details, we sketch the main steps of our construction.

First, we consider a locally piecewise affine extension
$u_*\in(L^\infty\cap W^{1,k})(\Omega, \, \R^m)$ of~$v$.
Since we have assumed that~$\X$ is polyhedral,
the singular set~$\S_y(u_*)$ will be locally polyhedral, for a.e.~$y$.
By projecting~$u_*$ onto~$\NN$ (using Hardt, Kinderlehrer and Lin's 
trick~\cite{HKL}, see Section~\ref{sect:extension}),
we define a map~$w_*\colon\Omega\to\NN$ such 
that~$w_* = v$ on~$\partial\Omega$,
$\S(w_*) = \S_y(u_*)$ (for a well-chosen~$y$) is locally polyhedral, 
and~$w_*$ has a locally nice singularity at~$\spt\S(w_*)$.
We cannot make sure that the singularity is nice 
up to the boundary of~$\Omega$, because the
boundary datum is not regular enough.
\begin{figure}[t]
	\centering
    \includegraphics[height=.35\textheight]{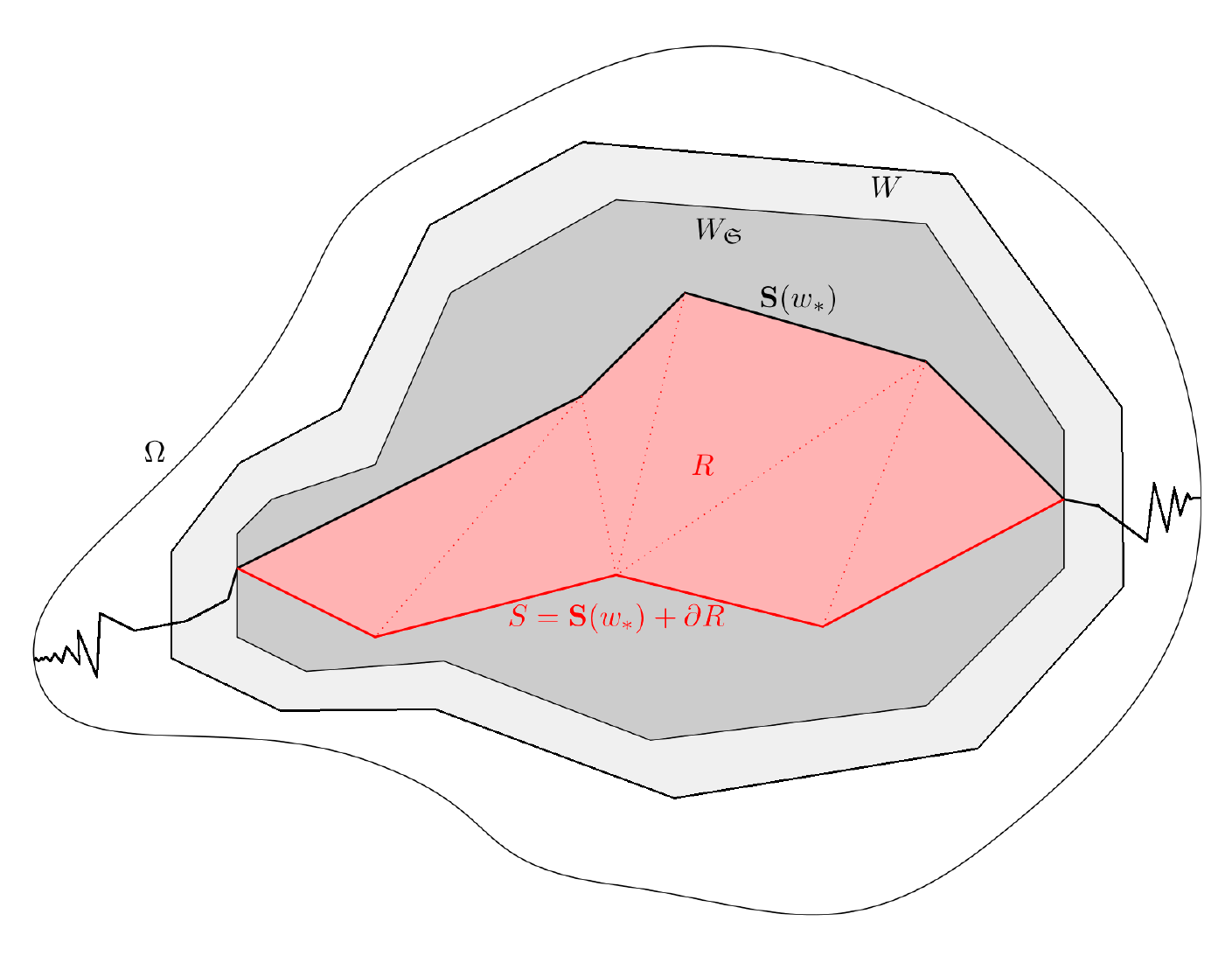}
	\caption{Sketch of the construction of a recovery sequence.
	Inside~$W_{\Sg}$, the chain~$S$ (in red) takes multiplicities
	in the set~$\Sg\subseteq\GN$. Outside~$W$, the original map~$w_*$ 
	and the modified map~$w$ coincide.}
	\label{fig:recovery}
\end{figure}
Let~$S$ be a finite-mass $n$-chain in the homology 
class~$\mathscr{C}(\Omega, \, v)$ defined by~\eqref{C_Omega_v}.
Thanks to~\eqref{S:cobord}, we know that 
$\S(w_*)= \S_y(u_*)\in\mathscr{C}(\Omega, \, v)$ and hence,
$\S(w_*)$ and~$S$ differ by a boundary. By approximation 
(see Section~\ref{sect:reduction}),
we reduce to the case
\[
 S = \S(w_*) + \partial R,
\]
where~$R$ is a polyhedral~$(n+1)$-chain with compact support in~$\Omega$.
Actually, we can make a further assumption on~$S$.
Let~$W_{\Sg}\csubset\Omega$ be an open set, with polyhedral boundary,
whose closure contains the support of~$R$ (see Figure~\ref{fig:recovery}).
Up to a density argument (Proposition~\ref{prop:approx_mult_noappendix}),
we can assume that $S\mres W_{\Sg}$
takes its multiplicities in the set~$\Sg\subseteq\GN$
defined by~\eqref{generators-intro}. Roughly speaking,
we replace each polyhedron~$K$ of~$S\mres W_{\Sg}$ with
a finite number of polyhedra, very close to each other,
whose multiplicities add up to the multiplicity of~$K$.
This is possible, because~$\Sg$ generates~$\GN$ 
by Proposition~\ref{prop:group_norm}. The 
assumption on the multiplicity of~$S\mres W_{\Sg}$ turns out
to be essential to obtain sharp energy bounds 
for our recovery sequence.

Let~$W$ be another open set, with polyhedral boundary,
such that~$W_{\Sg}\csubset W\csubset\Omega$ 
(see Figure~\ref{fig:recovery}).
In particular, $W$ contains the support of~$R$.
We aim to modify~$w_*$ inside~$W$, so to obtain a new map
$w\colon \Omega\to\NN$ with locally nice singularities
and~$\S(w) = \S(w_*) + \partial R = S$.
In other words, we need to ``move'' the singularities 
of~$w_*$ along the boundary of~$R$.
This is the key step in the construction.
We achieve this goal by a suitable generalisation
of the so-called ``insertion of dipoles'', 
Proposition~\ref{prop:dipole_simpl}
in Section~\ref{sect:dipole_simpl}.
For any $(n+1)$-po\-ly\-he\-dron~$T$ of~$R$,
we modify $w_*$ in a neighbourhood of~$T$ by
inserting an $\NN$-valued map that depends only 
on the $k-1$ coordinates in the orthogonal directions to~$T$.
To define~$w$ near~$\partial T$,
we use radial projections repeatedly, first 
onto the $n$-skeleton of~$T$, then onto 
its~$(n-1)$-skeleton, and so on. Eventually,
we obtain a map~$w\colon\Omega\to\NN$ that 
agrees with~$w_*$ out of a neighbourhood of~$\spt R$
(in particular, it matches the boundary datum),
has locally nice singularities at~$S$ and satisfies
$\S(w) = S$. By local surgery
(\cite[Lemma~9.3]{ABO2}, stated below as Lemma~\ref{lemma:ABO-minimal}),
we can also make sure that~$w_{|W}$ is~$\eta$-minimal.

The map~$w$ does not belong to the energy
space~$W^{1,k}(\Omega, \, \R^m)$, unless~$S=0$,
because it has a singularity of codimension~$k$.
Therefore, we must regularise~$w$ to construct a
recovery sequence. For~$x\in W$, we define
\begin{equation*} 
  u_\eps(x) := \min\left(\frac{\dist(x, \, \spt S)}{\eps}, \, 1\right)
  w(x).
\end{equation*}
Since~$w$ is~$\eta$-minimal in~$W$,
a fairly explicit computation allows us to estimate
the energy of~$u_\eps$ on~$W$, in terms of the area of~$\spt S$
and the maps~$\phi_K$ given by Definition~\ref{def:minimal}.
Moreover, for any simplex~$K$ of~$S\mres W_{\Sg}$,
the multiplicity~$\sigma_K$ of~$S$ at~$K$ belongs to~$\Sg$
and hence, 
\[
 \frac{1}{k} \int_{\SS^{k-1}} \abs{\nablaT\phi_K}^k\d\H^{k-1}
 \leq \abs{\sigma_K}_* + \eta,
\]
because of Definition~\ref{def:minimal} and~\eqref{generators-intro}.
Thanks to this inequality, we can indeed estimate
$E_\eps(u_\eps, \, W)$ in terms of the mass of~$S$,
up to remainder terms that can be made arbitrarily small.
However, this approach is not viable near the boundary of~$\Omega$,
because the regularity of~$w$ degenerates near~$\partial\Omega$.
Instead, we define~$u_\eps$ on~$\Omega\setminus W$ 
by adapting~\cite[Proposition 2.1]{Riviere-DenseSubsets},
see Section~\ref{sect:extension}. The two pieces 
--- inside and outside~$W$ --- 
are glued together by linear interpolation.

\subsection{Insertion of dipoles along a simplex}
\label{sect:dipole_simpl}

Our next result, Proposition~\ref{prop:dipole_simpl}, is the
main building block in the construction of the recovery sequence.

\begin{prop} \label{prop:dipole_simpl}
 Let~$D\subseteq\R^{n+k}$ be a bounded domain.
 Let~$\Sigma\subseteq D$ be a polyhedral set of dimension~$n$,
 and~$u\in W^{1,k-1}(D, \, \NN)$ a map with nice singularity at~$\Sigma$.
 Let~$T\csubset D$ be an {\BBB oriented} simplex of
 dimension~$n+1$ and~$\sigma\in\GN$.
 Then, there exists a map~$\tilde{u}\in W^{1, k-1}(D, \, \NN)$,
 with nice singularity at a polyhedral set of dimension~$n$, such that
 $\tilde{u} = u$ in a neighbourhood of~$\partial D$
 and $\S(\tilde{u}) = \S(u) + \sigma\partial\llbracket T \rrbracket$.
\end{prop}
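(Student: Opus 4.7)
The plan is to modify $u$ only inside a thin tubular neighbourhood of $T$, inserting a standard ``dipole'' supported there whose topological singular chain is exactly $\sigma\,\partial\llbracket T\rrbracket$; this follows the spirit of the dipole constructions of Brezis-Coron-Lieb and Bethuel, adapted here to the general target $\NN$.

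First I would arrange $T$ in good position and fix the local model. By a small translation of $T$ within its affine $(n+1)$-plane, one may assume that $T$ meets $\Sigma$ transversally, so that $T\cap\Sigma$ is a polyhedral set of dimension at most $n+1-k<n$; then pick a thin tubular neighbourhood $U\csubset D$ of $T$ on which $u$ is Lipschitz outside a small polyhedral neighbourhood of $\Sigma$. Fix a basepoint $y_0\in\NN$ and, by Sobolev density, a Lipschitz representative $\phi\colon\SS^{k-1}\to\NN$ of the homotopy class~$\sigma$.

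Next I would build a model dipole $\psi\colon U\to\NN$. Each $n$-face $F$ of $\partial T$ has $k$-dimensional orthogonal complement in $\R^{n+k}$; in tubular coordinates $(x',x'')\in F\times B^k_\eta$ on a sub-tube $N_\eta(F)\subset U$, set $\psi(x',x''):=\phi(x''/|x''|)$, with the orientation of $\phi$ chosen consistently with the induced orientation on $\partial T$. Outside the union of these sub-tubes, and in particular near $\partial U$, put $\psi\equiv y_0$; a small interpolation along the $(n-1)$-skeleton of $\partial T$ (where adjacent $n$-faces meet) makes $\psi$ Lipschitz off $\partial T$. By property~\eqref{S:intersection} and a direct computation of homotopy classes on small transverse $(k-1)$-spheres one finds $\S(\psi)=\sigma\,\partial\llbracket T\rrbracket$; the pointwise bound $|\nabla\psi|\lesssim 1/\dist(\cdot,\partial T)$ gives a nice singularity at $\partial T$.

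Third, I would glue $u$ and $\psi$. By a preliminary homotopy, replace $u|_U$ by a map $u_0$ that coincides with $u$ on a neighbourhood of $\partial U$, equals $y_0$ outside the sub-tubes where $\psi\neq y_0$ (and off a small neighbourhood of $\Sigma$), and has the same topological singular chain as $u$. This homotopy is obtained via a straight-line interpolation in $\R^m$ composed with the retraction $\RR$ of Proposition~\ref{prop:X}, which is defined as long as the path stays in $\R^m\setminus\X$. Since $u_0$ and $\psi$ then agree (both equal $y_0$) on the outer shell of $U$, we may paste them: define $\tilde u:=\psi$ on the inner sub-tubes, $\tilde u:=u_0$ on the rest of $U$, and $\tilde u:=u$ outside $U$. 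The resulting $\tilde u\in W^{1,k-1}(D,\NN)$ agrees with $u$ near $\partial D$, has nice singularity on the polyhedral set $\spt\S(u)\cup\partial T$, and additivity of the intersection index $\I$ yields $\S(\tilde u)=\S(u)+\sigma\,\partial\llbracket T\rrbracket$.

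The main obstacle is the preparatory homotopy of $u$ to $u_0$: one must keep the straight-line path in $\R^m\setminus\X$ so that $\RR$ applies along it, while simultaneously preserving $\S(u)$ and not introducing spurious singularities near the transverse intersection $T\cap\Sigma$. This is where the codimension $k$ of $\X$ in $\R^m$, the Lipschitz control of $u$ off $\Sigma$, and the $(k-2)$-connectedness of $\NN$ (which guarantees the relevant retractions exist up to the right homotopy class) combine; the construction can be carried out face by face on a sufficiently fine polyhedral decomposition of $U$.
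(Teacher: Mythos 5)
Your overall plan---insert a dipole in a tube around $T$ without disturbing $u$ near $\partial D$---is the right one, but two steps do not work as stated. First, the model map $\psi$ is not continuous. You set $\psi(x',x'')=\phi(x''/|x''|)$ on each sub-tube $F\times B^k_\eta$ around an $n$-face $F$ of $\partial T$ and $\psi\equiv y_0$ outside; but these definitions disagree on the lateral boundary $F\times\partial B^k_\eta$, where $\phi(x''/\eta)$ is a non-constant map because $\phi$ represents a non-trivial class. That mismatch is not confined to the $(n-1)$-skeleton of $\partial T$, so your proposed interpolation there does not repair it. The paper's construction avoids this by placing the bubble in the $(k-1)$-dimensional normal fiber to each $(n+1)$-face of $T$ \emph{itself} (not to $\partial T$): there Lemma~\ref{lemma:ext1} interpolates between the original trace $u(x',0)$ and a bubble $\phi$ through a path in $\NN$, and the fact that $\pi_{k-2}(\NN)=0$ makes this transition unobstructed.

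Second, and more seriously, the preparatory homotopy from $u|_U$ to $u_0$ is topologically obstructed whenever $\Sigma$ enters $U$, which cannot be ruled out (it is unavoidable in the intended iterative use, where $\Sigma$ eventually contains skeleta of previously-treated simplices, and when $n+1\geq k$ even $T\cap\Sigma$ is generically nonempty). If a piece of $\Sigma$ with nontrivial multiplicity lies strictly inside $U$ away from $\partial T$, then in the annular region separating it from $\partial U$ you want $u_0\equiv y_0$; a small transverse $(k-1)$-sphere there then carries trivial homotopy class, forcing the multiplicity of $\S(u_0)$ to vanish at that piece of $\Sigma$, contradicting $\S(u_0)=\S(u)$. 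The straight-line/$\RR$ homotopy you invoke cannot escape this: the obstruction \emph{is} $\S(u)$. Also, the preliminary perturbation of $T$ is not admissible, since the conclusion must produce $\sigma\,\partial\llbracket T\rrbracket$ for the prescribed simplex $T$. The paper circumvents all of this by triangulating $\Sigma\cup T$ jointly and never homotoping $u$ to a constant: instead it \emph{reparametrises} $u$ in the thin $(k-1)$-dimensional normal direction to each top face of $T$ (the ``slit'' formula~\eqref{hole}), keeping the inner trace equal to $u$ restricted to the centerline, then inserts the bubble via Lemma~\ref{lemma:ext1} and descends through the lower skeleta of $\Sigma\cup T$ by radial projection, preserving the nice-singularity estimates at every step.
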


Perhaps it is worth commenting on the assumptions 
of Proposition~\ref{prop:dipole_simpl}. In terms of regularity of $\NN$,
we do not need to work with smooth manifolds: a compact, connected 
Lipschitz neighbourhood retract would do. The assumption that
$\NN$ is $(k-2)$-connected could also be relaxed. 
$(k-2)$-connectedness is used in~\cite{PakzadRiviere, CO1}
to construct~$\S(u)$ for arbitrary $u\in W^{1,k-1}(\Omega, \, \NN)$;
however, if~$u$ has nice singularities and~$\GN$ is Abelian,
then~$\S(u)$ can be defined in a straightforward way. 
On the other hand, we must assume that $\NN$ is $(k-1)$-free
(that is, the fundamental group of~$\NN$ acts
trivially on~$\GN$). Should~$\NN$ not be $(k-1)$-free, 
we could not identify free homotopy classes of maps~$\SS^{k-1}\to\NN$
with elements of~$\GN$. In this case, the product of
free homotopy classes~$\SS^{k-1}\to\NN$ is multi-valued 
and hence, the equality
$\S(\tilde{u}) = \S(u) + \sigma\partial\llbracket T\rrbracket$ may fail.

\begin{figure}
 \begin{minipage}{.49\textwidth}
  \centering
  \includegraphics[width=\linewidth]{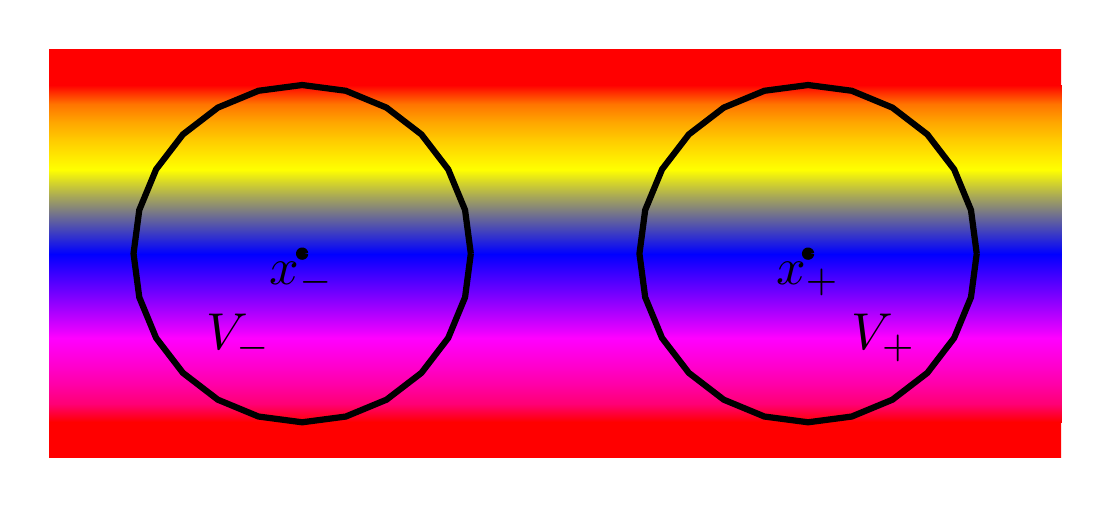}  
  (a)
 \end{minipage}
 \begin{minipage}{.5\textwidth}
  \centering
  \includegraphics[width=\linewidth]{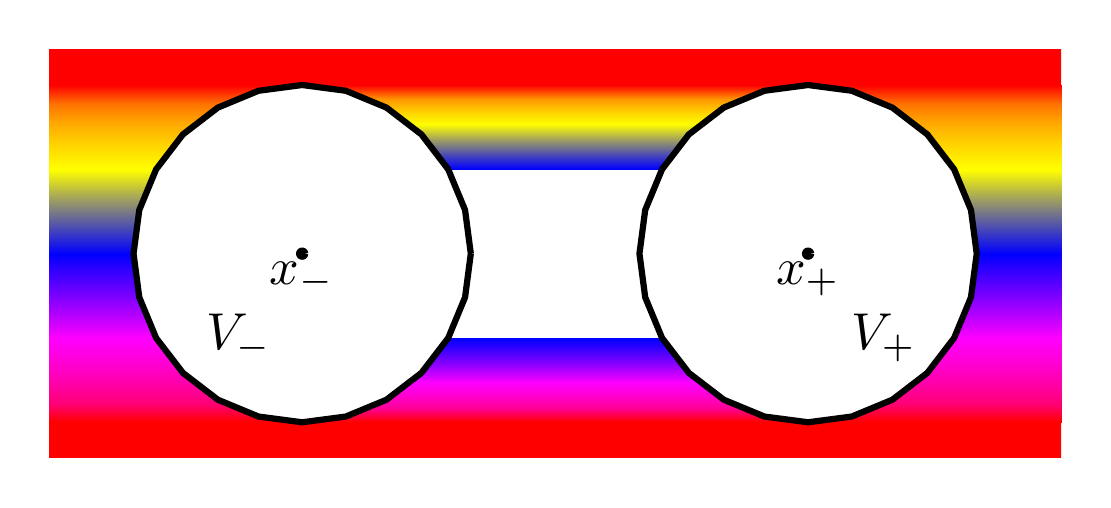} 
  (b)
 \end{minipage}
 
 \begin{minipage}{.49\textwidth}
  \centering
  \includegraphics[width=\linewidth]{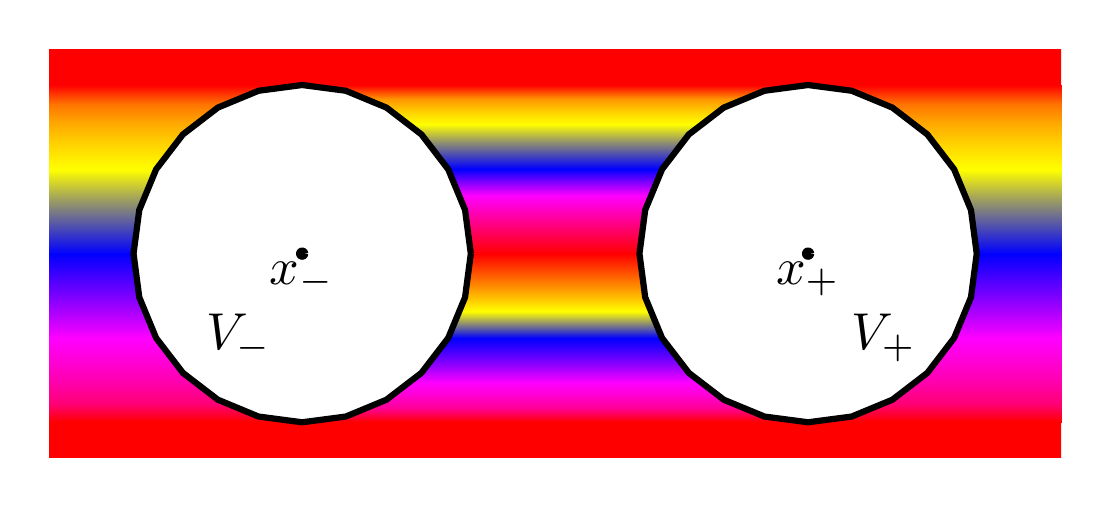}  
  (c)
 \end{minipage}
 \begin{minipage}{.5\textwidth}
  \centering
  \includegraphics[width=\linewidth]{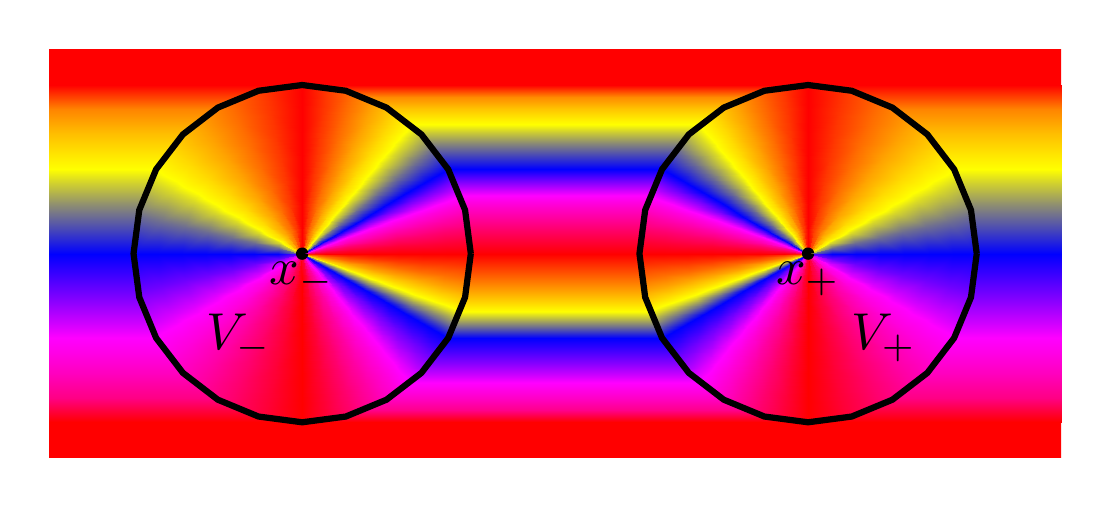}  
  (d)
 \end{minipage}
 
 \caption{{\BBB Idea of the proof of Proposition~\ref{prop:dipole_simpl}:
 an example with~$k=2$, $n=0$ and~$\NN=\SS^1$. The initial map~$u$
 is plotted in~(a); the values of~$u$ are represented by the 
 colour code. We aim to insert singularities of degrees~$1$, $-1$ 
 at the points~$x_+$, $x_-$. First, we reparametrise~$u$,
 creating a `slit' along the segment of endpoints~$x_+$ and~$x_-$ (b).
 Then, we fill the slit by inserting a map that winds around the 
 circle exactly once, as we move in the direction orthogonal
 to the segment of endpoints~$x_+$, $x_-$ (c). Finally, we define~$\tilde{u}$
 in the disks~$V_+$, $V_-$ in such a way that~$\tilde{u}$ 
 is homogeneous inside each disk~(d).
 The new map~$\tilde{u}$ behaves as required. 
 For instance, there are exactly three yellow points 
 on~$\partial V_+$; as we move anticlockwise around~$\partial V_+$,
 two of them carry the orientation `from red to blue' 
 and the other one carries the opposite orientation `from blue to red'.
 If we orient the target~$\SS^1$ `from red to yellow to blue',
 then the degree of~$\tilde{u}$ on~$\partial V_+$ is~$1$.}}
 \label{fig:colors}
\end{figure}

The proof of Proposition~\ref{prop:dipole_simpl} (see Figure~\ref{fig:colors})
is based on a construction known as ``insertion of dipoles''.
{\BBB Several variants of this construction are available in the literature
(see e.g.~\cite{BrezisCoronLieb, Bethuel1990, BethuelbrezisCoron,
GiaquintaModicaSoucek-I, PakzadRiviere}), but all 
of them rely of the following fact: a map~$B^{k-1}\to\NN$ that takes a constant
value on~$\partial B^{k-1}$ may be identified with a map~$\SS^{k-1}\to\NN$,
by collapsing the boundary of the disk to a point. As a consequence,
if a continuous map~$\phi\colon B^{k-1}\to\NN$ is constant on~$\partial B^{k-1}$,
then we may define the homotopy class of~$\phi$ as an element of~$\pi_{k-1}(\NN)$.
(In principle, we should distinguish between free or based homotopy, according 
to whether the boundary value of~$\phi$ is allowed to vary during
the homotopy or not; however, the assumption~\eqref{hp:N} guarantees that these
two notions are equivalent.)}

\begin{lemma} \label{lemma:ext1}
 {\BBB Let~$K$ be a convex polyhedron,
 let~$h\colon K\to\NN$ be a Lipschitz map, and let~$\sigma\in\GN$.
 Then, there exists a Lipschitz map
 $u\colon K\times B^{k-1}\to\NN$ such that
 \begin{equation} \label{ext11}
  u(x^\prime, \, x^{\prime\prime}) = h(x^\prime)
  \qquad \textrm{for any } 
  (x^\prime, \, x^{\prime\prime})\in K\times \partial B^{k-1}
 \end{equation}
 and, for any~$\sigma\in\GN$, the homotopy class 
 of~$u(x^\prime, \, \cdot)$ is~$\sigma$.}
\end{lemma}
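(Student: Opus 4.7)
The plan is to exploit the convexity (hence contractibility) of $K$ to decouple the variation of the base point $h(x')$ from the construction of a fixed representative of $\sigma$. The main obstacle this circumvents is that one cannot, in general, choose a Lipschitz family of paths from a single base point $p_0 \in \NN$ to each $h(x') \in \NN$: such a choice would, for instance, contract the identity $\SS^1 \to \SS^1$ if $\NN = \SS^1$, which is impossible.

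First, fix $x_0 \in K$ and set $p_0 := h(x_0)$. By convexity, the straight-line map $H(x', t) := (1-t) x' + t x_0$ is Lipschitz from $K \times [0,1]$ into $K$, so $G := h \circ H \colon K \times [0,1] \to \NN$ is a Lipschitz homotopy with $G(\cdot, 0) = h$ and $G(\cdot, 1) \equiv p_0$. Independently, choose a Lipschitz representative $\phi \colon \SS^{k-1} \to \NN$ of $\sigma$ with $\phi(e) = p_0$ at some distinguished $e \in \SS^{k-1}$; composing $\phi$ with a Lipschitz surjection $c \colon B^{k-1} \to \SS^{k-1}$ that collapses $\partial B^{k-1}$ to $e$ (e.g.\ $c(r\omega) = (\sin(\pi r)\omega, \, \cos(\pi r))$ in polar coordinates) yields a Lipschitz map $\psi := \phi \circ c \colon B^{k-1} \to \NN$ with $\psi \equiv p_0$ on $\partial B^{k-1}$ that realises the class $\sigma$ under the identification $B^{k-1}/\partial B^{k-1} \cong \SS^{k-1}$.

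Next, in polar coordinates $x'' = r\theta$ on $B^{k-1}$ (with $r \in [0, 1]$ and $\theta \in \SS^{k-2}$), I would define
\[
 u(x', \, r\theta) := \begin{cases} G(x', \, 3(1 - r)) & \text{if } r \in [2/3, \, 1], \\ \psi\bigl(\tfrac{3}{2} r\theta\bigr) & \text{if } r \in [0, \, 2/3]. \end{cases}
\]
At the interface $r = 2/3$ both pieces equal $p_0$, and at $r = 1$ one recovers $G(x', 0) = h(x')$, giving~\eqref{ext11}. Each piece is Lipschitz jointly in $(x', x'')$ (the inner piece extends continuously across the origin since $\psi$ is well-defined there), so the patched map $u$ is Lipschitz on $K \times B^{k-1}$.

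Finally, for each fixed $x' \in K$ the map $u(x', \cdot) \colon B^{k-1} \to \NN$ sends $\partial B^{k-1}$ to the single point $h(x')$ and so defines a class in $\pi_{k-1}(\NN, \, h(x'))$. The inner disk $\{r \leq 2/3\}$ carries a rescaled copy of $\psi$, contributing the class $\sigma$ based at $p_0$, while the outer annulus is independent of $\theta$ and deformation-retracts onto the path $t \mapsto G(x', t)$ from $p_0$ to $h(x')$. Hence the class of $u(x', \cdot)$ equals the image of $\sigma$ under the change-of-basepoint isomorphism along that path. By~\eqref{hp:N}, $\NN$ is $(k-1)$-simple --- either because $\pi_1(\NN) = 0$ when $k \geq 3$, or because $\pi_1(\NN)$ is assumed Abelian when $k = 2$ --- so the change-of-basepoint is canonical, and the homotopy class of $u(x', \cdot)$ is $\sigma \in \GN$, as required.
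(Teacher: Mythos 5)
Your proof is correct and takes essentially the same approach as the paper's: use the convexity of $K$ to homogenise the boundary value on an outer spherical shell of $B^{k-1}$ via the linear retraction to $x_0$, then insert a fixed Lipschitz representative of $\sigma$ on an inner ball, checking the class by a change of base point which is canonical under~\eqref{hp:N}. The only cosmetic difference is that you normalise $\phi$ to have base-point value $p_0 = h(x_0)$, which lets you dispense with the paper's intermediate annulus carrying a connecting path $\zeta$ from $z_0$ to $h(x_0')$.
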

{\BBB The proof of Lemma~\ref{lemma:ext1} is completely 
standard, but we provide it for the sake of convenience.}
\begin{proof}[Proof of Lemma~\ref{lemma:ext1}]
 We choose a point~$x^\prime_0\in K$ and consider the map
 $\psi\colon [0, \, 1]\times K\to K$ 
 as~$\psi(t, \, x^\prime) := t x^\prime + (1 - t)x^\prime_0$.
 {\BBB We define $u\colon K\times (B^{k-1}\setminus B^{k-1}_{1/2})\to\NN$ as
 \[
   u(x^\prime, \, x^{\prime\prime}) :=
   (h\circ\psi)(2|x^{\prime\prime}| - 1, \, x^\prime) 
   \qquad \textrm{for } x^\prime\in K, \
   1/2 \leq \abs{x^{\prime\prime}} \leq 1.
 \]
 The map~$u$ is Lipschitz and satisfies~\eqref{ext11}; moreover,
 for~$\abs{x^{\prime\prime}} = 1/2$ we have 
 $u(x^\prime, \, x^{\prime\prime}) = h(x^\prime_0)$.}
 Now, we take a smooth map
 $\phi\colon B^{k-1}\to\NN$ that is constant
 on~$\partial B^{k-1}$ --- say, $\phi = z_0\in\NN$ on~$\partial B^{k-1}$
 --- and has homotopy class~$\sigma$.
 Let~$\zeta\colon [0, \, 1]\to\NN$ be a 
 Lipschitz curve with $\zeta(0) = z_0$,
 $\zeta(1) = h(x^\prime_0)$.
 We define~$u\colon K\times B^{k-1}_{1/2}\to\NN$ as
 \[
  u(x^\prime, \, x^{\prime\prime}) :=
   \begin{cases}
    \zeta\left(4\abs{x^{\prime\prime}} - 1\right) 
    & \textrm{if } 1/4 \leq \abs{x^{\prime\prime}} < 1/2 \\
    \phi(4x^{\prime\prime}) 
    & \textrm{if } \abs{x^{\prime\prime}} < 1/4.
   \end{cases}
 \]
 {\BBB For any~$x^\prime\in K$, the map~$u(x^\prime, \, \cdot)$ 
 is (freely) homotopic to~$\sigma$, via a reparametrisation and
 a change of base-point. Therefore, the homotopy class 
 of~$u(x^\prime, \, \cdot)$ is~$\sigma$.}
\end{proof}

\begin{proof}[Proof of Proposition~\ref{prop:dipole_simpl}]
 We triangulate~$\Sigma\cup T$, that is, we write~$\Sigma\cup T$
 as a finite union of closed simplices in such a way that, 
 for any simplices~$K$, $K^\prime$ with~$K\neq K^\prime$, 
 $K\cap K^\prime$ is either empty or a boundary face
 of both~$K$ and~$K^\prime$.
 We denote by~$T_n$ the $n$-skeleton of this triangulation 
 (i.e., the union of all simplices of dimension~$n$ or less).
 We will construct a sequence of maps
 $u^{n+1}$, $u^n$, \ldots, $u^1$, $u^0$ 
 by modifying  the given map~$u$ first along
 the simplices of dimension~$n+1$ that are contained in~$T$, then along
 those of dimension~$n$, and so on. In order to do so,
 we first need to construct a suitable covering of~$T$.
 
 \begin{figure}[t]
	\centering
    \begin{subfigure}{.46\textwidth}
     \includegraphics[height=.35\textheight,angle=90]{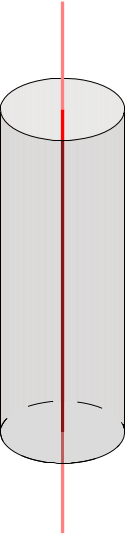}
    \end{subfigure}
    \begin{subfigure}{.53\textwidth}
     \includegraphics[height=.19\textheight]{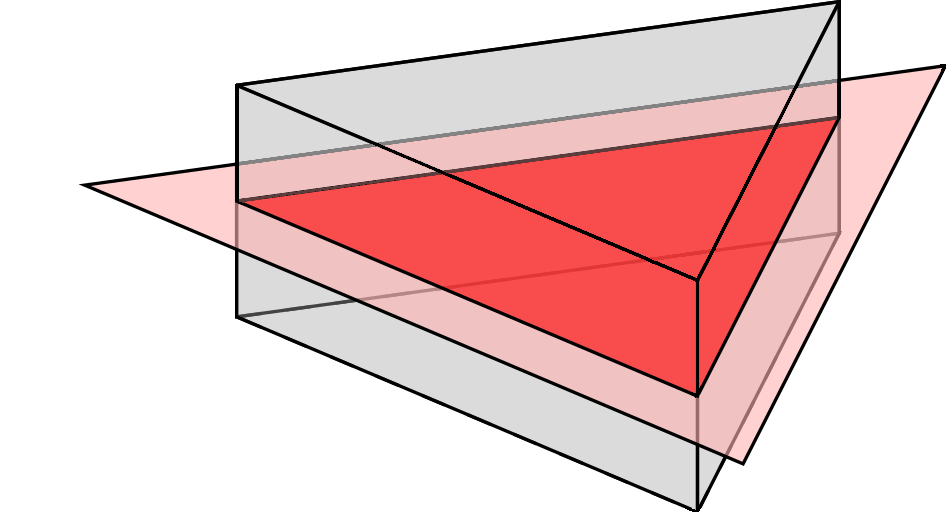}
    \end{subfigure}
	\caption{The set~$V(K, \, \delta, \, \gamma)$, in case 
	$n = 1$, $k=2$ (left) and~$n = 2$, $k=1$ (right). 
	In both cases, the polyhedron~$K$ is in pink and~$\tilde{K}$ is in red.}
	\label{fig:truncated}
 \end{figure}
 
 \begin{figure}[tb]
		\centering
		\includegraphics[height=.4\textheight]{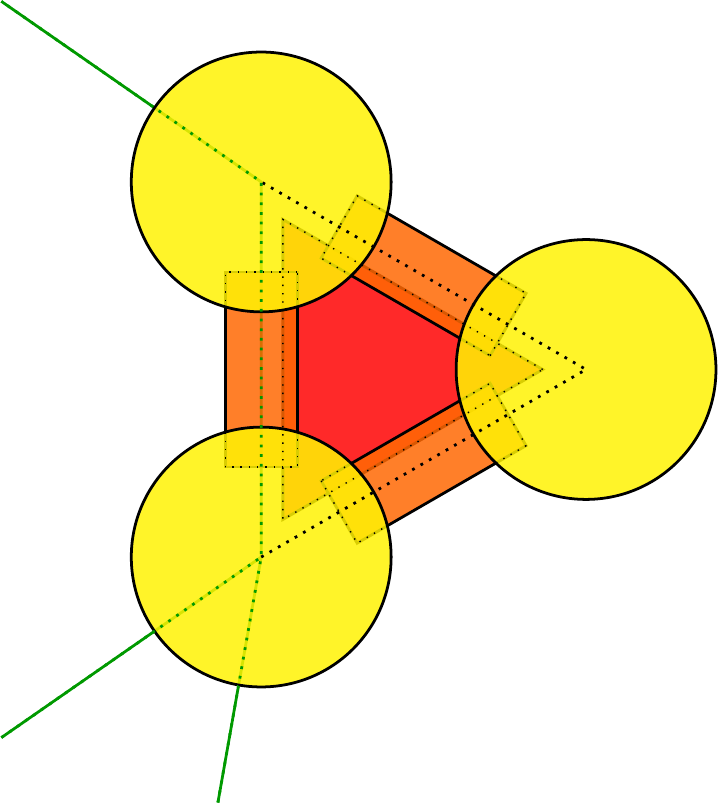}
	\caption{The covering of~$T$, in case~$n=1$ and~$k=2$
	(view from the top). The set~$\Sigma$ is in green.}
	\label{fig:covering}
 \end{figure}
 
 \setcounter{step}{0}
 \begin{step}[Construction of a covering of~$T$] \label{step:covering}
  Let~$K\subseteq T$ be a simplex of dimension~$j>0$.
  Let~$K^\perp$ be the orthogonal $(n+k-j)$-plane to~$K$ 
  through the origin. We fix positive
  numbers $\delta_K$, $\gamma_K$ and define
  \begin{gather}
   \tilde{K} := \left\{x^\prime\in K\colon 
   \dist(x^\prime, \, \partial K) > \gamma_K\right\} \!, \label{tildeK} \\
   V_K := \left\{x^\prime + x^{\prime\prime}
   \colon x^\prime\in\tilde{K}, \
   x^{\prime\prime}\in K^\perp, \
   |x^{\prime\prime}| < \delta_K \right\} \!,  \label{V_K} \\
   \Gamma_K := \left\{x^\prime + x^{\prime\prime}
   \colon x^\prime\in\tilde{K}, \
   x^{\prime\prime}\in K^\perp, \
   |x^{\prime\prime}| = \delta_K \right\} \label{Gamma_K}
  \end{gather} 
  (see Figure~\ref{fig:truncated}). If~$K$ is a~$0$-dimensional simplex,
  i.e.~a point, we define $V_K := B^{n+k}(K, \, \delta_K)$
  and~$\Gamma_K:= \partial V_K$.
  By choosing~$\delta_K$, $\gamma_K$ in a suitable way, we can make
  sure that the following properties are satisfied:
  \begin{enumerate}[label=(\alph*)]
   \item $V_K\csubset D$ for any simplex~$K\subseteq T$.
   \item For any~$j$-dimensional simplex~$K\subseteq T$, we have
   \[
    \partial V_K\setminus\Gamma_K \subseteq 
    \bigcup_{K^\prime\subseteq T\colon \dim K^\prime < j} V_{K^\prime}
   \]
   (in case~$j=0$, both sides of the inclusion are empty).
   \item For any simplices $K\subseteq T$, 
   $K^\prime\subseteq T$ with~$K\neq K^\prime$, 
   $\dim K = \dim K^\prime$, we have
   $\overline{V_K} \cap \overline{V_{K^\prime}} = \emptyset$.
   \item For any simplices~$K\subseteq T$, $K^\prime\subseteq\Sigma\cup T$
   with~$K\not\subseteq K^\prime$, 
   we have~$\overline{V_K}\cap K^\prime = \emptyset$.
   \item {\BBB No simplex~$K\subseteq T$ is entirely contained in
   $\cup\{\overline{V_{K^\prime}}\colon 
    \dim K^\prime < \dim K\}$.}
  \end{enumerate}
  Property~(b) implies that the~$V_K$'s do cover~$T$. To construct
  a covering that satisfies~(a)--(e), we first 
  cover the $0$-skeleton of~$T$ by pairwise
  disjoint balls that are compactly contained in~$D$.
  Then, we cover each~$1$-dimensional simplex 
  in~$T$ by a ``thin cylinder'', whose bases are 
  contained in the balls we have chosen before. Next, we cover
  each~$2$-dimensional simplex by a ``thin shell'', and so on,
  as illustrated in Figure~\ref{fig:covering}.
  At each step, we can make sure that the properties (a)--(e)
  are satisfied, because the simplices have pairwise
  disjoint interiors and only intersect along their boundaries.
  As a consequence of~(d), for any simplex~$K\subseteq T$ there holds
  \begin{equation} \label{simpl}
   \begin{cases}
    \overline{V_K} \cap (\Sigma\cup T_n) = \emptyset 
      & \textrm{if } \dim K = n+1 \\
    \Gamma_K \cap (\Sigma\cup T_n) = \emptyset 
      & \textrm{if } \dim K = n.
   \end{cases}
  \end{equation}
  For any integer~$j\in\{0, \, 1, \, \ldots, \, n+1\}$, we define
  \begin{equation*} 
   V^{=j} := \bigcup_{K\subseteq T\colon \dim K = j} V_K,
   \qquad V^{< j} := \bigcup_{i=0}^{j-1} V^{=i},
   \qquad V^{\geq j} := \bigcup_{i=j}^{n+1} V^{=i}
  \end{equation*}
  and~$V^{<0} := \emptyset$.
 \end{step}

 \begin{step}[Construction of~$u^{n+1}$] \label{step:un+1}
  Let~$K\subseteq T$ be a $(n+1)$-simplex of the triangulation,
  {\BBB with the orientation induced by~$T$.} 
  We identify~$V_K$ with~$\tilde{K}\times B^{k-1}(0, \, \delta_K)$,
  where~$\tilde{K}$ is given by~\eqref{tildeK}.
  {\BBB We construct a Lipschitz map~$u^{n+1}_K\colon V_K\to\NN$
  as follows. First, we let
  \begin{equation} \label{hole}
   u^{n+1}_K(x^\prime, \, x^{\prime\prime})
   := u\left(x^\prime, \, 2x^{\prime\prime}
    - \frac{\delta_K x^{\prime\prime}}{|x^{\prime\prime}|}\right)
   \qquad \textrm{for } x^\prime\in\tilde{K}, \
   \delta_K/2 \leq |x^{\prime\prime}| \leq \delta_K.
  \end{equation}
  Thus, $u^{n+1}_K = u$ on~$\Gamma_K$, while
  $u^{n+1}_K(x^\prime, \, x^{\prime\prime}) = u(x^\prime, \, 0)$
  for~$|x^{\prime\prime}| = \delta_K/2$. Since the trace of~$u^{n+1}_K$
  on~$\tilde{K}\times\partial B^{k-1}(0, \, \delta_K/2)$
  only depends on the variable~$x^\prime$, we may apply Lemma~\ref{lemma:ext1}
  and define~$u^{n+1}_K$ in~$\tilde{K}\times B^{k-1}(0, \, \delta_K/2)$
  in such a way that, for any~$x^\prime\in\tilde{K}$,
  \begin{equation} \label{hc_hole}
   \textrm{the homotopy class of }
   u^{n+1}_K(x^\prime, \, \cdot)_{|B^{k-1}(0, \, \delta_K/2)}
   \ \textrm{ is } (-1)^{n+1}\sigma.
  \end{equation}
  The sign~$(-1)^{n+1}$ will be useful to compensate for
  orientation effects, later on in the proof.}
  
  We define a map
  \[
   u^{n+1}\colon \left(D\setminus V^{< n+1}\right)\cup V^{=n+1}\to\NN
  \]
  as follows: $u^{n+1}(x):= u^{n+1}_K(x)$ if~$x\in V_K$ for some
  $(n+1)$-simplex~$K$, and~$u^{n+1}(x) := u(x)$ otherwise.
  This definition is consistent. Indeed, the sets~$\overline{V_K}$
  are pairwise disjoint, due to~(c). Moreover, if a point~$x$
  belongs both to~$\overline{V_K}$ and to~$D\setminus V^{< n+1}$,
  then~$x\in\Gamma_V$ because of~(b), so~$u^{n+1}_K(x) = u(x)$ by~(i).
  Therefore, the map~$u^{n+1}$ is well-defined and locally Lipschitz 
  out of~$\Sigma$, with nice singularity at~$\Sigma$.
 \end{step}
 
 \begin{step}[Construction of~$u^{n}$] \label{step:un}
  Let~$K\subseteq T$ be a $n$-simplex.
  We identify $V_K$ with~$\tilde{K}\times B^k(0, \, \delta_K)$.
  The map~$u^{n+1}$ is Lipschitz continuous
  on~$\Gamma_K$, due to~\eqref{simpl}.
  Let~$\sigma_K\in\GN$ be the homotopy class of~$u^{n+1}$
  on an arbitrary slice of~$\Gamma_K$,
  of the form~$\{x^\prime\}\times\partial B^k(0, \, \delta_K)$. 
  If~$\sigma_K = 0$ then, by adapting the arguments of
  Lemma~\ref{lemma:ext1}, we can construct a Lipschitz 
  continuous map $u^n_K\colon V_K\to\NN$ such that
  $u^n_K = u^{n+1}$ on~$\Gamma_K$. 
  If~$\sigma_K\neq 0$, we define~$u^n_K\colon V_K\to\NN$ as
  \[
   u^n_K(x^\prime, \, x^{\prime\prime}) := 
   u^{n+1}\left(x^\prime, \, \frac{\delta_K x^{\prime\prime}}{\abs{x^{\prime\prime}}}\right)
   \qquad \textrm{for } (x^\prime, \, x^{\prime\prime})\in \tilde{K}\times B^k(0, \, \delta_K).
  \]
  In both cases, by a straightforward 
  computation, we obtain
  \begin{equation} \label{nicesing-n}
   \abs{\nabla u^n_K(x^\prime, \, x^{\prime\prime})} \lesssim |x^{\prime\prime}|^{-1}
   \qquad \textrm{for a.e.~}(x^\prime, \, x^{\prime\prime})
   \in \tilde{K}\times B^{k}(0, \, \delta_K),
  \end{equation}
  where the proportionality constant at the 
  right-hand side depends on~$\delta_K$ and~$u^{n+1}$.
  We define
  \[
   u^{n}\colon \left(D\setminus V^{<n}\right)\cup V^{\geq n}\to\NN
  \]
  as follows: $u^{n}(x):= u^{n}_K(x)$ if~$x\in V_K$ for some
  $n$-simplex~$K$, and~$u^{n}(x) := u^{n+1}(x)$ otherwise.
  Thanks to~(b), (c) and~\eqref{nicesing-n}, 
  we can argue as in Step~\ref{step:un+1} and check 
  that~$u^{n}$ is locally Lipschitz out of~$\Sigma\cup T_n$,
  with nice singularity at~$\Sigma\cup T_n$.
 \end{step}
 
 \begin{step}[Construction of~$u^j$ for~$j < n$]
  We proceed by induction. Let~$j\in\{0, \, 1, \, \ldots, n-1\}$.
  Suppose we have constructed a map
  \[
   u^{j+1}\colon \left(D\setminus V^{<j+1}\right)\cup V^{\geq j+1}\to\NN
  \]
  that is locally Lipschitz out of~$\Sigma\cup T_n$ and
  has a nice singularity at~$\Sigma\cup T_n$. 
  Let~$K\subseteq T$ be a $j$-simplex. By identifying
  $V_K$ with~$\tilde{K}\times B^{n+k-j}(0, \, \delta_K)$,
  we define $u^j_K\colon V_K\to\NN$,
  \begin{equation*} 
   u^j_K(x^\prime, \, x^{\prime\prime}) := 
   u^{j+1}\left(x^\prime, \, \frac{\delta_K x^{\prime\prime}}{\abs{x^{\prime\prime}}}\right)
   \qquad \textrm{for } (x^\prime, \, x^{\prime\prime})\in \tilde{K}\times B^{n+k-j}(0, \, \delta_K).
  \end{equation*}
  The map~$u^j_K$ is locally Lipschitz out of the set
  \[
   A := \left\{(x^\prime, \, x^{\prime\prime})\in\tilde{K}\times B^{n+k-j}(0, \, \delta_K)
   \colon \left(x^\prime, \, \frac{\delta_K x^{\prime\prime}}{\abs{x^{\prime\prime}}}\right)
   \in \Sigma\cup T_n\right\} \!.
  \]
  By Property~(d), the only simplices of~$\Sigma\cup T_n$ 
  that intersect~$\overline{V}_K$
  are those that contain~$K$. Therefore, if $H_1$, $H_2$, \ldots, $H_p$
  denote the $n$-dimensional (closed) simplices of~$\Sigma\cup T_n$ that 
  contain~$K$, then
  \begin{equation} \label{Hi1}
   (\Sigma\cup T_n)\cap\overline{V_K} 
   = \bigcup_{i=1}^p \left(H_i\cap\overline{V_K}\right)
  \end{equation}
  Moreover, Property~(d) and the convexity of~$H_i$ imply that
  \begin{equation} \label{Hi2}
   H_i\cap\overline{V_K} = \tilde{K}\times    
   \left(\tilde{H}_i\cap \bar{B}^{n+k-j}(0, \, \delta_K)\right) \!,
  \end{equation}
  where~$\tilde{H}_i\subseteq\R^{n+k-j}$ is a cone
  (i.e.,~$\lambda x\in\tilde{H}_i$ for any~$x\in\tilde{H}_i$ and any~$\lambda\geq 0$). 
  As a consequence, 
  \begin{equation*} 
   \begin{split}
    A 
    &\stackrel{\eqref{Hi1}, \, \eqref{Hi2}}{=} \bigcup_{i=1}^p 
    \left(\tilde{K}\times (\tilde{H}_i\cap B^{n+k-j}(0, \, \delta_K))\right) 
    \stackrel{\eqref{Hi2}}{=} \bigcup_{i=1}^p 
    \left(H_i\cap V_K\right) 
    \stackrel{\eqref{Hi1}}{=} (\Sigma\cup T_n)\cap V_K,
   \end{split}
  \end{equation*}
  that is, $u^j_K$ is locally Lipschitz out of~$\Sigma\cup T_n$.
  We claim that
  \begin{equation} \label{nicesing-j}
   |\nabla u^j_K(x)| \lesssim \dist^{-1}(x, \, \Sigma\cup T_n)
   \qquad \textrm{for a.e.~}x\in V_K,
  \end{equation}
  where the proportionality constant at the 
  right-hand side may depend on~$\delta_K$.
  Given~$x = (x^\prime, \, x^{\prime\prime})\in V_K$,
  let~$y(x) := (x^\prime, \, \delta_K x^{\prime\prime}/\abs{x^{\prime\prime}})$.
  By the induction hypothesis, $u^{j+1}$ 
  has a nice singularity at~$\Sigma\cup T_n$.
  Therefore, an explicit computation gives
  \begin{equation} \label{nicesing-j1}
   |\nabla u^j_K(x)| 
   \lesssim |x^{\prime\prime}|^{-1} 
   \dist^{-1}\left(y(x), \, \Sigma\cup T_n\right)
  \end{equation}
  for a.e.~$x\in V_K$. By~\eqref{Hi1} and~\eqref{Hi2}, the set~$\Sigma\cup T_n$
  agrees with~$\tilde{K}\times \cup_i\tilde{H}_i$ in~$\overline{V_K}$,
  and~$\cup_i\tilde{H}_i$ is a cone. Then, by a geometric argument
  (see Figure~\ref{fig:distance}), we have
  \begin{equation} \label{nicesing-j2}
   \frac{\abs{x^{\prime\prime}}}{\dist(x, \, \Sigma\cup T_n)} 
   = \frac{\delta_K}{\dist\left(y(x), \, \Sigma\cup T_n\right)}
  \end{equation}
  By combining~\eqref{nicesing-j1} and~\eqref{nicesing-j2},
  \eqref{nicesing-j} follows.
  Finally, we define
  \[
   u^{j}\colon \left(D\setminus V^{<j}\right)\cup V^{\geq j}\to\NN
  \]
  as follows: $u^{j}(x):= u^{j}_K(x)$ if~$x\in V_K$ for some
  $j$-simplex~$K\subseteq T$, and~$u^{j}(x) := u^{j+1}(x)$ otherwise.
  Thanks to~(b), (c) and~\eqref{nicesing-j}, 
  the map~$u^{j}$ is well-defined, locally Lipschitz out of~$\Sigma\cup T_n$
  and has a nice singularity at~$\Sigma\cup T_n$.
 \end{step}
 
 \begin{figure}[t]
		\centering
		\includegraphics[height=.3\textheight]{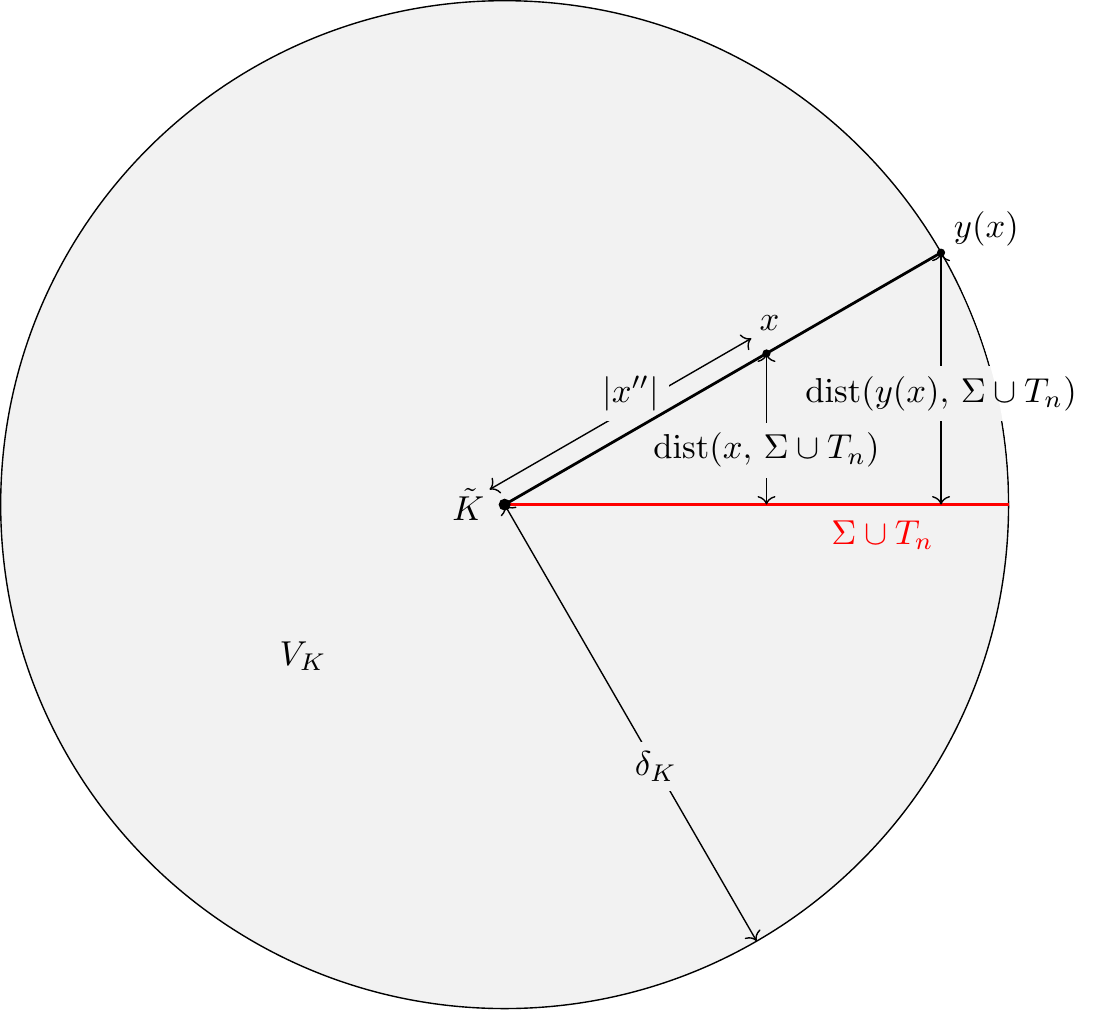}
	\caption{Proof of~\eqref{nicesing-j2}. The picture represents a slice of~$V_K$,
	of the form~$\{x^\prime\}\times B^{n+k-j}(0, \, \delta_K)$.}
	\label{fig:distance}
 \end{figure}
 
 \begin{step}[Conclusion]
  By induction, we have constructed a sequence of 
  maps~$u^{n+1}$, $u^{n}$, \ldots, $u^1$, $u^0$.
  Let~$\tilde{u}:= u^0\colon D\to\NN$. 
  By construction, the map~$\tilde{u}$ has a nice singularity at~$\Sigma\cup T_n$
  and agrees with~$u$ out of~$V^{< n+1}\cup V^{=n+1}$. In particular,
  $\tilde{u} = u$ in a neighbourhood of~$\partial D$, because of~(a).
  
  {\BBB It only remains to compute~$\S(\tilde{u})$.
  Let~$K$ be an $n$-simplex of~$T$. By Property~(e), $K$ is not 
  entirely contained in~$\overline{V^{<n}}$; 
  we take a point~$x\in K\setminus\overline{V^{<n}}$.
  Let~$K^\perp$ be the orthogonal $k$-plane to~$K$ at~$x$,
  and let~$F := \overline{V_K}\cap K^\perp$. By Property~(d),
  the only $(n+1)$-simplices that intersect~$F$
  are those that contain~$K$; we call them~$H_1$, \ldots, $H_p$. 
  We consider the restriction of~$\tilde{u}$ to the~$(k-1)$-sphere~$\partial F$.
  By construction (see~\eqref{hole} and~\eqref{hc_hole} in Step~\ref{step:un+1}), 
  $\tilde{u}_{|\partial F}$ consists (up to homotopy)
  of a reparametrisation of~$u_{|\partial F}$,
  with the insertion of `bubbles' around the points~$\partial F \cap H_i$.
  Each bubble carries the homotopy class~$\sigma$
  or~$-\sigma$, depending on the orientation of~$H_i$
  (which, we recall, is the one induced by~$T$).
  The net topological contribution of all the bubbles 
  may vanish or not, depending on whether the point~$x$ belongs
  to the boundary of~$T$ or not. As a result, we have}
  \[
   \begin{split}
    &(\textrm{homotopy class of } \tilde{u}_{|\partial F}\colon\partial F\simeq\SS^{k-1} \to\NN)\\
    &\qquad\qquad 
    = (\textrm{homotopy class of } u_{|\partial F}\colon\partial F\simeq\SS^{k-1} \to\NN)
    + \sigma(\textrm{multiplicity of } \partial\llbracket T \rrbracket \textrm{ at } x).
   \end{split} 
  \]
  {\BBB The sign of the second term 
  in the right-hand side depends on the choice of the sign
  we made in Equation~\eqref{hc_hole}
  (see, for instance, Property~(iv) in Lemma~8 of~\cite{CO1}).}
  Then, by Remark~\ref{remark:nice_sing}, 
  $\S(\tilde{u}) = \S(u) + \sigma\partial\llbracket T\rrbracket$.
  \qedhere
 \end{step}
\end{proof}

\subsection{Projection of a \texorpdfstring{$W^{1,k}$}{}-map onto~\texorpdfstring{$\NN$}{N}}
\label{sect:extension}

Before we pass to the construction of a recovery sequence, 
we gather some useful results, based on earlier work by
Hardt, Kinderlehrer and Lin
\cite[Lemma~2.3]{HKL}, \cite{HardtLin-Minimizing}, 
and Rivi\`ere \cite[Proposition 2.1]{Riviere-DenseSubsets};
see also~\cite[Proposition~6.4]{ABO2}
for similar statements in case~$\NN=\SS^{k-1}$.

For any~$y\in\R^m$, we consider the map~$\tilde{\RR}_y \colon z\mapsto\RR(z - y)$
which is well defined for~$z\in\R^m\setminus(\X+y)$.
This is not a retraction onto~$\NN$, in general, because
it does not restrict to the identity on~$\NN$.
However, for sufficiently small~$\abs{y}$ --- 
say, $y\in B^m_{\sigma}$ with~$\sigma>0$ small enough ---
the restriction~$\tilde{\RR}_{y|\NN}$ is a small perturbation of the identity
and, in particular, it is a diffeomorphism.
For~$y\in B^m_\sigma$ and~$z\in\R^m\setminus(\X+y)$, let us define
\begin{equation}\label{RR_y}
 \RR_y(z) := \left(\left(\tilde{\RR}_{y|\NN}\right)^{-1} \circ\RR\right)(z - y).
\end{equation}
This map is indeed a smooth retraction of~$\R^m\setminus(\X+y)$ onto~$\NN$.
We also define a function~$\psi\colon\R^m\to\R$ by
\begin{equation} \label{psi}
 \psi(z) := \min\left\{\frac{\dist(z, \, \X)}{\dist(\NN, \, \X)}, \, 1\right\}
 \qquad \textrm{for } z\in\R^m.
\end{equation}
The function~$\psi$ is Lipschitz and $\psi=1$ on~$\NN$.
By Proposition~\ref{prop:X} and~\eqref{psi}, we have
\begin{equation} \label{nabla_RRy}
 \abs{\nabla\RR_y(z)}\lesssim \frac{1}{\dist(z-y, \, \X)}
 \lesssim \frac{1}{\psi(z-y)}
\end{equation}
for any $y\in B^m_\sigma$ and $z\in\R^m\setminus(\X+y)$.
The proportionality constants here depend on~$\sigma$,
but~$\sigma = \sigma(\NN, \, \X, \, \RR)$ is fixed once and for all.
Finally, let~$\xi_\eps(t) := \min(t/\eps, \, 1)$ for~$t\geq 0$.

\begin{lemma} \label{lemma:extension}
 Let~$\Lambda$ be a positive number, and
 let~$u\in (L^\infty\cap W^{1,k})(\Omega, \, \R^m)$ be such that
 $\|u\|_{L^\infty(\Omega)}\leq\Lambda$.
 For~$y\in B^m_\sigma$, $\eps>0$ and~$x\in\Omega$, define
 \[
  w_y(x) := (\RR_y\circ u)(x), \qquad
  w_{\eps, y}(x) := (\xi_\eps\circ\psi)(u(x) - y) \, w_y(x).
 \]
 Then, the following properties hold.
 \begin{enumerate}[label=(\roman*), ref=\roman*]
  \item \label{ext:W1,k-1} For a.e.~$y\in B^m_\sigma$, 
  $w_y\in W^{1, k-1}(\Omega, \, \NN)$ and~$\S(w_y) = \S_y(u)$.
  
  \item \label{ext:W1,k} For a.e.~$y\in B^m_\sigma$ and sufficiently small~$\eps$,
  $w_{\eps, y}\in (L^\infty\cap W^{1,k})(\Omega, \, \R^m)$ and 
  $\|w_{\eps,y}\|_{L^\infty(\Omega)}\leq\max\{\abs{z}\colon z\in\NN\}$.
  
  \item \label{ext:energy} For any open set~$D\subseteq\Omega$, there holds
  \begin{equation*}
   \begin{split}
    &\int_{B^m_\sigma} \left(E_\eps(w_{\eps,y}, \, D) 
     + \eps^{-k} \L^{n+k}\{x\in D\colon w_{\eps,y}(x)\neq w_y(x)\}\right)\d y \\
    &\qquad\qquad\qquad \leq C_\Lambda \left(
     \abs{\log\eps}\norm{\nabla u}^k_{L^k(D)} + \L^{n+k}(D) \right) \!,
   \end{split}
  \end{equation*}
  where~$C_\Lambda$ is a positive constant that only depends 
  on~$\NN$, $k$, $\X$ and~$\Lambda$.
  
  \item \label{ext:conv} For a.e.~$y\in B^m_\sigma$
  there exists a (non-relabelled) \emph{subsequence} $\eps\to 0$
  such that $w_{\eps,y}\to w_y$ strongly in~$W^{1, k-1}(\Omega, \, \R^m)$.
 \end{enumerate}
\end{lemma}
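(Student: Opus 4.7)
The plan is to treat the four parts in order, with the bulk of the effort concentrated on part~(iii); parts~(i), (ii), and~(iv) then follow from variants of the same underlying computations. For part~(i), the chain rule together with~\eqref{nabla_RRy} yields
\[
 \abs{\nabla w_y(x)} \lesssim \frac{\abs{\nabla u(x)}}{\psi(u(x)-y)} \qquad \text{for a.e.~}x\in\Omega.
\]
Raising to the $(k-1)$-th power, integrating over $\Omega\times B^m_\sigma$, and applying Fubini reduces the question to bounding $\int_{B^m_\sigma}\psi(u(x)-y)^{-(k-1)}\,dy$ uniformly in~$x$; this holds because $\X$ has codimension~$k$ in~$\R^m$, so $\dist(\cdot,\X)^{-(k-1)}$ is locally integrable. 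Consequently $w_y\in W^{1,k-1}(\Omega,\NN)$ for a.e.~$y$. The identity $\S(w_y)=\S_y(u)$ follows by characterising both sides via~\eqref{S:intersection} and noting that the diffeomorphism $(\tilde{\RR}_{y|\NN})^{-1}$ that distinguishes $\RR_y$ from $\RR(\cdot-y)$ is, for small~$\abs{y}$, isotopic to the identity on~$\NN$ and therefore preserves homotopy classes of boundary maps.

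For part~(ii), the $L^\infty$ bound is immediate since $\xi_\eps\le 1$ and $w_y(\Omega)\subseteq\NN$. The Sobolev bound follows from the product-rule expansion
\[
 \nabla w_{\eps,y} = \xi_\eps'(\psi(u-y))\bigl(\nabla\psi(u-y)\,\nabla u\bigr)\,w_y + \xi_\eps(\psi(u-y))\,\nabla w_y.
\]
On the \emph{good set} $\{\psi(u-y)\ge\eps\}$ the first term vanishes and the second equals $\nabla w_y$. On the \emph{bad set} $\{\psi(u-y)<\eps\}$ both terms are controlled by $\abs{\nabla u}/\eps$: the first by the Lipschitz bound on $\psi$, and the second because $\xi_\eps(t)\le t/\eps$ and $\abs{\nabla w_y}\lesssim\abs{\nabla u}/\psi(u-y)$ combine to cancel the singular factor~$1/\psi$.

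The heart of the proof is part~(iii), which rests on two matching codimension-$k$ computations. First, the pointwise measure estimate $\L^m\{y\in B^m_\sigma:\psi(u(x)-y)<\eps\}\lesssim\eps^k$ controls, after Fubini, the $\eps^{-k}\L^{n+k}$ term by $\L^{n+k}(D)$ and likewise bounds the potential integral $\eps^{-k}\int f(w_{\eps,y})$ (using that $f$ is continuous and therefore bounded on the uniformly bounded range of $w_{\eps,y}$, and vanishes whenever $w_{\eps,y}=w_y\in\NN$). Second, on the good region $\abs{\nabla w_y}^k\lesssim\abs{\nabla u}^k/\psi(u-y)^k$, and after Fubini the inner integral $\int_{\psi(u(x)-y)\ge\eps}\psi(u(x)-y)^{-k}\,dy\lesssim\abs{\log\eps}$ produces the contribution $\abs{\log\eps}\,\|\nabla u\|_{L^k(D)}^k$; the bad-region kinetic contribution $\eps^{-k}\cdot\eps^k\|\nabla u\|_{L^k(D)}^k$ is absorbed. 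For part~(iv), I fix~$y$ outside the null set where $\{x:u(x)-y\in\X\}$ has positive measure; then $w_{\eps,y}\to w_y$ pointwise a.e., and uniform $L^\infty$ bounds give $L^{k-1}$ convergence by dominated convergence. For the gradients, $\nabla w_{\eps,y}=\nabla w_y$ on the good set, while on the bad set $\int\abs{\nabla w_y}^{k-1}\to 0$ by absolute continuity (since $\L^{n+k}\{\psi(u-y)<\eps\}\to 0$ for such~$y$); the analogous quantity with $\nabla w_{\eps,y}$ has, by Fubini, $L^1(B^m_\sigma)$-norm of order $\eps^{-(k-1)}\cdot\eps^k=\eps\to 0$, and so tends to zero along a subsequence for a.e.~$y$. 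The main technical obstacle is really the codimension-$k$ bookkeeping producing simultaneously the $\eps^k$ measure bound and the logarithmic integral bound; once those two estimates are in hand, everything else is a matter of splitting integrals over the good and bad regions.
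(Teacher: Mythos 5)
Your proof is correct and follows the same strategy as the paper's: chain rule with~\eqref{nabla_RRy}, Fubini, the change of variables $z = u(x) - y$, and codimension-$k$ estimates near~$\X$ (which the paper outsources to~\cite[Lemma~8.3]{ABO2} and which you spell out). The one place where you leave a real, if routine, gap is in part~(i): you argue $\S(w_y)=\S_y(u)$ by ``characterising both sides via~\eqref{S:intersection}'', but that property is stated only for smooth~$u$, whereas the lemma concerns general $u\in (L^\infty\cap W^{1,k})(\Omega,\,\R^m)$. The standard repair is to prove the identity first for smooth $u$ (where your isotopy argument for $(\tilde{\RR}_{y|\NN})^{-1}$ and~\eqref{S:intersection} apply directly) and then pass to general $u$ by approximating $u$ with smooth maps in $W^{1,k}$ and using the $W^{1,k-1}$-strong continuity of~$\S$; the paper makes this density step explicit, invoking its Lemmas~14, 17, 18 from~\cite{CO1}. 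Your version of part~(iv) --- treating $\int_{\{\psi(u-y)<\eps\}}|\nabla w_y|^{k-1}$ pointwise in $y$ by absolute continuity of the integral, and $\int_{\{\psi(u-y)<\eps\}}|\nabla w_{\eps,y}|^{k-1}$ in average over $y$ by Fubini and then Fatou --- is a mild structural variation of the paper's single integrated estimate $\int_{B^m_\sigma}\norm{\nabla w_{\eps,y}-\nabla w_y}^{k-1}_{L^{k-1}(\Omega)}\,\d y\lesssim\eps$; both yield the claimed subsequence convergence for a.e.~$y$.
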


\begin{remark} \label{rk:extension}
 Statement~\eqref{ext:energy} of Lemma~\ref{lemma:extension} 
 implies, via an averaging argument, that
 \[
  \inf\left\{ E_\eps(u)\colon u\in W^{1,k}_v(\Omega, \, \R^m)\right\} 
  \lesssim \abs{\log\eps}
 \]
 for any~$v\in W^{1-1/k,k}(\partial\Omega, \, \NN)$
 and any~$\eps>0$.
\end{remark}

\begin{proof}[Proof of Lemma~\ref{lemma:extension}]
 Throughout the proof, we denote by
 $C_\Lambda$ a generic positive constant that only depends on
 $\NN$, $k$, $\X$ and~$\Lambda$ (and may change from one 
 occurence to the other).

 \setcounter{step}{0}
 \begin{step}[Proof of~\eqref{ext:W1,k-1}]
  For a.e.~$y$, we have~$\RR\circ(u - y)\in W^{1,k-1}(\Omega, \, \NN)$
  (see e.g.~\cite[Lemma~14]{CO1} for a proof of this claim).
  Moreover, by~\cite[Lemma~17]{CO1} we know that
  \begin{equation*} 
   \S_{y^\prime}(\RR\circ(u - y)) = \S_y(u) \qquad 
   \textrm{for a.e. } y, \, y^\prime\in B^m_\sigma.
  \end{equation*} 
  Now, $w_y$ is obtained from~$\RR\circ(u-y)$ by composition with a map, 
  $(\tilde{\RR}_{y|\NN})^{-1}$, which is homotopic to the identity on~$\NN$.
  Therefore, from the identity above we obtain
  \begin{equation} \label{ext8}
   \S_{y^\prime}(w_y) = \S_{y}(u) 
   \qquad \textrm{for a.e. } y, \, y^\prime\in B^*.
  \end{equation}
  This can be first checked when~$u$ is smooth, using 
  \cite[Lemma~18]{CO1}, and remains true for a general~$u$
  by a density argument, using the continuity of~$\S$ and
  e.g.~\cite[Lemma~14]{CO1}.
 \end{step}
 
 \begin{step}[Proof of~\eqref{ext:W1,k}, \eqref{ext:energy}]
  It is immediate to see that 
  $\|w_{\eps,y}\|_{L^\infty(\Omega)}\leq\max\{\abs{z}\colon z\in\NN\}$.
  By~(i), $w_{\eps, y}\in(L^\infty\cap W^{1,k-1})(\Omega, \, \R^m)$ for a.e.~$y$,
  and by the chain rule, we have the pointwise bound
  \[
   \abs{\nabla w_{\eps,y}(x)} \leq C_\Lambda\left( 
   (\xi_\eps^\prime\circ\psi)(u(x) - y) \abs{\nabla u(x)} +
   (\xi_\eps\circ\psi)(u(x) - y) \abs{\nabla w_y(x)} \right)
  \]
  for a.e.~$x\in\Omega$. Thanks to~\eqref{nabla_RRy}, we deduce that
  \begin{equation} \label{ext4}
   \begin{split}
    \abs{\nabla w_{\eps,y}(x)} &\leq C_\Lambda\left( 
    (\xi_\eps^\prime\circ\psi)(u(x) - y) +
    \frac{(\xi_\eps\circ\psi)(u(x) - y)}{\psi(u(x) - y)}
    \right) \abs{\nabla u(x)} \\
    &\leq C_\Lambda\left(
    \frac{\one_{\{\psi(u(x) - y)\leq\eps\}}}{\eps} +
    \frac{\one_{\{\psi(u(x) - y)\geq\eps\}}}{\psi(u(x) - y)}
    \right) \abs{\nabla u(x)} 
   \end{split}
  \end{equation}
  (where, as usual, $\one_A$ denotes the characteristic function of a set~$A$).
  On the other hand, the $L^\infty$-norm of~$w_{\eps,y}$
  is uniformly bounded in terms of~$\NN$ only, and hence there holds
  \begin{equation} \label{ext5}
   f(w_{\eps,y}) \lesssim \one_{\{w_{\eps,y}\neq w_y\}}
   = \one_{\{\psi(u - y)<\eps\}} .
  \end{equation}
  Together, \eqref{ext4} and~\eqref{ext5} imply that
  \[
   \begin{split}
    &E_\eps(w_{\eps,y}, \, D)
    + \eps^{-k}\L^{n+k}\{x\in D\colon w_{\eps,y}(x)\neq w_y(x)\} 
    \leq C_\Lambda \int_\Omega \frac{\one_{\{\psi(u(x) - y)\leq\eps\}}}{\eps^k} \, \d x \\
    &\qquad\qquad + C_\Lambda \int_D \left(
    \frac{\one_{\{\psi(u(x) - y)\leq\eps\}}}{\eps^k} +
    \frac{\one_{\{\psi(u(x) - y)\geq\eps\}}}{\psi(u(x) - y)^k}
    \right) \abs{\nabla u(x)}^k \d x \\
   \end{split}
  \]
  We integrate the previous inequality for~$y\in B^m_\sigma$,
  apply Fubini theorem and make the change 
  of variable~$z = u(x) - y$:
  \[
   \begin{split}
    &\int_{B^m_\sigma} \left(E_\eps(w_{\eps,y}, \, D)
    + \eps^{-k}\L^{n+k}\{x\in D\colon w_{\eps,y}(x)\neq w_y(x)\}\right) \d y \\
    &\qquad \leq C_\Lambda 
    \int_D \int_{B^m_{\sigma + \Lambda}} 
    \left\{\left(\frac{\one_{\{\psi(z)\leq\eps\}}}{\eps^k} +
    \frac{\one_{\{\psi(z)\geq\eps\}}}{\psi(z)^k}\right) 
    \abs{\nabla u(x)}^k + \frac{\one_{\{\psi(z)\leq\eps\}}}{\eps^k}
    \right\} \d z \, \d x .
   \end{split}
  \]
  Since~$\X$ is a finite union of
  simplices of codimension~$k$ or higher,
  for~$\eps$ sufficiently small there holds
  \[
   \int_{B^m_{\sigma + \Lambda}}
   \one_{\{\psi(z)\leq\eps\}} \, \d z \leq C_\Lambda \eps^k, \qquad
   \int_{B^m_{\sigma + \Lambda}}
   \frac{\one_{\{\psi(z)\geq\eps\}}}{\psi(z)^k} \, \d z 
   \leq C_\Lambda \abs{\log\eps}
  \]
  (see e.g.~\cite[Lemma~8.3]{ABO2}).
  As a consequence, we obtain~\eqref{ext:energy}.
 \end{step}
 
 \begin{step}[Proof of~\eqref{ext:conv}]
  For a.e.~$y\in B^m_\sigma$, the set $\{\psi(u - y) = 0\} = (u - y)^{-1}(\X)$
  has Lebesgue measure equal to zero (see e.g. \cite[proof of Lemma~14]{CO1}).
  Then, since~$\xi_\eps\to 1$ pointwise on~$(0, \, +\infty)$ as~$\eps\to 0$,
  we have~$w_{\eps,y}\to w_y$ a.e. as~$\eps\to 0$, for a.e.~$y$.
  Using the chain rule,~\eqref{nabla_RRy} and~\eqref{ext4}, 
  we obtain that
  \[
   \abs{\nabla w_{\eps, y}(x) - \nabla w_y(x)}
   \leq C_\Lambda \left(\frac{1}{\eps} + 
   \frac{1}{\psi(u(x) - y)} \right) 
   \one_{\{\psi(u(x) - y)\leq\eps\}}\abs{\nabla u(x)} \! .
  \]
  for a.e.~$x\in\Omega$.
  We raise both sides of this inequality to the $(k-1)$-th power,
  integrate over~$(x, \, y)\in\Omega\times B^m_\sigma$, apply
  Fubini theorem and make the change of variable~$z = u(x) - y$:
  \[
   \begin{split}
    \int_{B^*} \norm{\nabla w_{\eps, y} - 
    \nabla w_y}^{k-1}_{L^{k-1}(\Omega)} \d y \leq C_\Lambda 
    \int_\Omega \int_{B^m_{\sigma + \Lambda}} 
    \left(\frac{1}{\eps^{k-1}} + \frac{1}{\psi(z)^{k-1}}\right)
    \one_{\{\psi(z)\leq\eps\}} \abs{\nabla u(x)}^{k-1} 
    \d z \, \d x .
   \end{split}
  \]
  We apply~\cite[Lemma~8.3]{ABO2} to estimate
  the integral with respect to~$z$:
  since~$\X$ has codimension~$k$, we obtain
  \[
   \begin{split}
    \int_{B^m_{\sigma + \Lambda}} 
    \left(\frac{1}{\eps^{k-1}} + \frac{1}{\psi(z)^{k-1}}\right)
    \one_{\{\psi(z)\leq\eps\}} \d z \leq C_\Lambda\eps,
   \end{split}
  \]
  so
  \begin{equation*} 
   \int_{B^m_\sigma} \norm{\nabla w_{\eps, y} - 
     \nabla w_y}_{L^{k-1}(\Omega)}^{k-1} \d y
   \leq C_\Lambda \eps \norm{\nabla u}_{L^{k-1}(\Omega)}^{k-1} \!.
  \end{equation*}
  By Fatou lemma, we deduce
  \begin{equation*} 
   \int_{B^m_\sigma} \liminf_{\eps\to 0} \norm{\nabla w_{\eps, y} - 
     \nabla w_y}_{L^{k-1}(\Omega)}^{k-1} \d y = 0 ,
  \end{equation*}
  so~\eqref{ext:conv} follows. \qedhere
 \end{step}
\end{proof}

\subsection{Construction of a recovery sequence}
\label{sect:recovery}

\subsubsection{Construction of an \texorpdfstring{$\NN$}{N}-valued map with
nice singularity at a locally polyhedral set}
\label{sect:w*}

In this section, we give the construction
of a recovery sequence. We first construct a map
$\Omega\to\NN$ that matches the Dirichlet boundary datum and
has nice singularities along a locally polyhedral set. 

\begin{lemma} \label{lemma:u*}
 Any boundary datum~$v\in W^{1-1/k, k}(\partial\Omega, \, \NN)$
 can be extended to a map $u^*\in (L^\infty\cap W^{1,k}_v)(\Omega, \, \R^m)$ 
 that satisfies the following properties, for a.e. $y\in\R^m$:
 \begin{enumerate}[label=(\alph*), ref=\alph*]
  \item $\M(\S_y(u_*))<+\infty$ and~$\S_{y}(u_*)\mres\partial\Omega = 0$;
  \item the chain $\S_{y}(u_*)$ is locally polyhedral;
  \item the chain $\S_{y}(u_*)$ takes its multiplicities in a
  finite subset of~$\GN$, which depends only on~$\NN$, $\RR$, $\X$;  
  \item there exists a locally $(n-1)$-polyhedral set~$P_y$ such that
  $\RR\circ (u_*-y)$ has a locally nice
  singularity at~$\spt\S_{y}(u_*)\cup P_{y}$.
 \end{enumerate}
\end{lemma}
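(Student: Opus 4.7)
The plan is to combine a trace extension of~$v$ near~$\partial\Omega$ (staying close to~$\NN$) with a locally piecewise affine approximation in the interior; the polyhedrality of~$\X$ then forces the singular set of the resulting map to be polyhedral as well. The genericity of~$y$ takes care of transversality issues.

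First, I would extend~$v$ to a tubular neighbourhood~$U$ of~$\partial\Omega$ in~$\Omega$ via the classical trace extension theorem, then cut off/project so as to stay within a small tubular neighbourhood of~$\NN$. This yields~$u_1\in W^{1,k}(U,\,\R^m)$ with~$u_1=v$ on~$\partial\Omega$ and $\dist(u_1(x),\,\NN)<\dist(\NN,\,\X)/2$ for every~$x\in U$. Consequently, for~$y\in B^m_\sigma$ with $\sigma<\dist(\NN,\,\X)/2$, the map~$u_1-y$ never meets~$\X$ on~$U$, so no topological singularities can form near the boundary; this will immediately give property~(a).

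Next, on~$\Omega\setminus U$, I would take any~$W^{1,k}$-extension across~$\partial U\cap\Omega$ and approximate it by a locally piecewise affine map (smooth approximation on an exhaustion~$\Omega_j\csubset\Omega$, followed by piecewise affine interpolation on successively refined triangulations compatible with the trace on~$\partial U$). Gluing with~$u_1$ along~$\partial U$ produces the desired~$u_*\in(L^\infty\cap W^{1,k}_v)(\Omega,\,\R^m)$, locally piecewise affine away from a neighbourhood of~$\partial\Omega$. Fix~$y\in B^m_\sigma$. On each simplex~$T$ of the triangulation, $u_*|_T-y$ is affine, so~$(u_*-y)^{-1}(\X)\cap T$ is a finite union of polyhedral pieces arising as intersections of~$T$ with affine translates of the faces of~$\X$. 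By a Sard-type genericity argument in~$y$, for a.e.~$y$ these intersections are transverse and the $n$-dimensional strata form a locally $n$-polyhedral set carrying $\S_y(u_*)$; the strictly lower-dimensional strata (together with the $(n-1)$-skeleton of the triangulation) can be collected into a locally $(n-1)$-polyhedral set~$P_y$, giving property~(b).

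For property~(c), the multiplicity of~$\S_y(u_*)$ along an $n$-face~$K$ is the homotopy class in~$\GN$ of $\RR\circ(u_*-y)$ restricted to any small $(k-1)$-sphere linking~$K$ (see Property~\eqref{S:intersection}). Because~$u_*$ is affine in a neighbourhood of~$K$ and~$\X$ is polyhedral, this homotopy class is determined by the oriented pair (face of~$\X$ crossed, linear piece of~$u_*$), and only finitely many such pairs exist, independently of~$u_*$. Property~(d) follows from the bound $|\nabla\RR(z)|\lesssim\dist^{-1}(z,\,\X)$ from Proposition~\ref{prop:X}: on each linear piece~$T$, the distance from~$u_*(x)-y$ to~$\X$ is comparable (via the transverse linear map) to the distance from~$x$ to $(u_*-y)^{-1}(\X)\cap T \subseteq \spt\S_y(u_*)\cup P_y$, which yields the nice singularity estimate away from a compact subset of~$\Omega$, i.e.~a locally nice singularity.

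The main obstacle is the gluing between the boundary layer~$U$ and the piecewise affine interior: one must ensure that the traces match across~$\partial U\cap\Omega$ without introducing unwanted topological singularities on the interface. Since the extension~$u_1$ stays uniformly away from~$\X+y$ for small~$|y|$, the interface carries no singularities, and a standard $W^{1,k}$ interpolation argument on a thin slab around~$\partial U\cap\Omega$ completes the gluing. The choice of~$y$ in a set of full measure simultaneously ensures transversality, validity of~\eqref{S:Sbar}, and the identification of~$\S_y(u_*)$ with the polyhedral chain described above.
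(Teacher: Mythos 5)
Your proposal introduces a boundary layer~$U$ in which $u_*$ coincides with a trace extension~$u_1$ projected to stay in a tubular neighbourhood of~$\NN$, and glues this to a locally piecewise affine map on the interior. The paper's proof has no such boundary layer: Lemma~\ref{lemma:loc_piecewise_affine} directly produces an extension that is locally piecewise affine on \emph{all} of~$\Omega$ (the triangulations refine as one approaches~$\partial\Omega$, and on any $W\csubset\Omega$ one still has a finite piecewise affine structure). The interior transversality/multiplicity arguments you give for (b), (c), (d) do match the paper's, but the boundary layer creates two concrete gaps.

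First, the assertion that you can arrange $\dist(u_1(x),\,\NN)<\dist(\NN,\,\X)/2$ for \emph{every} $x\in U$ is not justified. A $W^{1-1/k,k}$ boundary datum need not be continuous, so a generic $W^{1,k}$ trace extension does not remain pointwise near the range of~$v$ in a layer of positive width, however thin; truncating or projecting onto a tubular neighbourhood of~$\NN$ would either alter the trace or require the extension to already lie in that tubular neighbourhood. Second, and more decisively for~(d): even if $u_1$ did stay near~$\NN$ on~$U$, it is only $W^{1,k}$ there. Property~(d) requires $\RR\circ(u_*-y)$ to have a \emph{locally} nice singularity, i.e.~for every $W\csubset\Omega$ the restriction to~$W$ must be locally Lipschitz off the singular set (Definition of nice singularity). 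A compactly contained~$W$ will in general meet~$U$, and on $W\cap U$ your $\RR\circ(u_*-y)=\RR\circ(u_1-y)$ is merely $W^{1,k}$, not Lipschitz — so (d) fails there. Your remark about the singular set being empty in~$U$ addresses where the mass of $\S_y(u_*)$ lies, but not the regularity of the map itself. The paper's choice of a globally locally piecewise affine~$u_*$ is precisely what guarantees the needed local Lipschitz control; if you replace the trace extension on~$U$ by a locally piecewise affine extension (i.e.~apply the analogue of Lemma~\ref{lemma:loc_piecewise_affine} there too), the boundary layer becomes superfluous and you recover the paper's argument.
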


The proof of Lemma~\ref{lemma:u*} relies on the following fact.

\begin{lemma} \label{lemma:loc_piecewise_affine}
 Any boundary datum~$v\in W^{1-1/k, k}(\partial\Omega, \, \R^m)$
 has a locally piecewise affine extension $u_*\in (L^\infty\cap W^{1,k}_v)(\Omega, \, \R^m)$.
\end{lemma}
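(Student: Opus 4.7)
The plan is to reduce to approximating a smooth extension of~$v$ by a piecewise affine map on a locally finite triangulation of~$\Omega$ whose mesh becomes arbitrarily fine near~$\partial\Omega$. First I would produce an extension $\tilde u \in W^{1,k}(\Omega, \, \R^m)$ with trace~$v$ via the standard trace theorem; composing with a Lipschitz retraction onto a sufficiently large ball (legitimate since $v$ is bounded in the applications where this lemma is invoked) enforces $\tilde u \in L^\infty$, and mollifying interior values with a variable-scale mollifier of scale proportional to $\dist(\cdot, \partial\Omega)$ yields $\tilde u \in C^\infty(\Omega)$ without disturbing the trace. Thus I may assume $\tilde u \in (L^\infty \cap W^{1,k}_v)(\Omega, \, \R^m)$ and $\tilde u \in C^\infty(\Omega, \, \R^m)$.

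Next I would exhaust~$\Omega$ by compact polyhedral sets $K_1 \csubset K_2 \csubset \cdots \csubset \Omega$ with $\bigcup_j K_j = \Omega$, and triangulate each annular region $A_j := K_{j+1} \setminus \interior K_j$ by a shape-regular simplicial complex~$\mathcal{T}_j$ of mesh size~$h_j$, arranging that the $\mathcal{T}_j$ agree on the common polyhedral hypersurfaces~$\partial K_{j+1}$ (start by triangulating each~$\partial K_j$ and extend inward and outward). The assembly of the $\mathcal{T}_j$ is a locally finite simplicial complex covering~$\Omega$. I would then define $u_*$ to be the nodal interpolant of $\tilde u$ on this complex: on each simplex, $u_*$ is affine and takes the value of $\tilde u$ at each vertex. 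By construction $u_*$ is continuous, locally piecewise affine, and $\|u_*\|_{L^\infty(\Omega)} \leq \|\tilde u\|_{L^\infty(\Omega)}$. Standard Ciarlet-type interpolation estimates give
\[
 \|u_* - \tilde u\|_{W^{1,k}(A_j)} \leq C\, h_j\, \|\tilde u\|_{W^{2,k}(A_j)},
\]
and choosing $h_j \to 0$ fast enough yields $w := u_* - \tilde u \in W^{1,k}(\Omega)$.

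The main obstacle is to identify the trace of~$u_*$, equivalently to show $w \in W^{1,k}_0(\Omega)$. Multiplying~$w$ by a smooth cutoff $\chi_j$ supported in~$K_{j+1}$ with $\chi_j \equiv 1$ on~$K_j$ and $|\nabla \chi_j| \lesssim 1/\mathrm{width}(A_j)$, and calibrating the mesh sizes so that $h_j\, \|\tilde u\|_{W^{2,k}(A_j)} \leq 2^{-j}\, \mathrm{width}(A_j)$, forces $w\chi_j \to w$ in $W^{1,k}(\Omega)$; a further mollification puts~$w$ in the closure of $C^\infty_c(\Omega)$, hence in $W^{1,k}_0(\Omega)$, so $u_*$ has trace~$v$. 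The delicate point is precisely this trade-off: since~$v$ is only $W^{1-1/k,k}$ and generally not piecewise affine, $u_*$ cannot equal~$v$ pointwise on~$\partial\Omega$, so a pointwise match must be sacrificed for a trace-sense match, and the mesh sizes~$h_j$ must be tuned against both the (possibly exploding) interior regularity~$\|\tilde u\|_{W^{2,k}(A_j)}$ and the widths of the shells~$A_j$.
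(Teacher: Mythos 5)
Your proposal is correct in outline but takes a genuinely different route from the paper's. The paper does not mollify: it applies a partition of unity $\sum_j \varphi_j = 1$ (subordinate to overlapping shells near $\partial\Omega$) to a bounded $W^{1,k}$-extension $u$ of $v$, and for each piece $\varphi_j u$ invokes the Van Schaftingen approximation theorem \cite[Theorem~1]{VanSchaftingen-PiecewiseAffine}, which produces a triangulation $\mathcal{T}_j$ whose vertices are Lebesgue points of $\varphi_j u$ and whose piecewise affine interpolant $u_j$ satisfies $\|\nabla u_j - \nabla(\varphi_j u)\|_{L^k}\leq 2^{-j}$. Since the $u_j$ use independent triangulations, no interface-matching of meshes is needed; local piecewise-affineness of $u_* = \sum_j u_j$ comes just from local finiteness of the supports, and $u_* - u\in W^{1,k}_0$ follows because the partial sums $\sum_{j\leq N}(u_j - \varphi_j u)$ have compact support. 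Your approach instead mollifies at variable scale to get $\tilde u\in C^\infty(\Omega)$, carefully sets up a geometrically matching simplicial complex over a compact exhaustion, and uses classical nodal interpolation estimates. This is more elementary (it avoids the Van Schaftingen black-box) but incurs two technical costs that the paper's strategy sidesteps: the triangulations on adjacent shells must be compatible (a standard but fussy construction), and the estimate you quote, $\|u_* - \tilde u\|_{W^{1,k}(A_j)}\lesssim h_j\|\tilde u\|_{W^{2,k}(A_j)}$, requires $2k > n+k$, i.e.~$k>n$, for the nodal values of a $W^{2,k}$ function to be defined — since $\tilde u$ is actually smooth this is easily repaired, either by invoking a higher-order estimate with $W^{\ell,k}$ for $\ell > (n+k)/k$, or, more simply, by noting that nodal interpolation of a $C^\infty$ function on a shape-regular simplex converges in $W^{1,k}$ as the mesh size goes to zero, so one may choose $h_j$ to achieve any prescribed tolerance without naming a rate. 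Both your proof and the paper's implicitly need $v\in L^\infty(\partial\Omega)$ (otherwise an $L^\infty$ extension cannot exist), which you flag; the paper glosses over this via a truncation argument, and in the applications $v$ is $\NN$-valued, hence bounded.
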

We give a proof of Lemma~\ref{lemma:loc_piecewise_affine},
for the convenience of the reader only.
\begin{proof}[Proof of Lemma~\ref{lemma:loc_piecewise_affine}]
 Arguing component-wise, we reduce to the case~$m=1$.
 Let~$u\in W^{1,k}_v(\Omega)$ be an extension of~$v$. By a truncation argument, 
 we can make sure that $v\in L^\infty(\Omega)$.
 Let~$\Gamma_1 := \{x\in\Omega\colon \dist(x, \, \partial\Omega) > 1/2\}$ and,
 for any integer~$j\geq 2$, let $\Gamma_j := \{x\in\Omega\colon (j+1)^{-1} 
 < \dist(x, \, \partial\Omega) < (j - 1)^{-1}\}$.
 Using a partition of unity, 
 we construct a sequence of smooth functions
 $\varphi_j \in C^\infty_{\mathrm{c}}(\Gamma_j)$ such that
 $\sum_{j\geq 1}\varphi_j = 1$. Thanks to e.g.
 \cite[Theorem~1]{VanSchaftingen-PiecewiseAffine},
 for any~$j$ there exists a triangulation~$\mathcal{T}_j$ of~$\R^{n+k}$ such that 
 the piecewise affine interpolant~$u_j$ of~$\varphi_j u$ along~$\mathcal{T}_j$ 
 is well-defined (that is, all the vertices of~$\mathcal{T}_j$ are Lebesgue points
 of~$\varphi_j u$) and there holds
 \begin{equation} \label{interpolant}
  \norm{\nabla u_j - \nabla(\varphi_j u)}_{L^k(\R^{n+k})} \leq 2^{-j}.
 \end{equation}
 Moreover, the proof of~\cite[Theorem~1]{VanSchaftingen-PiecewiseAffine}
 shows that for any~$r>0$, we can choose~$\mathcal{T}_j$ 
 such that all the simplices of~$\mathcal{T}_j$
 have diameter~$\leq r$. In particular, we can make sure that 
 $u_j$ is still supported in~$\Gamma_j$. 
 Now, we define~$u_* := \sum_{j\geq 1} u_j$. Since the support of~$u_j$ 
 intersects the support of~$u_i$ only for finitely many~$i$,
 the function~$u_*$ is locally piecewise affine. Moreover, $u_*\in L^\infty(\Omega)$
 because, by construction, $\norm{u_j}_{L^\infty(\Omega)}\leq \norm{u}_{L^\infty(\Omega)}$
 for any~$j$, and $u\in W^{1, k}(\Omega)$ due to~\eqref{interpolant}.
 Finally, for any~$N\geq 1$
 the function $\sum_{j=1}^N (u_j - \varphi_j u)$ is compactly supported in~$\Omega$,
 and hence $\sum_{j=1}^N (u_j - \varphi_j u)\in W^{1, k}_0(\Omega)$. Passing to the limit
 as~$N\to +\infty$, we conclude that $u_* - u \in W^{1,k}_0(\Omega)$, and the lemma follows.
\end{proof}

\begin{proof}[Proof of Lemma~\ref{lemma:u*}]
 Let~$u_*$ be the locally piecewise extension of~$v$
 given by Lemma~\ref{lemma:loc_piecewise_affine}. 
 Statement~(a) follows from~\eqref{S:mass} in Proposition~\ref{prop:S}, because~$u_*\in W^{1,k}(\Omega, \, \R^m)$.
 Let~$K\subseteq\Omega$ be a (closed) $(n+k)$-simplex such that~$u_{*|K}$ is affine.
 Since we have assumed that~$\X$ is polyhedral, for any~$y\in\R^m$
 the inverse image $(u_* - y)^{-1}(\X)\cap K$ is polyhedral too.
 Take~$y\in\R^m$ such that $(u_* - y)_{|K}$ is transverse to each cell of~$\X$.
 By \cite[Corollary~1]{CO1}, we have  $\S_y(u_*)\mres K = \S_y(u_{*|K})$
 and by definition (see \cite[Section~3.2]{CO1} and Section~\ref{appendix:S} below),
 the latter is a polyhedral chain
 supported on~$(u_* - y)^{-1}(\X)\cap K$.
 Thus, $\S_y(u_*)$ is locally polyhedral.
 Moreover, $\S_y(u_*)$ take its multiplicities in the set
 \[
  \left\{\pm(\textrm{homotopy class of } \RR \textrm{ around } H)\colon
  H \textrm{ is a } (m-k)\textrm{-polyhedron of } \X\right\},
 \]
 which is a finite subset of~$\GN$, because~$\X$ is a
 finite union of polyhedra.
 Finally, let us prove Statement~(d). 
 Take an open set~$W\csubset\Omega$, and take~$y\in\R^m$ 
 such that $u_{*|W}$ is transverse to each cell of~$\X$.
 Let~$K$ be a $(n+k)$-simplex such that $K\cap W\neq\emptyset$
 and $u_{*|K}$ is affine. By transversality, we see that
 \[
  \dist(u_*(x) - y, \, \X) \geq C_{K, y} \dist(x, (u_* - y)^{-1}(\X))
  \qquad \textrm{for any } x\in K,
 \]
 where~$C_K > 0$ is a constant that depends on the
 (constant) gradient of~$u_*$ on~$K$ and on~$y$. Since~$W$ is covered by 
 finitely many simplices, we have
 \[
  \dist(u_*(x) - y, \, \X) \geq C_{ W, y} \dist(x, (u_* - y)^{-1}(\X))
  \qquad \textrm{for any } x\in  W,
 \]
 where~$C_{ W, y}:=\min_{K\colon K\cap W\neq\emptyset} C_{K,y}>0$.
 Then, by applying the chain rule and Proposition~\ref{prop:X},
 we conclude that $\RR\circ(u_* - y)_{|W}$ has a nice singularity
 at~$(\spt\S_y(u_*)\cup P_y)\cap W$, where~$P_y := (u_* - y)^{-1}(\X_{m-k-1})$.
\end{proof}

\subsubsection{Reduction of the problem}
\label{sect:reduction}

Throughout the rest of Section~\ref{sect:limsup}, 
we fix the boundary datum~$v\in W^{1-1/k, k}(\partial\Omega, \, \NN)$
and let~$u_*$ be the map given by Lemma~\ref{lemma:u*}.
We also fix $y^*\in\R^m$, with $\abs{y^*}$ sufficiently small,
in such a way that Statements (a)--(d) in Lemma~\ref{lemma:u*}
are satisfied. Let~$w_* := \RR_{y_*}\circ u_*$,
where~$\RR_{y_*}$ is defined by~\eqref{RR_y}.
By Lemma~\ref{lemma:u*}, the map $w_*$ has a locally nice 
singularity at~$\spt\S_{y_*}(u_*)\cup P_{y_*}$,
where $P_{y_*}$ is a locally polyhedral set of dimension~$n-1$.
By Lemma~\ref{lemma:extension}, we can choose~$y_*$
so to have $w_*\in W^{1, k-1}(\Omega, \, \NN)$ and~$\S(w_*) = \S_{y_*}(u_*)$
as well.

\begin{remark} \label{remark:S(w*)}
 For a generic map $w\in W^{1, k-1}(\Omega, \, \R^m)$,
 $\S(w)$ is only well-defined as a \emph{relative} flat chain,
 $\S(w)\in\F_n(\Omega; \, \GN)$ 
 (see~\cite[Section~3]{CO1}). However, $\S_{y_*}(u_*)$
 is well-defined as an element of~$\F_n(\R^{n+k}; \, \GN)$,
 because $u_*\in W^{1,k}(\Omega, \, \R^m)$
 (see Proposition~\ref{prop:S}). With a slight abuse of notation,
 we will regard
 $\S(w_*)$ as an element of~$\F_n(\R^{n+k}; \, \GN)$, too.
\end{remark}

Let~$S$ be a finite-mass $n$-chain, supported in~$\overline{\Omega}$,
that is cobordant to~$\S(w_*)$. By definition of~$\mathscr{C}(\Omega, \, v)$,
Equation~\eqref{C_Omega_v}, $S$ and~$\S(w_*)$ differ by a boundary.
By an approximation argument, we will reduce to the case~$S$ has a special form.

\begin{prop} \label{prop:approx-noappendix}
 Let~$S\in\mathscr{C}(\Omega, \, v)$ be a finite mass chain.
 Then, there exists a sequence of polyhedral $(n+1)$-chains~$R_j$,
 with compact support in~$\Omega$, such that 
 $\S(w_*) + \partial R_j \to S$ (with respect to the~$\F$-norm)
 and $\M(\S(w_*) + \partial R_j)\to \M(S)$ as~$j\to+\infty$.
\end{prop}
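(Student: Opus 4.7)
\medskip
\noindent\textbf{Plan.} The plan is to use the cobordism assumption to reduce to approximating a finite-mass $(n+1)$-chain $R$ with $\partial R = S - \S(w_*)$, and then to apply a polyhedral approximation in the normal-mass topology, with compact support in~$\Omega$.

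By definition~\eqref{C_Omega_v} and Remark~\ref{remark:S(w*)}, the assumption $S \in \mathscr{C}(\Omega, v)$ yields some $R \in \M_{n+1}(\overline{\Omega}; \GN)$ with $\partial R = S - \S(w_*)$. Since $S$ and $\S(w_*)$ both have finite mass, so does $\partial R$, and $R$ has finite normal mass $\M(R) + \M(\partial R) < +\infty$. Setting $T_j := \S(w_*) + \partial R_j$, one has $T_j - S = \partial(R_j - R)$, so $\F$-convergence $T_j \to S$ follows from $\F(R_j - R) \to 0$ (because $\partial$ is $1$-Lipschitz for the flat norm), while mass convergence $\M(T_j) \to \M(S)$ is equivalent, via lower semicontinuity and the bound $\M(T_j) \leq \M(S) + \M(\partial R_j - \partial R)$, to $\M(\partial R_j - \partial R) \to 0$. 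Hence it is enough to produce polyhedral $(n+1)$-chains $R_j$, compactly supported in~$\Omega$, with $\M(R_j - R) + \M(\partial R_j - \partial R) \to 0$.

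To build such $R_j$, I proceed in two stages. Let $\phi_t$ be a smooth isotopy of~$\R^{n+k}$ with $\phi_0 = \mathrm{id}$ and $\phi_t(\overline{\Omega}) \csubset \Omega$ for~$t > 0$; the pushforward $\phi_{t*} R$ is compactly supported in~$\Omega$. Combining this deformation with a surgery near~$\partial\Omega$ based on the locally polyhedral structure of $\S(w_*)$ (Lemma~\ref{lemma:u*}) yields a chain $R^{(t)}$, still compactly supported in~$\Omega$, with $\M(R^{(t)} - R) + \M(\partial R^{(t)} - \partial R) \to 0$ as $t \to 0$. Then, for each fixed~$t$, the polyhedral density theorem in normal mass for flat chains with coefficients in a normed Abelian group (Fleming~\cite{Fleming}; see also~\cite{White-Rectifiability} and the Federer--Fleming deformation theorem) provides polyhedral approximations $R^{(t)}_j$ of $R^{(t)}$ compactly supported in~$\Omega$, converging in normal mass as~$j \to +\infty$. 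A diagonal argument extracts the desired sequence~$R_j$.

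The main technical obstacle is the construction of~$R^{(t)}$ with the normal-mass convergence stated above: the naive pushforward $\phi_{t*} R$ need not satisfy $\M(\partial\phi_{t*} R - \partial R) \to 0$ when $\partial R = S - \S(w_*)$ has support touching~$\partial\Omega$, because an isotopy tending to the identity in~$C^1$ can still displace pieces of a chain by a positive distance (as is easily seen on a toy $1$-dimensional example). The remedy is to exploit the local polyhedrality of~$\S(w_*)$ near~$\partial\Omega$ and the identity $\partial S = \partial\S(w_*)$ to perform a surgery that matches the boundary trace of~$R^{(t)}$ to that of~$R$ up to an error vanishing in mass, so that pushing $R$ inward and compensating with a thin polyhedral correction along~$\S(w_*)$ preserves the normal-mass limit.
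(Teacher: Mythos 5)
Your reduction is where the argument breaks. You claim it suffices to produce polyhedral $(n+1)$-chains $R_j$, compactly supported in $\Omega$, with $\M(R_j - R) + \M(\partial R_j - \partial R) \to 0$. But this normal-mass convergence is strictly stronger than what the Proposition asserts, and it is in general \emph{impossible}. Since $\S(w_*)\mres\partial\Omega = 0$ (Lemma~\ref{lemma:u*}(a)), we have $\partial R\mres\partial\Omega = S\mres\partial\Omega$, and nothing in the hypotheses prevents $S$ from carrying mass on $\partial\Omega$. For instance, with $n=0$, $k=2$, $\Omega = B^2$, $\S(w_*) = \llbracket 0\rrbracket$ and $S = \llbracket p\rrbracket$ for some $p\in\partial B^2$ (cobordant via the segment from $0$ to $p$), we get $\M(\partial R\mres\partial\Omega) = |1|_* > 0$. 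For any $R_j$ compactly supported in the open set $\Omega$, the chain $\partial R_j$ is also compactly supported in $\Omega$, hence $(\partial R_j)\mres\partial\Omega = 0$ and
\[
  \M(\partial R_j - \partial R) \;\geq\; \M\bigl((\partial R_j - \partial R)\mres\partial\Omega\bigr) \;=\; \M(\partial R\mres\partial\Omega) \;>\; 0,
\]
uniformly in $j$. The ``surgery near $\partial\Omega$'' you allude to cannot repair this: it is a hard support obstruction, not a defect of a particular construction. Note that the Proposition \emph{is} true in this example --- take $R_j$ the segment from $0$ to $(1-1/j)p$ --- but the approximating chains satisfy $\M(\S(w_*) + \partial R_j - S) = 2|1|_* \not\to 0$ while still achieving $\F$-convergence and mass convergence of $\M(\S(w_*)+\partial R_j)$ to $\M(S)$. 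Thus $\F(T_j - S)\to 0$ together with $\M(T_j)\to\M(S)$ does not require, and is in general incompatible with, $\M(T_j - S)\to 0$; the word ``equivalent'' in your second paragraph is also incorrect for the same reason.

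The paper's proof (Lemma~\ref{lemma:approxcspt} in Appendix~\ref{sect:approximation}) is engineered precisely to avoid this trap. After reducing, via Lemma~\ref{lemma:compactspt}, to the case where $R$ itself has compact support in $\Omega$, one fixes an interior polyhedral window $U$ containing $\spt R$, observes that $\partial(S\mres U) = \partial(\S(w_*)\mres U)$ is polyhedral, and invokes Fleming's polyhedral density theorem to build polyhedral $T_j$ with $\partial T_j$ held \emph{exactly} equal to $\partial(S\mres U)$, $\F(T_j - S\mres U)\to 0$, and $\M(T_j)\to\M(S\mres U)$. The flat error $S\mres U - T_j = \partial P_j + Q_j$ is then pushed inside by a slicing argument and cleaned up by the deformation theorem (Lemma~\ref{lemma:def}), yielding $S_j = T_j + Q_j'' + S\mres(\R^{n+k}\setminus U) = \S(w_*) + \partial R_j$. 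Mass convergence $\M(S_j)\to\M(S)$ then comes from a two-sided squeeze: lower semicontinuity of $\M$ under $\F$-convergence gives $\liminf\M(S_j)\geq\M(S)$, while $\M(T_j)\to\M(S\mres U)$ plus the triangle inequality gives $\limsup\M(S_j)\leq\M(S)$. No control on $\M(S_j - S)$ is ever needed or available. You would need to replace your normal-mass reduction with a mechanism of this type to make the argument work.
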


The proof of Proposition~\ref{prop:approx-noappendix} 
is left to Appendix~\ref{sect:approximation}. Thanks to 
Proposition~\ref{prop:approx-noappendix}, and a diagonal argument,
we can assume with no loss of generality that~$S$ has the form
\begin{equation} \label{specialS1}
 S = \S(w_*) + \partial R,
\end{equation}
where~$R$ is a polyhedral $(n+1)$-chain, 
compactly supported in~$\Omega$.
There is one further assumption we can make.
Let~$W_{\Sg}\csubset\Omega$ be an open set,
with polyhedral boundary, such that~$\partial W_{\Sg}$ is transverse to~$\spt S$
(more precisely, there exist triangulations of~$\partial W_{\Sg}$
and~$\spt S$ such that any simplex of the triangulation of~$\partial W_{\Sg}$
is transverse to any simplex of the triangulation of~$\spt S$) and
\begin{equation} \label{specialS2}
 \spt R \subseteq \overline{W_{\Sg}}, \qquad S\mres\partial W_{\Sg} = 0.
\end{equation}
The condition $S\mres\partial W_{\Sg} = 0$ is satisfied 
because, by transversality,
$\spt S\cap\partial W_{\Sg}$ has dimension~$(n-1)$ or less and hence,
it cannot support a non-trivial polyhedral $n$-chain.

\begin{prop} \label{prop:approx_mult_noappendix} 
 There exists a sequence of polyhedral 
 $(n+1)$-chains~$R_j$, supported in $\overline{W_{\Sg}}$,
 such that the following hold:
 \begin{enumerate}[label=(\roman*)]
  \item $S+\partial R_j \to S$, with respect 
  to the~$\F$-norm, as~$j\to+\infty$;
  \item $\M(S+\partial R_j)\to\M(S)$ as~$j\to+\infty$;
  \item for any~$j$, $(S + \partial R_j)\mres\partial W_{\Sg} = 0$;
  \item for any~$j$, the chain $(S + \partial R_j)\mres W_{\Sg}$ takes multiplicities 
  in the set~$\Sg\subseteq\GN$ defined by~\eqref{generators-intro}.
 \end{enumerate}
\end{prop}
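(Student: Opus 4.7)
The approach is to construct each~$R_j$ explicitly as a sum of small prisms. Thanks to Lemma~\ref{lemma:u*}(b) and the fact that~$R$ is polyhedral with support in~$\overline{W_{\Sg}}$, the restriction $S\mres\overline{W_{\Sg}} = \S(w_*)\mres\overline{W_{\Sg}} + \partial R$ is a polyhedral $n$-chain. Triangulating its support and using the transversality of~$\partial W_{\Sg}$ to~$\spt S$ to refine the triangulation, one writes $S\mres W_{\Sg} = \sum_{i=1}^N \sigma_i \llbracket K_i \rrbracket$, where the~$K_i$ are $n$-simplices whose interiors do not meet~$\partial W_{\Sg}$ and $\sigma_i\in\GN\setminus\{0\}$. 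The key algebraic input is then Proposition~\ref{prop:group_norm}: for each~$i$, decompose $\sigma_i = \sum_{\alpha=1}^{q_i} \sigma_{i,\alpha}$ with $\sigma_{i,\alpha}\in\Sg$ and --- crucially --- $\sum_\alpha |\sigma_{i,\alpha}|_* = |\sigma_i|_*$.

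For the geometric step, for each pair~$(i, \alpha)$ pick a unit vector~$\nu_{i,\alpha}$ orthogonal to the affine hull of~$K_i$, chosen generically so that the translation planes $\mathrm{aff}(K_i)+\R\nu_{i,\alpha}$ are pairwise distinct across all $(i, \alpha)$ (possible since $k\geq 2$), and so that $\nu_{i,\alpha}$ points into~$W_{\Sg}$ wherever $K_i$ abuts $\partial W_{\Sg}$ (possible by transversality). Set $K_{i,\alpha,j} := K_i + j^{-1}\nu_{i,\alpha}$, let~$P_{i,\alpha,j}$ denote the oriented $(n+1)$-dimensional prism swept out between~$K_i$ and~$K_{i,\alpha,j}$, and define $R_j := \sum_{i,\alpha} \sigma_{i,\alpha}\llbracket P_{i,\alpha,j}\rrbracket$. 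For~$j$ large enough, $R_j$ is a polyhedral $(n+1)$-chain supported in the open set~$W_{\Sg}$.

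Writing $\partial \llbracket P_{i,\alpha,j}\rrbracket = \llbracket K_{i,\alpha,j}\rrbracket - \llbracket K_i \rrbracket - L_{i,\alpha,j}$, where~$L_{i,\alpha,j}$ is the lateral $n$-chain on the tube over~$\partial K_i$ in direction~$\nu_{i,\alpha}$, of mass bounded by $j^{-1}\H^{n-1}(\partial K_i)$, one obtains
\begin{equation*}
S + \partial R_j = S\mres(\Omega\setminus W_{\Sg}) + \sum_{i,\alpha}\sigma_{i,\alpha}\llbracket K_{i,\alpha,j}\rrbracket - \sum_{i,\alpha}\sigma_{i,\alpha} L_{i,\alpha,j}.
\end{equation*}
Thanks to the distinctness of the translation planes, the $n$-polyhedra on the right-hand side are pairwise disjoint up to $(n-1)$-dimensional sets, so each carries a single multiplicity of the form $\pm\sigma_{i,\alpha}\in\Sg$ (note that~$\Sg$ is symmetric because $|\cdot|_*$ is a norm and~$E_{\min}$ is invariant under orientation reversal). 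This gives~(iv); property~(iii) follows because $\spt R_j$ is contained in the open set~$W_{\Sg}$. For~(i), $\F(S + \partial R_j - S) = \F(\partial R_j) \leq \M(R_j) \leq j^{-1}\sum_i |\sigma_i|_*\,\H^n(K_i)$, which tends to~$0$. For~(ii), the mass-preservation identity $\sum_\alpha|\sigma_{i,\alpha}|_* = |\sigma_i|_*$ yields
\begin{equation*}
\M(S+\partial R_j) = \M(S\mres(\Omega\setminus W_{\Sg})) + \sum_i |\sigma_i|_* \H^n(K_i) + O(j^{-1}) \to \M(S).
\end{equation*}

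The main obstacle is the careful choice of translation directions needed to guarantee that (a) distinct lateral pieces~$L_{i,\alpha,j}$ never overlap, because any overlap would sum multiplicities and potentially escape~$\Sg$, violating~(iv), and (b) no prism~$P_{i,\alpha,j}$ exits~$W_{\Sg}$, which would spoil~(iii). Both issues are handled by the genericity arguments made possible by $k\geq 2$ and by the transversality of~$\partial W_{\Sg}$ to~$\spt S$; the decomposition property from Proposition~\ref{prop:group_norm} is what really drives the equality of masses in~(ii).
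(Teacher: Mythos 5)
Your proposal follows the same core strategy as the paper: triangulate $S\mres W_{\Sg}$, use Proposition~\ref{prop:group_norm} to decompose each multiplicity $\sigma_i$ as a mass-additive sum of elements of~$\Sg$, then push the decomposed copies of each simplex~$K_i$ apart in distinct directions orthogonal to~$K_i$, forming the cobounding chain~$R_j$ out of the resulting thin prisms. The one substantive difference is in the prism construction. The paper uses the piecewise-affine map $h^{K,i}(t, x') = x' + t\min\{\delta, \gamma\dist(x', \partial K)\}\, y^{K,i}$, so that the prism \emph{tapers to zero thickness at~$\partial K$}. This has two pay-offs that your straight prisms do not enjoy: the boundary of each tapered prism consists only of a ``top'' and a ``bottom'' and has no lateral face, so there is no lateral chain~$L_{i,\alpha,j}$ to account for in~$\partial R_j$; and because the prism pinches onto~$\partial K$, it automatically stays inside the diamond $U(K, \delta/j, \gamma/j)$, which by transversality can be taken inside~$\overline{W_{\Sg}}$, without any condition on the direction of translation.

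Your version has to manage the lateral pieces and the containment question by hand, and this is where there are soft spots. First, the claim that $R_j$ is ``supported in the open set~$W_{\Sg}$'' is false when some~$K_i$ abuts $\partial W_{\Sg}$: the straight prism contains $K_i$ itself as a bottom face, hence its support meets $\partial W_{\Sg}$. So the argument you give for~(iii) does not apply as stated; (iii) is still salvageable, but only via the weaker observation that each $n$-face of $\partial R_j$ (the translated simplices $K_{i,\alpha,j}$, the $-K_i$'s, and the lateral tubes $L_{i,\alpha,j}$) meets $\partial W_{\Sg}$ in dimension $\leq n-1$ for generic~$\nu$. Second, the requirement that each~$\nu_{i,\alpha}$ ``points into~$W_{\Sg}$ wherever~$K_i$ abuts~$\partial W_{\Sg}$'' is asserted without justification. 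If a single~$K_i$ touches~$\partial W_{\Sg}$ along several boundary faces lying on different $(n{+}k{-}1)$-faces of the polyhedral~$\partial W_{\Sg}$, there may be no single inward direction; this has to be repaired by refining the triangulation until each~$K_i$ meets at most one face of~$\partial W_{\Sg}$, and you do not say this. Third, to guarantee~(iv), you need the lateral tubes to avoid each other and the translated tops in dimension~$n$; the genericity claim is plausible but, unlike the paper's construction (where there are simply no such tubes), it deserves more than one sentence, since an accidental $n$-dimensional overlap of two tubes over a shared face would add two elements of~$\Sg$ and could leave~$\Sg$. None of these is fatal --- the approach can be pushed through --- but the paper's tapered prism eliminates all of them at once, which is why it uses the map~$h^{K,i}$ rather than a rigid translation.
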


The proof of Proposition~\ref{prop:approx_mult_noappendix}
will be given in Appendix~\ref{sect:approximation}.
Thanks to Proposition~\ref{prop:approx_mult_noappendix}, 
it is not restrictive to assume that
\begin{equation} \label{specialS3}
 S\mres W_{\Sg} \textrm{ takes its multiplicities in } \Sg
\end{equation}
in addition to~\eqref{specialS1}, \eqref{specialS2}.
Indeed, if~\eqref{specialS2} does not hold,
we replace~$S$ with with a chain of the form~$S + \partial R_j$
as given by Proposition~\ref{prop:approx_mult_noappendix},
we replace~$R$ with~$R+R_j$, then we use a diagonal 
argument to pass to the limit as~$j\to+\infty$.

\subsubsection{Construction of an \texorpdfstring{$\NN$}{N}-valued map with
prescribed singular set}
\label{sect:dipole}

Our next task is to construct a map~$w\colon\Omega\to\NN$,
with locally nice singularities, in such a way that~$\S(w) = S$.
To do so, we fix an open set~$W\csubset\Omega$ such that
$W_{\Sg}\csubset W$ and~$\partial W$ is transverse to~$\spt S$
(i.e., there exist triangulations of~$\partial W_{\Sg}$
and~$\spt S$ such that any simplex of the triangulation of~$\partial W_{\Sg}$
is transverse to any simplex of the triangulation of~$\spt S$).
We also fix a small parameter~$\eta>0$.

\begin{lemma} \label{lemma:dipole}
 For any~$W$ as above and any~$\eta>0$,
 there exists a map~$w\in W^{1,k-1}(\Omega, \, \NN)$
 that satisfies the following properties:
 \begin{enumerate}[label=(\roman*)]
  \item $w = w_*$ a.e. in~$\Omega\setminus W$;
  \item $w$ has a locally nice singularity at $(\spt S, \, Q_*)$,
  where~$Q_*\supseteq(\spt S)_{n-1}$ 
  is a locally~$(n-1)$-polyhedral set;
  \item $\S(w) = S$;
  \item $w_{|W}$ is~$\eta$-minimal.
 \end{enumerate}
\end{lemma}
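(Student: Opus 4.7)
The plan is to start from the ``base'' map $w_*$ given by Lemma~\ref{lemma:u*} and convert its singular chain $\S(w_*)$ into $S = \S(w_*) + \partial R$ by successively inserting dipoles along the simplices of $R$, using Proposition~\ref{prop:dipole_simpl}. Concretely, I write the polyhedral chain $R$ as a finite sum $R = \sum_{i=1}^N \sigma_i \llbracket T_i \rrbracket$, where the $T_i$ are oriented $(n+1)$-simplices with pairwise disjoint interiors, all contained in $W_{\Sg}$ (since $\spt R \subseteq \overline{W_{\Sg}}$, a barycentric subdivision may be needed). Because $\Sg$ generates $\GN$ (Proposition~\ref{prop:group_norm}), I can further subdivide each $(T_i,\sigma_i)$ and assume $\sigma_i \in \Sg$ for every $i$.

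I then apply Proposition~\ref{prop:dipole_simpl} iteratively. At the $i$-th step, applied with $D$ a small open neighbourhood of $T_i$ compactly contained in $W$ and with initial data the map produced at step $i-1$, I obtain a new map which coincides with the previous one outside an arbitrarily small neighbourhood of $T_i$, still belongs to $W^{1,k-1}(\Omega,\NN)$, has a nice singularity at an $n$-dimensional polyhedral set, and whose singular chain equals the previous one plus $\sigma_i \partial \llbracket T_i \rrbracket$. Since all modifications happen inside $W_{\Sg}$, condition~(i), $w = w_*$ on $\Omega \setminus W$, is automatic, and by induction
\[
 \S(w) = \S(w_*) + \sum_{i=1}^N \sigma_i \partial \llbracket T_i \rrbracket = \S(w_*) + \partial R = S,
\]
proving~(iii). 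The resulting $w$ has a locally nice singularity at a locally $n$-polyhedral set $\Sigma'$ that contains $\spt S$, but may contain additional $n$-faces along which $\S(w)$ is zero (they arise from the $n$-skeleta $T_{i,n}$ inside the dipole construction, from the $n$-simplices of the initial singular set $\spt\S(w_*)$ that survive the cancellations in~$\partial R$, and from $P_{y_*}$).

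To match~(ii), I need to reduce $\Sigma'$ to $\spt S$. On each $n$-simplex $K \subseteq \Sigma'$ along which $\S(w)$ vanishes, $w$ has trivial homotopy class on a generic transverse $(k-1)$-sphere; hence I may apply the local surgery of \cite[Lemma~9.3]{ABO2} (our Lemma~\ref{lemma:ABO-minimal}) to replace $w$ in a small tubular neighbourhood of the relative interior of $K$ by a map with no singularity across $K$, at the price of adding $\partial K$ to the $(n-1)$-dimensional singular set. Performing this surgery on every such $K$ yields a map whose singular set is exactly $\spt S$, together with a locally $(n-1)$-polyhedral ``lower stratum'' $Q_*$ containing $(\spt S)_{n-1}$, the $(n-1)$-skeleta of the erased simplices, and the remnants of $P_{y_*}$; this proves~(ii).

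Finally, to achieve~(iv), I apply the local surgery once more inside $W$, on each $n$-simplex $K$ of $\spt S \cap W$. Because of the reduction carried out in Section~\ref{sect:reduction} (cf.~\eqref{specialS3}), the multiplicity $\sigma_K \in \Sg$, so Remark~\ref{remark:phi-sigma} supplies a smooth map $\phi_K\colon\SS^{k-1}\to\NN$ in the class $\sigma_K$ satisfying~\eqref{phi-sigma}. Lemma~\ref{lemma:ABO-minimal} lets me re-define $w$ in the diamond $U(K,\delta,\gamma)$ to be the homogeneous extension $x'+x''\mapsto \phi_K(x''/|x''|)$, gluing smoothly with the previous $w$ on the rest of $W$, and pushing any mismatch into $Q_*$. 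The main obstacle in this plan is the third paragraph: ensuring that the dipole constructions, the ``clean-up'' surgery and the $\eta$-minimality surgery can be carried out independently on different simplices without interfering, and that the polyhedral book-keeping allows one to identify the final singular chain as $S$ exactly (not merely up to boundaries). This is handled by choosing the neighbourhoods $D$, the diamond parameters $(\delta,\gamma)$ and the tubular neighbourhoods in the surgery all small enough that they remain pairwise disjoint and remain inside $W$; here the transversality of $\partial W_{\Sg}$ and $\partial W$ to $\spt S$ plays a crucial role.
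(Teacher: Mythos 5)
Your proposal is correct and follows essentially the same strategy as the paper: triangulate $R$, insert dipoles via Proposition~\ref{prop:dipole_simpl} to obtain $\S(w_q)=S$, then use the local surgery of Lemma~\ref{lemma:ABO-minimal} to clean up the spurious $n$-faces (using a constant $\phi_K$ when $\sigma_K=0$) and achieve $\eta$-minimality. The only cosmetic difference is that you split the surgery step into two passes (clean-up, then $\eta$-minimality) whereas the paper performs both simultaneously on the pairwise-disjoint diamonds $U(K,\delta,\gamma)$; since those diamonds can be fixed once and for all, the two versions coincide.
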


Lemma~\ref{lemma:dipole} follows from 
Proposition~\ref{prop:dipole_simpl},
combined with the following lemma from~\cite{ABO2}:

\begin{lemma}[Lemma~9.3, \cite{ABO2}] \label{lemma:ABO-minimal}
 Let $K\subseteq\R^{n+k}$ be a $n$-simplex,
 and let $\delta$, $\gamma$ be positive parameters. 
 Let $u\colon U(K, \, \delta, \, \gamma)\to\NN$ be a map
 with nice singularity at $K$, and let~$\sigma\in\GN$
 the homotopy class of~$u$ around~$K$.
 Let~$\phi\colon\SS^{k-1}\to\NN$
 be a Lipschitz map in the homotopy class~$\sigma$.
 Then, there exists a map $\tilde{u}\colon U(K, \, \delta, \, \gamma)\to\NN$ 
 that satisfies the following properties:
 \begin{enumerate}[label=(\roman*)]
  \item $\tilde{u} = u$ on $\partial U(K, \, \delta, \, \gamma)$;
  \item $\tilde{u}$ has a nice singularity at $(K, \, \partial K)$;
  \item $\S(\tilde{u}) = \S(u)$;
  \item $\tilde{u}(x) = \phi(x^{\prime\prime}/|x^{\prime\prime}|)$ 
  for any $x = (x^\prime, \, x^{\prime\prime})\in
  U(K, \, \delta/4, \, \gamma/4)$.
 \end{enumerate}
\end{lemma}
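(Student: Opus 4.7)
The plan is to construct $\tilde u$ as a three-piece patchwork along the radial coordinate $\rho(x) := |x''|/r_{\max}(x')$, where $r_{\max}(x') := \min(\delta,\, \gamma\dist(x', \partial K))$. In the inner core $\{\rho \leq 1/4\}$, which by a direct calculation coincides with $U(K, \delta/4, \gamma/4)$, I set $\tilde u(x', x'') := \phi(x''/|x''|)$ outright, which secures property~(iv). In the outer collar $\{\rho \geq 3/4\}$, I keep $\tilde u := u$, securing the matching condition~(i) on $\partial U(K, \delta, \gamma)$. The real work lies in the interpolation shell $\{1/4 < \rho < 3/4\}$.

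On the shell I aim to define $\tilde u(x', x'') := H(x', \rho(x), x''/|x''|)$, where $H\colon K \times [1/4, 3/4] \times \SS^{k-1}\to\NN$ is a Lipschitz map (on compact subsets of $(K\setminus\partial K)\times [1/4, 3/4]\times\SS^{k-1}$) satisfying the boundary conditions
\[
H(x', 1/4, \omega) = \phi(\omega), \qquad H(x', 3/4, \omega) = u\bigl(x', \tfrac{3}{4} r_{\max}(x')\,\omega\bigr).
\]
For each fixed $x'\in K\setminus\partial K$, both endpoints are Lipschitz maps $\SS^{k-1}\to\NN$ in the same homotopy class $\sigma$, so a pointwise homotopy exists. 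To obtain Lipschitz dependence on $x'$, I plan to choose a triangulation of~$K$ fine enough that the boundary datum $u(x', \tfrac{3}{4} r_{\max}(x')\,\cdot)$ oscillates by less than an arbitrarily prescribed amount in $C^0$-norm on each simplex, and then build $H$ recursively over the $j$-skeleta of this triangulation, using the topological hypothesis~\eqref{hp:N} --- in particular the $(k-2)$-connectedness of~$\NN$ --- to resolve the extension obstructions at each stage. As $x'\to\partial K$, the shell thickness $r_{\max}(x')$ shrinks to zero and the Lipschitz constants of $H$ blow up correspondingly; this is tolerated by the definition of nice singularity at $(K, \partial K)$, which permits a $\dist^{-p}(\cdot, \partial K)$ singularity for every $p > 1$.

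The four desired properties then follow by inspection. Properties~(i) and~(iv) are enforced by the piecewise definition. For~(ii), in the inner core $|\nabla\tilde u|\leq \mathrm{Lip}(\phi)\cdot|x''|^{-1}\lesssim\dist^{-1}(\cdot, K)$; in the outer collar the bound is inherited from the nice singularity of $u$; in the interpolation shell, the gradient is dominated by $|\nabla H|\cdot(|x''|^{-1}+|\nabla\rho|)$, where $|\nabla\rho|$ contributes a factor $r_{\max}(x')^{-1}\sim\dist^{-1}(\cdot,\partial K)$ near $\partial K$, which is absorbed by the allowed $\dist^{-p}(\cdot,\partial K)$ term. For property~(iii), both $u$ and $\tilde u$ induce the homotopy class $\sigma$ around every interior point of $K$, so by~\cite[Lemma~18]{CO1} their singular chains coincide on $U(K,\delta,\gamma)\setminus\partial K$; any residual discrepancy would be a finite-mass $n$-chain supported on the $(n-1)$-dimensional polyhedral set $\partial K$, and hence must vanish.

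The main obstacle is the inductive Lipschitz construction of $H$ on a triangulation of $K$. Pointwise homotopies between $\phi$ and $u(x', \tfrac{3}{4}r_{\max}(x')\,\cdot)$ are classical, but stitching them into a single globally Lipschitz map requires the compatibility conditions across adjacent simplices to be resolvable, and this is precisely where the topological assumptions on $\NN$ intervene. Once $H$ is in hand, the remaining verifications reduce to the elementary gradient estimates outlined above, so the crux of the argument is concentrated on this single skeletal-induction step.
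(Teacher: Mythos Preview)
Your three-shell architecture is sound, and the inner and outer pieces are exactly right. The gap is in the interpolation shell. Extending~$H$ from the $j$-skeleton to the $(j+1)$-skeleton of~$K \times [1/4,\,3/4]$ meets an obstruction in~$\pi_j$ of the component $\mathrm{Map}(\SS^{k-1}, \NN)_\sigma$, not in~$\pi_j(\NN)$; by the evaluation fibration these groups involve $\pi_{j+k-1}(\NN)$, which is in general nonzero, so $(k-2)$-connectedness of~$\NN$ is neither the relevant hypothesis nor sufficient. (The homotopy~$H$ does exist, but because the simplex~$K$ is contractible --- both boundary maps $K \to \mathrm{Map}(\SS^{k-1}, \NN)_\sigma$ are then null-homotopic, hence homotopic to each other --- not because of $(k-2)$-connectedness.)

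The paper, following~\cite{ABO2}, takes a simpler route that avoids obstruction theory entirely and needs only (Lipschitz) path-connectedness of~$\NN$: build the interpolation by \emph{reparametrising the domain through~$u$ itself}. Fix~$x_0' \in \interior K$ and, in a first sub-shell, set $\tilde u(x', x'') := u\bigl(\psi_{s}(x'),\ \tfrac{3}{4}\, r_{\max}(\psi_s(x'))\, x''/|x''|\bigr)$, where $\psi_s(x') := s x' + (1-s) x_0'$ and~$s$ is affine in your radial coordinate~$\rho$. This slides the $x'$-dependence down to the single point~$x_0'$ while automatically staying in~$\NN$, because~$u$ is $\NN$-valued. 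At the inner boundary of this sub-shell one is left with a fixed Lipschitz map $w_0 := u\bigl(x_0',\, \tfrac{3}{4} r_{\max}(x_0')\,\cdot\bigr)\colon \SS^{k-1}\to\NN$ in class~$\sigma$, independent of~$x'$; a single Lipschitz homotopy $w_0 \simeq \phi$ (which exists by definition of the homotopy class) then fills the remaining sub-shell. The gradient estimates for~(ii) follow directly from the nice singularity of~$u$ at~$K$ together with $|\nabla r_{\max}| \leq \max(1,\gamma)$, with no need to control Lipschitz constants through an inductive construction. This is exactly what the paper means by ``a suitable reparametrisation of the domain~$U(K,\delta,\gamma)$'' using only path-connectedness.
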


In \cite{ABO2}, this result is proved in the 
particular case $\NN=\SS^{k-1}$. However, the same proof 
applies to a general target $\NN$: the map~$\tilde{u}$
is constructed by a suitable reparametrisation of the 
domain~$U(K, \, \delta, \, \gamma)$, and 
the arguments do not rely on properties of the target~$\NN$
other than (Lipschitz) path-connectedness. Property~(iii)
follows from Remark~\ref{remark:nice_sing} and~(ii), (iv).

\begin{proof}[Proof of Lemma~\ref{lemma:dipole}]
 By~\eqref{specialS2}, we 
 have~$\spt R \subseteq \overline{W}_{\Sg}\subseteq W$.
 By triangulating, we can write~$R$ in the form
 \[
  R = \sum_{i=1}^q \sigma_i \llbracket T_i\rrbracket,
 \]
 where the coefficients~$\sigma_i$ belong to~$\GN$ 
 and each~$T_i\csubset W$ is a convex $(n+1)$-simplex.
 We apply Proposition~\ref{prop:dipole_simpl},
 so to modify~$w_*$ in a neighbourhood of~$T_1$. We obtain
 a new map~$w_1\in W^{1,k}(\Omega, \, \NN)$ that
 has a locally nice singularity at~$\spt\S(w_*)\cup (T_1)_n\cup P_{y_*}$
 (with~$(T_1)_n$ is the $n$-skeleton of a suitable triangulation of~$T$),
 satisfies $w_1 = w_*$ on~$\Omega\setminus W$ 
 and~$\S(w_1) = \S(w_*) + \sigma_1\,\partial\llbracket T_1\rrbracket$.
 Now, we use Proposition~\ref{prop:dipole_simpl}
 to modify~$w_1$ in a neighbourhood of~$T_2$, and so on.
 By applying iteratively Proposition~\ref{prop:dipole_simpl},
 we construct a sequence of maps~$w_1$, $w_2$, \ldots, $w_q$.
 The map~$w_q$ has a locally nice singularity at
 $\spt\S(w_*) \cup (\spt R)_n \cup P_{y_*}$,
 satisfies~$w_q= w_*$ on~$\Omega\setminus W$ 
 and~$\S(w_q) = \S(w_*) + \partial R = S$.
 
 To complete the proof, it only remains to modify~$w_q$
 so as to satisfy~(iv).
 Since~$W\csubset\Omega$ has polyhedral boundary,
 the restriction~$S\mres W$ is a polyhedral chain.
 Let~$K$ be a $n$-face of~$\spt\S(w_*) \cup (\spt R)_n$. 
 The interior of~$K$ is contained in~$W$ and hence, 
 for sufficiently small parameters~$\delta>0$,
 $\gamma>0$, the interior of~$U(K, \, \delta, \, \gamma)$ 
 is contained in~$W$. Let~$\sigma_K\in\GN$ be
 the homotopy class of~$w_q$ around~$K$. By Remark~\ref{remark:phi-sigma},
 there exists a smooth map~$\phi_K\colon\SS^{k-1}\to\NN$
 that satisfies
 \begin{equation*}
   \int_{\SS^{k-1}} \abs{\nablaT\phi_K}^k \d\H^{k-1}
   \leq \int_{\SS^{k-1}} \abs{\nablaT\psi}^k \d\H^{k-1} + \eta
 \end{equation*}
 for any~$\psi\in W^{1,k}(\SS^{k-1}, \, \NN)\cap\sigma_K$.
 If~$\sigma_K = 0$, we choose~$\phi_K$ to be constant.
 We apply Lemma~\ref{lemma:ABO-minimal} to~$u = w_q$ and~$\phi = \phi_K$.
 By doing so for each~$K$, we obtain a 
 map~$w\colon\Omega\to\NN$ that agrees
 with~$w_*$ on~$\Omega\setminus\overline{W}$
 and is~$\eta$-minimal on~$W$.
 By Remark~\ref{remark:nice_sing}, $\S(w) = \S(w_q) = S$.
 Moreover, since~$\phi_K$ is constant if~$\sigma_K = 0$,
 $w$ has a locally nice singularity at~$(\spt S, \, Q_*)$
 where $Q_* := (\spt\S(w_*))_{n-1}\cup(\spt R)_{n-1}\cup P_{y_*}$.
 Therefore, $w$ has all the desired properties.
\end{proof}

\subsubsection{\texorpdfstring{$\eps$}{epsilon}-regularisation}

The map~$w\colon\Omega\to\NN$ given by Lemma~\ref{lemma:dipole}
has a singularity of codimension~$k$ at~$\spt S$, so
$w\notin W^{1,k}(\Omega, \, \NN)$ unless~$S = 0$.
Therefore, in order to define a recovery sequence, we need to 
regularise~$w$ around~$\spt S$. We do so by defining the maps
\begin{equation} \label{w_eps_def}
 w_\eps(x) := \min\left\{\frac{\dist(x, \, \spt S)}{\eps}, \, 1 \right\} w(x)
 \qquad \textrm{for any } x\in\Omega.
\end{equation}

\begin{lemma} \label{lemma:w_eps_int}
 For sufficiently small~$\eps$, the map~$w_\eps$ defined 
 by~\eqref{w_eps} belongs to~$(L^\infty\cap W^{1,k}_{\mathrm{loc}})(\Omega, \, \R^m)$.
 Moreover, the following properties holds.
 \begin{enumerate}[label=(\roman*), ref=\roman*]
  \item \label{w_eps_norm} 
  $w_\eps\to w$ strongly in~$W^{1,k-1}_{\mathrm{loc}}(\Omega)$ as~$\eps\to 0$.
  
  \item \label{w_eps} For any open set~$D\csubset\Omega$
  with polyhedral boundary, there holds
  \[
    \limsup_{\eps\to 0} \frac{E_\eps(w_\eps, \, D)}{\abs{\log\eps}}
    \leq C_{w, D} \, \M(S\mres\overline{D}),
  \]
  where the constant~$C_{w, D}$ depends
  on the map~$w$ and on~$\dist(D, \, \partial\Omega)$.
  
  \item \label{w_eps_sharp} We have
  \[
   \limsup_{\eps\to 0} \frac{E_\eps(w_\eps, \, W)}{\abs{\log\eps}}
   \leq (1 + C\eta)\,\M(S\mres\overline{W}) 
   + C\,\M(\overline{W}\setminus W_{\Sg}), 
  \]
  where~$C$ is a constant that depends only on~$\NN$, $\X$, $\RR$ and~$k$.
 \end{enumerate}
\end{lemma}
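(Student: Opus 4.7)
The plan is to decompose $\Omega$ according to the tubular structure of $\spt S$ and to exploit the explicit scaling of $w$ near $\spt S$ afforded by the ($\eta$-minimal) nice singularity. Write $t_\eps(x) := \min\{\dist(x, \spt S)/\eps, \, 1\}$ and $U_\eps := \{\dist(\cdot, \spt S) < \eps\}$, so that $w_\eps = t_\eps w$, $w_\eps = w$ on $\Omega\setminus U_\eps$, and $|\nabla t_\eps|\leq 1/\eps$ on $U_\eps$. Since $\NN$ is compact and $Q_*$ has dimension $n-1$ (codimension $\geq k+1$), the nice-singularity hypothesis yields $|\nabla w(x)| \lesssim \dist(x, \spt S)^{-1} + \dist(x, Q_*)^{-p}$ for any $p>1$; choosing $p<1+1/k$ keeps $\dist(\cdot, Q_*)^{-p}$ in $L^k_{\mathrm{loc}}$, so contributions from neighbourhoods of $Q_*$ are always $O(1)$. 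For part~(i), on any compact $K\csubset\Omega$ one has $|U_\eps\cap K| = O(\eps^k)$ and $|\nabla w_\eps|\lesssim 1/\eps$ on $U_\eps$, giving $\int_{U_\eps\cap K}|\nabla w_\eps|^{k-1}\lesssim \eps$; the analogous estimate for $\nabla w$ via Fubini in tubular coordinates around $\spt S$, together with $w_\eps\to w$ pointwise a.e.\ and $\|w_\eps\|_{L^\infty}$ uniformly bounded, yields $w_\eps\to w$ strongly in $W^{1,k-1}_{\mathrm{loc}}(\Omega)$.

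For part~(ii), split $E_\eps(w_\eps, D) = E_\eps(w_\eps, D\setminus U_\eps) + E_\eps(w_\eps, D\cap U_\eps)$. On $D\setminus U_\eps$, $w_\eps = w$ takes its values in $\NN$ so the potential vanishes, and a direct integration in tubular coordinates around $\spt S$ produces
\[
 \int_{D\setminus U_\eps} |\nabla w|^k \lesssim \H^n(\spt S \cap \bar D)\,|\log\eps| + O(1).
\]
On $D\cap U_\eps$, both $|\nabla w_\eps|^k$ and $\eps^{-k} f(w_\eps)$ are $\lesssim \eps^{-k}$ (the latter because $f$ is continuous and $|w_\eps|$ is uniformly bounded), and the volume $O(\eps^k)$ of the integration domain makes this piece $O(1)$. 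Finally \eqref{discrete_norm-intro} gives $\H^n(\spt S \cap \bar D) \leq \M(S\mres \bar D)/\inf_{\sigma \neq 0}|\sigma|_*$, so (ii) follows with a constant $C_{w,D}$ absorbing the sizes of the $\phi_K$'s and the distance $\dist(D,\partial\Omega)$.

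For the sharp bound~(iii), the $\eta$-minimal structure on $W$ provides, for each $n$-simplex $K$ of $\spt S$, a Lipschitz map $\phi_K\colon\SS^{k-1}\to\NN$ in the homotopy class $\sigma_K$ with $w(x', x'') = \phi_K(x''/|x''|)$ on $U(K,\delta,\gamma)$; separating $x''$ into radial and angular parts produces
\[
 \frac{1}{k}\int_{U(K,\delta,\gamma)\cap\{|x''|>\eps\}} |\nabla w|^k \leq \H^n(K)\,\bigl(|\log\eps| + O(1)\bigr)\cdot \frac{1}{k}\int_{\SS^{k-1}} |\nablaT\phi_K|^k \, \d\H^{k-1}.
\]
By $\eta$-minimality, $\frac{1}{k}\int |\nablaT\phi_K|^k \leq E_{\min}(\sigma_K) + \eta$. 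For $K\subseteq W_{\Sg}$ the reduction \eqref{specialS3} forces $\sigma_K\in\Sg$, hence $E_{\min}(\sigma_K) = |\sigma_K|_*$; summing $\H^n(K)(|\sigma_K|_* + \eta)$ over such $K$ yields the leading contribution $\M(S\mres W_{\Sg})(1+C\eta)$, where the inequality $\sum_K \H^n(K)\leq C\,\M(S\mres W_{\Sg})$ again uses \eqref{discrete_norm-intro}. On $\bar W\setminus W_{\Sg}$ the multiplicities lie in the finite set from Lemma~\ref{lemma:u*}(c), so $E_{\min}(\sigma_K)\leq C$ and the contribution is $\leq C\,\M(S\mres(\bar W\setminus W_{\Sg}))$. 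The remaining terms ($E_\eps$ on $U_\eps$, on the region exterior to every tube, and the potential $f(w_\eps)/\eps^k$ which is supported in $U_\eps$ with integrand $\lesssim \eps^{-k}$) are each $O(1)$ and vanish after division by $|\log\eps|$.

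The main obstacle is the bookkeeping that converts the intrinsic coefficient $E_{\min}(\sigma_K)$ attached to each simplex into the sharp $|\sigma_K|_*$ required for $\Gamma$-convergence: this is precisely why both the $\eta$-minimal structure on $W$ \emph{and} the reduction to multiplicities in $\Sg$ on $W_{\Sg}$ are needed. A subsidiary, but still delicate, difficulty is checking that every correction --- the $\eps$-core $U_\eps$, regions where $w$ approaches $Q_*$, overlaps and boundaries of the tubes $U(K,\delta_K,\gamma_K)$, and the potential --- contributes at most $O(1)$, so that the leading constant attached to $|\log\eps|$ is exactly the sum of the angular Dirichlet energies of the $\phi_K$'s.
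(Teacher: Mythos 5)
Your proposal is correct and follows essentially the same route as the paper: split the energy across the $\eps$-core $Z_\eps$, the $\eta$-minimal tubes $U(K,\delta,\gamma)$, and their complement, use the nice-singularity bound $|\nabla w|\lesssim\dist^{-1}(\cdot,\spt S)+\dist^{-p}(\cdot,Q_*)$ with $p<1+1/k$, and convert the angular Dirichlet energy of $\phi_K$ into $|\sigma_K|_*+\eta$ on $W_{\Sg}$ via Definition~\ref{def:minimal} and the reduction~\eqref{specialS3}, invoking the discreteness of $|\cdot|_*$ to bound $\H^n$ by $\M$. The one local imprecision is the claim ``$|\nabla w_\eps|\lesssim 1/\eps$ on $U_\eps$'' in part~(i), which fails near $Q_*$; the correct estimate (as in the paper) gives $\int_{U_\eps\cap K}|\nabla w_\eps|^{k-1}=\mathrm{o}_{\eps\to 0}(1)$ rather than $O(\eps)$, but this does not affect the conclusion.
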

\begin{proof}
 Let~$Z_\eps:= \{x\in\R^{n+k}\colon \dist(x, \, \spt S)< \eps\}$,
 and let~$\zeta_\eps$ be the characteristic function of~$Z_\eps$
 (i.e.~$\zeta_\eps:= 1$ on~$Z_\eps$, $\zeta_\eps:= 0$ elsewhere).
 
 \setcounter{step}{0}
 \begin{step}[Proof of~(i)]
  Let~$D\csubset\Omega$ be an open set. We choose a number~$p$, 
  with~$1 < p < (k+1)/(k-1)$.
  Since~$w$ has a locally nice singularity at~$(\spt S, \, Q_*)$,
  at a.e.~point of~$D$ we have
  \begin{equation} \label{w_eps_0}
   \begin{split}
    \abs{\nabla w_\eps} 
    &\lesssim \left(\frac{\dist(\cdot, \, \spt S)\zeta_\eps}{\eps} 
    + 1 - \zeta_\eps\right)\abs{\nabla w} 
    + \frac{\zeta_\eps}{\eps} \\
    &\leq C_{w,D} \left(\left(\frac{\dist(\cdot, \, \spt S)\zeta_\eps}{\eps} 
    + 1 - \zeta_\eps\right)\left(\dist^{-1}(\cdot, \, \spt S) 
    + \dist^{-p}(\cdot, \, Q_*)\right)
    + \frac{\zeta_\eps}{\eps}\right) \\
    &\leq  C_{w,D} \left(
    (1 - \zeta_\eps)\dist^{-1}(\cdot, \, \spt S) 
    + \dist^{-p}(\cdot, \, Q_*)
    + \frac{\zeta_\eps}{\eps}\right) \!,
   \end{split}
  \end{equation}
  where~$C_{w,D}$ is a constant that depends on~$w$,
  $\dist(D, \, \partial\Omega)$ and~$p$, but not on~$\eps$.
  Therefore,
  \begin{gather*}
    \int_{D\cap Z_\eps}\abs{\nabla w_\eps}^{k-1}
    \leq C_{w,D}\left(\int_{D\cap Z_\eps}\dist^{p-kp}(x, \, Q_*)\,\d x
    + \frac{\L^{n+k}(D\cap Z_\eps)}{\eps^{k-1}}\right) 
  \end{gather*}
  By our choice of~$p$, we have $p-kp > -(k+1)$.
  Since~$Q_*$ has codimension~$k+1$,
  \cite[Lemma~8.3]{ABO2} implies that the function
  $\dist^{p-kp}(\cdot, \, Q_*)$ is integrable and that
  \begin{equation} \label{Psi_eps}
   \L^{n+k}(Z_\eps)\lesssim\eps^k.   
  \end{equation}
  As a consequence, we have
  \[
   \int_{D} \abs{\nabla (w - w_\eps)}^{k-1} 
   \lesssim \int_{D\cap Z_\eps}\abs{\nabla w}^{k-1}
   +\int_{D\cap Z_\eps}\abs{\nabla w_\eps}^{k-1} \to 0
  \]
  as~$\eps\to 0$, and~(i) follows.
 \end{step}
 
 \begin{step}[Proof of~(ii)]
  Let~$D\csubset\Omega$ and~$1 < p < 1 + 1/k$.
  From~\eqref{w_eps_0}, we deduce
  \begin{gather*}
   \begin{split}
    E_\eps(w_\eps, \, D)
    \leq C_{w,D}\left(\int_{D\setminus Z_\eps}\dist^{-k}(x, \, \spt S)\,\d x
    + \int_{D}\dist^{-kp}(x, \, Q_*)\,\d x
    + \frac{\L^{n+k}(D\cap Z_\eps)}{\eps^k}\right) 
   \end{split}
  \end{gather*}
  The second and third term at the right-hand side 
  are uniformly bounded with respect to~$\eps\to 0$,
  due to \cite[Lemma~8.3]{ABO2} and~\eqref{Psi_eps}.
  Since~$\spt S \cap D$ is contained in a finite union of
  polyhedra of codimension~$k$ or higher and~$D$ has polyhedral boundary,
  a computation based on Fubini theorem gives
  \[
   \limsup_{\eps\to 0} \frac{1}{\abs{\log\eps}}
   \int_{D\setminus Z_\eps}\dist^{-k}(\cdot, \, \spt S)
   \lesssim \H^{n}(\spt S\cap\overline{D}).
  \]
  On the other hand, 
  $\H^n(\spt S\cap\overline{D})\lesssim\M(S\mres\overline{D})$
  because the coefficient group~$(\GN, \, |\cdot|_*)$ is discrete
  (Proposition~\ref{prop:group_norm}). Thus, (ii) follows
  (and in particular, $w_\eps\in W^{1,k}_{\mathrm{loc}}(\Omega, \, \R^m)$).
 \end{step}
 
 \begin{step}[Proof of~(iii)]
  The inequality~\eqref{Psi_eps} implies
  \begin{equation} \label{w_eps_f}
   \limsup_{\eps\to 0}\frac{1}{\abs{\log\eps}\eps^k} \int_{W} f(w_\eps) 
   \lesssim \limsup_{\eps\to 0}\frac{\L^{n+k}(Z_\eps)}{\abs{\log\eps}\eps^k} = 0,
  \end{equation}
  so we only need to estimate the gradient terms.
  By Lemma~\ref{lemma:dipole}, $w_{|W}$ is~$\eta$-minimal,
  with nice singularity at~$((\spt S)\cap W, \, Q_*\cap W)$.
  Therefore, there exist positive numbers~$\delta$, $\gamma$, 
  a triangulation of~$(\spt S)\cap W$ and,
  for any~$n$-simplex~$K$ of the triangulation, 
  a Lipschitz map~$\phi_K\colon\SS^{k-1}\to\NN$ that satisfy
  the conditions~(i)--(iii) in Definition~\ref{def:minimal}. 
  By taking smaller~$\delta$, $\gamma$ if necessary,
  we can also assume that the interior of~$U(K, \, \delta, \, \gamma)$
  is contained in~$W$, for any~$n$-simplex~$K$ of the triangulation.
  Let~$F:= W\setminus \cup_K U(K, \, \delta, \, \gamma)$,
  where the union is taken over all $n$-simplices~$K$ of the triangulation.
  We estimate separately the energy on~$F$ and on 
  each~$U(K, \, \delta, \, \gamma)$.
  
  Let us estimate the energy on~$F$ first.
  Since~$Q_*\supseteq(\spt S)_{n-1}$, the
  definition~\eqref{U-diamond} of~$U(K, \, \delta, \, \gamma)$
  implies that
  \begin{equation} \label{outofdiamond}
   \dist(x, \, Q^*)\gtrsim\dist(x, \, \spt S)
   \qquad \textrm{for any } x\in F.
  \end{equation}
  (The proportionality constant at 
  the right-hand side depends on~$\delta$, $\gamma$.)
  Let us choose a number~$p$ with~$1 < p < 1 + 1/k$.
  Since~$w$ has a locally nice singularity
  at~$(\spt S$, $Q_*)$, we obtain
  \begin{equation*}
   \begin{split}
    \abs{\nabla w(x)} \leq C_{w,W} 
    \left(\dist^{-1}(x, \, \spt S) + \dist^{-p}(x, \, Q_*)\right)
    \stackrel{\eqref{outofdiamond}}{\leq}  C_{w,W} \dist^{-p}(x, \, Q_*)
   \end{split}
  \end{equation*}
  for a.e.~$x\in F$ and some constant~$C_{w,W}$ that 
  depends on~$w$, $W$, $p$, $\delta$ and~$\gamma$. This implies
  \begin{equation*}
   \begin{split}
    \frac{1}{k} \int_{F}\abs{\nabla w_\eps}^k
    \leq C_{w,W} \int_{F}\dist^{-kp}(x, \, Q_*)\, \d x 
    + \frac{C_{w,W}}{\eps^k} \L^{n+k}(Z_\eps).
   \end{split}
  \end{equation*}
  The right-hand side is uniformly bounded with respect to~$\eps$,
  due to \cite[Lemma~8.3]{ABO2} and~\eqref{Psi_eps}, so
  \begin{equation} \label{w_eps_F}
   \limsup_{\eps\to 0} \frac{1}{k\abs{\log\eps}}
   \int_{F}\abs{\nabla w_\eps}^k = 0.
  \end{equation}
  Next, we estimate the energy on~$U(K, \, \delta, \, \gamma)$, with~$K$
  an $n$-dimensional simplex in the triangulation of~$(\spt S)\cap W$.
  We write~$U := U(K, \, \delta, \, \gamma)$ for brevity, 
  and let~$x = (x^\prime, \, x^{\prime\prime})$ denote
  the variable in~$U$, as in~\eqref{U-diamond}. Using
  Condition~(ii) in Definition~\eqref{def:minimal},
  we can compute explicitly the gradient of~$w_\eps$, 
  and we obtain
  \[
   \begin{split}
    \abs{\nabla w_\eps(x)} 
    &\leq \left(\frac{C\zeta_\eps(x)}{\eps} + 
    \frac{1 - \zeta_\eps(x)}{\abs{x^{\prime\prime}}}\right)
    \abs{(\nablaT\phi_K)\left(\frac{x^{\prime\prime}}{\abs{x^{\prime\prime}}}\right)} 
    + \frac{C\zeta_\eps(x)}{\eps} \\
    &\leq \frac{1 - \zeta_\eps(x)}{\abs{x^{\prime\prime}}}
    \abs{(\nablaT\phi_K)\left(\frac{x^{\prime\prime}}{\abs{x^{\prime\prime}}}\right)} 
    + \frac{C_{w,W}\,\zeta_\eps(x)}{\eps}
   \end{split}
  \]
  for a.e.~$x\in U$, where~$\nablaT$
  denotes the tangential gradient on~$\SS^{k-1}$.
  (In the second inequality, 
  we use that~$\phi_K$ is Lipschitz.)
  We raise to the power~$k$ both
  sides of this inequality, integrate over~$U$, apply
  Fubini theorem and pass to polar coordinates 
  for the integral with respect to~$x^{\prime\prime}$:
  \begin{equation*}
   \begin{split}
    \frac{1}{k} \int_{U} \abs{\nabla w_\eps}^k &\leq 
     \left(\frac{1}{k}\int_{\SS^{k-1}} \abs{\nablaT\phi_K}^k \d\H^{k-1}\right)
    \left(\int_{\eps}^{\delta} \frac{\d\rho}{\rho} \right) \H^{n}(K) 
    + \frac{C_{w,W}\,\L^{n+k}(Z_\eps)}{\eps^k} \\
    &\stackrel{\eqref{Psi_eps}}{\leq} 
    \left(\frac{1}{k} \int_{\SS^{k-1}} \abs{\nablaT\phi_K}^k \d\H^{k-1}\right)
    \left(\log\frac{\delta}{\eps}\right)\H^{n}(K) + C_{w,W}.
   \end{split}
  \end{equation*}
  Using Condition~(iii) in Definition~\ref{def:minimal}, we deduce
  \begin{equation} \label{w_eps_U}
   \begin{split}
    \limsup_{\eps\to 0} \frac{1}{k\abs{\log\eps}} 
     \int_{U} \abs{\nabla w_\eps}^k 
    &\leq \left(E_{\min}(\sigma_K) + \eta\right)\H^{n}(K),
   \end{split}
  \end{equation}
  where~$\sigma_K\in\GN$ is the homotopy class of~$\phi_K$
  and~$E_{\min}(\sigma_K)$ is defined by~\eqref{I_min-intro}.
  We need to distinguish two cases, depending on whether
  the interior of~$K$ is contained~$W_{\Sg}$ or not.
  If the interior of~$K$ is contained in~$W_{\Sg}$,
  then~$\sigma_K\in\Sg$ because of~\eqref{specialS3},
  and~\eqref{w_eps_U} becomes
  \begin{equation} \label{w_eps_U_Sg}
   \begin{split}
    \limsup_{\eps\to 0} \frac{1}{k\abs{\log\eps}} 
     \int_{U} \abs{\nabla w_\eps}^k 
     \leq \left(\abs{\sigma_K}_* + \eta\right)\H^{n}(K) 
    \leq (1 + C\eta) \, \M(S\mres K)
   \end{split}
  \end{equation}
  for some constant~$C$ that depends only on~$\NN$.
  (Here again, we have used that~$\M(S\mres K)\gtrsim\H^n(K)$,
  due to Proposition~\eqref{prop:group_norm}.) 
  Suppose now that the interior of~$K$ is not contained in~$W_{\Sg}$.
  The intersection between the interior of~$K$ and~$\partial W_{\Sg}$
  has dimension~$n-1$ at most, because
  we have taken~$\partial W_{\Sg}$ to be transverse to~$\spt S$.
  Therefore, up to refining the triangulation, we may assume 
  that the interior of~$K$ is contained in~$W\setminus\overline{W}_{\Sg}$.
  Then, thanks to~\eqref{specialS1} and~\eqref{specialS2},
  $S$ agrees with~$\S(w_*)$ in the interior of~$K$.
  The chain~$\S(w_*)$ takes its multiplicity in a finite set
  that depends only on~$\NN$, $\X$, $\RR$ (by Lemma~\ref{lemma:u*}) 
  and hence, $E_{\min}(\sigma_K)\leq C$.
  Thus, \eqref{w_eps_U} becomes
  \begin{equation} \label{w_eps_U_noSg}
   \begin{split}
    \limsup_{\eps\to 0} \frac{1}{k\abs{\log\eps}} 
     \int_{U} \abs{\nabla w_\eps}^k 
    \lesssim \H^{n}(K) \lesssim \M(S\mres K).
   \end{split}
  \end{equation}
  Combining~\eqref{w_eps_f}, \eqref{w_eps_F}, 
  \eqref{w_eps_U_Sg} and~\eqref{w_eps_U_noSg}, 
  the inequality~\eqref{w_eps_sharp} follows.
  \qedhere
 \end{step}
\end{proof}

\subsubsection{Proof of Theorem~\ref{th:main}.(ii) and 
Proposition~\ref{prop:main-nobd}.(ii)}
\label{sect:proof_ii}


\begin{proof}[Proof of Theorem~\ref{th:main}.(ii)]
 Let~$S\in\mathscr{C}(\Omega, \, v)$ be a finite-mass chain, and 
 let~$\eta>0$ be a small number. Given a countable
 sequence~$\eps\to 0$, we aim to 
 construct~$u_\eps\in (L^\infty\cap W^{1,k}_v)(\Omega, \, \R^m)$,
 where~$\eps$ ranges in a non-relabelled subsequence, in such a way that
 \begin{gather} 
  \lim_{\eps\to 0}\int_{B^m(0, \, \dist(\NN, \, \X))}
   \F(\S_y(u_\eps) - S) \, \d y = 0,
   \label{limsup-goal-0} \\
  \begin{split}
    \limsup_{\eps\to 0}\frac{E_\eps(u_\eps)}{\abs{\log\eps}} \leq 
    (1 + C\eta)\,\M(S) + C\eta, \label{limsup-goal}
  \end{split}
 \end{gather}
 where~$C$ is a constant that does not depend on~$\eta$. 
 If we do so, the theorem will follow, 
 by a diagonal argument. As we have seen, thanks to
 Proposition~\ref{prop:approx-noappendix}, 
 Proposition~\ref{prop:approx_mult_noappendix} and
 a diagonal argument, it is not restrictive
 to assume that~$S$ satisfies~\eqref{specialS1},
 \eqref{specialS2}, \eqref{specialS3}. Moreover, we have
 \begin{equation} \label{limsup0}
  S\mres\partial\Omega 
  \stackrel{\eqref{specialS1},\eqref{specialS2}}{=} 
  \S(w_*)\mres\partial\Omega 
  = \S_{y_*}(u_*)\mres\partial\Omega = 0
 \end{equation}
 by~Lemma~\ref{lemma:u*} and hence, by taking a larger~$W_{\Sg}$
 if necessary, we can assume without loss of generality that
 \begin{equation} \label{limsup1}
   \M(S\mres(\overline{\Omega}\setminus W_{\Sg})) + 
   \int_{\Omega\setminus W_{\Sg}} \abs{\nabla u_*}^k \leq \eta.
  \end{equation}
 
 \setcounter{step}{0}
 \begin{step}[Definition of~$u_\eps$]
  To define the recovery sequence near the boundary 
  of~$\Omega$, we apply Lemma~\ref{lemma:extension}
  to~$u_*$ and~$y_*$, and consider the map
  \[
   w_{\eps,y_*} :=  (\xi_\eps\circ\psi)\circ(u_* - y_*) \cdot w_*
   = (\xi_\eps\circ\psi)\circ(u_* - y_*) \cdot (\RR_{y_*}\circ u_*)
  \]
  (with~$\xi_\eps$, $\psi$ as in Lemma~\ref{lemma:extension}).
  Thanks to Lemma~\ref{lemma:extension} and an averaging argument, 
  by possibly modifying the value of~$y_*$ we have
  \begin{equation} \label{limsup1.5}
   \begin{split}
    E_\eps(w_{\eps,y_*}, \, \Omega\setminus W_{\Sg}) 
     &+ \eps^{-k} \L^{n+k}\left\{x\in\Omega\setminus W_{\Sg}
     \colon w_{\eps,y_*}(x)\neq w_{*}(x)\right\} \\
     &\lesssim \abs{\log\eps}
     \int_{\Omega\setminus W_{\Sg}}\abs{\nabla u_*}^k + 1
     \stackrel{\eqref{limsup1}}{\lesssim} 
     \eta\abs{\log\eps} + 1 .
   \end{split}
  \end{equation}
  Our recovery sequence will coincide with~$w_\eps$
  given by~\eqref{w_eps} in~$W$, where~$W_{\Sg}\csubset W\csubset\Omega$ 
  is the open set introduced in Section~\ref{sect:dipole}. 
  We need to interpolate between~$w_\eps$ and~$w_{\eps,y_*}$
  near~$W$. To this end, we take a small parameter~$\theta>0$, 
  and we let $D_\theta := \{x\in\Omega\setminus\overline{ W}
  \colon\dist(x, \,  W)< \theta\}$. 
  For~$x\in D_\theta$, let~$t_\theta(x):= \theta^{-1}\dist(x, \,  W)$.
  We define 
  \[
   u_\eps(x) := \begin{cases}
                 w_\eps(x) & \textrm{if } x\in\overline{ W} \\
                 (1 - t_\theta(x))w_{\eps}(x) + t_\theta(x) w_{\eps, y_*}(x) 
                  & \textrm{if } x\in D_\theta \\              
                 w_{\eps, y_*}(x) & \textrm{if } 
                  x\in\Omega\setminus(\overline{ W}\cup D_\theta).
                \end{cases}
  \]
  We have $u_\eps\in (L^\infty\cap W^{1,k}_v)(\Omega, \, \R^m)$
  and~$\sup_{\eps}\|u_\eps\|_{L^\infty(\Omega)}<+\infty$.
 \end{step}
 
 \begin{step}[Bounds on $E_\eps(u_\eps)$]
  The energy of~$u_\eps$ on~$\Omega\setminus(\overline{ W}\cup D_\theta)$
  is bounded from above by~\eqref{limsup1.5}. 
  The energy of~$u_\eps$ is bounded from above by Lemma~\ref{lemma:w_eps_int}:
  \begin{equation} \label{limsup2}
   \begin{split}
    \limsup_{\eps\to 0} \frac{E_\eps(u_\eps, \,  W)}{\abs{\log\eps}}
    &\leq (1 +  C\eta)\,\M(S) + C\,\M(S\mres(\overline{W}\setminus W_{\Sg})) \\
    &\stackrel{\eqref{limsup1}}{\leq} 
    (1 +  C\eta)\,\M(S) + C\eta.
   \end{split}
  \end{equation}
  It remains to estimate the energy of~$u_\eps$ on~$D_\theta$.
  We first note that
  $\abs{\nabla t_\theta} = \theta^{-1}$ and hence,
  \begin{equation} \label{limsup2.1}
   \abs{\nabla u_\eps} \leq \abs{\nabla w_\eps} + \abs{\nabla w_{\eps, y_*}}
   + \theta^{-1} \abs{w_\eps - w_{\eps, y_*}}
   \leq \abs{\nabla w_\eps} + \abs{\nabla w_{\eps, y_*}}
   + C\theta^{-1}.
  \end{equation}
  By Lemma~\ref{lemma:dipole}, $w = w_*$ a.e.~in~$\Omega\setminus W$
  and in particular, $w = w_*$ a.e.~in~$D_\theta$. Therefore,
  for a.e.~$x\in D_\theta$ such that $w_\eps(x) = w(x)$
  and~$w_{\eps, y_*}(x) = w_*(x)$, we have
  $u_\eps(x) = w_*(x)\in\NN$. Since the maps~$u_\eps$
  are uniformly bounded, we deduce that
  \begin{equation} \label{limsup2.2}
   f(u_\eps) \lesssim \one_{\{w_\eps\neq w\}} 
   + \one_{\{w_{\eps, y_*}\neq w_*\}}  .
  \end{equation}
  From~\eqref{limsup2.1} and~\eqref{limsup2.2}, we obtain
  \begin{equation*} 
   \begin{split}
    E_\eps(u_\eps, \, D_\theta) 
    &\lesssim E_\eps(w_\eps, \, D_\theta) 
     + E_\eps(w_{\eps,y_*}, \, D_\theta) 
     + \theta^{-k}\L^{n+k}(D_\theta)\\
    &\qquad\qquad + \eps^{-k} \L^{n+k}\{w_\eps\neq w\}
     + \eps^{-k} \L^{n+k}(D_\theta\cap \{w_{\eps, y_*}\neq w_*\}).
   \end{split}
  \end{equation*}
  The set~$\{w_\eps\neq w\}$ is the $\eps$-neighbourhood of
  $\spt S$, which is a locally polyhedral set of codimension~$k$, so
  \begin{equation} \label{limsup3}
   \L^{n+k}\{w_\eps\neq w\}\lesssim\eps^k
  \end{equation}
  (see Lemma~\cite[Lemma~8.3]{ABO2} and~\eqref{Psi_eps}).
  Moreover, $\L^{n+k}(D_\theta)\lesssim\theta$. Then,
  \begin{equation} \label{limsup3.5}
   \begin{split}
    E_\eps(u_\eps, \, D_\theta) 
    &\lesssim E_\eps(w_\eps, \, D_\theta) 
     + E_\eps(w_{\eps,y_*}, \, D_\theta) 
     + \eps^{-k} \L^{n+k}(D_\theta\cap \{w_{\eps, y_*}\neq w_*\})
     + \theta^{1-k} +1.
   \end{split}
  \end{equation}
  We choose~$\theta = \theta(\eps)$ in such a way that 
  $\theta(\eps)\to 0$ and $\theta(\eps)^{1-k}|\!\log\eps|^{-1}\to 0$
  as~$\eps\to 0$; for instance, we take $\theta(\eps):= |\!\log\eps|^{-1/(2k - 2)}$.
  With this choice of~$\theta$,
  from~\eqref{limsup3.5}, Lemma~\ref{lemma:w_eps_int}
  and~\eqref{limsup1.5} we deduce
  \begin{equation*} 
   \begin{split}
    \frac{E_\eps(u_\eps, \, D_{\theta(\eps)})}{\abs{\log\eps}}
    &\leq C_{w,W} \, \M(S\mres\overline{D_{\theta(\eps)}}) 
     + C\eta + \mathrm{o}_{\eps\to 0}(1) 
   \end{split}
  \end{equation*}
  where~$C_{w,W}$ is a constant that depends 
  on~$w$ and~$\dist(W, \, \partial\Omega)$,
  but not on~$\eps$. By taking the limit as~$\eps\to 0$,
  and recalling that~$\partial W$ is transverse to~$\spt S$,
  we conclude that
  \begin{equation} \label{limsup4}
   \begin{split}
    \limsup_{\eps\to 0}
     \frac{E_\eps(u_\eps, \, D_{\theta(\eps)})}{\abs{\log\eps}}
    &= C_{w,W} \, \M(S\mres\partial W) + C\eta = C\eta.
   \end{split}
  \end{equation}
  Combining~\eqref{limsup1.5}, \eqref{limsup2} 
  and~\eqref{limsup4}, the inequality~\eqref{limsup-goal} follows.
 \end{step}

 \begin{step}[$u_\eps\to w$ in~$W^{1,k-1}(\Omega)$]
  To complete the proof, it only remains to check~\eqref{limsup-goal-0}.
  As an intermediate step, we prove that
  $u_\eps \to w$ strongly in~$W^{1,k-1}(\Omega)$.
  Up to extraction of a subsequence, 
  we have $w_\eps\to w$ in~$W^{1,k-1}(W)$
  and~$w_{\eps, y_*}\to w_* = w$ in~$W^{1,k-1}(\Omega\setminus W)$
  by Lemma~\ref{lemma:w_eps_int} and Lemma~\ref{lemma:extension}, 
  respectively. Thus, we only need to check that
  \begin{equation} \label{limsup5}
   \int_{D_{\theta(\eps)}}\abs{\nabla u_\eps}^{k-1} \to 0
   \qquad \textrm{as } \eps\to 0.
  \end{equation}
  From~\eqref{limsup2.1}, using that~$w = w_*$ 
  a.e. on~$D_{\theta(\eps)}$, we deduce
  \begin{equation*} 
   \begin{split}
    \int_{D_{\theta(\eps)}} \abs{\nabla u_\eps}^{k-1}  \lesssim
    \int_{D_{\theta(\eps)}} \left( \abs{\nabla w_\eps}^{k-1}
     + \abs{\nabla w_{\eps, y_*}}^{k-1}
     +\frac{\abs{w_\eps - w}^{k-1}}{\theta(\eps)^{k-1}}
     + \frac{\abs{w_{\eps, y_*} - w_*}^{k-1}}{\theta(\eps)^{k-1}} \right) \!.
   \end{split}
  \end{equation*}
  The sequences~$w_\eps$ and~$w_{\eps, y_*}$ are strongly 
  compact in~$W^{1,k-1}_{\mathrm{loc}}(\Omega)$,
  $W^{1,k-1}(\Omega)$ respectively.
  Since $\L^{n+k}(D_{\theta(\eps)})\to 0$, we have
  \begin{equation*} 
   \begin{split}
    \int_{D_{\theta(\eps)}} \left( \abs{\nabla w_\eps}^{k-1}
     + \abs{\nabla w_{\eps, y_*}}^{k-1} \right) \to 0
     \qquad \textrm{as } \eps\to 0.
   \end{split}
  \end{equation*}
  Then, keeping in mind that $w_{\eps, y_*}$, $w_\eps$ are uniformly bounded,
  and using~\eqref{limsup1.5}, \eqref{limsup3}, we obtain
  \begin{equation*} 
   \begin{split}
    \int_{D_{\theta(\eps)}} \abs{\nabla u_\eps}^{k-1} 
     \lesssim \eps^k\abs{\log\eps} \theta(\eps)^{1-k}
     + \mathrm{o}_{\eps\to 0}(1).
   \end{split}
  \end{equation*}
  Now~\eqref{limsup5} follows, because
  we have chosen~$\theta(\eps)$ in such a way that 
  $\theta(\eps)^{1-k}|\!\log\eps|^{-1}\to 0$.
 \end{step}
 
 \begin{step}[Proof of~\eqref{limsup-goal-0}]
  Let us take a larger, bounded domain~$\Omega^\prime\supset\!\supset\Omega$
  and a map $V\in(L^\infty\cap W^{1,k})
  (\Omega^\prime\setminus\overline{\Omega}, \, \R^m)$
  with trace~$v$ on~$\partial\Omega$. We define
  \[
   \tilde{u}_{\eps} := \begin{cases}
                          u_\eps & \textrm{on } \Omega \\
                          V & \textrm{on } \Omega^\prime\setminus\Omega,
                         \end{cases} \qquad
   \tilde{w} := \begin{cases}
                          w & \textrm{on } \Omega \\
                          V & \textrm{on } \Omega^\prime\setminus\Omega.
                \end{cases}
  \]
  Since the traces of~$u_\eps$, $w$ agree with that 
  of~$V$ on~$\partial\Omega$, we have 
  $\tilde{u}_\eps\in (L^\infty\cap W^{1,k})(\Omega^\prime, \, \R^m)$, 
  $\tilde{w}\in (L^\infty\cap W^{1,k-1})(\Omega^\prime, \, \R^m)$,
  $\sup_\eps\|\tilde{u}_{\eps}\|_{L^\infty(\Omega^\prime)}<+\infty$
  and~$\tilde{u}_{\eps}\to \tilde{w}$ strongly in~$W^{1,k-1}(\Omega^\prime)$.
  By continuity of~$\S$ \cite[Theorem~3.1]{CO1},
  this implies
  \begin{equation*} 
   \int_{B^m(0, \, \dist(\NN, \, \X))} \F_{\Omega^\prime}\left(
   \S_{y}(\tilde{u}_\eps) - 
   \S_{y}(\tilde{w})\right) \d y \to 0
   \qquad \textrm{as } \eps\to 0.
  \end{equation*}
  Since~$\tilde{u}_\eps=\tilde{w}$ a.e.
  on~$\Omega^\prime\setminus\overline{\Omega}$ 
  and the operator~$\S$ is local \cite[Corollary~1]{CO1},
  we have $\S_{y}(\tilde{u}_\eps) 
  \mres(\Omega^\prime\setminus\overline{\Omega}) = 
  \S_{y}(\tilde{w}) \mres(\Omega^\prime\setminus\overline{\Omega})$
  for a.e.~$y$, and hence $\S_{y}(\tilde{u}_\eps) - 
  \S_{y}(\tilde{w})$ is supported in~$\overline{\Omega}$
  for a.e.~$y$. 
  For chains supported in a compact subset of~$\Omega^\prime$,
  the relative flat norm~$\F_{\Omega^\prime}$ is equivalent to~$\F$
  (see e.g. \cite[Remark~2.2]{CO1}). Therefore, we have
  \begin{equation} \label{limsup6}
   \int_{B^m(0, \, \dist(\NN, \, \X))}
    \F\left(\S_{y}(\tilde{u}_\eps) 
    - \S_{y}(\tilde{w})\right) \d y \to 0
   \qquad \textrm{as } \eps\to 0.
  \end{equation}
  By \cite[Eq.~(3.25)]{CO1} we have
  $\S_{y}(\tilde{u}_\eps)\mres\partial\Omega = 0$ and
  $\S_{y}(\tilde{w})\mres\partial\Omega 
  = \S_{y}(V)\mres\partial\Omega = 0$ for a.e.~$y$, so
  \begin{equation*} 
   \begin{split}
    \S_{y}(\tilde{u}_\eps) - \S_{y}(\tilde{w})
    = \left(\S_{y}(\tilde{u}_\eps) - \S_{y}(\tilde{w})\right)\mres\Omega.
   \end{split}
  \end{equation*}
  Since~$\tilde{u}_\eps=u_\eps$ and~$\tilde{w} = w$ a.e. on~$\Omega$,
  \cite[Corollary~1]{CO1} implies
  \begin{equation*}
   \begin{split}
    \S_{y}(\tilde{u}_\eps) - \S_{y}(\tilde{w})
    = \left(\S_{y}(u_\eps) - \S(w) \right)\mres\Omega
    = \S_{y}(u_\eps) - S\mres\Omega 
   \end{split}
  \end{equation*}
  and finally, recalling~\eqref{limsup0}, we obtain
    \begin{equation} \label{limsup7}
   \begin{split}
    \S_{y}(\tilde{u}_\eps) - \S_{y}(\tilde{w})
     = \S_{y}(u_\eps) - S.
   \end{split}
  \end{equation}
  From \eqref{limsup6} and~\eqref{limsup7} we 
  deduce~\eqref{limsup-goal-0}, and the proof is complete. \qedhere
 \end{step}
\end{proof}

The proof of Proposition~\ref{prop:main-nobd}.(ii)
follows along the same lines, and in fact, is even simpler.

\begin{proof}[Proof of Proposition~\ref{prop:main-nobd}]
 Let~$S$ be an $n$-dimensional relative boundary of finite mass
 --- that is, $S$ has the form~$S = (\partial R)\mres\Omega$,
 where~$R$ is an $(n+1)$-chain of finite mass such that
 $\M(\partial R)<+\infty$. By a density argument, 
 we can assume without loss of generality that~$R$ is polyhedral.
 By Proposition~\ref{prop:approx_mult_noappendix}, we can also assume 
 that~$\partial R$ takes its multiplicities in the set~$\Sg\subseteq\GN$
 defined by~\eqref{generators-intro}. Finally, by translating 
 the support of~$R$ and applying Thom's transversality theorem, we
 can assume that 
 \begin{equation} \label{limsupnobd0}
  (\partial R)\mres\partial\Omega = 0.
 \end{equation}
 Let~$w_*\in\NN$ be a constant, and let~$\eta>0$
 be a small parameter. We repeat the same arguments of 
 Lemma~\ref{lemma:dipole} and modify the constant map~$w_*$
 in a neighbourhood of~$\spt R$.
 We obtain a new map~$w\colon\R^{n+k}\to\NN$ that
 \begin{enumerate}[label=(\roman*)]
  \item has a nice singularity at~$(\spt(\partial R), \, (\spt(\partial R))_{n-1})$;
  \item satisfies~$\S(w) = \S(w_*) + \partial R = \partial R$; 
  \item is~$\eta$-minimal.
 \end{enumerate}
 Let
 \[
  u_\eps(x) := \left\{\frac{\dist(x, \, \spt(\partial R))}{\eps},
  \, 1\right\} w(x) \qquad \textrm{for } x\in\R^{n+k}.
 \]
 By the same computations as in Lemma~\ref{lemma:w_eps_int}, we
 obtain that~$w_\eps\to w$ strongly in~$W^{1,k-1}(\R^{n+k})$
 and that
 \begin{equation} \label{limsupnobd1}
  \limsup_{\eps\to 0} 
  \frac{E_\eps(u_\eps, \, \Omega)}{\abs{\log\eps}}
  \leq (1 + C\eta) \, \M((\partial R)\mres\Omega^\prime),
 \end{equation}
 where~$\Omega^\prime\supset\!\supset\Omega$ is any open set,
 with polyhedral boundary, such that~$\partial\Omega$ 
 is transverse to~$\spt(\partial R)$. (The latter condition 
 is generic, by Thom's transversality theorem.)
 The continuity of~$\S$ \cite[Theorem~3.1]{CO1},
 together with the fact that the operator~$\S$
 is local~\cite[Corollary~1]{CO1}, implies 
 $\S(u_{\eps|\Omega})\to \S(w)\mres\Omega
 = (\partial R)\mres\partial\Omega = S$ in~$Y$.
 We let~$\Omega^\prime\searrow\Omega$ in~\eqref{limsupnobd1},
 and we deduce
 \[
  \limsup_{\eps\to 0} 
  \frac{E_\eps(u_\eps, \, \Omega)}{\abs{\log\eps}}
  \leq (1 + C\eta) \, \M((\partial R)\mres\overline{\Omega})
  \stackrel{\eqref{limsupnobd0}}{=}
  (1 + C\eta) \, \M(S).
 \]
 Since~$\eta$ may be taken arbitrarily small,
 Proposition~\ref{prop:main-nobd}.(ii) follows,
 by a diagonal argument.
\end{proof}

\section{Compactness and lower bounds}
\label{sect:liminf}

\subsection{A local compactness result}
\label{sect:loc_liminf}

The aim of this section is to prove Statement~(i) of Theorem~\ref{th:main}.
As an intermediate step, we will prove the following result,
which is a local version of Theorem~\ref{th:main}.(i).
We recall that we have fixed a number~$\delta^*\in (0, \, \dist(\NN, \, \X))$
and that $B^* := B^m(0, \, \delta^*)\subseteq\R^m$.

\begin{prop} \label{prop:liminf-local}
 Let~$U\csubset U^\prime$ be bounded domains in~$\R^{n+k}$.
 Let~$(u_\eps)_\eps$ be a countable sequence of maps
 in~$W^{1,k}(U^\prime, \, \R^m)$ such that
 \begin{equation} \label{H}
  \sup_{\eps>0} \frac{E_\eps(u_\eps, \, U^\prime)}{\abs{\log\eps}} < +\infty.
 \end{equation}
 Then, there exist a (non-relabelled) subsequence
 and a finite-mass chain $S\in\M_n(\overline{U}^\prime; \, \GN)$ such that
 \begin{gather}
  \lim_{\eps\to 0}\int_{B^*} \F_U(\S_y(u_\eps) - S) \, \d y = 0 
   \label{liminf:flat} \\
  \M(S\mres U) \leq \liminf_{\eps\to 0} 
   \frac{E_\eps(u_\eps, \, U^\prime)}{\abs{\log\eps}} \label{liminf:mass}
 \end{gather}
 ($\F_U$ is the relative flat norm, see~\eqref{flat_relative}).
\end{prop}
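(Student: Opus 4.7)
The plan is to prove compactness and the lower bound simultaneously, via a mesoscopic cube-decomposition of~$U^\prime$ combined with Proposition~\ref{prop:JS} as the basic local lower bound. The rescaled energy measures
\[
  \mu_\eps := \frac{1}{\abs{\log\eps}}\left(\frac{1}{k}\abs{\nabla u_\eps}^k + \frac{f(u_\eps)}{\eps^k}\right)\d x\mres U^\prime
\]
have uniformly bounded mass by~\eqref{H}, so along a subsequence $\mu_\eps \to \mu$ weakly-$*$ for some finite Radon measure~$\mu$ on~$\overline{U}^\prime$. Property~\eqref{S:mass} only yields $\int_{B^*}\M(\S_y(u_\eps))\,\d y \lesssim \abs{\log\eps}$, which diverges; the key point is that the topological lower bound of Proposition~\ref{prop:JS} will force the ``essential'' part of $\S_y(u_\eps)$, captured by a grid approximation, to have uniformly bounded mass.

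The heart of the construction is as follows. Fix a scale $r_\eps\to 0$ slow enough that $r_\eps^{-k}/\abs{\log\eps}\to 0$, and cover $\R^{n+k}$ by an orthogonal grid $\mathcal{G}_{r_\eps}$ of $(n+k)$-cubes of side~$r_\eps$; each cube~$Q$ meeting~$U^\prime$ writes as $Q = F\times K$ for an $n$-face~$F$ and orthogonal $k$-face~$K$. By a Fubini/averaging argument over $y\in B^*$ and over translations of the grid (using~\eqref{S:mass}), one may, for a.e.~$y$, choose the grid so that for every cube~$Q$ and a.e.~$x^\prime\in F$, the map $w_\eps(x^\prime,\cdot) := \RR_y(u_\eps(x^\prime,\cdot) - y)$ admits a $W^{1-1/k,k}$ trace on $\partial(\{x^\prime\}\times K)$ with a well-defined homotopy class $\sigma_\eps^y(x^\prime,Q)\in\GN$; by~\eqref{S:intersection}, this class equals the intersection index of $\S_y(u_\eps)$ with $\llbracket\{x^\prime\}\times K\rrbracket$. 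Proposition~\ref{prop:JS} on each $k$-slice then gives
\[
  E_\eps(u_\eps,\{x^\prime\}\times K) \geq \abs{\sigma_\eps^y(x^\prime,Q)}_*\abs{\log\eps} - C,
\]
and integration in $x^\prime$ together with summation over cubes produce a polyhedral $n$-chain
\[
  T_\eps^y := \sum_{Q}\sum_{\sigma\in\GN}\sigma\,\llbracket\{x^\prime\in F : \sigma_\eps^y(x^\prime,Q)=\sigma\}\rrbracket
\]
supported on the $n$-skeleton of the grid, satisfying
\[
  \abs{\log\eps}\,\M(T_\eps^y)\leq E_\eps(u_\eps,U^\prime) + \frac{C\,\L^{n+k}(U^\prime)}{r_\eps^k}.
\]
By the choice of $r_\eps$, the remainder is negligible, so $\sup_\eps\M(T_\eps^y)<+\infty$ for a.e.~$y$.

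Federer--Fleming compactness then extracts a subsequence along which $T_\eps^y \to S$ in flat norm, for some $S\in\M_n(\overline{U}^\prime;\GN)$. To complete~\eqref{liminf:flat}, I would note that by construction $T_\eps^y$ and $\S_y(u_\eps)$ share the same intersection index with every $k$-slice of the grid (by~\eqref{S:intersection} and the definition of $\sigma_\eps^y$), so their difference has vanishing intersection index with every such slice; a deformation/slicing argument of Federer--Fleming type, applied to this difference and combined with~\eqref{S:mass} to control the slice-wise masses of $\S_y(u_\eps)$ averaged in~$y$, then yields $\int_{B^*}\F_U(\S_y(u_\eps) - T_\eps^y)\,\d y \to 0$. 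The lower bound~\eqref{liminf:mass} on the open set~$U$ follows from lower semi-continuity of mass on open sets, applied to the extracted subsequence. The main obstacle, and the point requiring the most care, will be this last flat-approximation step: because $\M(\S_y(u_\eps))$ itself may diverge like $\abs{\log\eps}$, the naive bound $\F_U(\S_y(u_\eps) - T_\eps^y) \lesssim r_\eps\,\M(\S_y(u_\eps))$ is insufficient, and one must exploit the slice-matching of $T_\eps^y$ (rather than just the closeness of supports) to produce a sharper estimate uniform in~$\eps$.
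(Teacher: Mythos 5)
Your overall strategy — discretize on a mesoscopic grid, use a Jerrard--Sandier lower bound slice by slice to bound the mass of a grid-supported chain, then pass to the flat limit — is indeed the approach the paper takes. But there is a structural flaw in the way you set up the discrete chain. You fix a single splitting of each cube as $Q = F\times K$ with $F$ an $n$-face and $K$ a $k$-face, so $T_\eps^y$ is supported on a union of \emph{parallel} $n$-faces. A chain of this form can never be flat-close to $\S_y(u_\eps)$: take $n=1$, $k=2$, $\NN=\SS^1$, and a configuration whose topological singular set is a macroscopic circle lying in the plane $\{x_3=0\}$, with $K$ the face in the $(e_1,e_2)$-direction; then for a.e.~$x^\prime=x_3$ the slice $\{x_3\}\times K$ misses the circle, $\sigma_\eps^y(x^\prime,Q)\equiv 0$, hence $T_\eps^y=0$, while $\F(\S_y(u_\eps))$ is bounded below. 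This is why the paper's chain~$T^\eps$ in~\eqref{j_eps} sums over \emph{all}~$k$-cells of the grid, not just those orthogonal to one fixed $n$-plane, so that it is supported on the full dual $n$-skeleton; the price is that its mass bound \eqref{mass_T} carries a~$\delta^{-1}$ factor, and the sharp inequality~\eqref{liminf:mass} is then recovered only through the direction-by-direction projected bound~\eqref{mass_Tproj} of Lemma~\ref{lemma:mass_T}, Lemma~\ref{lemma:liminf-L}, and the mass-through-projections characterisation of Lemma~\ref{lemma:mass_chain-nointro}. Your ``one direction plus lower semicontinuity'' route either gives a chain that does not match $\S_y(u_\eps)$ in flat norm, or, if you sum over all directions, loses the sharp constant; the paper's two-step argument (all directions for compactness, one direction for the energy bound, then combine) is what reconciles the two.

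Two further points. First, Proposition~\ref{prop:JS} requires the boundary trace to be $\NN$-valued, which the slices $u_\eps(x^\prime,\cdot)_{|\partial K}$ are not; the paper instead uses Lemma~\ref{lemma:lowerbounds}, which only asks for $\dist(u,\NN)\leq\delta_1$ near $\partial Q_h$ but carries an extra term $C_0\,h\,r\,E_\eps(u,\partial Q_h)$, and the closeness hypothesis itself has to be established uniformly on the grid skeleton via Lemma~\ref{lemma:grid_conv}. Your clean estimate $E_\eps(u_\eps,\{x^\prime\}\times K)\geq\abs{\sigma}_*\abs{\log\eps}-C$ omits both the boundary-energy correction and the verification that the slice boundary data are admissible, so the mass bound for $T_\eps^y$ is not actually justified. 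Second, you correctly identify that the naive estimate $\F_U(\S_y(u_\eps)-T_\eps^y)\lesssim r_\eps\M(\S_y(u_\eps))$ fails, but you leave the remedy unspecified; the paper's resolution is the averaged weighted estimate~\eqref{grid_mu}, which controls $\int_{B^*}\int\dist^{-n}(x,R^\eps_{k-1})\,\d\mu_{\eps,y}(x)\,\d y$ by a grid-selection argument (Lemma~\ref{lemma:grid}) and then feeds it into a Federer--Fleming deformation in Lemma~\ref{lemma:projection}. Without some analogue of~\eqref{grid_mu}, the flat-convergence step does not close.
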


Throughout this section, we fix bounded
domains~$U\csubset U^\prime\subseteq\R^{n+k}$ 
and a countable sequence~$(u_\eps)$ in~$W^{1,k}(U^\prime, \, \R^m)$
that satisfies~\eqref{H}. By an approximation argument,
using the continuity of~$\S$ 
(Proposition~\ref{prop:S} and~\cite[Theorem~3.1]{CO1}),
we can assume without loss of generality that the maps~$u_\eps$ are \emph{smooth and bounded}.
For any~$\eps>0$ and~$y\in B^*$, we define the measure
\begin{equation} \label{mu_y,eps}
 \mu_{\eps, y} (A) := \M(\S_y(u_\eps)\mres (A\cap U^\prime))
 \qquad \textrm{for any Borel set } A\subseteq\R^{n+k}.
\end{equation}
Thanks to~\eqref{S:mass} (Proposition~\ref{prop:S}),
$\mu_{\eps,y}$ is a 
bounded Radon measure for a.e.~$y$.

\subsubsection{Choice of a grid}

As in~\cite{ABO2}, we define a grid~$\GG$ of size~$h > 0$
as a collection of closed cubes of the form 
\begin{equation} \label{grid}
 \GG = \GG(a, \, h) := \left\{a + h z + [0, \ h]^{n+k} \colon z\in\Z^{n+k}\right\} \! ,
\end{equation}
for some~$a\in\R^{n+k}$. For~$j\in\N$, $0 \leq j\leq n+k$, 
we denote by~$\GG_j$ the collection of the (closed)~$j$-cells of~$\GG$, 
and we define the~$j$-skeleton of~$\GG$, $R_j := \cup_{K\in\GG_j} K$.
We let~$\tilde{R}_{k}$ be the union of all the cells $K\in\GG_k$
that are parallel to the $k$-plane spanned by~$\{\e_{n+1}, \, \ldots, \, \e_{n+k}\}$.
Given an open set~$V\subseteq U^\prime$, we denote by~$R_k(V)$ the union of
the~$k$-cells~$K\in\GG$ such that~$K\cap V \neq\emptyset$
(so $R_k(V)\supseteq R_k\cap V$). Given~$\GG = \GG(a, \, h)$, the grid
\[
 \GG^\prime := \GG(a + (h/2, \, h/2, \, \ldots, \, h/2), \, h)
\]
will be called the dual grid of~$\GG$.
We will denote by~$\GG^\prime_k$ the collections of~$k$-cells of~$\GG^\prime$
and by~$R^\prime_k$ its~$k$-skeleton. For each $K\in\GG_k$
there exists a unique~$K^\prime\in\GG^\prime_{n}$,
called the dual cell of~$K$, such that~$K\cap K^\prime\neq\emptyset$.

We are now going to construct a sequence of grids~$\GG^\eps$
with suitable properties. The construction is
analogous to~\cite[Lemma~3.11]{ABO2}. 
Let us take a function~$h\colon (0, 1)\to\R^+$ such that
\begin{equation} \label{h}
 \eps^\alpha \ll h(\eps) \ll \abs{\log\eps}^{-1} \qquad 
 \textrm{for any } \alpha >0, \textrm{ as } \eps\to 0.
\end{equation}
For instance, we may take $h(\eps) := \abs{\log\eps}^{-2}$.

\begin{lemma} \label{lemma:grid}
 For any fixed parameter~$\delta > 0$ and any~$\eps < 1$
 there exists a grid~$\GG^\eps$ of size~$h(\eps)$ that satisfies the following properties:
 \begin{gather}
  h(\eps)^n \, E_\eps(u_\eps, \, \tilde{R}^\eps_{k}\cap U^\prime) \leq (1 +  \delta) 
   E_\eps(u_\eps, \, U^\prime) \label{grid_ktilde} \\
  h(\eps)^n \, E_\eps(u, \, R^\eps_{k}\cap U^\prime) 
   \lesssim \delta^{-1} E_\eps(u_\eps, \, U^\prime) \label{grid_k} \\
  h(\eps)^{n+1} \, E_\eps(u, \, R^\eps_{k-1}\cap U^\prime) 
   \lesssim \delta^{-1} E_\eps(u_\eps, \, U^\prime) \label{grid_k-1} \\
  h(\eps)^n \int_{B^*} \int_{U^\prime} 
   \dfrac{\d\mu_{\eps, y}(x)}{\dist^n(x, \, R^\eps_{k-1})} \, \d y
   \lesssim \delta^{-1} E_\eps(u_\eps, \, U^\prime) . \label{grid_mu}
 \end{gather}
 Here~$\mu_{\eps,y}$ is the measure defined by~\eqref{mu_y,eps}.
\end{lemma}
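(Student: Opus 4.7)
The construction is by averaging over translations of the grid, in the spirit of~\cite{ABO2}. Fix once and for all a reference grid $\GG^0$ of size~$h(\eps)$, and for $a\in [0,h(\eps)]^{n+k}$ let $\GG^\eps_a := a + \GG^0$. I will show that each of the four quantities in \eqref{grid_ktilde}--\eqref{grid_mu}, viewed as a function of~$a$, has average (over $a\in [0,h(\eps)]^{n+k}$) bounded by a constant multiple of $E_\eps(u_\eps,U^\prime)$; then Markov's inequality, applied with constants chosen proportional to $\delta^{-1}$ in \eqref{grid_k}--\eqref{grid_mu}, shows that the set of ``bad'' translations where any one of the bounds fails has measure strictly less than $h(\eps)^{n+k}$. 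Consequently a good~$a$ exists.

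For \eqref{grid_ktilde}, write $\tilde R_k^a = \bigcup_{z\in\Z^n} (a_1 + hz_1,\dots,a_n+hz_n)\times\R^k$: the $k$-skeleton $\tilde R_k^a$ is a union of $k$-planes parallel to $\mathrm{span}(\e_{n+1},\dots,\e_{n+k})$. For any nonnegative Borel $g$, splitting $a=(a^\prime,a^{\prime\prime})\in [0,h]^n\times[0,h]^k$ and changing variables $b=a^\prime+hz$ gives
\[
 \int_{[0,h]^{n+k}}\!\!\int_{\tilde R_k^a} g\,\d\H^k\,\d a = h^k\int_{\R^{n+k}}g\,\d x,
\]
so the average of $h^n\int_{\tilde R_k^a\cap U^\prime} g\,\d\H^k$ over~$a$ equals $\int_{U^\prime}g$. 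Taking $g$ to be the energy density of~$u_\eps$ and applying Markov's inequality, the set of $a$'s violating \eqref{grid_ktilde} has measure at most $h(\eps)^{n+k}/(1+\delta)$. The bounds \eqref{grid_k} and \eqref{grid_k-1} follow from the same identity, carried out for the full $k$- and $(k-1)$-skeletons: the average of the appropriate quantity picks up combinatorial factors $\binom{n+k}{k}$ and $\binom{n+k}{k-1}$ (one for each axis-aligned orientation of the faces), and the power of~$h$ shifts by one in \eqref{grid_k-1} because $R_{k-1}$ has one greater codimension than $R_k$.

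The fourth bound \eqref{grid_mu} is slightly more delicate, because the integrand involves $\dist^{-n}(\cdot,R_{k-1}^a)$ rather than a Hausdorff measure on the skeleton. The key is the pointwise average
\[
 \int_{[0,h]^{n+k}} \dist^{-n}(x,R_{k-1}^a)\,\d a \;=\; \int_{[0,h]^{n+k}} \dist^{-n}(y,R_{k-1}^0)\,\d y \;\lesssim\; h^{k}
\]
for every $x$, obtained by translation-invariance of the integral and a scaling $y=hy^\prime$, combined with the fact that $R_{k-1}^0$ has codimension $n+1$ so that $\dist^{-n}(\cdot,R_{k-1}^0)$ is locally integrable. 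Applying Fubini to interchange the integration in~$a$ with the integrations in~$x$ (against $\mu_{\eps,y}$) and in~$y$, I get
\[
 \int_{[0,h]^{n+k}} \!\left(h^n\int_{B^*}\!\int_{U^\prime}\frac{\d\mu_{\eps,y}(x)}{\dist^n(x,R_{k-1}^a)}\,\d y\right)\d a \;\lesssim\; h^{n+k}\int_{B^*} \M(\S_y(u_\eps)\mres U^\prime)\,\d y.
\]
By property~\eqref{S:mass} in Proposition~\ref{prop:S}, the right-hand side is $\lesssim h(\eps)^{n+k}\int_{U^\prime}\abs{\nabla u_\eps}^k \lesssim h(\eps)^{n+k} E_\eps(u_\eps,U^\prime)$, so the average of the quantity in \eqref{grid_mu} is $\lesssim E_\eps(u_\eps,U^\prime)$.

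\textbf{Main obstacle.} Combining the four estimates requires showing that a single~$a$ realises all four bounds. By Markov, taking the implicit constants in \eqref{grid_k}--\eqref{grid_mu} large enough (but still $\lesssim\delta^{-1}$) ensures that each of the three associated bad sets has relative measure at most $\delta/(10(1+\delta))$ in $[0,h(\eps)]^{n+k}$, while the good set for \eqref{grid_ktilde} has relative measure at least $\delta/(1+\delta)$. The intersection is therefore nonempty. The only genuinely technical point is the bound $\int_{[0,h]^{n+k}}\dist^{-n}(x,R_{k-1}^a)\,\d a \lesssim h^k$, which depends on correctly matching the codimension of~$R_{k-1}$ with the exponent~$n$ so that the scaling produces the right power of~$h$; this is exactly where the integrability of the singular factor~$\dist^{-n}$ is crucial.
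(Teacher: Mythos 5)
Your proposal is correct and follows essentially the same route as the paper: average each quantity over translations of the grid, then use a Markov/pigeonhole argument to find a single good translation. The one small difference is cosmetic: where the paper cites \cite[Lemma~5.2]{FedererFleming} to obtain
\[
 h(\eps)^n \fint_{(0,h(\eps))^{n+k}}\!\!\left(\int_{U^\prime}\frac{\d\mu_{\eps,y}(x)}{\dist^n(x,R^\eps_{k-1}(a))}\right)\d a \lesssim \mu_{\eps,y}(\R^{n+k}),
\]
you derive the underlying pointwise estimate $\int_{[0,h]^{n+k}}\dist^{-n}(x,R_{k-1}^a)\,\d a\lesssim h^k$ directly by translation-invariance and a scaling argument, which is exactly what that lemma encapsulates, and then apply Fubini and~\eqref{S:mass} as in the paper. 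Your identification of the codimension~$n+1$ of $R_{k-1}^0$ as the reason $\dist^{-n}$ is locally integrable, and as the source of the extra power of~$h$ between \eqref{grid_k} and \eqref{grid_k-1}, is precisely the point. One tiny caveat worth noting: in the final pigeonhole step, making the bad set for each of \eqref{grid_k}--\eqref{grid_mu} have relative measure $\leq \delta/(10(1+\delta))$ forces the implicit constant to be $\gtrsim (1+\delta)$, which is harmless when $\delta$ is restricted to (say) $(0,1]$ — the regime of interest — but strictly speaking the constant in $\lesssim$ cannot be chosen uniformly over all $\delta>0$; the paper's reference to \cite[Lemma~8.4]{ABO2} has the same implicit restriction, so this is a convention issue rather than a gap in your argument.
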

\begin{proof}
 We take a grid~$\GG^\eps := \GG(a, \, h(\eps))$ of the form~\eqref{grid}.
 We claim that it is possible to choose~$a\in (0, \, h(\eps))^{n+k}$ in such a way
 that~\eqref{grid_ktilde}--\eqref{grid_mu} are satisfied. 
 For \eqref{grid_ktilde}--\eqref{grid_k-1}, 
 we can repeat verbatim the arguments in~\cite{ABO2}.
 As for~\eqref{grid_mu}, let us call~$R^\eps_{k-1}(a)$ the $(k-1)$-skeleton 
 of the grid $\GG(a, \, h(\eps))$.
 Thanks to~\eqref{S:mass} in Section~\ref{sect:setting}, $\mu_{\eps,y}$
 is a finite, non-negative Radon measure for a.e.~$y\in B^*$.
 By applying \cite[Lemma~5.2]{FedererFleming}, 
 together with a scaling argument, we obtain
 \[
  h(\eps)^n \fint_{(0, \, h(\eps))^{n+k}}
  \left(\int_{U^\prime} \frac{\d\mu_{\eps,y}(x)}
  {\dist^n(x, \,R^\eps_{k-1}(a))} \right)
  \d\L^{n+k}(a) \lesssim \mu_{\eps, y}(\R^{n+k})
  = \M(\S_y(u_\eps)\mres U^\prime)
 \]
 for a.e~$y\in B^*$. 
 By integrating the previous inequality with respect to~$y$
 and applying~\eqref{S:mass}, we obtain
 \[
  h(\eps)^n\fint_{(0, \, h(\eps))^{n+k}}
  \left(\int_{B^*}\int_{U^\prime} \frac{\d\mu_{\eps,y}(x)} 
  {\dist^n(x, \,R^\eps_{k-1}(a))} \, \d y\right)
  \d\L^{n+k}(a) \lesssim \norm{\nabla u_\eps}_{L^k(U^\prime)}^k
  \lesssim E_\eps(u_\eps, \, U^\prime).
 \]
 Now the lemma follows by an averaging argument, see e.g.~\cite[Lemma~8.4]{ABO2}.
\end{proof}

Throughout the rest of this section, we suppose that~\eqref{h} is satisfied, 
we fix~$\delta\in (0, \, 1)$ and we consider the sequence of
grids~$\GG^\eps$ given by Lemma~\ref{lemma:grid}.
Without loss of generality, 
we will also assume that
\begin{equation} \label{RepsU}
 R^\eps_{n+k}(\overline{U}) \csubset U^\prime
\end{equation}
($R^\eps_{n+k}(\overline{U})$ is the union of the closed 
cubes~$K\in\GG^\eps$ such that~$K\cap\overline{U}\neq\emptyset$).

\begin{lemma} \label{lemma:grid_conv}
 For any~$\alpha \in (0, \, k/(k^2-k+2))$, there holds
  \begin{equation*} 
   \sup_{x\in R^\eps_{k-1}(U)} 
   \dist(u_\eps(x), \, \NN) \leq  
   \frac{C(\delta, \, \alpha) \, \eps^\alpha}{h(\eps)^{(n+k)/2}} 
   \left(E_\eps(u_\eps, \, U^\prime) + 1\right)^{1/2},
  \end{equation*}
  where~$C(\delta, \, \alpha)$ is a positive constant that 
  only depends on~$\NN$, $k$, $f$, $n$, $\delta$ and~$\alpha$.
\end{lemma}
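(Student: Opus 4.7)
My approach combines Morrey's embedding $W^{1,k}(K) \hookrightarrow C^{0,1/k}(K)$ on a $(k-1)$-dimensional cube $K$ (which is the critical Morrey case) with an averaging argument driven by the non-degeneracy hypothesis~\eqref{hp:non-degeneracy}.

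Fix $x \in R^\eps_{k-1}(U)$, let $K$ be a $(k-1)$-cell of $\GG^\eps$ containing $x$, and write $E := E_\eps(u_\eps, U^\prime)$. From~\eqref{grid_k-1} applied to this single cell,
\[
 \int_K \abs{\nabla u_\eps}^k \, \d\H^{k-1} \lesssim \frac{E}{\delta \, h(\eps)^{n+1}},
 \qquad
 \int_K f(u_\eps) \, \d\H^{k-1} \lesssim \frac{\eps^k \, E}{\delta \, h(\eps)^{n+1}}.
\]
For any $r \in (0, h(\eps)]$, the set $B(x,r) \cap K$ has $\H^{k-1}$-measure at least $c \, r^{k-1}$, so by averaging I can find $x_0 \in B(x,r) \cap K$ with $f(u_\eps(x_0)) \lesssim \eps^k E / (\delta \, r^{k-1} \, h(\eps)^{n+1})$. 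Combined with~\eqref{hp:non-degeneracy} this gives
\[
 \dist(u_\eps(x_0), \, \NN) \lesssim
 \frac{\eps^{k/2} \, E^{1/2}}{\delta^{1/2} \, r^{(k-1)/2} \, h(\eps)^{(n+1)/2}},
\]
while Morrey's inequality on the cube $K$ at the critical exponent $1/k$ yields
\[
 \abs{u_\eps(x) - u_\eps(x_0)}
 \lesssim r^{1/k} \, \norm{\nabla u_\eps}_{L^k(K, \H^{k-1})}
 \lesssim \frac{r^{1/k} \, E^{1/k}}{\delta^{1/k} \, h(\eps)^{(n+1)/k}}.
\]

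I would then sum the two bounds via the triangle inequality and optimise over $r$. The two $r$-exponents balance when $r^{(k-1)/2 + 1/k} = r^{(k^2-k+2)/(2k)} \sim \eps^{k/2}$, i.e.\ $r = \eps^{k^2/(k^2-k+2)}$, producing the sharp $\eps$-exponent $k/(k^2-k+2)$. Since the lemma only asks for $\alpha$ strictly smaller, I would pick $r := \eps^\beta$ with $\beta$ slightly below this optimal value: this provides enough slack to (a) absorb $E^{1/k}$ into $(E+1)^{1/2}$ (using $k \geq 2$ and $E+1 \geq 1$) and (b) replace both $h(\eps)^{-(n+1)/2}$ and $h(\eps)^{-(n+1)/k}$ by the single factor $h(\eps)^{-(n+k)/2}$, which dominates both as $h \leq 1$ since $(n+1)/2 \leq (n+k)/2$ and $(n+1)/k \leq (n+k)/2$ whenever $k \geq 2$. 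The constraint $r \leq h(\eps)$ is automatic for small $\eps$, since~\eqref{h} guarantees $h(\eps) \gg \eps^\alpha$ for every $\alpha > 0$.

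The main subtlety lies in the criticality of the Morrey embedding: Morrey alone provides no $\eps$-decay (the $1/k$-Hölder constant is scale-invariant in the sense that matters here), while pure averaging of $f$ over the full cell $K$ requires $r = h(\eps)$ and leads to a much worse $h$-dependence. Only by interpolating between the two at the intermediate scale $r = \eps^\beta$ can one simultaneously extract the $\eps$-exponent $k/(k^2-k+2)$ and the $h$-exponent $(n+k)/2$ claimed in the lemma.
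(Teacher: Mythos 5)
Your proof is correct but takes a genuinely different route from the paper's. The paper works with $d_\eps := \dist(u_\eps, \NN)$ on each $(k-1)$-cell and proceeds in three steps: a weighted Young inequality recasts the pointwise energy density as (a constant times) $\eps^{-k\lambda}\abs{\nabla(G\circ d_\eps)}^{k-k\lambda}$, with $G(t) = t^{2\lambda/(k-k\lambda)+1}$ and $\lambda<1/k$; the (still supercritical) Sobolev embedding of $W^{1,k-k\lambda}$ on a $(k-1)$-cell then controls $\mathrm{osc}\,(G\circ d_\eps)$, which transfers to $\mathrm{osc}\,d_\eps$ via the H\"older continuity of $G^{-1}$; and the cell average of $d_\eps$ is bounded directly from the potential term. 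Letting $\lambda\nearrow 1/k$ yields the claimed exponent. You replace the integrand-interpolation parameter $\lambda$ with a spatial scale $r$: averaging the potential over $B(x,r)\cap K$ produces a point $x_0$ where $\dist(u_\eps(x_0),\NN)$ is small, Morrey at the \emph{fixed} supercritical exponent $k$ controls $\abs{u_\eps(x)-u_\eps(x_0)}$, and you optimise over $r$. Your route is somewhat more elementary (no auxiliary $G$), and because Morrey is applied in a single non-degenerate regime you could in fact take the optimal $r=\eps^{k^2/(k^2-k+2)}$ and obtain the \emph{endpoint} $\alpha=k/(k^2-k+2)$ directly --- the absorptions $E^{1/k}\leq(E+1)^{1/2}$ and $h^{-(n+1)/2},\,h^{-(n+1)/k}\leq h^{-(n+k)/2}$ hold automatically for $k\geq 2$, $h\leq 1$, and need no slack in $\beta$; the paper's argument, by contrast, genuinely loses the endpoint since at $\lambda=1/k$ the Sobolev exponent $k-k\lambda=k-1$ becomes critical on a $(k-1)$-dimensional cell. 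Two points worth stating explicitly in a write-up: the Morrey seminorm bound $[u_\eps]_{C^{0,1/k}(K)}\lesssim\norm{\nabla u_\eps}_{L^k(K,\,\H^{k-1})}$ has a constant independent of the cell side $h(\eps)$ by scale invariance, and the constraint $r\leq h(\eps)$ needed so that $\H^{k-1}(B(x,r)\cap K)\gtrsim r^{k-1}$ holds for small $\eps$ by~\eqref{h}.
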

\begin{proof}
 We repeat the arguments of~\cite[Lemma~3.4]{ABO2}.
 Let~$d_\eps:= \dist(u_\eps, \, \NN)$, let~$\lambda\in (0, \, 1/k)$ 
 be a parameter, and let $G\colon\R^+\to\R^+$ be defined by
 $G(t) := t^{2\lambda/(k - k\lambda) + 1}$. Thanks
 to~\eqref{hp:non-degeneracy}, we have $d_\eps^2\lesssim f(u_\eps)$.
 Therefore, by~\eqref{grid_k-1} and~\eqref{RepsU}, we obtain
 \begin{equation} \label{conv0}
  \begin{split}
   h(\eps)^{n+1}\int_{R^\eps_{k-1}(U)} \left(
   \frac{1}{k}\abs{\nabla d_\eps}^k + 
   \eps^{-k}d^2_\eps\right)\d\H^{k-1} 
   &\lesssim h(\eps)^{n+1} E_\eps(u_\eps, \, R^\eps_{k-1}(U)) \\
   &\lesssim \delta^{-1} E_\eps(u_\eps, \, U^\prime).
  \end{split}
 \end{equation}
 The Young inequality and the chain rule imply
 \begin{equation} \label{conv1}
  \begin{split}
   \delta^{-1} E_\eps(u_\eps, \, U^\prime) & \gtrsim h(\eps)^{n+1}
   \int_{R^\eps_{k-1}(U)} \left(
   \frac{1}{k}\abs{\nabla d_\eps}^k + 
   \eps^{-k}d^2_\eps\right)\d\H^{k-1} \\
   &\geq C(\lambda) \, \eps^{-k\lambda} h(\eps)^{n+1} \int_{R^\eps_{k-1}(U)}
   \abs{\nabla d_\eps}^{k - k\lambda} d^{2\lambda}_\eps \, \d\H^{k-1} \\
   & \geq C(\lambda) \, \eps^{-k\lambda} h(\eps)^{n+1} \int_{R^\eps_{k-1}(U)}
   \abs{\nabla (G\circ d_\eps)}^{k - k\lambda} \d\H^{k-1}
  \end{split}
 \end{equation}
 Since we have assumed that~$\lambda < 1/k$, we have
 $k - k\lambda > k-1$ and hence, for any $(k-1)$-cell~$K\subseteq R^\eps_{k-1}(U)$,
 we can bound the oscillation of~$G\circ d_\eps$ on~$K$ by Sobolev embedding:
 \[
  \begin{split}
   \left(\mathrm{osc}\,(G\circ d_\eps, \, K)\right)^{k-k\lambda} 
   &\leq C(\delta, \, \lambda) \, h(\eps)^{1-k\lambda} \int_{R^\eps_{k-1}(U)}
   \abs{\nabla (G\circ d_\eps)}^{k - k\lambda} \d\H^{k-1} \\
   &\stackrel{\eqref{conv1}}{\leq} C(\delta, \, \lambda)
   \, \eps^{k\lambda}h(\eps)^{-n-k\lambda}E_\eps(u_\eps, \, U^\prime).
  \end{split}
 \]
 The inverse~$G^{-1}$ of~$G$ is well-defined and H\"older continuous
 of exponent $(k-k\lambda)/(2\lambda + k - k\lambda)$, so
 \begin{equation} \label{conv2}
  \begin{split}
  \mathrm{osc}\,(d_\eps, \, K) &\leq C(\delta, \, \lambda) \left(
   \eps^{k\lambda} h(\eps)^{-n-k\lambda} E_\eps(u_\eps, \, U^\prime)
   \right)^{1/(2\lambda + k - k\lambda)} \\
   &\leq C(\delta, \, \lambda) \, \eps^{k\lambda/(2\lambda + k - k\lambda)} 
   \, h(\eps)^{-(n+k\lambda)/(2\lambda + k - k\lambda)}
   \, E_\eps(u_\eps, \, U^\prime)^{1/(2\lambda + k - k\lambda)} 
  \end{split}
 \end{equation}
 On the other hand, we can bound the integral average of~$d_\eps$
 on~$K$ thanks to~\eqref{conv0}:
 \begin{equation} \label{conv3}
  \begin{split}
   \fint_K d_\eps \, \d\H^{k-1} 
   \leq \left(\fint_K d^2_\eps \, \d\H^{k-1}\right)^{1/2}
   &\lesssim \delta^{-1/2} \, \eps^{k/2} \, h(\eps)^{-(n+k)/2}
   \, E_\eps(u_\eps, \, U^\prime)^{1/2} .
  \end{split}
 \end{equation}
 Combining~\eqref{conv2} with~\eqref{conv3}, 
 and letting~$\lambda\nearrow 1/k$, the lemma follows.
\end{proof}

\subsubsection{A polyhedral approximation of~\texorpdfstring{$\S_y(u_\eps)$}{S}}

\newcommand{\Sepsy}
{\S_{y}(u_\eps)}

Let~$y\in B^*$ be fixed in such a way that 
$\S_{y}(u_\eps)$ has finite mass for any~$\eps$.
(Thanks to~\eqref{S:mass}, the set of~$y$ such that this property
is not satisfied is negligible, because
the sequence $(u_\eps)$ is assumed to be countable.)
We are going to construct a polyhedral approximation of~$\Sepsy$,
supported on the dual $n$-skeleton~$(R^\eps)^\prime_n$ of the grid.

Thanks to Lemma~\ref{lemma:grid_conv}, there exists~$\eps_0>0$
(depending on~$\delta_*$, but not on~$y$) such that
\begin{equation} \label{grid_k-1_conv}
 \dist(u_\varepsilon(x), \, \NN) < \dist(\NN, \, \X) - \delta_* 
 < \dist(\NN, \, \X) - \abs{y}
\end{equation}
for any $x\in R^\eps_{k-1}(U)$ and any~$\eps\in (0, \, \eps_0]$. 
As a consequence, the projection~$\RR(u_\eps - y)$ is well-defined
and smooth on~$R^\eps_{k-1}(U)$ for~$\eps\in (0, \, \eps_0]$.
For any~$K\in\GG^\eps_{k}$, let $\gamma^\eps(K)\in\GN$ be the
homotopy class of~$\RR(u_\eps - y)$ on~$\partial K$. 
The quantity~$\gamma^\eps(K)$ does not depend on 
the choice of~$y\in B^*$, because 
$\RR(u_\eps - y)_{|\partial K}$ and~$\RR(u_\eps)_{|\partial K}$
are homotopic to each other, due to~\eqref{grid_k-1_conv};
a homotopy is defined by
$(x, \, t)\in\partial K\times[0, \,1]\mapsto \RR(u_\eps(x) - ty)$.
We define the polyhedral chain
\begin{equation} \label{j_eps}
 T^\eps := \sum_{K\in\GG^\eps_{k}, \ K\cap U\neq\emptyset}
 \gamma^\eps(K) \, \llbracket K^\prime \rrbracket
 \in \M_n(\overline{U^\prime}; \, \GN),
\end{equation}
where~$K^\prime\in(\GG^\eps)^\prime_n$ is the dual cell to~$K$.
The chain~$T^\eps$ depends on the choice of the grid, but not on~$y$.

\begin{lemma} \label{lemma:projection}
 For any~$\eps\in (0, \, \eps_0]$ and any~$y\in B^*$
 such that~$\Sepsy$ has finite mass, there holds
 \begin{equation} \label{proj:flat}
  \F_U\left(\Sepsy\mres U - T^\eps\right) \lesssim 
   h(\eps)^{n+1} \int_{U^\prime} \frac{\d\mu_{\eps,y}(x)}
   {\dist^n(x, \, R^\eps_{k-1})}. 
 \end{equation}
 Moreover, $\partial T^\eps\mres U = 0$.
\end{lemma}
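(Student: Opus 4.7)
The plan is to treat the two assertions of the lemma separately, both as consequences of Property~$(P_0)$ of the operator~$\S$. By density of smooth maps and continuity of~$\S$ (Proposition~\ref{prop:S}), I may assume throughout that $u_\eps$ is smooth, in which case $\S_y(u_\eps)$ is a finite-mass chain in~$\R^{n+k}$ for a.e.~$y$. By Lemma~\ref{lemma:grid_conv}, for $\eps$ small enough the map $\RR\circ(u_\eps-y)$ is smooth on $R^\eps_{k-1}\cap U'$, and $(P_0)$ applied to $R = \llbracket K\rrbracket$ gives
\[
\gamma^\eps(K) = \I\bigl(\S_y(u_\eps), \llbracket K\rrbracket\bigr)
\]
for every $k$-cell $K$ of $\GG^\eps$ with $K\cap U\neq\emptyset$; the hypothesis $\spt\partial\llbracket K\rrbracket\subseteq U'\setminus\spt\S_y(u_\eps)$ of $(P_0)$ follows from Lemma~\ref{lemma:grid_conv}.

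The vanishing $\partial T^\eps \mres U = 0$ is then immediate from a second application of $(P_0)$. Indeed, every dual $(n-1)$-cell $L'\subseteq\overline U$ corresponds to a $(k+1)$-cell $L$ of $\GG^\eps$ contained in $R^\eps_{n+k}(\overline U)\subseteq U'$ (by~\eqref{RepsU}), and the coefficient of $L'$ in $\partial T^\eps$ is $\sum_{K\subset\partial L}\pm\gamma^\eps(K)$. By bilinearity of the intersection pairing,
\[
\sum_{K\subset\partial L}\pm\gamma^\eps(K) = \I\bigl(\S_y(u_\eps), \partial\llbracket L\rrbracket\bigr);
\]
this vanishes by $(P_0)$ applied to the closed $k$-chain $R := \partial\llbracket L\rrbracket$, since $\partial R = 0$ and the homotopy class on $\partial R$ is therefore vacuously trivial.

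For the flat-norm estimate, the plan is to implement a Federer--Fleming style deformation of $\S_y(u_\eps)$ onto the dual $n$-skeleton. Within each $(n+k)$-cube $C$ of $\GG^\eps$ there is a piecewise-linear deformation retraction $r_C\colon C\setminus R^\eps_{k-1}\to C\cap (R^\eps)'_n$, obtained by radial projection from the $(k-1)$-skeleton of $\partial C$ onto the star of the dual vertex at the centre of $C$; these local retractions match on shared cube faces and assemble into a global retraction $r$ on $R^\eps_{n+k}(\overline U)\setminus R^\eps_{k-1}$. The straight-line homotopy $H(t,x) := (1-t)x + t\,r(x)$ pushed along $[0,1]\times(\S_y(u_\eps)\mres R^\eps_{n+k}(\overline U))$ produces an $(n+1)$-chain $P^\eps$ whose boundary equals $r_*\S_y(u_\eps) - \S_y(u_\eps)$ on $R^\eps_{n+k}(\overline U)$. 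The identity $r_*\S_y(u_\eps) = T^\eps$ on the dual $n$-skeleton again follows from $(P_0)$: a slicing argument identifies the multiplicity of $r_*\S_y(u_\eps)$ on each dual $n$-cell $K'\cap C$ with $\I(\S_y(u_\eps), \llbracket K\rrbracket) = \gamma^\eps(K)$. Chains supported outside~$U$ are absorbed into the $Q$-component of the relative flat norm~\eqref{flat_relative}.

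The main obstacle is the sharp mass bound for $P^\eps$. The retraction $r$ is singular along $R^\eps_{k-1}$, and one needs the precise estimates $|x - r(x)|\lesssim h(\eps)$ and $|\nabla r(x)|\lesssim h(\eps)/\dist(x, R^\eps_{k-1})$: the first follows from the cube diameter, the second reflects the codimension-$k$ structure of $(R^\eps)'_n$ relative to $R^\eps_{k-1}$. With these in hand, the area formula yields
\[
\M(P^\eps)\lesssim h(\eps)\int |\nabla r|^n\,\d\mu_{\eps,y}\lesssim h(\eps)^{n+1}\int_{U'}\frac{\d\mu_{\eps,y}(x)}{\dist^n(x, R^\eps_{k-1})},
\]
which matches~\eqref{proj:flat}.
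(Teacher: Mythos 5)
Your approach is essentially the same as the paper's: deform $\S_y(u_\eps)$ onto the dual $n$-skeleton via the straight-line homotopy from the identity to a cube-wise retraction (the paper cites \cite[Lemma~3.8.(i)]{ABO2} for the retraction and its key gradient bound $|\nabla\zeta^\eps(x)|\lesssim h(\eps)\dist(x,R^\eps_{k-1})^{-1}$), bound the mass of the homotopy chain via the pushforward estimate, and identify the pushforward with $T^\eps$ by computing multiplicities via~$(P_0)$.

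There is, however, a genuine gap in your argument that $\partial T^\eps\mres U = 0$. You reduce the coefficient of $\partial T^\eps$ on a dual $(n-1)$-cell to $\I(\S_y(u_\eps),\partial\llbracket L\rrbracket)$ and then assert this vanishes ``by $(P_0)$ ... since $\partial R = 0$ and the homotopy class on $\partial R$ is therefore vacuously trivial.'' This is not a valid inference: $(P_0)$ does not say the intersection index with an arbitrary $k$-cycle is zero, and in fact that statement is false — if $R$ is a $k$-cycle representing a nontrivial relative homology class, $\I(\S_y(u_\eps),R)$ can be nonzero even though $\partial R = 0$. What saves the argument here is that $R = \partial\llbracket L\rrbracket$ explicitly bounds inside $U'$, combined with the crucial fact (from \cite[Theorem~3.1]{CO1}) that $\partial\S_y(u_\eps)\mres U' = 0$; a Stokes-type identity for $\I$ then gives $\I(\S_y(u_\eps),\partial\llbracket L\rrbracket) = \pm\I(\partial\S_y(u_\eps),\llbracket L\rrbracket) = 0$. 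You never invoke the closedness of $\S_y(u_\eps)$ inside $U'$, yet it is the essential ingredient. The same omission reappears in the homotopy-formula step of the flat-norm bound: pushing $[0,1]\times(\S_y(u_\eps)\mres R^\eps_{n+k}(\overline U))$ through $H$ gives a boundary term $H_*(I\times\partial(\S_y(u_\eps)\mres R^\eps_{n+k}(\overline U)))$, and showing this does not contribute inside $U$ again relies on $\partial\S_y(u_\eps)\mres U' = 0$ together with the inclusion $R^\eps_{n+k}(\overline U)\csubset U'$ from~\eqref{RepsU}; the paper (Eq.~\eqref{projection1}) tracks this carefully, while your write-up elides it.
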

\begin{proof}
 Essentially, this lemma is a particular instance of the 
 Deformation Theorem for flat chains~\cite[Theorem~7.3]{Fleming}
 (see also~\cite[Lemma~3.8]{ABO2} for a statement
 which is specifically tailored for application to Ginzburg-Landau functionals).
 Nevertheless, we provide details for the convenience of the reader.
 
 Let~$\eps\in (0, \, \eps_0]$ be fixed. 
 By~\cite[Lemma~3.8.(i)]{ABO2} there exists a locally Lipschitz retraction
 $\zeta^\eps\colon\R^{n+k}\setminus R^\eps_{k-1}\to (R^\eps)^\prime_{n}$,
 which maps each cube of~$\GG^\eps$ into itself and satisfies
 \begin{equation} \label{sigma}
  |\nabla\zeta^\eps(x)| \lesssim h(\eps) \dist(x, \, R^\eps_{k-1})^{-1}
  \qquad \textrm{for a.e. } x\in\R^{n+k}\setminus R^\eps_{k-1}.
 \end{equation}
 By~\eqref{grid_k-1_conv}, we have~$u_\eps(x) - y\notin\X$ 
 for any~$x\in R^\eps_{k-1}(U)$. By construction (see~\cite[Section~3]{CO1}),
 this implies $\spt(\Sepsy)\cap R^\eps_{k-1}(U) = \emptyset$, so the
 push-for\-ward~$\zeta^\eps_{*}(\Sepsy)\mres U$ is well-defined.
 Let~$\tau^\eps\colon[0, \, 1]\times(\R^{n+k}\setminus R^\eps_{k-1})\to\R^{n+k}$ 
 be given by
 \[
  \tau^\eps(t, \, x) := (1 - t) x + t\zeta^\eps(x)
 \]
 and let~$I$ be the $1$-chain, with integer multiplicity, carried
 by the interval~$[0, \, 1]$ with positive orientation. We remark that
 \begin{equation} \label{projection1}
  \tau^\eps_{*}(I\times\partial\Sepsy)\mres U = 0.
 \end{equation}
 Indeed, since~$\zeta^\eps$ maps each cell~$K$ of~$\GG^\eps$
 into itself, we have $(\tau^\eps)^{-1}(U)\subseteq
 [0, \, 1]\times R^\eps_{n+k}(\overline{U})
 \csubset [0, \, 1]\times U^\prime$ by~\eqref{RepsU}. This implies
 \[
  \begin{split}
   \tau^\eps_{*}(I\times\partial \Sepsy)\mres U &=
   \tau^\eps_{*}\left((I\times\partial \Sepsy)
       \mres ([0, \, 1]\times R^\eps_{n+k}(\overline{U}))\right)\mres U \\
   &= \tau^\eps_{*}\left(I\times\left(\partial \Sepsy
       \mres R^\eps_{n+k}(\overline{U})\right)\right)\mres U = 0
  \end{split}
 \]
 because $\partial \Sepsy\mres U^\prime = 0$
 \cite[Theorem~3.1]{CO1}.
 This proves~\eqref{projection1}.
 As a consequence, by applying the homotopy formula 
 (see e.g.~\cite[Eq.~(6.3) p.~172]{Fleming}),
 we deduce that
 \begin{equation} \label{projection2}
  \left(\zeta^\eps_{*}(\Sepsy) - \Sepsy\right)\mres U 
  = \partial\tau^\eps_{*}(I\times \Sepsy)\mres U.
 \end{equation}
 From~\cite[Eq.~(6.5) p.~172]{Fleming} 
 and~\eqref{sigma}, we obtain
 \begin{equation*} 
  \M\left(\tau^\eps_{*}(I\times \Sepsy)\right) \lesssim h(\eps)^n 
  \int_{U^\prime} \frac{\abs{\zeta^\eps(x) - x}}
          {\dist^n(x, \, R^\eps_{k-1})} \d\mu_{\eps, y}(x)
  \lesssim h(\eps)^{n+1} \int_{U^\prime} \frac{\d\mu_{\eps,y}(x)}{\dist^n(x, \, R^\eps_{k-1})}.
 \end{equation*}
 Then, by the properties of the relative flat norm 
 (see e.g.~\cite[Lemma~2]{CO1}) and~\eqref{projection2}, we deduce
 \begin{equation} \label{projection6}
  \begin{split}
   \F_U(\zeta^\eps_{*}(\Sepsy) - \Sepsy)
   \lesssim h(\eps)^{n+1} \int_{U^\prime} \frac{\d\mu_{\eps,y}(x)}
   {\dist^n(x, \, R^\eps_{k-1})}.
  \end{split}
 \end{equation}
 To conclude the proof of~\eqref{proj:flat}, it suffices to show that 
 $\zeta^\eps_{*}(\Sepsy)$ agrees with~$T^\eps$
 inside~$U$. 
 By~\cite[Lemma~7.2]{Fleming},
 $\zeta^\eps_{*}(\Sepsy)\mres U$ is a $n$-polyhedral chain of the grid~$(\GG^\eps)^\prime$;
 in particular, its multiplicity is constant on every $n$-cell 
 of~$(\GG^\eps)^\prime$. We want to compute such multiplicities.
 Let us take $K\in\GG^\eps_k$ and its dual cell~$K^\prime\in(\GG^\eps)^\prime_n$,
 and let~$x$ be the unique element of~$K\cap K^\prime$.
 By construction of~$\zeta^\eps$ (see \cite[Lemma~3.8 and Figure~3.2]{ABO2}),
 we have~$(\zeta^\eps)^{-1}(x) = K\setminus\partial K$.
 By Thom's parametric transversality theorem, we can assume
 with no loss of generality that~$K$ intersects transversally the support of~$\Sepsy$.
 Then, by definition of push-forward, we have
 \[
  \begin{split}
   \textrm{multiplicity of } \zeta^\eps_{*}(\Sepsy)
   \textrm{ at } x 
   = \I(\Sepsy, \, \llbracket (\zeta^\eps)^{-1}(x)\rrbracket)
   = \I(\Sepsy, \, \llbracket K \rrbracket)
   \stackrel{\eqref{S:intersection}}{=} \gamma^\eps(K) 
  \end{split}
 \] 
 and hence
 \begin{equation} \label{projection3}
  (\zeta^\eps_{*}(\Sepsy) - T^\eps)\mres U = 0.
 \end{equation}
 Now,~\eqref{proj:flat} follows from~\eqref{projection6} 
 and~\eqref{projection3}. Moreover, \eqref{projection3} implies
 \[
  \partial T^\eps\mres U 
  = \zeta^\eps_{*}(\partial \Sepsy)\mres U = 0,
 \]
 because $\Sepsy$ has no boundary inside~$U^\prime$
 \cite[Theorem~3.1, (S\textsubscript{3})]{CO1}.
\end{proof}

To bound the mass of~$T^\eps$, we will use the following result.

\begin{lemma} \label{lemma:lowerbounds}
 There exist positive numbers~$\delta_1=\delta_1(\NN, \, f)$,
 $C_0 = C_0(\NN, \, f)$ and, for~$r>0$,
 $\eps_r=\eps_r(r, \, \NN, \, f)$,
 $C_r = C_r(r, \, \NN, \, f)$ such that the following statement holds. 
 Let~$Q^k_h := [-h/2, \, h/2]^k$ be a cube of edge length~$h>0$.
 Suppose that~$u\in W^{1,k}(Q^k_h, \, \R^m)$ satisfies
 \begin{equation*} 
  u_{|\partial Q^k_h}\in W^{1,k}(\partial Q^k_h, \, \R^m)
  \qquad \textrm{and} \qquad 
  \dist(u(x), \, \NN)\leq\delta_1 \quad \textrm{for a.e. }
  x\in\partial\Omega.
 \end{equation*}
 Let~$\gamma\in\GN$ be the homotopy class of~$u$ on~$\partial Q^k_h$.
 Let~$0 < \eps< h^{k/2}/2$ be such that
 \[
  \frac{\eps}{h^{k/2}}\abs{\log\frac{\eps}{h^{k/2}}}|\gamma|_*\leq\eps_r.
 \]
 Then,
 \[
  E_\eps(u, \, Q_h) + C_0\, h \, r E_\eps(u, \, \partial Q_{h}) \geq
  \abs{\gamma}_*\abs{\log\frac{\eps}{h^{k/2}}} -
  C_r\abs{\gamma}_*(1 + \log\abs{\gamma}_*) .
 \]
\end{lemma}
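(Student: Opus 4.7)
The strategy is to deduce the lemma from Proposition~\ref{prop:JS} by modifying $u$ in a boundary collar of thickness of order $rh$ so as to replace the (a priori uncontrolled) trace $u|_{\partial Q^k_h}$ with a canonical representative of the homotopy class~$\gamma$ whose energy is quantitatively controlled by $|\gamma|_*$. The free parameter $r$ in the statement plays the role of the rescaled thickness of this collar, which is why it enters as a coefficient in front of the boundary term.

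After rescaling via $\tilde u(\tilde x) := u(h\tilde x)$, the bulk energy becomes $E_{\tilde\eps}(\tilde u, Q^k_1)$ with $\tilde\eps := \eps/h$; the boundary energy rescales by a power of $h$, and the hypothesis $\eps < h^{k/2}/2$ ensures that the gap between $|\log\tilde\eps|$ and $|\log(\eps/h^{k/2})|$ is $O(|\log h|)$, to be absorbed in the remainder. By Proposition~\ref{prop:group_norm}, write $\gamma = \sum_{i=1}^N \sigma_i$ with $\sigma_i \in \Sg$, $N \lesssim |\gamma|_*$ (since $\inf_\Sg|\cdot|_* > 0$), and $\sum_i|\sigma_i|_* = |\gamma|_*$, and fix Lipschitz minimisers $\phi_i\in W^{1,k}(\SS^{k-1},\NN)\cap\sigma_i$ nearly realising $E_{\min}(\sigma_i)$. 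Using the retraction $\RR$, which is well-defined near $\partial Q^k_h$ thanks to the hypothesis $\dist(u,\NN)\leq\delta_1$, project $\tilde u|_{\partial Q^k_1}$ onto $\NN$, and then construct in the collar $\{\dist(\cdot,\partial Q^k_1)\leq r\}$ an interpolation inside $\NN$ between this projection and a canonical trace on $\partial Q^k_{1-r}$ built from $N$ disjoint bubbles realising the $\phi_i$'s. The gradient part of this interpolation costs $C_0 r\, E_{\tilde\eps}(\tilde u, \partial Q^k_1)$, which rescales to $C_0 h r E_\eps(u, \partial Q^k_h)$, plus an additional energy of order $|\gamma|_*(1+\log|\gamma|_*)$ arising from the bubbles themselves.

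On the inner cube $Q^k_{1-r}$, which is homeomorphic to a ball, the modified map has the canonical boundary trace just constructed, and Proposition~\ref{prop:JS} yields the lower bound $|\gamma|_*|\log\tilde\eps| - C_{v_*}$. The main obstacle is tracking the dependence of the constant $C_{v_*}$ on $\gamma$: Proposition~\ref{prop:JS} as stated hides this dependence inside a constant that depends on $\Omega$ and~$v$, whereas here one needs the explicit bound $C_{v_*} \lesssim |\gamma|_*(1+\log|\gamma|_*)$. This requires inspecting the proof of Proposition~\ref{prop:JS} in Appendix~\ref{sect:jerrard} and exploiting the controlled structure of the canonical trace, with $N \sim |\gamma|_*$ bubbles of radius $\rho \sim |\gamma|_*^{-1/(k-1)}$ producing the logarithmic $\log|\gamma|_*$ factor when resolved at scale $\tilde\eps$. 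The smallness hypothesis on $(\eps/h^{k/2})|\log(\eps/h^{k/2})||\gamma|_*$ in the statement is precisely what guarantees that this logarithmic remainder stays below the main term $|\gamma|_*|\log(\eps/h^{k/2})|$; combining the estimates and reverting the rescaling then yields the claim.
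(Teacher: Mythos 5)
The paper does not prove this lemma via Proposition~\ref{prop:JS}; it deduces it directly from Proposition~\ref{prop:lowerbounds}, ``by exactly the same arguments as in~\cite[Lemma~3.10]{ABO2}.'' That route is shorter and, crucially, closes the gap you yourself identify. Proposition~\ref{prop:lowerbounds} already states the explicit remainder~$M\abs{\sigma}_*(1+\log\abs{\sigma}_*)$, and its hypothesis is only that $\dist(u,\NN)\leq\delta_0$ on a collar of thickness~$r$ near~$\partial\Omega$ --- no specific trace required. So after rescaling, one simply extends $\tilde u$ from $Q^k_1$ to a slightly larger cube $Q^k_{1+cr}$ by constant extension in the normal direction: this automatically satisfies the collar condition (since on the extension $\dist(\cdot,\NN)$ equals its boundary value $\leq\delta_1$) and its energy cost is $\lesssim r\,E_{\tilde\eps}(\tilde u,\partial Q^k_1)$, which rescales precisely to the extra term $C_0 h r\, E_\eps(u,\partial Q^k_h)$. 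Applying Proposition~\ref{prop:lowerbounds} to the extended map then gives the claim directly.

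Your detour through Proposition~\ref{prop:JS} introduces two problems that the paper's route avoids, and which you do not resolve. First, the constant in Proposition~\ref{prop:JS} depends on the full boundary datum~$v$, not just its homotopy class, so you are forced to replace the (uncontrolled) trace of~$u$ by a ``canonical'' trace built from bubbles and then to re-open the proof of Proposition~\ref{prop:JS} to track $\gamma$-dependence. But Proposition~\ref{prop:JS} is itself proved in Appendix~\ref{sect:jerrard} as a corollary of Proposition~\ref{prop:lowerbounds} (``Once Proposition~\ref{prop:lowerbounds} is proved, Proposition~\ref{prop:JS} follows by an extension argument'') --- so ``inspecting the proof'' amounts to discovering that Proposition~\ref{prop:lowerbounds} was the right tool all along, and the bubble construction is superfluous. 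Second, the step where you modify~$u$ in a collar to interpolate to the canonical trace is only sketched: the asserted cost $C_0 r E_{\tilde\eps}(\tilde u,\partial Q^k_1)+\abs{\gamma}_*(1+\log\abs{\gamma}_*)$ is not estimated (and the bubble trace alone carries $W^{1,k}$-boundary energy of order $\abs{\gamma}_*^{1+1/(k-1)}$ at the scale you propose, so the bulk cost of interpolating to it must be computed carefully). There is also a directional confusion: if you modify~$u$ in an \emph{inner} collar down to a new trace on~$\partial Q^k_{1-r}$, the restriction of~$u$ to $Q^k_{1-r}$ does not actually carry that canonical trace, so Proposition~\ref{prop:JS} cannot be applied to it with the constant you want; the modification must be outward, as in the extension argument above. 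The correct logic --- extend outward with constant normal extension (no canonical trace needed) and apply Proposition~\ref{prop:lowerbounds} --- removes all of these difficulties.

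One more small point: after the rescaling $\tilde u(\tilde x) = u(h\tilde x)$ with $\tilde\eps=\eps/h$, the main term you get from Proposition~\ref{prop:lowerbounds} is $\abs{\gamma}_*\abs{\log(\eps/h)}$, which is \emph{at least} the stated $\abs{\gamma}_*\abs{\log(\eps/h^{k/2})}$ (since $h<1$ and $k\geq 2$), so no absorption of a $\log h$ gap into the remainder is needed; the $h^{k/2}$ normalisation in the statement just makes the bound slightly weaker and the hypothesis $\eps<h^{k/2}/2$ a convenient sufficient condition for $\tilde\eps<1/2$.
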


The proof of Lemma~\ref{lemma:lowerbounds} will be given in Appendix~\ref{sect:jerrard}.

\begin{lemma} \label{lemma:mass_T}
 For any~$r$, $\delta$ and for sufficiently small~$\eps$, there holds
 \begin{equation} \label{mass_T}
  \begin{split}
   \left(1 - c_{r,\delta}(\eps)\right)\M(T^\eps\mres U) 
    &\lesssim \delta^{-1}\left(1 + r\right) 
    \frac{E_\eps(u_\eps, \, U^\prime)}{\abs{\log\eps}},
  \end{split}
 \end{equation}
 where~$c_{r,\delta}(\eps)>0$ is such that 
 $c_{r,\delta}(\eps)\to 0$ as~$\eps\to 0$. Moreover, if~$L$ is the $k$-plane
 spanned by~$\{\e_{n+1}, \, \ldots, \, \e_{n+k}\}$, then there holds
 \begin{equation} \label{mass_Tproj}      
  \begin{split}
   \left(1 - c_{r,\delta}(\eps)\right)\M(\pi_{L,*}(T^\eps\mres U))
    &\leq \left(1 + \delta + C r \, \delta^{-1}\right)
    \frac{E_\eps(u_\eps, \, U^\prime)}{\abs{\log\eps}}.
  \end{split}
 \end{equation}
\end{lemma}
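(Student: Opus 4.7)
The plan is to apply Lemma~\ref{lemma:lowerbounds} cellwise on each $k$-cell of the grid $\GG^\eps$, then sum the local lower bounds and compare the resulting sum with the total energy via the averaging properties in Lemma~\ref{lemma:grid}. The starting observation is that, directly from the definition of $T^\eps$ in \eqref{j_eps},
\[
 \M(T^\eps\mres U)\leq h(\eps)^n\!\!\sum_{K\in\GG^\eps_k,\, K\cap U\neq\emptyset}\!\! |\gamma^\eps(K)|_*,
 \qquad
 \M(\pi_{L,*}(T^\eps\mres U))\leq h(\eps)^n\!\!\sum_{K\in\tilde R^\eps_k,\, K\cap U\neq\emptyset}\!\!|\gamma^\eps(K)|_*,
\]
the second inequality because only $k$-cells $K$ parallel to $L$ have their dual $n$-cell $K^\prime$ sent (isometrically) onto an $n$-cell of full mass $h(\eps)^n$ under $\pi_L$, while other $K^\prime$ collapse to a lower-dimensional image and give a vanishing $n$-chain.

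Next, fix $K\in\GG^\eps_k$ with $K\cap U\neq\emptyset$. Lemma~\ref{lemma:grid_conv}, together with \eqref{H} and \eqref{h}, ensures that $\dist(u_\eps,\NN)<\delta_1$ on $\partial K\subset R^\eps_{k-1}(U)$ once $\eps$ is small enough, uniformly in $K$; so the boundary hypothesis of Lemma~\ref{lemma:lowerbounds} is met, and $\gamma^\eps(K)\in\GN$ is well defined (consistently with the definition used in \eqref{j_eps}). The smallness condition $(\eps/h(\eps)^{k/2})|\log(\eps/h(\eps)^{k/2})||\gamma^\eps(K)|_*\leq\eps_r$ also holds for every $K$ once $\eps$ is small, because by \eqref{h} the factor $\eps/h(\eps)^{k/2}$ decays faster than any polynomial in $\eps$, while a first application of Lemma~\ref{lemma:lowerbounds} already shows that $\max_K|\gamma^\eps(K)|_*$ is at most polynomial in $1/\eps$. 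The lemma then gives
\[
 E_\eps(u_\eps,K)+C_0\,h(\eps)\,r\,E_\eps(u_\eps,\partial K)\;\geq\;|\gamma^\eps(K)|_*\left|\log\frac{\eps}{h(\eps)^{k/2}}\right|-C_r|\gamma^\eps(K)|_*\bigl(1+\log|\gamma^\eps(K)|_*\bigr).
\]

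We now sum this inequality over the appropriate family of $k$-cells, multiply by $h(\eps)^n$ and divide by $|\log\eps|$. For \eqref{mass_Tproj}, we sum over $K\in\tilde R^\eps_k$, so that by \eqref{grid_ktilde} the first left-hand-side term becomes at most $(1+\delta)E_\eps(u_\eps,U^\prime)$, while by \eqref{grid_k-1} (and the fact that each $(k-1)$-face of $\tilde R^\eps_k$ is shared by a bounded number of $k$-cells) the second one becomes at most $C\,r\delta^{-1}E_\eps(u_\eps,U^\prime)$; the resulting constant in front of $E_\eps(u_\eps,U^\prime)$ is therefore $1+\delta+Cr\delta^{-1}$. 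For \eqref{mass_T}, we sum over all $K\in\GG^\eps_k$ and use \eqref{grid_k} in place of \eqref{grid_ktilde}, getting the weaker constant $\lesssim\delta^{-1}(1+r)$. Since \eqref{h} implies $|\log(\eps/h(\eps)^{k/2})|/|\log\eps|\to 1$, the principal term on the right reproduces exactly the mass sums displayed at the beginning.

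The main obstacle is absorbing the logarithmic correction
\[
 \frac{C_r}{|\log\eps|}\,h(\eps)^n\sum_K|\gamma^\eps(K)|_*\bigl(1+\log|\gamma^\eps(K)|_*\bigr)
\]
into a multiplicative factor $c_{r,\delta}(\eps)$ on the mass. For this we need $\max_K\log|\gamma^\eps(K)|_*=\mathrm{o}(|\log\eps|)$. Rearranging the cellwise inequality gives the crude a~priori bound
\[
 |\gamma^\eps(K)|_*\log|\gamma^\eps(K)|_*\lesssim E_\eps(u_\eps,K)+h(\eps)\,r\,E_\eps(u_\eps,\partial K),
\]
and combining with \eqref{H}, \eqref{grid_ktilde}--\eqref{grid_k-1} and the discreteness \eqref{discrete_norm-intro} yields exactly $\max_K\log|\gamma^\eps(K)|_*\to+\infty$ at most like $|\log h(\eps)|=\mathrm{o}(|\log\eps|)$ by \eqref{h}. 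The correction is therefore $c_{r,\delta}(\eps)\M(\cdot)$ with $c_{r,\delta}(\eps)\to 0$, and moving it to the left-hand side produces the prefactor $(1-c_{r,\delta}(\eps))$ in front of $\M(T^\eps\mres U)$, respectively $\M(\pi_{L,*}(T^\eps\mres U))$, as stated in \eqref{mass_T} and \eqref{mass_Tproj}.
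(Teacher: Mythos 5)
Your overall structure matches the paper's: bound $\M(T^\eps\mres U)$ by $h(\eps)^n\sum_K|\gamma^\eps(K)|_*$, apply Lemma~\ref{lemma:lowerbounds} cellwise, sum using Lemma~\ref{lemma:grid}, and absorb the logarithmic correction into $c_{r,\delta}(\eps)$. However, there is a genuine gap where you verify the hypothesis of Lemma~\ref{lemma:lowerbounds}. You write that ``a first application of Lemma~\ref{lemma:lowerbounds} already shows that $\max_K|\gamma^\eps(K)|_*$ is at most polynomial in $1/\eps$'', and later you derive a bound on $|\gamma^\eps(K)|_*\log|\gamma^\eps(K)|_*$ by ``rearranging the cellwise inequality''. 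Both steps are circular: Lemma~\ref{lemma:lowerbounds} gives no conclusion at all unless the smallness condition
\[
 \frac{\eps}{h(\eps)^{k/2}}\abs{\log\frac{\eps}{h(\eps)^{k/2}}}\,|\gamma^\eps(K)|_*\leq\eps_r
\]
already holds, so you cannot invoke that lemma to establish the very bound that makes its own hypothesis true.

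The missing ingredient is an a priori estimate on $|\gamma^\eps(K)|_*$ obtained independently of Lemma~\ref{lemma:lowerbounds}. The paper supplies it via the topological singular set: by \eqref{S:intersection} and the definition of the intersection index, $|\gamma^\eps(K)|_*=|\I(\S_y(u_\eps),\llbracket K\rrbracket)|_*\leq\M(\S_y({u_\eps}_{|K}))$ for any admissible $y\in B^*$; averaging over $y$ and invoking the mass bound \eqref{S:mass} gives $|\gamma^\eps(K)|_*\lesssim\|\nabla u_\eps\|_{L^k(K)}^k\lesssim E_\eps(u_\eps,R^\eps_k\cap U^\prime)$, which by \eqref{grid_k} is $\lesssim\delta^{-1}h(\eps)^{-n}E_\eps(u_\eps,U^\prime)$. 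Combined with \eqref{H} and \eqref{h}, this estimate both verifies the smallness condition for $\eps$ small and gives $\log|\gamma^\eps(K)|_*=\mathrm{o}(|\log\eps|)$, which is exactly what turns the error term into a multiplicative prefactor $1-c_{r,\delta}(\eps)$. Once this bound is in place, the rest of your argument --- the cellwise application of Lemma~\ref{lemma:lowerbounds}, the weighted sums via \eqref{grid_ktilde}, \eqref{grid_k}, \eqref{grid_k-1}, and the projection argument for \eqref{mass_Tproj} --- is essentially the paper's.
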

\begin{proof}
 We first remark that
 \begin{equation} \label{mass-1}
  \M(T^\eps\mres U) \leq h(\eps)^n 
  \sum_{K\in\GG^\eps_k, \ K\cap U\neq\emptyset}
  |\gamma^\eps(K)|_*.
 \end{equation}
 Let~$K\in\GG^\eps_k$ be a $k$-cell such that~$K\cap U\neq\emptyset$.
 We claim that
 \begin{equation} \label{mass0}
  |\gamma^\eps(K)|_* \lesssim \delta^{-1} \,
  h(\eps)^{-n} \, E_\eps(u_\eps, \, U^\prime) 
 \end{equation}
 Indeed, thanks to \eqref{S:intersection}
 and the definition of~$\I$ (see e.g.~\cite[Section~2.1]{CO1}), we have
 \[
  |\gamma^\eps(K)|_* = |\I(\S_y(u_\eps), \, \llbracket K\rrbracket)|_*
  \leq \M(\S_{y}({u_\eps}_{|K}))
 \]
 for any $y\in B^*$.
 By averaging both sides with respect to~$y\in B^*$,
 and by applying~\eqref{S:mass} from Proposition~\ref{prop:S}, we obtain
 \[
  |\gamma^\eps(K)|_* \leq \fint_{B^*} \M(\S_y({u_\eps}_{|K})) \, \d y
  \lesssim \norm{\nabla u_\eps}_{L^k(K)}^k
  \lesssim E_\eps(u_\eps, \, R^\eps_k\cap U^\prime).
 \]
 We can bound the right-hand side from above with the help of 
 \eqref{grid_k}, so the claim~\eqref{mass0} follows.
 
 From~\eqref{H}, \eqref{h} and~\eqref{mass0}, we deduce
 \[
  \sup_{K\in \GG^\eps_k, \ K\cap U \neq\emptyset} \ \frac{\eps}{h(\eps)^{k/2}} 
  \abs{\log\left(\frac{\eps}{h(\eps)^{k/2}}\right)}
  |\gamma^\eps(K)|_* \to 0 \qquad \textrm{as } \eps\to 0
 \]
 and this fact, together with~\eqref{grid_k-1_conv}, 
 shows that the assumptions of Lemma~\ref{lemma:lowerbounds} are satisfied
 for~$\eps$ small enough. By applying Lemma~\ref{lemma:lowerbounds},
 \eqref{h} and~\eqref{mass0}, we obtain the following bound:
 \begin{equation*} 
  \begin{split}
   |\gamma^\eps(K)|_* \abs{\log\eps}
   \left(1 + \mathrm{o}_{\eps\to 0}(1)\right) \leq
   E_\eps(u_\eps, \, K) + Cr \, h(\eps)  E_\eps(u_\eps, \, \partial K).
  \end{split}
 \end{equation*}
 We multiply both sides by~$h(\eps)^n\abs{\log\eps}^{-1}$
 and sum over~$K$. Thanks to~\eqref{mass-1}, we obtain
 \begin{equation*} 
  \begin{split}
   \left(1 + \mathrm{o}_{\eps\to 0}(1)\right)\M(T^\eps\mres U) 
    &\leq h(\eps)^n \, \frac{E_\eps(u_\eps, 
       \, R^\eps_{k}\cap U^\prime)}{\abs{\log\eps}} \\
    &\qquad\qquad
    + C r \, h(\eps)^{n+1} \, \frac{E_\eps(u_\eps, 
       \, R^\eps_{k-1}\cap U^\prime)}{\abs{\log\eps}}.
  \end{split}
 \end{equation*}
 The right-hand side can now be bounded from above with 
 the help of Lemma~\ref{lemma:grid}, so~\eqref{mass_T} follows.
 The proof of~\eqref{mass_Tproj} is analougous; in this case,
 we sum over the cells~$K$ that are parallel to the $k$-plane spanned 
 by~$\{\e_{n+1}, \, \ldots, \, \e_{n+k}\}$ and use~\eqref{grid_ktilde}.
\end{proof}

\subsubsection{Proof of Proposition~\ref{prop:liminf-local}}

By combining the results in the previous section,
we prove the following lemma,
which is analougous to~\cite[Proposition~3.1]{ABO2}.
For any $n$-plane $L\subseteq\R^{n+k}$, we denote 
by $\pi_{L}\colon\R^{n+k}\to L$ the orthogonal projection onto~$L$.

\begin{lemma} \label{lemma:liminf-L}
 Let~$U\csubset U^\prime$ be bounded domains in~$\R^{n+k}$.
 Let~$(u_\eps)_\eps$ be a countable sequence of smooth, 
 bounded maps that satisfy~\eqref{H}.
 Let~$L\subseteq\R^{n+k}$ be a $n$-plane. Then, there exist a (non-relabelled) subsequence
 and a finite-mass chain $S\in\M_n(\overline{U}^\prime; \, \GN)$ such that
 \begin{gather}
  \int_{B^*} \F_U(\S_y(u_\eps) - S) \, \d y \to 0 
   \qquad \textrm{as } \eps\to 0 \label{liminf:flatL} \\
  \M(\pi_{L,*}(S\mres U)) \leq \liminf_{\eps\to 0} 
   \frac{E_\eps(u_\eps, \, U^\prime)}{\abs{\log\eps}}. \label{liminf:massL}
 \end{gather}
\end{lemma}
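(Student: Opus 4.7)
I would construct the limit chain $S$ as the flat-norm limit of the polyhedral approximations $T^\eps$ defined in~\eqref{j_eps}. After an orthogonal change of coordinates, it is no loss of generality to assume $L = \mathrm{span}\{\e_1, \ldots, \e_n\}$, so that the preferred $k$-plane appearing in Lemmas~\ref{lemma:grid} and~\ref{lemma:mass_T} is precisely~$L^\perp$. Fix parameters $\delta, r \in (0, 1)$, let $\GG^\eps$ be the grid provided by Lemma~\ref{lemma:grid}, and denote by $T^\eps_{\delta} \in \M_n(\overline{U^\prime}; \GN)$ the corresponding polyhedral chain from~\eqref{j_eps}.

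Integrating the projection estimate~\eqref{proj:flat} in $y \in B^*$ and combining~\eqref{grid_mu} with~\eqref{H} and~\eqref{h}, I obtain
\[
 \int_{B^*} \F_U\bigl(\S_y(u_\eps) - T^\eps_{\delta}\bigr) \, \d y
 \lesssim \delta^{-1} h(\eps) \abs{\log\eps} \longrightarrow 0 \quad \text{as } \eps \to 0.
\]
Lemmas~\ref{lemma:projection} and~\ref{lemma:mass_T} ensure $\partial T^\eps_{\delta} \mres U = 0$ and $\sup_\eps \M(T^\eps_{\delta}) < +\infty$. A standard compactness argument for flat chains then extracts a non-relabelled subsequence along which $T^\eps_{\delta} \to S$ with respect to $\F_U$, for some $S \in \M_n(\overline{U^\prime}; \GN)$; combined with the display, this proves~\eqref{liminf:flatL}. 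Applying the continuous pushforward $\pi_{L,*}$, invoking lower semicontinuity of the mass under flat convergence, and using~\eqref{mass_Tproj} gives
\[
 \M\bigl(\pi_{L,*}(S \mres U)\bigr) \leq \liminf_{\eps \to 0} \M\bigl(\pi_{L,*}(T^\eps_{\delta} \mres U)\bigr) \leq \bigl(1 + \delta + Cr \delta^{-1}\bigr) \liminf_{\eps \to 0} \frac{E_\eps(u_\eps, U^\prime)}{\abs{\log\eps}}.
\]

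The principal subtlety is the extraction of a single chain $S$ valid simultaneously for every $(\delta, r)$. For this I would note that any two approximations $T^\eps_{\delta_1}$ and $T^\eps_{\delta_2}$ satisfy $\F_U(T^\eps_{\delta_1} - T^\eps_{\delta_2}) \to 0$ as $\eps \to 0$: averaging the triangle inequality
\[
 \F_U\bigl(T^\eps_{\delta_1} - T^\eps_{\delta_2}\bigr) \leq \F_U\bigl(T^\eps_{\delta_1} - \S_y(u_\eps)\bigr) + \F_U\bigl(\S_y(u_\eps) - T^\eps_{\delta_2}\bigr)
\]
over $y \in B^*$ and using the previous display shows that flat-norm limits along a common subsequence must coincide. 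A standard diagonal extraction in $(\delta, r)$ thus produces a single subsequence and a single limit $S$; sending $r \to 0$ and then $\delta \to 0$ in the projected mass bound yields~\eqref{liminf:massL}. The main technical ingredients are the projection estimate and the mass bound of Lemmas~\ref{lemma:projection} and~\ref{lemma:mass_T}, both relying on the sharp lower bound of Lemma~\ref{lemma:lowerbounds}.
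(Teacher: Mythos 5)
Your proof follows essentially the same route as the paper: the polyhedral approximation $T^\eps$ of \eqref{j_eps}, the projection estimate \eqref{proj:flat} integrated in~$y$, the boundary and mass bounds of Lemmas~\ref{lemma:projection} and~\ref{lemma:mass_T}, compactness of finite-mass chains under the flat norm, and lower semicontinuity of $\M(\pi_{L,*}(\,\cdot\,))$.

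Two minor points of comparison. First, you correctly normalize $L$ to be the $n$-plane $\mathrm{span}\{\e_1,\ldots,\e_n\}$, so the preferred $k$-plane of Lemmas~\ref{lemma:grid} and~\ref{lemma:mass_T} is $L^\perp$; the paper's text confusingly refers to $L$ as a $k$-plane in the proof of Lemma~\ref{lemma:liminf-L}, which is a slip since the lemma is about $n$-planes and $T^\eps$ is an $n$-chain. Second, you spend more care than the paper on the fact that the flat limit $S$ obtained from the $T^\eps_\delta$ (via compactness for one fixed $\delta$) is actually independent of $\delta$, which is needed to send $\delta\to 0$ in the projected mass estimate. Your argument --- averaging the triangle inequality through $\S_y(u_\eps)$ over $y\in B^*$ to show $\F_U(T^\eps_{\delta_1}-T^\eps_{\delta_2})\to 0$, then diagonalizing --- is valid and spells out what the paper only asserts as ``passing to the limit first in $r$, then in $\delta$.'' One can streamline your argument slightly: since the grid $\GG^\eps$ of Lemma~\ref{lemma:grid} depends only on~$\delta$ and not on~$r$, the chain $T^\eps_\delta$ is itself $r$-independent; so once $S$ is found for a fixed~$\delta$, one may already send $r\to 0$ in the bound $\M(\pi_{L,*}(S\mres U))\leq(1+\delta+Cr\delta^{-1})\liminf(\cdots)$ to get the factor $(1+\delta)$, and the diagonal argument is only really needed for the $\delta\to 0$ step. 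In any case your reasoning is correct.
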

\begin{proof}
 Up to rotations we can assume without loss of 
 generality that~$L$ is the $k$-plane spanned by~$\{\e_{n+1}, \, \ldots, \, \e_{n+k}\}$.
 By Lemma~\ref{lemma:projection} and Lemma~\ref{lemma:mass_T}, 
 we know that $\partial T^\eps\mres U = 0$ and $\M(T^\eps\mres U)$
 is uniformly bounded with respect to~$\eps$. Then, by applying
 compactness results for the flat norm (see e.g.
 \cite[Lemma~5 and~6]{CO1} for a statement in terms of the relative flat norm),
 we find a (non-relabelled) subsequence and a finite-mass chain 
 $S\in\M_n(\overline{U}; \, \GN)$ such that
 \begin{gather}
  \F_U(T^\eps - S) \to 0 \qquad \textrm{as } \eps\to 0 \label{liminf1} \\
  \M(\pi_{L,*}(S\mres U)) \leq (1 + \delta + C\delta^{-1} r)
  \liminf_{\eps\to 0} \frac{E_\eps(u_\eps, \, U^\prime)}{\abs{\log\eps}}.
  \label{liminf2}
 \end{gather}
 The triangle inequality and Lemma~\ref{lemma:projection} imply
 \begin{equation*} \label{liminf3}
  \begin{split}
   \int_{B^*} \F_U(\S_y(u_\eps) - S) \, \d y
   &\leq \int_{B^*} \F_U(\S_y(u_\eps) - T^\eps) \, \d y
   + \L^n(B^*) \, \F_U(T^\eps - S) \\
   &\leq h(\eps)^{n+1} \int_{B^*} \int_{U^\prime} \frac{\d\mu_{\eps,y}(x)}
   {\dist^n(x, \, R^\eps_{k-1})} \d y 
   + \L^n(B^*) \, \F_U(T^\eps - S) \\
   &\stackrel{\eqref{grid_mu}}{\lesssim}
   \delta^{-1} h(\eps) \frac{E_\eps(u_\eps, \, U^\prime)}{\abs{\log\eps}}
   + \L^n(B^*) \, \F_U(T^\eps - S)
  \end{split}
 \end{equation*}
 and the right-hand side tends to zero as~$\eps\to 0$, due
 to~\eqref{H} and~\eqref{liminf1}. Thus, \eqref{liminf:flatL} follows.
 By passing to the limit in~\eqref{liminf2} first as~$r\to 0$,
 then as~$\delta\to 0$, we obtain~\eqref{liminf:massL}. 
\end{proof}

Now, Proposition~\ref{prop:liminf-local} can be deduced 
from Lemma~\ref{lemma:liminf-L} by a localisation argument,
with the help of the following lemma.

\begin{lemma} \label{lemma:mass_chain-nointro}
 Let~$S\in\M_n(\R^{n+k}; \, \GN)$ be a chain of finite mass.
 Then, there holds
 \[
  \M(S) = \sup_{(U_i, \, L_i)_{i\in\N}} \,
  \sum_{i=0}^{+\infty} \M(\pi_{L_i,*}(S\mres U_i)),
 \]
 where the supremum is taken over all sequences of
 pairwise disjoint open sets $U_i$ and $n$-planes $L_i\subseteq\R^{n+k}$.
\end{lemma}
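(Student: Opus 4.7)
The plan is to prove the two inequalities separately. The direction $\M(S)\geq\sup\sum_i\M(\pi_{L_i,*}(S\mres U_i))$ is straightforward: since every orthogonal projection~$\pi_L$ is $1$-Lipschitz, push-forward by~$\pi_L$ does not increase mass, so $\M(\pi_{L_i,*}(S\mres U_i))\leq\M(S\mres U_i)$ for each~$i$; and for pairwise disjoint open sets~$(U_i)$, the countable additivity of the finite positive measure~$E\mapsto\M(S\mres E)$ yields $\sum_i\M(S\mres U_i)\leq\M(S)$. Summing gives the required bound.

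For the reverse inequality, the key ingredient is rectifiability. Since the coefficient group~$(\GN,|\cdot|_*)$ is discrete (Proposition~\ref{prop:group_norm}), White's rectifiability theorem~\cite{White-Rectifiability} implies that~$S$ is rectifiable: there exist an $n$-rectifiable Borel set~$M\subseteq\R^{n+k}$, a Borel multiplicity~$\theta\colon M\to\GN\setminus\{0\}$, and a measurable orientation~$\tau$ such that~$\M(S\mres E)=\int_{M\cap E}|\theta|_*\,\d\H^n$ for every Borel~$E$. In particular, $M$ possesses an approximate tangent $n$-plane~$T_xM$ at~$\H^n$-a.e. point~$x\in M$.

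Fix~$\eta>0$. Combining Lusin's theorem applied to~$\theta$ and~$\tau$ with the approximate tangent plane property, for~$\H^n$-a.e. $x\in M$ one can find a radius~$r_0(x)>0$ such that, for every~$0<r<r_0(x)$, the restriction of~$\pi_{T_xM}$ to~$M\cap B(x,r)$ is injective and bilipschitz with distortion at most~$1+\eta$, while~$\theta$ takes the constant value~$\theta(x)$ on~$M\cap B(x,r)$ outside a subset of~$\H^n$-measure at most~$\eta r^n$. Expressing the mass of the push-forward via the area formula, one then deduces the crucial estimate $\M(\pi_{T_xM,*}(S\mres B(x,r)))\geq(1-C_n\eta)\,\M(S\mres B(x,r))$ for some dimensional constant~$C_n$: the near-isometric injective projection sends the chain onto an $n$-rectifiable set in~$T_xM$ with essentially constant multiplicity~$\theta(x)$, so no cancellation can occur.

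To conclude, I would apply the Besicovitch covering theorem (or a Vitali-type lemma for rectifiable measures) to the family of open balls~$\{B(x,r)\colon x\in M,\ 0<r<r_0(x)\}$, producing a countable pairwise disjoint subcollection~$U_i=B(x_i,r_i)$ with $\H^n(M\setminus\bigcup_i U_i)=0$; since~$S$ is supported on~$M$, this gives~$\sum_i\M(S\mres U_i)=\M(S)$. Setting~$L_i:=T_{x_i}M$ and summing the crucial estimate yields $\sum_i\M(\pi_{L_i,*}(S\mres U_i))\geq(1-C_n\eta)\,\M(S)$, and letting~$\eta\to 0$ completes the proof. I expect the main obstacle to be the verification of the crucial estimate, namely that the near-isometric, injective projection produces no multiplicity cancellation on the projected rectifiable piece. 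This relies on the discreteness of~$(\GN,|\cdot|_*)$, which underpins both White's rectifiability theorem and the Lusin-type localisation of~$\theta$ at the single value~$\theta(x)$ on most of each small ball.
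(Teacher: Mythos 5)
Your easy-inequality argument is correct, and your overall architecture --- invoke White's rectifiability theorem, localise near approximate tangent planes, then cover by a disjoint Vitali/Besicovitch family --- is a genuinely different route from the paper's. The paper instead argues in three layers: first for a $C^1$-polyhedron $S=f_*(\sigma\llbracket K\rrbracket)$, where uniform continuity of $\nabla f$ and a fine triangulation of~$K$ give the estimate directly via the area formula; then for $C^1$-chains by additivity; and finally for rectifiable chains by $\M$-approximation, using the $1$-Lipschitz bound $\M(\pi_{L,*}(\,\cdot\,))\leq\M(\,\cdot\,)$ to absorb the error term $Q=S-f_*P$. That last step is where the paper sidesteps all the measure-theoretic subtleties your proof has to face head-on.

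The ``crucial estimate'' in your sketch has a genuine gap, on two counts. First, the asserted injectivity of $\pi_{T_xM}$ on $M\cap B(x,r)$ for $\H^n$-a.e.~$x$ and small~$r$ is false for general rectifiable sets: neither Lusin's theorem applied to $(\theta,\tau)$ nor the (weak-$*$) approximate tangent plane property yields injectivity. A rectifiable set can carry ``overhanging'' sheets near a.e.~point; what is true is that, writing $M$ as a countable union of Borel pieces $M_j$ each contained in a $C^1$ manifold $\Sigma_j$, for a.e.~$x\in M_j$ one has $\H^n\bigl((M\setminus\Sigma_j)\cap B(x,r)\bigr)=o(r^n)$, and the projection is injective and near-isometric only on the good piece $M\cap\Sigma_j\cap B(x,r)$, not on all of $M\cap B(x,r)$. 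Second, even granting this decomposition, you must bound the mass (not just the $\H^n$-measure) of the chain on the residual piece: $\M\bigl(S\mres((M\setminus\Sigma_j)\cap B(x,r))\bigr)=\int_{(M\setminus\Sigma_j)\cap B(x,r)}\abs{\theta}_*\,\d\H^n$, and Lusin's theorem gives no bound on $\abs{\theta}_*$ off its good compact. What one actually needs is the Lebesgue differentiation theorem for the $L^1(\H^n\mres M)$ function $\abs{\theta}_*\one_{M\setminus\Sigma_j}$ at the points $x\in M_j$, together with the Lebesgue-point property of $\abs{\theta}_*$ itself, so that the bad mass is $o(r^n)$ and hence an $\eta$-fraction of $\M(S\mres B(x,r))\sim\abs{\theta(x)}_*\omega_n r^n$. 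With the decomposition $S\mres B(x,r)=T_A+T_B$ (good/bad) and the triangle inequality $\M(\pi_*T_A+\pi_*T_B)\geq\M(\pi_*T_A)-\M(T_B)$, the estimate then goes through --- but none of this is in your sketch. Finally, the remark that the crucial estimate ``relies on the discreteness of $(\GN,|\cdot|_*)$'' is slightly misplaced: discreteness is what gives rectifiability via White's theorem, but plays no role in the local projection estimate once rectifiability is granted.
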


The proof will be given in Appendix~\ref{sect:mass_chains}.
Once Lemma~\ref{lemma:mass_chain-nointro} is proved,
Proposition~\ref{prop:liminf-local} follows by repeating
verbatim the arguments of~\cite[Theorem~1.1.(i)]{ABO2}, so we skip
the proof of Proposition~\ref{prop:liminf-local}.

\subsection{Compactness and lower bounds for the boundary value problem}

The aim of this section is to complete the proof of
Theorem~\ref{th:main}.(i). We will deduce 
Theorem~\ref{th:main}.(i)
from its local counterpart, i.e. Proposition~\ref{prop:liminf-local},
with the help of the extension result, Lemma~\ref{lemma:extension}.

\begin{proof}[Proof of Theorem~\ref{th:main}.(i)]
 Let~$(u_\eps)_\eps\subseteq W^{1,k}_v(\Omega, \, \R^m)$ 
 be such that 
 $\sup_\eps {\abs{\log\eps}^{-1}}{E_\eps(u_\eps)} <+\infty$.
 Let~$\tilde{u}\in (L^\infty\cap W^{1,k})(\R^{n+k}, \,\R^m)$ 
 be such that~$\tilde{u} = v$ on~$\partial\Omega$. 
 Let~$\Omega^\prime$, $\Omega^{\prime\prime}$ be bounded
 domains in~$\R^{n+k}$, such that~$\Omega\csubset\Omega^\prime
 \csubset\Omega^{\prime\prime}$.
 By applying Lemma~\ref{lemma:extension},
 we find $y\in B^*$, a subsequence~$\eps\to 0$ and maps
 $w_{\eps,y}\in (L^\infty\cap W^{1,k})
 (\Omega^{\prime\prime}\setminus\overline{\Omega}, \, \R^m)$
 that agree with~$v$ on~$\partial\Omega$ and satisfy
 \begin{equation}\label{mliminf1}
  \sup_{\eps} \left(
   \|w_{\eps,y}\|_{L^\infty(\Omega^{\prime\prime}\setminus\overline{\Omega})}
   + \frac{E_\eps(w_{\eps,y}, \, \Omega^{\prime\prime}\setminus\overline{\Omega})}
  {\abs{\log\eps}} \right) < + \infty. 
 \end{equation}
 Lemma~\ref{lemma:extension} also implies that the sequence $(w_{\eps, y})$ converges 
 $W^{1,k-1}(\Omega^{\prime\prime}\setminus\overline{\Omega})$-strongly
 to a limit~$w_y$, and that~$\S(w_y) = \S_y(\tilde{u})$.
 Then, the continuity of~$\S$ \cite[Theorem~3.1]{CO1} implies
 \begin{equation}  \label{mliminf2}
  \int_{B^*} \F_{\Omega^{\prime\prime}\setminus\overline{\Omega}} 
   \left(\S_{y^\prime}(w_{\eps,y}) - \S_{y}(\tilde{u})\right) 
   \d y^\prime \to 0 \qquad \textrm{as } \eps\to 0.
 \end{equation}
 We define the map~$\tilde{u}_\eps\in (L^\infty\cap W^{1,k})
 (\Omega^{\prime\prime}, \, \R^m)$ by setting~$\tilde{u}_\eps := u_\eps$
 on~$\Omega$ and~$\tilde{u}_\eps := w_{\eps,y}$ on 
 $\Omega^{\prime\prime}\setminus\overline{\Omega}$.
 Since the operator~$\S$ is local~\cite[Corollary~1]{CO1},
 we have
 \[
  \S_{y^\prime}(w_{\eps,y})\mres(\Omega^{\prime\prime}\setminus\overline{\Omega})
  = \S_{y^\prime}(\tilde{u}_\eps)\mres(\Omega^{\prime\prime}\setminus\overline{\Omega})
 \]
 for a.e.~$y^\prime\in B^*$.
 Therefore, from~\eqref{mliminf2} and~\cite[Lemma~3]{CO1} we obtain
 \begin{equation}  \label{mliminf2bis}
  \int_{B^*} \F_{\Omega^{\prime}\setminus\overline{\Omega}} 
   \left(\S_{y^\prime}(\tilde{u}_\eps) - \S_{y}(\tilde{u})\right) 
   \d y^\prime \to 0 \qquad \textrm{as } \eps\to 0.
 \end{equation}
 We are now in the position to apply our local result,
 Proposition~\ref{prop:liminf-local}, to the sequence~$\tilde{u}_\eps$
 and the open sets~$\Omega^\prime\csubset\Omega^{\prime\prime}$.
 As a result, we obtain a finite-mass chain~$\tilde{S}$
 such that, up to subsequences,
 \begin{equation}\label{mliminf3}
  \int_{B^*} \F_{\Omega^\prime}(\S_{y^\prime}(\tilde{u}_\eps)
    - \tilde{S}) \, \d y^\prime \to 0 \qquad
    \textrm{as }\eps\to 0. 
 \end{equation}
 By~\cite[Lemma~3]{CO1}, 
 \begin{equation*}\label{mliminf3bis}
  \int_{B^*} \F_{\Omega^\prime\setminus\overline{\Omega}}
    (\S_{y^\prime}(\tilde{u}_\eps)
    - \tilde{S}) \, \d y^\prime \to 0 \qquad
    \textrm{as }\eps\to 0. 
 \end{equation*}
 This condition, combined with~\eqref{mliminf2bis}, implies that
 $\S_{y}(\tilde{u})\mres(\Omega^{\prime}\setminus\overline{\Omega}) 
 = \tilde{S} \mres(\Omega^{\prime}\setminus\overline{\Omega})$
 and hence, the chain
 \[
  S := \tilde{S} - \S_{y}(\tilde{u})\mres
  (\Omega^{\prime}\setminus\overline{\Omega}) 
  = \tilde{S}\mres\overline{\Omega}
 \]
 is supported in~$\overline{\Omega}$. At the same time,
 we have $\S_{y^\prime}(u_\eps) = 
 \S_{y^\prime}(\tilde{u}_\eps)\mres\overline{\Omega}$
 for a.e.~$y^\prime\in B^*$. For chains supported in a compact subset of~$\Omega^\prime$,
 the relative flat norm~$\F_{\Omega^\prime}$ is equivalent to~$\F$
 (see e.g. \cite[Remark~2.2]{CO1})
 and hence, \eqref{mliminf3} implies
 \[
  \int_{B^*} \F\left(\S_{y^\prime}(u_\eps)
    - S\right) \d y^\prime \to 0 \qquad
    \textrm{as }\eps\to 0.
 \]
 By~\eqref{S:cobord}, $\S_{y^\prime}(u_\eps)\in\mathscr{C}(\Omega, \, v)$ for 
 any~$\eps$ and a.e.~$y^\prime\in B^*$. 
 The set~$\mathscr{C}(\Omega, \, v)$ is closed with respect to the
 $\F$-norm (this follows from the isoperimetric inequality, see e.g.
 \cite[Statement~(7.6)]{Fleming}). Therefore,
 $S\in\mathscr{C}(\Omega, \, v)$.
 
 It only remain to prove the upper bound on the mass of~$S$.
 Let~$A\subseteq\R^{n+k}$ be an open set. We extract a (non-relabelled) subsequence,
 in such a way that $\liminf_{\eps\to 0}\abs{\log\eps}^{-1} E_\eps(u_\eps, A\cap\Omega)$
 is achieved as a limit. For any integer~$j\geq 1$, let
 $A_j :=\{x\in A\colon \dist(x, \, \partial A)\geq 1/j\}$.
 By applying Proposition~\ref{prop:liminf-local} and a diagonal argument,
 we find a subsequence such that
 \[
  \begin{split}
  \M(S\mres (A_j\cap\Omega^\prime)) 
  \leq \limsup_{\eps\to 0} \frac{E_\eps(\tilde{u}_\eps, \, A\cap\Omega^{\prime\prime})}{\abs{\log\eps}}
  \qquad \textrm{for any } j\geq 1.
  \end{split}
 \]
 By construction, $S$ is supported in~$\overline{\Omega}$,
 so~$S\mres (A_j\cap\Omega^\prime) = S\mres A_j$. Then, by applying 
 Lemma~\ref{lemma:extension}, we obtain
 \[
  \begin{split}
  \M(S\mres A_j) 
  \leq \lim_{\eps\to 0} \frac{E_\eps(u_\eps, \, A\cap\Omega)}{\abs{\log\eps}} 
  + C \int_{\Omega^{\prime\prime}\setminus\Omega} \abs{\nabla\tilde{u}}^{k} 
  \qquad \textrm{for any } j\geq 1,
  \end{split}
 \]
 for some constant~$C$ that does not depend  
 on~$\eps$, $j$, $\Omega^{\prime\prime}$.
 Letting~$j\to+\infty$, $\Omega^{\prime\prime}\searrow\Omega$,
 we conclude that
 \[
  \M(S\mres A) \leq \lim_{\eps\to 0} \frac{E_\eps(u_\eps, \, A\cap\Omega)}{\abs{\log\eps}}
 \]
 and the proof is complete.
\end{proof}

Statement~(i) in Proposition~\ref{prop:main-nobd} also
follows by Proposition~\ref{prop:liminf-local}, in a similar way.

\section{Proof of Theorem~\ref{th:mainmin}}
\label{sect:mainmin}

Let~$u_{\eps,\min}$ be a minimiser of the functional~$E_\eps$
subject to the boundary condition~$u = v$ on~$\partial\Omega$, and let
\begin{equation*}
 \mu_{\eps,\min} := \left( \frac{1}{k}|\nabla u_{\eps,\min}|^k
 + \frac{1}{\eps^k} f(u_{\eps,\min})\right) \frac{\d x\mres\Omega}{\abs{\log\eps}} .
\end{equation*}
We have~$\sup_\eps \mu_{\eps,\min}(\R^{n+k}) < +\infty$
by Remark~\ref{rk:extension} and hence,
up to a subsequence, $\mu_{\eps,\min}$ converges
weakly$^*$ to a limit~$\mu_{\min}$,
in the sense of measures on~$\R^{n+k}$. 
By applying Theorem~\ref{th:main}.(i),
we find a chain~$S_{\min}\in\mathscr{C}(\Omega, \, v)$ such that
\begin{equation} \label{cor1}
 \M(S_{\min}\mres A) \leq \liminf_{\eps\to 0} \mu_{\eps, \min}(A)
 \qquad \textrm{for any open set } A\subseteq\R^{n+k}.
\end{equation}
Theorem~\ref{th:main}.(ii) implies that~$S_{\min}$ is
mass-minimising in~$\mathscr{C}(\Omega, \, v)$. Moreover, by
the properties of weak$^*$ convergence, from~\eqref{cor1} we obtain
\begin{equation} \label{cor2}
 \M(S_{\min}\mres A) \leq \mu_{\min}(A)
 \qquad \textrm{for any open set } A\subseteq\R^{n+k} \textrm{ such that }
 \mu_{\min}(\partial A) = 0.
\end{equation}
Let~$E\subseteq\R^{n+k}$ be a Borel set, let~$U\subseteq\R^{n+k}$
be an open set and let~$K\subseteq\R^{n+k}$ be a compact set
such that $K\subseteq E \subseteq U$. 
For any~$t\in (0, \, \dist(K, \, \partial U))$,
let~$U_t := \{x\in U\colon \dist(x, \, \partial U)> t\} \supseteq K$.
Since~$\mu_{\min}$ is a finite measure, we have
$\mu_{\min}(\partial U_{t}) = 0$ for all but countably 
many~$t\in (0, \, \dist(K, \, \partial U))$.
Therefore, there holds
\[
 \M(S_{\min}\mres K) \leq \M(S_{\min}\mres U_t)
 \stackrel{\eqref{cor2}}{\leq}
 \mu_{\min}(U_t) \leq \mu_{\min}(U) \qquad \textrm{for a.e. } t.
\]
Letting~$U\searrow K$, $K\nearrow E$, we conclude that
$\M(S_{\min}\mres E)\leq \mu_{\min}(E)$.
(The measure~$\M(S_{\min}\mres\cdot)$ is Radon, because by construction,
it is the weak$^*$ limit of a sequence of Radon measures, associated
with polyhedral approximations of~$S_{\min}$; see \cite[Section~4]{Fleming}.)
As a consequence, $\mu_{\min} - \M(S_{\min}\mres\cdot)$ is a non-negative measure.
However, Theorem~\ref{th:main}.(ii) implies that 
$\mu_{\min}(\R^{n+k}) = \lim_{\eps\to 0} \mu_{\eps,\min}(\R^{n+k})\leq \M(S_{\min})$, 
so $\mu_{\min} = \M(S_{\min}\mres\cdot)$.

\paragraph*{Acknowledgements.}
{\BBB The authors are grateful to the referees for their careful reading of the
manuscript and their insightful comments.
The authors were partially supported by GNAMPA-INdAM.}

\begin{appendix}

\section{The norm on~\texorpdfstring{$\GN$}{the coefficient group}: Proof of Proposition~\ref{prop:group_norm}}
\label{sect:norm}

The aim of this section is to prove Proposition~\ref{prop:group_norm}.
In Section~\ref{sect:setting}, we have defined 
\begin{equation} \label{I_min}
 E_{\min}(\sigma) := \inf\left\{
 \frac{1}{k}\int_{\SS^{k-1}}\abs{\nablaT v}^k
 \colon v\in W^{1,k}(\SS^{k-1}, \, \NN)\cap\sigma \right\} 
\end{equation}
for any~$\sigma\in\GN$, with~$\nablaT$ the tangential
gradient on~$\SS^{k-1}$ (i.e.~the restriction of~$\nabla$
to the tangent plane to~$\SS^{k-1}$). The compact Sobolev
embedding~$W^{1,k}(\SS^{k-1}, \, \NN)\hookrightarrow C(\SS^{k-1}, \, \NN)$
implies that~$W^{1,k}(\SS^{k-1}, \, \NN)\cap\sigma$ is 
sequentially $W^{1,k}$-weakly closed, so the infimum at the 
right-hand side is achieved. We must have
\begin{equation} \label{discrete_I}
 \inf_{\sigma\in\GN\setminus\{0\}} E_{\min}(\sigma) > 0,
\end{equation}
for otherwise there would exist a sequence of non-null-homotopic maps
$v_j\in W^{1,k}(\SS^{k-1}, \, \NN)$ that converge $W^{1,k}$-strongly,
and hence uniformly, to a constant. Moreover, there holds
\begin{equation} \label{inversion}
 E_{\min}(\sigma) = E_{\min}(-\sigma) \qquad 
 \textrm{for any } \sigma\in\GN.
\end{equation}
Indeed, for any~$v\in W^{1,k}(\SS^{k-1}, \, \NN)\cap\sigma$ and any~$x\in\SS^{k-1}$,
define $\bar{v}(x) := v(-x_1, \, x_2, \, \ldots, \, x_k)$. The
map that sends~$v\mapsto\bar{v}$
is a bijection $W^{1,k}(\SS^{k-1}, \, \NN)\cap\sigma\to W^{1,k}(\SS^{k-1}, \, \NN)\cap(-\sigma)$
that preserves the $L^k$-norm of the gradient, and hence~\eqref{inversion} follows.

Our candidate norm~$|\cdot|_*$ on~$\GN$, 
which was also introduced in Section~\ref{sect:setting},
is defined for any~$\sigma\in\GN$ by
\begin{equation} \label{group_norm}
 |\sigma|_* := 
   \inf\left\{\sum_{i=1}^q E_{\min}(\sigma_i)\colon q\in\N, \ (\sigma_i)_{i=1}^q\in\GN^q,
   \ \sum_{i=1}^q \sigma_i = \sigma\right\} \!.
\end{equation}

\begin{prop}
 The function $|\cdot|_*$ is a norm on~$\GN$ that satisfies
 \begin{equation} \label{discrete_norm}
  \inf_{\sigma\in\GN\setminus\{0\}} |\sigma|_* > 0
 \end{equation}
 and
 \begin{equation} \label{I_norm}
   \abs{\sigma}_* \leq E_{\min}(\sigma) \qquad 
  \textrm{for any } \sigma\in\GN.
 \end{equation}
 The infimum in~\eqref{discrete_norm} is achieved, for any~$\sigma\in\GN$.
 Moreover, the set
 \begin{equation} \label{generators}
  \Sg := \left\{\sigma\in\GN\colon 
  \abs{\sigma}_* = E_{\min}(\sigma) \right\}
 \end{equation}
 is finite, and for any~$\sigma\in\GN$ there exists a decomposition~$\sigma = \sum_{i=1}^q\sigma_i$
 such that $\abs{\sigma}_* = \sum_{i=1}^q\abs{\sigma_i}_*$ and~$\sigma_i\in\Sg$ for any~$i$.
\end{prop}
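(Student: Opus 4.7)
The plan is to verify the norm axioms for $|\cdot|_*$, then establish the existence of optimal decompositions, and finally prove the finiteness of $\Sg$.

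First I would verify the norm axioms. The bound $|\sigma|_* \leq E_{\min}(\sigma)$ in~\eqref{I_norm} is immediate from the trivial decomposition $q=1$, $\sigma_1 = \sigma$. Symmetry $|{-}\sigma|_* = |\sigma|_*$ follows from~\eqref{inversion} via the bijection $(\sigma_i) \leftrightarrow (-\sigma_i)$ on decompositions. The triangle inequality is obtained by concatenating near-optimal decompositions of $\sigma_1$ and $\sigma_2$. Setting $\lambda := \inf_{\tau\neq 0} E_{\min}(\tau) > 0$ from~\eqref{discrete_I}, any decomposition of a nonzero $\sigma$ must contain at least one nonzero term, so $|\sigma|_* \geq \lambda > 0$: this yields both positive-definiteness and the discreteness estimate~\eqref{discrete_norm}.

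Next I would establish that every sublevel set $\{\sigma \in \GN : E_{\min}(\sigma) \leq M\}$ is finite. For each such $\sigma$, pick a minimizer $v_\sigma \in W^{1,k}(\SS^{k-1}, \NN) \cap \sigma$ with $\|\nablaT v_\sigma\|_{L^k}^k \leq kM$; then the compact embedding $W^{1,k}(\SS^{k-1}) \hookrightarrow C^0(\SS^{k-1})$ extracts a uniform limit, and uniform convergence to a continuous map into the ANR $\NN$ forces the homotopy classes to coincide eventually, precluding infinitely many distinct $\sigma$. This in turn implies that the infimum in~\eqref{group_norm} is attained: along a near-optimal decomposition (with zero terms discarded), the number of terms is at most $(|\sigma|_*+1)/\lambda$, and each term lies in the finite set $\{E_{\min} \leq |\sigma|_* + 1\}$, so a pigeonhole/diagonal subsequence stabilizes both the count and each coordinate. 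A refinement argument then shows every optimal decomposition $\sigma = \sum_i\sigma_i$ consists of elements of $\Sg$: if some $\sigma_i \notin \Sg$, replacing it by its own optimal decomposition strictly lowers the total, contradicting optimality. Since $\sigma_i \in \Sg$ gives $|\sigma_i|_* = E_{\min}(\sigma_i)$, we obtain $|\sigma|_* = \sum_i E_{\min}(\sigma_i) = \sum_i |\sigma_i|_*$.

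The delicate part is the finiteness of $\Sg$. By the previous step it suffices to bound $E_{\min}$ on $\Sg$. I would argue by contradiction: if $\sigma_n \in \Sg$ with $E_{\min}(\sigma_n) \to \infty$, my aim is to produce a nontrivial splitting $\sigma_n = \tau_n^{(1)} + \tau_n^{(2)}$ with
\[
 E_{\min}(\tau_n^{(1)}) + E_{\min}(\tau_n^{(2)}) < E_{\min}(\sigma_n),
\]
contradicting $\sigma_n \in \Sg$. The strategy is a Fubini/slicing argument applied to a minimizer $u_n\in W^{1,k}(\SS^{k-1},\NN)\cap\sigma_n$: foliate $\SS^{k-1}$ by $(k-2)$-sphere slices, and use an averaging argument to find a slice $\Sigma$ on which $u_n|_\Sigma$ has arbitrarily small $W^{1-1/k,k}$-energy relative to the diverging $E_{\min}(\sigma_n)$. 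The $(k-2)$-connectedness of $\NN$ is essential here: it ensures $u_n|_\Sigma : \SS^{k-2} \to \NN$ is null-homotopic and therefore extends over a $(k-1)$-disk in $\NN$ with controlled $k$-energy. Gluing this extension to each of the two hemispheres of $\SS^{k-1}$ cut out by $\Sigma$ produces closed maps $\SS^{k-1}\to\NN$ carrying classes $\tau_n^{(1)}$, $\tau_n^{(2)}$ whose sum is $\sigma_n$ and whose total $k$-energy falls strictly below $E_{\min}(\sigma_n)$ for $n$ large.

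The main obstacle will be making the energy bookkeeping of this construction sharp — in particular, bounding the $k$-energy of the disk extension by the $W^{1-1/k,k}$-norm of its trace times a harmless constant, and ensuring this correction is truly negligible compared with the energy saved by cutting — so as to obtain a strict improvement. I would follow the approach of~\cite[Chapter~6]{Chiron}, which provides exactly this kind of subadditivity analysis for $k$-energy on spheres; this is the only step where the topological hypothesis on~$\NN$ intervenes in an essential way.
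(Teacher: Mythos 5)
Your verification of the norm axioms, the finiteness of sublevel sets of $E_{\min}$ via the compact Sobolev embedding, and the consequent attainment of the infimum in~\eqref{group_norm} are all sound and close to the paper's reasoning (the compactness argument you give is, if anything, a clean formalisation of the same idea the paper uses to prove~\eqref{discrete_I}). The refinement argument showing that an optimal decomposition can be taken inside~$\Sg$ also matches the paper.

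The genuine gap is in your treatment of the finiteness of~$\Sg$. Your cut-and-cap slicing argument cannot produce the strict inequality you need. If $u$ minimises $E_{\min}(\sigma)$ and you slice along a $(k-2)$-sphere~$\Sigma$ into hemispheres with energies $E^{(1)}$, $E^{(2)}$ (so $E^{(1)}+E^{(2)}=E_{\min}(\sigma)$), and then you cap each hemisphere by a disk filling of $u_{|\Sigma}$ with energy $\eps_i\geq 0$, the resulting competitors give
\[
 E_{\min}(\tau^{(1)}) + E_{\min}(\tau^{(2)}) \leq \bigl(E^{(1)}+\eps_1\bigr) + \bigl(E^{(2)}+\eps_2\bigr) = E_{\min}(\sigma) + \eps_1 + \eps_2 \geq E_{\min}(\sigma),
\]
which runs in exactly the wrong direction: cutting and capping can only increase total energy, never strictly decrease it, regardless of how small the trace energy on~$\Sigma$ is. No choice of slice fixes this. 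The sign of the inequality is structural, not a bookkeeping issue.

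What is actually needed is that $E_{\min}$ grows \emph{strictly superlinearly} in the ``size'' of~$\sigma$, while $|\cdot|_*$ is by construction at most linear; this tension is what forces $\Sg$ to be bounded. The paper obtains this by invoking Hurewicz, so that $\GN\simeq H_{k-1}(\NN)\simeq\Z^p\oplus T$, and then pairing with closed $(k-1)$-forms $\omega_i$ dual to generators of the torsion-free part. For $\sigma = \sum_i d_i g_i + \sigma_T$ and any competitor $v\in\sigma$, the estimate $|d_i| = \bigl|\int_{\SS^{k-1}} v^*\omega_i\bigr|\lesssim \bigl(\int_{\SS^{k-1}}|\nablaT v|^k\bigr)^{(k-1)/k}$ gives $E_{\min}(\sigma)\gtrsim\bigl(\sum_i|d_i|\bigr)^{k/(k-1)}$, while decomposing $\sigma$ into $\sum_i|d_i|$ generators plus a torsion element gives $|\sigma|_*\lesssim\sum_i|d_i|+1$. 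Comparing the two when $|\sigma|_*=E_{\min}(\sigma)$ bounds $\sum_i|d_i|$, hence bounds~$\Sg$. This cohomological input is the step your argument is missing.
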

\begin{proof}
 The function~$|\cdot|_*$ is certainly non-negative, and its definition~\eqref{group_norm}
 immediately implies the triangle inequality, $|\sigma_1+\sigma_2|_*\leq |\sigma_1|_* + |\sigma_2|_*$.
 The property~$|\sigma|_* = |-\sigma|_*$ follows by~\eqref{inversion}, 
 while~\eqref{discrete_I} yields~\eqref{discrete_norm} (in particular,
 $|\sigma|_* = 0$ only if~$\sigma=0$). The property~\eqref{I_norm} is
 immediate from the definition of~$|\cdot|_*$.
 
 We check now that the set~$\Sg$ is finite.
 Under the assumption~\eqref{hp:N}, Hurewicz theorem 
 (see e.g.~\cite[Theorem~4.37 p.~371]{Hatcher}) implies that
 $\GN$ is isomorphic to the homology group $H_{k-1}(\NN)$.
 The latter is Abelian and finitely generated, because the manifold~$\NN$ is compact
 and hence, homotopically equivalent to a finite cell complex. Therefore, we have 
 \[
  \GN\simeq H_{k-1}(\NN) \simeq \Z^{p} \oplus T,
 \]
 where~$p\geq 0$ is an integer and~$T$ is a finite group. Let~$(g_i)_{i=1}^p$
 be a basis for the torsion-free part of~$H_{k-1}(\NN)$ (i.e., the quotient $H_{k-1}(\NN)/T\simeq\Z^p$).
 By de Rham theorem, there exist closed, smooth 
 $(k-1)$-forms~$\omega_1, \, \ldots, \,  \omega_p$ that satisfy
 \[
  \int_{c_i}  \omega_j = \delta_{ij} \qquad \textrm{for any } i, \, j,
 \]
 where~$c_i$ is a smooth $(k-1)$-cycle in the homology class~$g_i$.
 Let~$\sigma\in\GN$. By abusing of notation, and identifying $g_i$
 with its image under the Hurewicz isomorphism, we can write uniquely
 \[
  \sigma = \sum_{i=1}^p d_ig_i + \sigma_T,
 \]
 where~$d_i \in\Z$ and~$\sigma_T\in T$. Then, for
 any~$v\in W^{1,k}(\SS^{k-1}, \, \NN)\cap\sigma$, we have
 \[
  \begin{split}
   |d_i| = \abs{\int_{\SS^{k-1}} v^* \omega_i} \leq 
   \|\omega_i\|_{L^\infty(\NN, \, \Lambda^{k-1}T^*\NN)} 
   \int_{\SS^{k-1}} \abs{\nablaT v}^{k-1}
   \leq C_{k, \NN}
   \left(\frac{1}{k} \int_{\SS^{k-1}} 
   \abs{\nablaT v}^k\right)^{\frac{k-1}{k}}
  \end{split}
 \]
 where~$C_{k, \, \NN}>0$ is a constant depending only on~$k$ and the $\omega_i$'s.
 This implies
 \begin{equation} \label{I_superquadratic}
  E_{\min}(\sigma) \geq 
  C_{k, \NN}^{\prime} \left(\sum_{i=1}^p |d_i|\right)^{\frac{k}{k-1}}
 \end{equation}
 for a different constant~$C_{k, \NN}^\prime$.
 On the other hand, the definition of~$|\cdot|_*$ immediately gives the upper bound
 \begin{equation} \label{norm_sublinear}
  |\sigma|_* \leq \left(\max_{i=1,\ldots,p} E_{\min}(g_i)\right) 
  \sum_{i=1}^p |d_i| + \max_{\sigma_T\in T} E_{\min}(\sigma_T).
 \end{equation}
 If~$\sigma\in\Sg$ then, by comparing~\eqref{I_superquadratic}
 and~\eqref{norm_sublinear}, we obtain
 $\sum_i |d_i|\leq M$ for some constant~$M>0$ 
 depending only on~$k$, $\NN$. Therefore,
 $\Sg$ is a finite set.
 
 For any~$\sigma\in\GN$ there exists a finite 
 decomposition~$\sigma = \sum_{i=1}^q\sigma_i$ which achieves the infimum in
 the right-hand side of~\eqref{group_norm}. 
 Indeed, it suffices to minimise among the decompositions with 
 $q \leq(\inf_{g\in\GN\setminus\{0\}} E_{\min}(g))^{-1} E_{\min}(\sigma)$
 and~$E_{\min}(\sigma_i) \leq E_{\min}(\sigma)$ for any~$i$, 
 and there are only finitely many such decompositions 
 because of~\eqref{discrete_I}, \eqref{I_superquadratic}.
 Let~$\sigma = \sum_{i=1}^q\sigma_i$ be a decomposition that achieves the minimum
 in~\eqref{group_norm}. Then, the triangle inequality implies
 \[
  \sum_{i=1}^q E_{\min}(\sigma_i) = 
  \abs{\sigma}_* \leq 
  \sum_{i=1}^q \abs{\sigma_i}_*
 \]
 and, since $\abs{\sigma_i}_*\leq E_{\min}(\sigma_i)$ for any~$i$, 
 we must have~$\abs{\sigma_i}_* = E_{\min}(\sigma_i)$, i.e. $\sigma_i\in\Sg$,
 for any~$i$. 
\end{proof}

\begin{example} \label{example:sphere}
 Let~$k=2$, $\NN = \SS^{1}$. Then, $\GN\simeq\Z$ and~$E_{\min}(d) = \pi d^2$
 for any~$d\in\Z$, since the infimum in~\eqref{I_min} is achieved by
 a curve that parametrises the unit circle~$|d|$ times, with constant speed 
 and orientation depending on the sign of~$d$.
 Therefore,~$\Sg = \{-1, \, 0, \, 1\}$ and
 $\abs{d}_* = \pi\abs{d}$ for any~$d\in\Z$.
 
 More generally, when~$\NN=\SS^{k-1}$ the constant that appears in 
 the lower bound~\eqref{I_superquadratic} can be computed explicitely,
 and we have
 \begin{equation} \label{superlin}
  E_{\min}(d)\geq \beta_k |d|^{\frac{k}{k-1}} 
  \qquad \textrm{for any } d\in\pi_{k-1}(\SS^{k-1})\simeq\Z,
 \end{equation}
 where~$\beta_k := (k-1)^{k/2}\L^k(B^k_1)$.
 On the other hand, by using the identity as a comparison map for~\eqref{I_min},
 we see that $E_{\min}(1) \leq (k-1)^{k/2}\mathrm{vol}(\SS^{k-1})/k = \beta_k$,
 hence $E_{\min}(1) = E_{\min}(-1) = \beta_k$. It follows that
 \[
  E_{\min}(d) \stackrel{\eqref{superlin}}{>}
  \beta_k\abs{d} \geq \abs{d}_*
  \qquad \textrm{if } \abs{d}>1.
 \]
 Therefore, also in case~$\NN=\SS^{k-1}$ we have~$\Sg = \{-1, \, 0, \, 1\}$.
 By Proposition~\ref{prop:group_norm}, we conclude that
 $\abs{d}_* = \beta_k\abs{d}$ for 
 any~$d\in\pi_{k-1}(\SS^{k-1})\simeq\Z$. 
\end{example}

\section{The operator~\texorpdfstring{$\overline{\S}$}{S bar}: Proof of Proposition~\ref{prop:S}}
\label{appendix:S}

The aim of this section is to prove Proposition~\ref{prop:S}, 
which we recall here for the convenience of the reader.
We recall that~$\delta^*\in (0, \, \dist(\NN, \, \X))$ is a fixed constant,
$B^* := B^m(0, \, \delta^*)\subseteq\R^m$, and
$\overline{Y} := L^1(B^*, \, \F_{n}(\overline{\Omega}; \, \GN))$ 
is equipped with the norm
\[
 \norm{S}_{\overline{Y}} := \int_{B^*} \F(S_y) \, \d y < +\infty.
\]

\begin{prop} \label{prop:S-app}
 There exists a continuous operator $\overline{\S}\colon W^{1,k}(\Omega, \, \R^*)\to \overline{Y}$ 
 that satisfies the following properties:
 \begin{enumerate}
 [label=\emph{(P\textsubscript{\arabic*})}, ref=P\textsubscript{\arabic*}]
  \setcounter{enumi}{-1}
  \item \label{S:intersection-app} 
  for any smooth~$u$, a.e.~$y\in B^*$ and any~$R\in\F_{k}(\R^{n+k}; \, \Z)$
  such that $\M(R)+\M(\partial R)<+\infty$,
  $\spt(R)\subseteq\Omega$, $\spt(\partial R)
  \subseteq\Omega\setminus\spt\S_y(u)$, there holds
  \[
   \I(\S_y(u), \, R) = 
   \textrm{homotopy class of } \RR\circ(u - y) \textrm{ on } \partial R.
  \]
  
  \item \label{S:Sbar-app}
  For any~$u\in (L^\infty\cap W^{1,k})(\Omega, \, \R^m)$ and a.e~$y\in B^*$,
  $\overline{\S}_y(u) = \S_y(u)$ (more precisely, the chain~$\overline{\S}_y(u)$
  belongs to the equivalence class~$\S_y(u)\in\F_{n}(\Omega; \, \GN)$).
  
  \item \label{S:mass-app} For any~$u\in W^{1,k}(\Omega, \, \R^m)$
  and any Borel subset~$E\subseteq\overline{\Omega}$, there holds
  \[
   \int_{B^*} \M(\overline{\S}_y(u)\mres E) \,\d y \lesssim \int_{E} \abs{\nabla u}^k.
  \]
  
  \item \label{S:cobord-app} Let~$u_0$, $u_1\in W^{1,k}(\Omega, \, \R^m)$
  be such that $u_{0|\partial\Omega} = 
  u_{1|\partial\Omega}\in W^{1-1/k, k}(\partial\Omega, \, \NN)$
  (in the sense of traces). Then, for a.e.~$y_0$, $y_1\in B^*$
  there exists~$R\in\M_{n+1}(\overline{\Omega}; \, \GN)$ such that
  $\overline{\S}_{y_1}(u_1) - \overline{\S}_{y_0}(u_0) = \partial R$. 
 \end{enumerate}
\end{prop}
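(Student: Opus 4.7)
The plan is to define $\overline{\S}$ on $W^{1,k}(\Omega,\R^m)$ by approximating an arbitrary map with bounded ones, for which the operator $\S$ of~\cite{CO1} is already available. Given $u\in W^{1,k}(\Omega,\R^m)$, let $T_j\colon\R^m\to\bar{B}^m_j$ denote the $1$-Lipschitz radial retraction onto the closed ball of radius $j$, and set $u_j := T_j\circ u$. Then $u_j\in (L^\infty\cap W^{1,k})(\Omega,\R^m)$, $|\nabla u_j|\leq|\nabla u|$ pointwise, and $u_j\to u$ in $W^{1,k}(\Omega,\R^m)$ by dominated convergence. Each $u_j$ lies in the domain of $\S$ from~\cite{CO1}, giving $\S(u_j)\in Y$.

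First, I would lift each $\S(u_j)$ canonically to $\overline{\S}(u_j)\in\overline{Y}$. For a smooth~$u$, the companion paper realises $\S_y(u)$ on small cells as a polyhedral chain carried by $(u-y)^{-1}(\X)$ with appropriate $\GN$-multiplicities; this representation lifts trivially to $\F_n(\R^{n+k};\GN)$. A coarea computation exploiting that $\X\subseteq\R^m$ has codimension $k$ then yields the mass bound
\begin{equation*}
 \int_{B^*}\M(\overline{\S}_y(u)\mres E)\,\d y \lesssim \int_E |\nabla u|^k
\end{equation*}
for any Borel $E\subseteq\overline{\Omega}$. Approximating an $L^\infty\cap W^{1,k}$ map by smooth maps in $W^{1,k}$ and invoking the continuity of $\S$ on $L^\infty$-bounded sets~\cite{CO1}, these absolute lifts pass to the limit and produce $\overline{\S}(u_j)\in\overline{Y}$ satisfying the same bound. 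Taking $E=\partial\Omega$ shows $\overline{\S}_y(u_j)\mres\partial\Omega=0$ for a.e.~$y$, so the lift does not concentrate on the boundary.

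Second, I would show that $\{\overline{\S}(u_j)\}$ is Cauchy in $\overline{Y}$. Regarding the linear interpolation $(x,t)\mapsto (1-t)u_j(x)+tu_l(x)$ as a map on $\Omega\times[0,1]\to\R^m$ and applying the smooth-case construction to it produces an $(n+1)$-chain whose slices at $t=0,1$ are $\overline{\S}_y(u_j)$ and $\overline{\S}_y(u_l)$ respectively, and whose mass is controlled by the displayed bound applied to the interpolation. Since $u_j=u_l$ on $\{|u|\leq\min(j,l)\}$, this chain is supported where $|u|>\min(j,l)$, a region of vanishing $|\nabla u|^k$-measure as $j,l\to\infty$ by absolute continuity of the integral. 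Hence $\int_{B^*}\F(\overline{\S}(u_l)-\overline{\S}(u_j))\,\d y\to 0$, and we set $\overline{\S}(u):=\lim_j\overline{\S}(u_j)$. The same interpolation yields continuity of $\overline{\S}$ on $W^{1,k}(\Omega,\R^m)$ and independence of the approximating sequence, hence uniqueness.

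Finally, (P0)--(P2) are verified directly: (P0) holds on smooth~$u$ from~\cite{CO1}; (P1) is automatic since the truncation is the identity on bounded maps once $j$ is large enough; (P2) extends the smooth mass bound to all $W^{1,k}$ maps by lower semi-continuity of mass under $\F$-convergence combined with Fatou's lemma on the $y$-integral. The main obstacle is (P3), the cobordism. Cobordism in the relative group is available from~\cite{CO1} for bounded maps, but upgrading it to $\overline{Y}$ relies crucially on the common trace. I would extend $u_0,u_1$ to a larger domain $\Omega'\csubset\!\csubset\Omega'$, wait — to $\Omega'\supseteq\overline{\Omega}$ via a common extension $V\in(L^\infty\cap W^{1,k})(\Omega'\setminus\overline{\Omega},\R^m)$ of the trace $v$, so that the extended maps $\tilde{u}_0,\tilde{u}_1$ agree on $\Omega'\setminus\overline{\Omega}$. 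Applying the interpolation device above to the straight-line homotopy $(1-t)\tilde{u}_0+t\tilde{u}_1$ on $\Omega'\times[0,1]$ produces an $(n+1)$-chain on $\Omega'\times[0,1]$; its projection onto $\R^{n+k}$ yields a chain $R\in\M_{n+1}(\overline{\Omega};\GN)$ whose boundary is $\overline{\S}_{y_1}(u_1)-\overline{\S}_{y_0}(u_0)$. Ensuring that this identity holds as absolute chains (and not merely in the relative group $\F_n(\Omega;\GN)$) is the delicate technical point, resolved by the no-concentration-on-$\partial\Omega$ property established in the first step.
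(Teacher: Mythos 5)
The overall strategy — truncating to bounded maps, lifting the relative chains from~\cite{CO1} to absolute ones, and passing to the limit — is in the same spirit as the paper's construction, and several ingredients (the coarea mass bound, the interpolation/cylinder device) are the right ones. However, there is a genuine gap at the step you describe as ``these absolute lifts pass to the limit and produce $\overline{\S}(u_j)\in\overline{Y}$''. The continuity of~$\S$ that you invoke from~\cite{CO1} is continuity with respect to the \emph{relative} flat norm~$\F_\Omega$ (i.e., in~$Y$), and this does not control the \emph{absolute} flat norm~$\F$ for chains supported in~$\overline{\Omega}$: a sequence of chains concentrating near~$\partial\Omega$ can have vanishing~$\F_\Omega$-norm while its~$\F$-norm stays bounded below. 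The mass bound (P$_2$) alone does not rescue this — compactness for flat chains would require a uniform bound on~$\M(\partial\,\overline{\S}_y(\cdot))$, which is unavailable, since the restriction to~$\overline{\Omega}$ may create boundary of arbitrarily large mass on~$\partial\Omega$. Your observation that~$\overline{\S}_y(u_j)\mres\partial\Omega=0$ controls the \emph{mass} on~$\partial\Omega$, not the boundary mass, and does not close this gap.

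The paper sidesteps the problem by never lifting a relative chain directly: it first extends~$u$ to all of~$\R^{n+k}$ by a bounded linear extension operator~$T$, so that~$\S_y(Tu)$ is a genuine boundaryless chain in~$\F_n(\R^{n+k};\GN)$, proves~$L^\infty$-independent versions of the mass and~$\F$ estimates for compactly supported smooth maps (the paper's~\eqref{mass} and~\eqref{flat}), and then sets~$\overline{\S}_y(u) := \S_y(Tu)\mres\overline{\Omega}$. The restriction's absolute flat norm is then controlled through Lemma~\ref{lemma:flat_restr}, which is exactly the technical device your proposal is missing: it bounds~$\F(T\mres\overline{\Omega})$ by~$(1+\rho^{-1})\F(T)$ plus a mass term near~$\partial\Omega$, and the mass bound makes that remainder vanish as~$\rho\to 0$. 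This gap propagates: your Cauchy argument for the truncations~$\{\overline{\S}(u_j)\}$, your verification of~(P$_2$) by ``lower semi-continuity + Fatou'', and your (P$_3$) argument all presuppose that the absolute lifts converge in~$\overline{Y}$, which is the very thing left unproved. For~(P$_3$), the paper's argument is in fact simpler than yours: it truncates~$u_0,u_1$, applies the cobordism result of~\cite{CO1} for bounded maps, and concludes by $\F$-closedness of the set of boundaries (isoperimetric inequality), avoiding the cylinder construction and its bookkeeping at~$\partial\Omega'$ altogether.
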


In the proof, we will use the following
\begin{lemma} \label{lemma:flat_restr}
 Let~$\rho>0$, and let~$\Gamma_\rho := \{x\in\R^{n+k}\setminus\overline{\Omega}\colon
 \dist(x, \, \Omega)<\rho\}$.
 Then, for any finite-mass chain~$T\in\M_n(\R^{n+k}; \, \GN)$, there holds
 \[
  \F(T\mres\overline{\Omega}) \leq (1 + \rho^{-1}) \F(T)
  + \M(T\mres\Gamma_\rho).
 \]
\end{lemma}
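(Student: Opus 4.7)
The plan is to start from an almost-optimal decomposition $T = P + \partial Q$, with $P\in\M_n(\R^{n+k};\GN)$ and $Q\in\M_{n+1}(\R^{n+k};\GN)$ satisfying $\M(P) + \M(Q) \leq \F(T) + \eta$ for some small $\eta > 0$, and then to restrict this decomposition to $\overline{\Omega}$ in a controlled way. The obstruction is that restriction does not commute with $\partial$, so $(\partial Q)\mres\overline{\Omega}$ is generally not a boundary; one must pay a slicing correction. Rather than slicing on $\partial\Omega$ directly (where $Q$ could concentrate badly), I would slice along the parallel hypersurfaces $\{d = t\}$ for $t$ in a carefully chosen range $(0,\rho)$, where $d(x) := \dist(x,\overline{\Omega})$ is the $1$-Lipschitz distance function.

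For each $t\in(0,\rho)$, the slicing theory for flat chains with coefficients in a normed Abelian group (see White) gives a slice $\langle Q,d,t\rangle\in\F_n(\R^{n+k};\GN)$ satisfying the identity
\[
 (\partial Q)\mres\{d\leq t\}
 = \partial(Q\mres\{d\leq t\}) - \langle Q,d,t\rangle
\]
and the coarea-type inequality $\int_0^\rho \M(\langle Q,d,t\rangle)\,\d t \leq \M(Q\mres\Gamma_\rho) \leq \M(Q)$, since $d$ is $1$-Lipschitz. In particular, by averaging, there exists $t\in(0,\rho)$ with $\M(\langle Q,d,t\rangle)\leq \rho^{-1}\M(Q)$. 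Fix such a $t$.

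Writing $T\mres\overline{\Omega} = T\mres\{d\leq t\} - T\mres\{0<d\leq t\}$ and inserting the decomposition of $T$ and the slicing identity above, I obtain
\[
 T\mres\overline{\Omega} = P\mres\{d\leq t\} - \langle Q,d,t\rangle - T\mres\{0<d\leq t\} + \partial\bigl(Q\mres\{d\leq t\}\bigr).
\]
This exhibits $T\mres\overline{\Omega}$ as the sum of a finite-mass $n$-chain and the boundary of a finite-mass $(n+1)$-chain, hence
\[
 \F(T\mres\overline{\Omega}) \leq \M(P) + \rho^{-1}\M(Q) + \M(T\mres\Gamma_\rho) + \M(Q)
 \leq (1+\rho^{-1})(\M(P)+\M(Q)) + \M(T\mres\Gamma_\rho),
\]
where I used $\{0<d\leq t\}\subseteq\Gamma_\rho$. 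Taking $\eta\to 0$ and optimising over $P,Q$ yields the claimed inequality.

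The only real difficulty is invoking slicing in the correct category. Since the flat chains here have coefficients in the discrete normed group $(\GN,|\cdot|_*)$ (Proposition~\ref{prop:group_norm}), I would appeal to White's slicing for flat chains with coefficients in a normed Abelian group, applied to the $1$-Lipschitz function $d$; the coarea estimate and the boundary identity are the two ingredients actually needed. Everything else is bookkeeping of masses across a disjoint decomposition of $\R^{n+k}$ into $\overline{\Omega}$, $\Gamma_\rho$, and their complement.
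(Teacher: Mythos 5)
Your argument is correct, and it is essentially the same proof as the paper's: the paper also restricts $T$ to the level sets of the distance function and averages over $t\in(0,\rho)$, but it packages the slicing estimate into the single cited inequality $\int_0^\rho \F(T\mres\Omega_t)\,\d t \leq (1+\rho)\F(T)$ (from [CO1, Lemma~4]) and then manipulates flat norms directly, whereas you unpack that estimate by slicing an almost-optimal decomposition $T = P + \partial Q$. The two routes coincide in substance.
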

\begin{proof}
 For any~$t>0$, let~$\Omega_t := \{x\in\R^{n+k}\colon\dist(x, \, \Omega)<t\}$.
 There holds
 \[
  \int_0^{\rho}\F(T\mres\Omega_t) \, \d t \leq (1 + \rho) \F(T)
 \]
 (see e.g.~\cite[Lemma~4, Eq.~(2.8)]{CO1}). By an averaging argument,
 we can find~$t\in (0, \, \rho)$ such that
 \begin{equation} \label{restr1}
  \F(T\mres\Omega_t) \leq (1 + \rho^{-1}) \F(T).
 \end{equation}
 Now, there holds~$T\mres\Omega_t = T\mres\overline{\Omega} + T\mres\Gamma_t$
 and hence,
 \[
  \begin{split}
   \F(T\mres\overline{\Omega}) \leq \F(T\mres\Omega_t) + \F(T\mres\Gamma_t) 
   \stackrel{\eqref{restr1}}{\leq} (1 + \rho^{-1}) \F(T) + \M(T\mres\Gamma_t), 
  \end{split}
 \]
 so the lemma follows.
\end{proof}

\begin{proof}[Proof of Proposition~\ref{prop:S-app}]
 For the sake of clarity, we split the proof into steps.
 \setcounter{step}{0}
 \begin{step}[Construction of~$\overline{\S}$]
 First, we consider a smooth map~$u\in C^\infty_{\mathrm{c}}(\R^{n+k}, \, \R^m)$
 and the topological singular operator, $\S_y(u)$, as defined
 in~\cite[Section~3.2, Eq.~(3.4)]{CO1}.
 By definition, we can write
 \begin{equation} \label{S}
  \S_y(u) = \sum_{K} \gamma(K) \llbracket (u-y)^{-1}(K)\rrbracket 
  \qquad \textrm{for a.e. } y\in B^*.
 \end{equation}
 Here, the sum is taken over all~$(m-k)$-dimensional 
 polyhedra~$K$ in~$\X$. The coefficient~$\gamma(K)\in\GN$
 is the homotopy class of~$\RR$ restricted to a small 
 $(k-1)$-sphere~$\Sigma$ around~$K$, 
 $\RR_{|\Sigma}\colon\Sigma\simeq\SS^{k-1}\to\NN$.
 For a.e.~$y\in B^*$, the set~$(u-y)^{-1}(K)$
 is a smooth, compact $n$-dimensional manifold (as a consequence of
 Thom's transversality theorem, see e.g.~\cite[Theorem~2.7 p.~79]{Hirsch})
 and $\llbracket (u-y)^{-1}(K)\rrbracket$ denotes the smooth chain
 carried by~$(u-y)^{-1}(K)$, with unit multiplicity and a suitable orientation
 (see~\cite[Section~3.2]{CO1} for more details).
 
 We claim that, for any $u$, $u_0$, $u_1\in C^\infty_{\mathrm{c}}(\R^{n+k}, \, \R^m)$
 and any open set~$U\subseteq\R^{n+k}$, there holds
 \begin{gather} 
  \int_{B^*} \M(\S_y(u)\mres U) \lesssim \int_U \abs{\nabla u}^k \label{mass} \\
  \int_{B^*} \F(\S_y(u_1) - \S_y(u_0)) \lesssim
  \norm{u_1 - u_0}_{L^k(\R^{n+k})} 
  \left(\norm{\nabla u_0}_{L^k(\R^{n+k})}^{k-1} + 
  \norm{\nabla u_1}_{L^k(\R^{n+k})}^{k-1} \right) \! .\label{flat}
 \end{gather}
 These inequalities differ from the corresponding ones in
 \cite[Theorem~3.1]{CO1} because the multiplicative constants in
 front of the right-hand sides do not depend on the
 $L^\infty$-norm of~$u$, $u_0$, $u_1$. We postpone the proof of~\eqref{mass}--\eqref{flat}.
 As a consequence of~\eqref{flat}, by a density argument
 we can extend~$\S$ to a continuous operator
 $W^{1,k}(\R^{n+k}, \, \R^m)\to L^1(B^*, \, \F(\R^{n+k}, \, \GN))$,
 still denoted~$\S$ for simplicity. The property~\eqref{mass}
 is preserved for any~$u\in W^{1,k}(\R^{n+k}, \, \R^m)$,
 by the lower semi-continuity of~$\M$ (see e.g.~\cite[Lemma~3 and Lemma~5]{CO1}).
 
 Since the domain~$\Omega\subseteq\R^{n+k}$ is bounded and smooth, 
 by reflection about~$\partial\Omega$ and multiplication with a cut-off
 function we can define a linear extension
 operator~$T\colon W^{1,k}(\Omega, \, \R^m)\to W^{1,k}(\R^{n+k}, \, \R^m)$,
 such that
 \begin{equation} \label{ext}
  \norm{Tu}_{L^k(\R^{n+k})} \lesssim \norm{u}_{L^k(\Omega)}, \quad
  \norm{\nabla(Tu)}_{L^k(\R^{n+k})} \lesssim
   \norm{\nabla u}_{L^k(\Omega)} + \norm{u}_{L^k(\Omega)} \! .
 \end{equation}
 For any~$u\in W^{1,k}(\Omega, \, \R^m)$ and a.e.~$y\in B^*$,
 the chain~$\S_y(Tu)$ has finite mass, due to~\eqref{mass}.
 Therefore, the restriction
 \[
  \overline{\S}_y(u) := \S_y(Tu)\mres\overline{\Omega}
 \]
 is well-defined and belongs to~$\M_n(\overline{\Omega}; \, \GN)$.
 \end{step}
 
 \begin{step}[$\overline{\S}$ is continuous]
  Let~$(u_j)_{j\in\N}$ be a sequence in~$W^{1,k}(\Omega, \, \R^m)$
  such that $u\to u$ in~$W^{1,k}(\Omega)$.
  From~\eqref{flat} and~\eqref{ext}, we deduce
  \begin{equation} \label{cont1}
   \begin{split}
    \int_{B^*} \F(\S_y(Tu_j) - \S_y(Tu))\,\d y &\lesssim
    \norm{u_j - u}_{L^k(\Omega)} 
    \left(\norm{\nabla u_j}_{L^k(\Omega)}^{k-1} + 
    \norm{\nabla u}_{L^k(\Omega)}^{k-1} \right)
    + \norm{u_j - u}_{L^k(\Omega)}^k 
   \end{split}
  \end{equation}
  Let~$\rho>0$ and~$\Gamma_\rho := \{x\in\R^{n+k}
  \setminus\overline{\Omega}\colon \dist(x, \, \Omega)<\rho\}$.
  By applying Lemma~\ref{lemma:flat_restr} and~\eqref{mass}, \eqref{cont1},
  we obtain
  \begin{equation*}
   \begin{split}
    \|\overline{\S}(u_j) - \overline{\S}(u)\|_{\overline{Y}}
    &\lesssim (1 + \rho^{-1}) \int_{B^*} \F(\S_y(Tu_j) - \S_y(Tu))\,\d y \\
    &\qquad\qquad + \int_{B^*} \M(\S_y(Tu_j)\mres\Gamma_\rho) \, \d y 
    + \int_{B^*} \M(\S_y(Tu)\mres\Gamma_\rho) \, \d y \\
    &\lesssim (1 + \rho^{-1}) \norm{u_j - u}_{L^k(\Omega)} 
    \left(\norm{\nabla u_j}_{L^k(\Omega)}^{k-1} + 
    \norm{\nabla u}_{L^k(\Omega)}^{k-1} \right) \\
    &\qquad\qquad + (1 + \rho^{-1}) \norm{u_j - u}_{L^k(\Omega)}^k 
    + \norm{\nabla(Tu_j)}_{L^k(\Gamma_\rho)}^k
    + \norm{\nabla(Tu)}_{L^k(\Gamma_\rho)}^k \! .
   \end{split}
  \end{equation*}
  By taking the limit in the inequality above first as~$j\to+\infty$,
  then as~$\rho\to 0$, we conclude that $\overline{\S}(u_j)\to\overline{\S}(u)$
  in~$\overline{Y}$.
 \end{step}
 
 \begin{step}[Proof of~\eqref{S:Sbar}]
  By construction, $\overline{\S}_y(u) = \S_y(u)$ for 
  any~$u\in C^\infty_{\mathrm{c}}(\R^{n+k}, \, \R^m)$ and a.e.~$y\in B^*$.
  By continuity of both~$\overline{\S}$ and~$\S$ \cite[Theorem~3.1]{CO1},
  we deduce that $\overline{\S} = \S$ on~$(L^\infty\cap W^{1,k})(\Omega, \, \R^m)$.
 \end{step}
 
 \begin{step}[Proof of~\eqref{S:mass}]
  Let~$E\subseteq\overline{\Omega}$ be a Borel set and~$U\supseteq E$
  be open. By~\eqref{mass}, we have
  \[
   \int_{B^*} \M(\overline{\S}_y(u)\mres E) \, \d y \leq 
   \int_{B^*} \M(\S_y(u)\mres U) \, \d y \lesssim \int_U \abs{\nabla u}^k
  \]
  and~\eqref{S:mass} follows by letting~$U\searrow E$.
 \end{step}
 
 \begin{step}[Proof of~\eqref{S:cobord}]
  Take~$u_0$, $u_1\in W^{1,k}(\Omega, \, \R^m)$. 
  For~$i\in\{0, \, 1\}$ and~$M >0$, we define
  \[
   u_{i,M} := \begin{cases}
               u_i                    & \textrm{if } \abs{u_i}\leq M \\
               \dfrac{Mu_i}{\abs{u_i}} &\textrm{otherwise.}
              \end{cases}
  \]
  Since~$u_{i,M}\to u_i$ strongly in~$W^{1,k}(\Omega)$ as~$M\to 0$,
  the continuity of~$\overline{\S}$ gives, upon extraction of
  a (non-relabelled) subsequence,
  \begin{equation} \label{Scobord1}
   \F(\S_{y}(u_{i, M}) - \S_y(u_{i})) \to 0 \qquad 
   \textrm{as } M\to +\infty,  \textrm{ for a.e. } y\in B^*
   \textrm{ and } i\in\{0, \, 1\}.
  \end{equation}
  Let~$\mathbb{B} := \{\partial R\colon R\in \M_{n+1}(\overline{\Omega}; \, \R^{n+k})\}$.
  By~\cite[Proposition~2]{CO1},
  we have~$\overline{\S}_{y_0}(u_{1,M}) - \overline{\S}_{y_1}(u_{0, M})\in\mathbb{B}$
  for any~$M>0$ and a.e.~$y_0$, $y_1\in B^*$. On the other hand,
  the set~$\mathbb{B}$ is closed with respect to the $\F$-norm,
  as a consequence of the isoperimetric inequality (see e.g.~\cite[7.6]{FedererFleming}).
  Therefore, \eqref{S:cobord} follows by~\eqref{Scobord1}.
 \end{step}

 \begin{step}[Proof of~\eqref{mass}]
  Let~$u\in C^\infty_{\mathrm{c}}(\R^{n+k}, \, \R^m)$ and
  let~$E\subseteq\R^{n+k}$ be a Borel set.
  Since~$\X$ contains finitely many $(m-k)$-cells~$K$,
  there exists a constant~$C$ such that~$\abs{\gamma(K)}_*\leq C$
  for any~$K$. Then, using the definition~\eqref{S} of~$\S_y(u)$,
  we deduce
  \begin{equation} \label{Smass1}
   \M(\S_y(u)\mres E) \lesssim \sum_{K} \H^n\left((u-y)^{-1}(K)\cap E\right) \! ,
  \end{equation}
  where the sum is taken over all the $(m-k)$-dimensional polyhedra~$K$ in~$\X$.
  We fix~$K$ and assume, without loss of generality,
  that~$K\subseteq\left\{y\in\R^m\colon y_1=\ldots = y_k=0 \right\}\simeq\R^{m-k}$.
  Let~$\zeta^\perp$ be the orthogonal projection
  $\R^m\to\left\{y\in\R^m\colon y_{m-k+1}=\ldots = y_m=0 \right\}\simeq\R^{k}$.
  Then,
  \begin{equation*} 
   (u-y)^{-1}(K)\subseteq (\zeta^\perp\circ u)^{-1}(\zeta^\perp(y))
  \end{equation*}
  If we integrate this inequality over~$y\in B^*$, and 
  use the variable~$y = (z, \, z^\perp)\in\R^{m-k}\times\R^m$,
  we obtain
  \[
   \begin{split}
    \int_{B^*} \H^n\left((u-y)^{-1}(K) \cap E\right)
    &\leq \int_{[-\delta^*, \, \delta^*]^{m-k}\times [-\delta^*, \, \delta^*]^k}
    \H^n\left((\zeta^\perp\circ u)^{-1}(z^\perp)\cap E\right) \d (z, \, z^\perp) \\
    &\leq (2\delta^*)^{m-k} \int_{\R^k} 
    \H^n\left((\zeta^\perp\circ u)^{-1}(z^\perp)\cap E\right) \d z^\perp.
   \end{split}
  \]
  The right-hand side can be estimated by applying the coarea formula:
  \begin{equation} \label{Smass2}
   \begin{split}
    \int_{B^*} \H^n\left((u-y)^{-1}(K) \cap E\right)
    &\lesssim \int_{E} \abs{\nabla(\zeta^\perp\circ u)}^k 
    \lesssim \int_{E} \abs{\nabla u}^k.
   \end{split}
  \end{equation}
  Combining~\eqref{Smass1} and~\eqref{Smass2}, \eqref{S:mass-app} follows.
 \end{step}
 
 \begin{step}[Proof of~\eqref{flat}]
  Let~$u_0$, $u_1\in C^\infty_{\mathrm{c}}(\R^{n+k}, \, \R^m)$, and let
  $u\colon [0, \, 1]\times\R^{n+k}\to\R^m$ be defined by
  $u(t, \, x) := (1-t)u_0(x) + tu_1(x)$.
  Let~$\pi\colon [0, \, 1]\times\R^{n+k}\to\R^{n+k}$ be the
  canonical projection, $\pi(t, \, x) := x$. By~\cite[Proposition~4]{CO1},
  we have
  \begin{equation*}
   \S_y(u_1) - \S_y(u_0) = \partial\left(\pi_{*}\S_y(u)\right) \! .
  \end{equation*}
  Therefore, using~\eqref{S}, we obtain
  \begin{equation} \label{Sflat0}
   \F(\S_y(u_1) - \S_y(u_0)) \leq \M(\pi_{*}\S_y(u)) 
   \leq \sum_{K} \M\left(\pi_{*}\llbracket (u-y)^{-1}(K)\rrbracket\right) \! ,
  \end{equation}
  where the sum is taken over all the~$(m-k)$-polyhedra~$K$ in~$\X$.
  Fix such a~$K$. As above, we assume that
  that~$K\subseteq\left\{y\in\R^m\colon y_1=\ldots = y_k=0 \right\}$.
  Let $\zeta$, $\zeta^\perp$ be the orthogonal projections of~$\R^m$ onto
  $\{y\in\R^m\colon y_1=\ldots = y_{m-k}=0\}\simeq\R^{m-k}$,
  $\{y\in\R^m\colon y_{m-k+1}=\ldots = y_m=0\}\simeq\R^{k}$, respectively.
  We wite~$z := \zeta(y)$, $z^\perp := \zeta^\perp(y)$
  and identify~$y = (z, \, z^\perp)$. Then, for a suitable choice 
  of orientation of~$K$, we obtain
  \begin{equation} \label{Sflat1}
   \llbracket (u-y)^{-1}(K)\rrbracket = 
   \llbracket (\zeta^\perp\circ u)^{-1}(z^\perp) \rrbracket
    \mres \left((\zeta\circ u - z)^{-1}(K)\right) \!.
  \end{equation}
  where~$\llbracket (\zeta^\perp\circ u)^{-1}(z^\perp) \rrbracket$
  is the chain carried by the set~$(\zeta^\perp\circ u)^{-1}(z^\perp)$,
  with unit multiplicity, oriented by the Jacobian of~$\zeta^\perp\circ u$
  (see e.g. \cite[p.~72]{CO1}). Let us define~$v := \zeta^\perp\circ u$,
  $K_z := (\zeta\circ u - z)^{-1}(K)$.
  By integrating~\eqref{Sflat1} with respect to~$y\in B^*$,
  we obtain
  \begin{equation*}
   \begin{split}
    \int_{B^*} \M\left(\pi_{*}\llbracket (u-y)^{-1}(K)\rrbracket\right) \d y
    \leq \int_{[-\delta^*, \delta^*]^{m-k}\times\R^k}
     \M\left(\pi_{*}(v^{-1}(z^\perp)\mres K_{z})\right) \d (z, \, z^\perp). 
   \end{split}
 \end{equation*}
 We may write~$v(t, \, x) = (1-t)v_0(x) + tv_1(x)$, where~$v_0 := \zeta^\perp\circ u_0$,
 $v_1 := \zeta^\perp\circ u_1$. By applying~\cite[Lemma~15]{CO1}, we deduce
 \begin{equation} \label{Sflat2}
   \begin{split}
    \int_{B^*} \M\left(\pi_{*}\llbracket (u-y)^{-1}(K)\rrbracket\right) \d y
    &\lesssim (2\delta^*)^{m-k}
    \int_{\R^{n+k}} \abs{v_0 - v_1} 
    \left(\abs{\nabla v_0}^{k-1} + \abs{\nabla v_1}^{k-1}\right) \! .
   \end{split}
 \end{equation}
 Combining~\eqref{Sflat0} and~\eqref{Sflat2}, using that
 the function~$\zeta$ is $1$-Lipschitz, and applying 
 the H\"older inequality, \eqref{flat} follows. \qedhere
 \end{step}
\end{proof}

\section{Energy lower bounds when~\texorpdfstring{$n=0$}{n = 0}}
\label{sect:jerrard}

The aim of this section is to prove energy lower bounds in the 
critical dimension, i.e. when~$n=0$.
In the contest of the Ginzburg-Landau theory, i.e. when~$\NN=\SS^{k-1}$,
energy bounds of this type were proved 
by Jerrard~\cite{Jerrard} and, in case~$k=2$, by Sandier~\cite{Sandier}.

Let~$\delta_0>0$, $r>0$ be small numbers. 
Suppose that a map~$u\in W^{1,k}(\Omega, \, \R^m)$ satisfies
\begin{equation} \label{bd-close-N}
  \dist(u(x), \, \NN)\leq\delta_0 \qquad \textrm{for a.e. }
  x\in\Omega \textrm{ such that } \dist(x, \, \partial\Omega)<r.
\end{equation}
Then, we can define the homotopy class of~$u$ 
(or, more precisely, of~$\RR\circ u$)
on~$\partial\Omega$ as an element of~$\GN$.
This is immediate in case~$u$ is continuous on~$\partial\Omega$ 
and~$\overline{\Omega}$ is homeomorphic to a disk. If~$\Omega$
has not the topology of a disk, this is still possible due to
the Hurewicz isomorphism $\pi_{k-1}(\NN)\simeq H_{k-1}(\NN)$,
which holds true thanks to~\eqref{hp:N}
(see e.g. \cite[Theorem~4.37 p.~371]{Hatcher} and~\eqref{hc} below).
If~$u$ is not continuous 
we can define its homotopy class by approximating~$\RR\circ u$
with smooth functions $\Omega\to\NN$,
as in 
\cite{BN1} (see also~\eqref{hc} below for more details).

\begin{prop} \label{prop:lowerbounds}
 Let~$\Omega\subseteq\R^k$ be a bounded, Lipschitz domain and let~$r>0$.
 There exist a number~$\delta_0>0$, depending only on~$\NN$, and 
 positive constants~$\eps_0$,~$M$, depending only 
 on~$\Omega$, $r$, $\NN$, $k$ and~$f$, such that the following statement holds.
 Suppose that~$u\in W^{1,k}(\Omega, \, \R^m)$ satisfies~\eqref{bd-close-N},
 and let~$\sigma\in\GN$ be the homotopy class of~$u$ on~$\partial\Omega$.
 Let~$\eps\in (0, \, 1/2)$ be such that
 $\eps\abs{\log\eps}|\sigma|_*\leq\eps_0$. Then,
 \[
  E_\eps(u) \geq 
  \abs{\sigma}_*\abs{\log\eps} -
  M\abs{\sigma}_*(1 + \log\abs{\sigma}_*) .
 \]
\end{prop}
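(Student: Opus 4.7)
The proof will follow the ball-construction strategy of Sandier and Jerrard, adapted to the general target~$\NN$ and the norm~$\abs{\cdot}_*$. The first step is a reduction: using the growth condition~\eqref{hp:non-degeneracy}, the set where $\dist(u, \, \NN) \geq \delta$ has Lebesgue measure controlled by $\delta^{-2}\eps^k E_\eps(u)$, so a Fubini/coarea argument produces a ``good sphere'' $\Sigma_\rho$ (for some $\rho$ slightly less than~$r$) on which $u$ is close to~$\NN$, carries the homotopy class~$\sigma$, and has controlled tangential $L^k$-energy. Composing with the retraction $\RR$ of Proposition~\ref{prop:X} then yields an $\NN$-valued map defined on most of~$\Omega$, with $\sigma$ still being its boundary homotopy class.

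Next, implement the ball construction. Produce an increasing family of finite collections of disjoint closed balls $\{B_i(t)\}_i$, parametrised by a radius ``time'' $t \geq \eps$, such that the union of the balls at time~$t$ covers the set $\{\dist(u, \NN) > \delta_0/2\}$ thickened to scale~$t$; when two balls touch, they are merged into a larger ball. To each $B_i(t)$ attach the class $\sigma_i(t) \in \GN$ of $\RR\circ u$ on $\partial B_i(t)$. Because $\GN$ is Abelian under~\eqref{hp:N}, merging preserves the total class, so $\sum_i \sigma_i(t) = \sigma$ throughout. The key pointwise lower bound is the slicing estimate
\[
 \int_{\partial B(x,r)} \tfrac{1}{k}\abs{\nablaT u}^k \, \d\H^{k-1}
 \;\geq\; \frac{E_{\min}(\sigma_i(t))}{r},
\]
which follows, after rescaling $\partial B(x,r)$ to the unit sphere, directly from the definition~\eqref{I_min-intro} of $E_{\min}$. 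Integrating the radial coarea formula from $t=\eps$ to a fixed final time $T$ of order $\mathrm{diam}(\Omega)$ then gives
\[
 E_\eps(u) \;\geq\; \sum_{i} E_{\min}(\sigma_i(T)) \log\frac{T}{\eps} \;-\; (\textrm{losses}).
\]

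To convert $\sum_i E_{\min}(\sigma_i(T))$ into $\abs{\sigma}_*$, invoke Proposition~\ref{prop:group_norm}: since $\sum_i \sigma_i(T) = \sigma$, the very definition~\eqref{group_norm-intro} of the norm gives $\sum_i E_{\min}(\sigma_i(T)) \geq \abs{\sigma}_*$, producing the leading term $\abs{\sigma}_*\abs{\log\eps}$. The additive loss $M\abs{\sigma}_*(1+\log\abs{\sigma}_*)$ has three sources: (a) the energy inside the cores of the balls at time $\eps$, controlled by~\eqref{hp:non-degeneracy} and bounded by a constant per ball; (b) adjustments at merging events, which cost a $\log$-factor per merge; (c) the boundary correction at $\Sigma_\rho$. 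The total number $N$ of terminal balls and of merges is bounded via the superlinear estimate~\eqref{I_superquadratic}, which forces $\sum_i |\sigma_i(T)| \lesssim \abs{\sigma}_*^{(k-1)/k} \cdot (\#\mathrm{balls})^{1/k}$, yielding $N \lesssim \abs{\sigma}_*$; hence the merging losses amount to at most $C\abs{\sigma}_*\log\abs{\sigma}_*$.

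The main obstacle is the merging step. When two balls with classes $\sigma'$ and $\sigma''$ merge, the new class is $\sigma'+\sigma''$, but we only have lower bounds in terms of $E_{\min}(\sigma')$ and $E_{\min}(\sigma'')$, and the inequality $E_{\min}(\sigma'+\sigma'') \leq E_{\min}(\sigma')+E_{\min}(\sigma'')$ may fail: this is the very reason the norm $\abs{\cdot}_*$ was introduced rather than $E_{\min}$ itself. The way around this is to never use $E_{\min}$ of a merged class; instead, one retains each summand $\sigma_i(T)$ in the final sum and uses the defining inequality $\sum_i E_{\min}(\sigma_i(T)) \geq \abs{\sigma}_*$ only at the end. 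The set $\Sg$ from Proposition~\ref{prop:group_norm}, being finite, also bounds the contribution from each individual core uniformly, which is essential for the loss terms to be of the claimed form.
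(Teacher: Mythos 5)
Your overall strategy (a Sandier--Jerrard ball construction adapted to $\NN$-valued maps and the norm $\abs{\cdot}_*$) is the right one, and you correctly identify merging as the crux. But the fix you propose does not close the gap. You grow balls tracking $E_{\min}$ of the per-ball class, and propose to integrate the annular lower bounds to get $\sum_i E_{\min}(\sigma_i(T))\log(T/\eps)$ minus losses, then pass to $\abs{\sigma}_*$ via the definition of the norm. The trouble is precisely during merging: when two balls of classes $\sigma'$, $\sigma''$ merge at time $t_m < T$, the annular slicing estimate for the merged ball at times $t>t_m$ reads $E_{\min}(\sigma'+\sigma'')/r$, while for $t<t_m$ the contribution is $(E_{\min}(\sigma')+E_{\min}(\sigma''))/r$. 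Since $E_{\min}$ is superlinear (cf.~\eqref{I_superquadratic}) it is \emph{not} subadditive, and $E_{\min}(\sigma'+\sigma'')$ can strictly exceed $E_{\min}(\sigma')+E_{\min}(\sigma'')$. So the integrated lower bound is only $(E_{\min}(\sigma')+E_{\min}(\sigma''))\log(t_m/\eps)+E_{\min}(\sigma'+\sigma'')\log(T/t_m)$, which is \emph{not} bounded below by $E_{\min}(\sigma'+\sigma'')\log(T/\eps)=\sum_i E_{\min}(\sigma_i(T))\log(T/\eps)$ in general; your claimed conclusion of the integration step is false. Saying ``never use $E_{\min}$ of a merged class'' cannot help, because the slicing estimate at late times only sees the merged class --- there is no way to extract $E_{\min}$ of the pre-merge constituents from a sphere that encloses all of them.

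The paper's proof (Lemmas~\ref{lemma:annulus}--\ref{lemma:ballconstruction}) resolves this by replacing $E_{\min}$ with $\abs{\cdot}_*$ \emph{at the level of the annulus estimate}, not at the end: after obtaining $\frac{1}{k}\int_{\partial B_\rho}\abs{\nablaT v}^k \geq E_{\min}(\sigma)/\rho$, one immediately invokes $E_{\min}(\sigma)\geq\abs{\sigma}_*$ and works thereafter with $\Lambda_\eps(\rho/\abs{\sigma}_*)$. Because $\abs{\cdot}_*$ is a genuine norm, hence subadditive, the quantity $r_i/\abs{\sigma_i}_*$ can only increase under merging, and the Jerrard-type bookkeeping (Lemma~\ref{lemma:ballconstruction}, where $s:=\min_i r_i/\abs{\hc(u,\partial\tilde{B}_i)}_*$ and one uses $\abs{\sigma}_*\le\sum_i\abs{\sigma_i}_*$) goes through. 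Your estimate of the loss term and your invocation of the finiteness of~$\Sg$ are in the right spirit, though the paper's $\log\abs{\sigma}_*$ term arises more directly from the choice of final ball-scale $\tau\sim r/\abs{\sigma}_*$ forced by the constraint~\eqref{sigma-eps}. Finally, your ``good sphere'' reduction is cruder than what is needed: the paper's reduction (Lemma~\ref{lemma:cone}) is a pointwise factorisation of the energy density through $s=\phi\circ u$ and $v=\pi\circ u$, which is what allows the ball-construction machinery to be applied to a scalar ``degree density'' $s$; a single good sphere $\Sigma_\rho$ near $\partial\Omega$ would not give the co-area control needed on every radius.
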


The aim of this section is to prove Proposition~\ref{prop:lowerbounds}.
Once Proposition~\ref{prop:lowerbounds} is proved,
Proposition~\ref{prop:JS} follows by an extension argument
in a neighbourhood of~$\partial\Omega$ 
(see e.g.~\cite[Theorem~2]{BethuelDemengel}).
Lemma~\ref{lemma:lowerbounds} 
also follows from Proposition~\ref{prop:lowerbounds},
by exactly the same arguments as in~\cite[Lemma~3.10]{ABO2}. 

\subsection{Reduction to the cone-valued case}

For the purposes of this section, it will be convenient to consider the 
nearest-point projection onto~$\NN$.
If~$z\in\R^m$ is sufficiently close
to~$\NN$, there exists a unique $\pi(z)\in\NN$ such that
$\abs{z-\pi(z)} \leq \abs{z - w}$ for any $w\in\NN$.
Moreover, the map~$z\mapsto \pi(z)$ is a smooth in a neighbourhood of~$\NN$.
Throughout the rest of the section, we fix
a small parameter~$\theta_0$ and assume that 
$\pi$ is well-defined and smooth in a $\theta_0$-neighbourhood of~$\NN$.

\begin{lemma} \label {lemma:proj}
 If~$u\colon\Omega\to\R^m$ is a smooth map that satisfies
 $\dist(u(x), \, \NN)<\theta_0$ for any~$x\in\Omega$
 and if~$d := \dist(u, \, \NN)$, then there holds
 \[
  \abs{\nabla u}^2 \geq C_1\abs{\nabla d}^2 + 
  (1 - C_2d)\abs{\nabla (\pi\circ u)}^2 \qquad \textrm{on } \Omega,
 \]
 where~$C_1$, $C_2$ are positive constants that only depend on~$\NN$.
\end{lemma}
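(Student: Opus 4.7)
The plan is to work in a tubular neighbourhood of $\NN$, where the nearest-point projection $\pi$ gives a canonical decomposition $u = (\pi\circ u) + \nu$ with $\nu(x) \in N_{(\pi\circ u)(x)}\NN$ and $|\nu(x)| = d(x)$. Writing $p := \pi\circ u$, we have $\partial_i u = \partial_i p + \partial_i \nu$ for each $i$, so
\[
 \abs{\partial_i u}^2 = \abs{\partial_i p}^2 + 2\,\partial_i p\cdot \partial_i \nu + \abs{\partial_i \nu}^2.
\]
By construction $\partial_i p \in T_{p(x)}\NN$, so only the tangential component of $\partial_i \nu$ contributes to the cross term. The hard part to get right is the control of this tangential component, and it will come from the Weingarten map.

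To extract the tangential part of $\partial_i \nu$, I would argue as follows. Choose locally a smooth orthonormal frame $\{e_1,\ldots,e_{m-\dim\NN}\}$ for the normal bundle of $\NN$, and write $\nu(x) = \sum_\alpha a_\alpha(x)\, e_\alpha(p(x))$. Differentiating gives $\partial_i \nu = \sum_\alpha (\partial_i a_\alpha)\,e_\alpha(p) + \sum_\alpha a_\alpha\, \mathrm{D}e_\alpha(p)[\partial_i p]$; the first sum is normal to $\NN$ at $p(x)$, while the tangential part of the second sum equals $-W_{\nu}(\partial_i p)$, where $W_\nu$ is the Weingarten (shape) operator. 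Since $\NN$ is a smooth compact submanifold, $\nu\mapsto W_\nu$ is linear and bounded, giving $\abs{(\partial_i \nu)^T} \leq C\abs{\nu}\abs{\partial_i p} = Cd\abs{\partial_i p}$ for some $C = C(\NN)$. Cauchy--Schwarz then yields
\[
 \bigl|2\,\partial_i p\cdot \partial_i \nu\bigr|
 = \bigl|2\,\partial_i p\cdot (\partial_i \nu)^T\bigr|
 \leq 2Cd\abs{\partial_i p}^2.
\]

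For the radial component, differentiate the identity $|\nu|^2 = d^2$ to get $\nu\cdot \partial_i \nu = d\,\partial_i d$. Because $\nu\in N_{p(x)}\NN$, this inner product only sees the normal part of $\partial_i \nu$, and Cauchy--Schwarz (dividing by $d$ when $d>0$, and using $\nabla d = 0$ a.e.\ on $\{d=0\}$ since $d\geq 0$ is Lipschitz) yields $\abs{\partial_i d} \leq \abs{(\partial_i \nu)^N} \leq \abs{\partial_i \nu}$. Combining,
\[
 \abs{\partial_i u}^2 \;\geq\; (1 - 2Cd)\abs{\partial_i p}^2 + \abs{\partial_i \nu}^2
 \;\geq\; (1 - 2Cd)\abs{\partial_i (\pi\circ u)}^2 + \abs{\partial_i d}^2
\]
for a.e.\ $x\in\Omega$. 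Summing over $i$ gives the claim with $C_1 = 1$ and $C_2 = 2C$.

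I expect the main technical obstacle to be making the Weingarten bound $\abs{(\partial_i\nu)^T}\leq Cd\abs{\partial_i p}$ completely rigorous using only the smoothness of $\pi$ on a tubular neighbourhood, without getting lost in local frames. An alternative, perhaps cleaner route would be to work entirely with $\Phi(p,\nu) := p+\nu$ on the normal bundle $N\NN$: the inverse $\Phi^{-1}$ is smooth near the zero section, and its differential at $(p,0)$ is the identity, so $\mathrm{D}\Phi^{-1}$ differs from the ``split'' form by an $O(d)$ perturbation, which directly produces the $(1-C_2 d)$ factor on the tangential block and preserves the normal block. Either way, the geometric fact that drives the estimate is that the tubular neighbourhood is, to first order in $d$, an orthogonal product of $T\NN$ and $N\NN$.
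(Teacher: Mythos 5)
Your argument is correct and follows essentially the same route as the paper's: decompose $u$ as $\pi\circ u$ plus a normal vector field, differentiate, and control the tangential part of the derivative of the normal component by a curvature bound on $\NN$ (the Weingarten operator in your phrasing, the bound $\abs{\nabla\nu_i}\leq C$ on the normal frame in the paper's). Your version is a bit more explicit about the geometry and even yields $C_1=1$, but it is the same decomposition and the same estimate.
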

\begin{proof}
 Let~$x_0\in\Omega$ be arbitrarily fixed.
 Let~$\nu_1, \, \ldots, \, \nu_p$ be a smooth orthonormal frame for the normal
 space to~$\NN$, locally defined in a neighbourhood of~$(\pi\circ u)(x_0)$.
 Then, for each~$x$ in a neighbourhood of~$x_0$ there exist numbers 
 $\alpha_1(x), \, \ldots, \, \alpha_p(x)$ such that
 \[
  u(x) = (\pi\circ u)(x) + \sum_{i=1}^p \alpha_i(x) \, (\nu_i\circ\pi\circ u)(x).
 \]
 The functions~$\alpha_i$ are as regular as~$u$. By differentiating this equation,
 raising both sides to the square, using the fact that $\nabla(\pi\circ u)$
 is tangent to~$\NN$ and that $(\nabla\nu_i)\nu_i = 0$, we obtain
 \[
  \begin{split}
   \abs{\nabla u}^2 - \abs{\nabla (\pi\circ u)}^2 = \sum_{i=1}^p \left( 
   \abs{\nabla\alpha_i}^2 + \alpha_i^2 \abs{\nabla(\nu_i\circ\pi\circ u)}^2
   + 2\alpha_i\nabla(\pi\circ u):\nabla(\nu_i\circ\pi\circ u)\right) \!.
  \end{split}
 \]
 Since~$\NN$ is smooth and compact, we have $|\nabla\nu_i|\leq C$ for some 
 constant~$C$ that only depends on~$\NN$ and not on~$u$. Therefore, 
 setting $d := \dist(u, \, \NN) = (\sum_i\alpha_i^2)^{1/2}$, we obtain
 \begin{equation} \label{proj1}
  \abs{\nabla u}^2 - \abs{\nabla (\pi\circ u)}^2 
  \geq \sum_{i=1}^p \abs{\nabla\alpha_i}^2 - Cd \abs{\nabla (\pi\circ u)}^2.
 \end{equation}
 On the other hand, by differentiating the identity
 $d = (\sum_i\alpha_i^2)^{1/2}$, we see that
 \begin{equation} \label{proj2}
  \abs{\nabla d}^2 \leq \left(\sum_{i=1}^p \abs{\nabla\alpha_i}\right)^2 
  \lesssim \sum_{i=1}^p \abs{\nabla\alpha_i}^2.
 \end{equation}
 By combining~\eqref{proj1} and~\eqref{proj2}, the lemma follows.
\end{proof}

\begin{lemma} \label{lemma:cone}
 Suppose that~$f\colon\R^m\to\R$ satisfies the
 assumptions~\eqref{hp:first}--\eqref{hp:non-degeneracy}.
 Then, there exist positive constants~$\alpha$, $\beta$ and a smooth
 function~$\phi\colon \R^m\to [0, \, 1]$ such that
 the following holds:
 \begin{enumerate}[label=(\roman*)]
  \item $\phi(y) = 1$ for any~$y\in\NN$;
  \item $\phi(y) = 0$ if~$\dist(y, \, \NN)\geq \theta_0$, and in particular
  $\pi(y)$ is well-defined for any~$y\in\R^m$ such that~$\phi(y) >0$;
  \item for any~$u\in W^{1,k}(\Omega, \, \R^m)$, there holds
   \[
     \frac{1}{k}\abs{\nabla u}^k + \frac{1}{\eps^k} f(u)
     \geq \alpha\abs{\nabla(\phi\circ u)}^k 
     + \frac{1}{k}(\phi\circ u)^k\abs{\nabla(\pi\circ u)}^k
     + \frac{\beta}{\eps^k} (1 - \phi\circ u)^2
   \]
   pointwise a.e. on~$\Omega$.
 \end{enumerate}
\end{lemma}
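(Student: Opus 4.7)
The plan is to take $\phi$ to be a cutoff of the distance from $\NN$. More precisely, fix $\theta_0>0$ so small that the nearest-point projection $\pi$ is smooth on the $\theta_0$-tubular neighbourhood of $\NN$ and $\theta_0<1/(2C_2)$ (with $C_2$ as in Lemma~\ref{lemma:proj}); then choose a smooth $\psi\colon[0,\infty)\to[0,1]$ with $\psi(0)=1$, $\psi\equiv 0$ on $[\theta_0,\infty)$ (vanishing to infinite order at $\theta_0$), and satisfying the pointwise bound $\psi(t)^2\leq 1-C_2 t$ on $[0,\theta_0]$. Such a $\psi$ exists because the envelope $(1-C_2 t)^{1/2}$ stays $\geq 1/\sqrt{2}$ on $[0,\theta_0]$. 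Now set $\phi(y):=\psi(\tilde d(y))$ with $\tilde d(y):=\dist(y,\NN)$; smoothness of $\phi$ follows from smoothness of $\tilde d$ inside the tube and the fact that $\phi$ is identically zero outside. Properties~(i) and~(ii) are immediate.

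For property~(iii), I would argue pointwise at an arbitrary $x\in\Omega$, writing $d=\tilde d(u(x))$, and distinguish two cases.

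\emph{Case 1: $d\geq\theta_0$.} Then $\phi\circ u=0$ and $\nabla(\phi\circ u)=0$ at~$x$ (since $\psi\equiv 0$ there), so the right-hand side of~(iii) reduces to $\beta/\eps^k$. By the non-degeneracy condition~\eqref{hp:non-degeneracy}, $f(u(x))\geq\lambda_0 d^2\geq\lambda_0\theta_0^2$, which handles the inequality provided $\beta\leq\lambda_0\theta_0^2$.

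\emph{Case 2: $d<\theta_0$.} I would apply Lemma~\ref{lemma:proj} and raise to the power $k/2$; since $k\geq 2$, the super-additivity inequality $(a+b)^{k/2}\geq a^{k/2}+b^{k/2}$ for $a,b\geq 0$ yields
\[
\tfrac{1}{k}|\nabla u|^k\;\geq\;\tfrac{C_1^{k/2}}{k}|\nabla d|^k+\tfrac{(1-C_2 d)^{k/2}}{k}|\nabla(\pi\circ u)|^k.
\]
By the chain rule, $\nabla(\phi\circ u)=\psi'(d)\nabla d$, so the first term on the right dominates $\alpha|\nabla(\phi\circ u)|^k$ as soon as $\alpha\leq C_1^{k/2}/(k\|\psi'\|_\infty^k)$. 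The design constraint $\psi(t)^2\leq 1-C_2 t$ gives $(\phi\circ u)^k=\psi(d)^k\leq(1-C_2 d)^{k/2}$, whence the second term dominates $\tfrac{1}{k}(\phi\circ u)^k|\nabla(\pi\circ u)|^k$. Finally, since $\psi$ is smooth with $\psi(0)=1$, the quantity $(1-\psi(d))/d$ is bounded on $[0,\theta_0]$ by some $L>0$, so $(1-\phi\circ u)^2\leq L^2 d^2$, and $\lambda_0 d^2\leq f(u)$ gives the potential inequality with $\beta\leq\lambda_0/L^2$.

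The only real work is the bookkeeping of constants: one fixes $\psi$ first, then computes $\|\psi'\|_\infty$ and $L$, and finally reads off compatible values of $\alpha$ and $\beta$. I do not foresee any genuine obstacle — the two mildly delicate points are (a) ensuring that $\phi$ is globally smooth on $\R^m$ (handled by requiring $\theta_0$ less than the reach of $\NN$ and $\psi$ flat to infinite order at $\theta_0$), and (b) making sure the constraint $\psi(t)^2\leq 1-C_2 t$ is compatible with $\psi(0)=1$ (it forces $-\psi'(0)\geq C_2/2$, which is harmless).
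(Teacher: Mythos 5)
Your proposal is correct and follows essentially the same strategy as the paper: apply Lemma~\ref{lemma:proj}, raise to the power $k/2$ (super-additivity of $t\mapsto t^{k/2}$ for $k\geq 2$ is exactly what the paper invokes under the name ``convexity''), and design the cutoff $\phi$ as a function of $\dist(\cdot,\NN)$ constrained so that $(\phi\circ u)^k$ is dominated by the coefficient in front of $|\nabla(\pi\circ u)|^k$. The only cosmetic differences are that the paper writes down $\phi$ explicitly as $(1-C_2\dist)^{1/k}\xi(\dist)$ (equivalently, imposing $\psi^k\leq 1-C_2 t$ rather than your slightly stronger $\psi^2\leq 1-C_2 t$, after absorbing a factor $k/2$ into $C_2$), and it treats the potential term via $(1-x^k)^2\geq(1-x)^2$ for $x\in[0,1]$ instead of your direct Lipschitz bound on $1-\psi$; both routes reach the same conclusion with the same constants up to relabelling.
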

\begin{proof}
 Let~$u\in W^{1,k}(\Omega, \, \R^m)$ be given.
 By a density argument, we can assume without loss of generality
 that~$u$ is smooth. Let~$d:=\dist(u, \, \NN)$, and let~$x_0\in\Omega$ 
 be such that $d(x_0)<\theta_0$. By applying Lemma~\ref{lemma:proj},
 and using the convexity of the function $t\mapsto t^{k/2}$, we see that the inequality
 \begin{equation} \label{cone1}
  \abs{\nabla u}^k \geq 
  C_1 \abs{\nabla d}^k + (1 - C_2 d)\abs{\nabla (\pi\circ u)}^k
 \end{equation}
 holds pointwise in a neighbourhood of~$x_0$
 (though we may need to re-define
 the constants~$C_1$, $C_2$.)
 Let~$\xi\in C^\infty_{\mathrm{c}}[0, \, +\infty)$
 be a non-increasing function,
 such that $\xi = 1$ in a neighbourhood of~$0$ 
 and~$\xi(\min\{\theta_0/2, 1/(2C_2)\}) = 0$. We set
 \begin{equation*} 
  \phi(y) := \left(1 - C_2\dist(y, \, \NN)\right)^{1/k} \, \xi(\dist(y, \, \NN))
 \end{equation*}
 for any~$y\in\R^m$. This defines a smooth 
 function~$\phi\colon\R^m\to [0, \, 1]$ which satisfies~(i) and~(ii).
 Since~$(\phi\circ u)^k\leq 1 - C_2 d$ and 
 $|\nabla(\phi\circ u)| \lesssim |\nabla d|$, 
 from~\eqref{cone1} we deduce that
 \begin{equation} \label{cone2}
  \frac{1}{k}\abs{\nabla u}^k \geq \alpha\abs{\nabla(\phi\circ u)}^k
  + \frac{1}{k}(\phi\circ u)^k\abs{\nabla (\pi\circ u)}^k
 \end{equation}
 pointwise in the open set~$\{d<\theta_0\}$. Here~$\alpha$ is a positive constant
 that only depends on~$\NN$, $k$ and~$\xi$. Because the function~$\phi\circ u$
 is identically equal to zero on the open set~$\{d>\theta_0/2\}$, 
 the inequality~\eqref{cone2} actually holds in the whole of~$\Omega$.
 
 We consider now the potential term~$f(u)$. 
 Due to the assumption~\eqref{hp:non-degeneracy},
 $f(u)\gtrsim d^2$ and hence, 
 there exists a positive number~$\beta>0$ such that
 \begin{equation} \label{cone3}
  f(u) \geq \beta\left(1 - (\phi\circ u)^k\right)^2
  \qquad \textrm{in } \Omega.
 \end{equation}
 By combining \eqref{cone2} and~\eqref{cone3}, and using the elementary inequality
 $1 - x^k 
 \geq 1 - x$ for~$0 \leq x \leq 1$, the lemma follows.
\end{proof}

\subsection{Proof of Proposition~\ref{prop:lowerbounds}}

Throughout this section, we fix a bounded, smooth map~$u\colon\Omega\to\R^m$
and we let~$s:= \phi\circ u$, $v:=\pi\circ u$, where~$\phi$ is the function
given by Lemma~\ref{lemma:cone} and~$\pi$ is the nearest-point projection
onto~$\NN$. Thanks to Lemma~\ref{lemma:cone},
in order to provide lower bounds for~$E_\eps(u)$ it suffices to 
bound from below the functional
\begin{equation} \label{G_eps}
 G_\eps(s, \, v) = G_\eps(s, \, v; \, \Omega) := 
   \int_\Omega\left( \alpha\abs{\nabla s}^k 
   + \frac{s^k}{k}\abs{\nabla v}^k + \frac{\beta}{\eps^k} (1 - s)^2 \right) \!.
\end{equation}
To this end, we adapt Jerrard's approach in~\cite{Jerrard}.
We explain here the main steps of the construction and 
point out the differences, referring the reader to~\cite{Jerrard} for more details.

Let us fix a small number~$\eta_0$, such that
\begin{equation} \label{eta0}
 0 < \eta_0 < \dist(\NN, \, \X) - \theta_0.
\end{equation}
Let~$V\subseteq\Omega$ be an open set such that~$s>0$ on~$\partial V$.
By Lemma~\ref{lemma:cone}, we have $\dist(u(x), \, \NN) < \theta_0$ for 
any $x\in\partial V$. Therefore, by~\eqref{eta0}, we have 
$\spt(\S_y(u))\cap \partial V = \emptyset$
for a.e.~$y\in\R^m$ such that~$|y|\leq\eta_0$.
In fact, $\S_y(u)$ is a $0$-chain of finite mass, and hence we can write
\begin{equation} \label{0chain}
 \S_y(u)\mres\overline{V} := \sum_{i=1}^q \sigma_i\llbracket x_i\rrbracket
\end{equation}
where~$\sigma_i\in\GN$ and~$x_i\in V$. The quantity
$\I(\S_y(u), \llbracket V\rrbracket) := \sum_{i=1}^q \sigma_i\in\GN$
plays the r\^ole of the topological degree 
and indeed, it coincides with the homotopy class of~$\pi\circ u$ on~$\partial V$
because of Proposition~\ref{prop:S-app}.\eqref{S:intersection-app} 
and~\eqref{eta0} (see \cite[Section~2.4 and Theorem~3.1]{CO1} for the
details on the case~$u$ is not smooth).
In particular, $\I(\S_y(u),  \llbracket V\rrbracket)$
is independent of the choice of~$y$.

As in~\cite{Jerrard}, we define an ``approximate homotopy class'',
which allows us to disregard sets where~$s$ is small
but $u$ does not carry topological obstruction. We 
let~$S := \{x\in\Omega\colon s(x)\leq 1/2\}$.
The components~$\tilde{S}$ of~$S$ are closed sets and it is possible
to define~$\I(\S_y(u), \, \llbracket \tilde{S}\rrbracket)$ as above.
We define the ``essential part'' of~$S$:
\[
 S_E := \cup \left\{\tilde{S} \textrm{ component of }S \colon 
 \I(\S_y(u), \, \llbracket \tilde{S}\rrbracket) \neq 0 \right\} \!.
\]
For~$V\subseteq\Omega$, 
we define the ``approximate homotopy class'' of~$u$ on~$\partial V$ as
\begin{equation} \label{hc}
 \hc(u, \, \partial V) := \sum_{\tilde{S}} \I(\S_y(u), 
    \, \llbracket \tilde{S}\rrbracket)\in\GN,
\end{equation}
where the sum is taken over all the
components~$\tilde{S}$ of~$S_E$ such that~$\tilde{S}\csubset V$.
If~$V\subseteq\Omega$ is an open disk and~$s>1/2$ on~$\partial V$, 
then~$\hc(u, \, \partial V)$ is the homotopy class 
of~$v\colon\partial V\simeq\SS^{k-1}\to\NN$.

For any~$\rho>0$, we define
\begin{equation*} 
 \lambda_{\eps}(\rho) := \min_{0\leq \mu \leq 1}
 \left\{\frac{\mu^k}{\rho}+ \frac{C_0}{\eps}(1 - \mu)^N \right\}
\end{equation*}
and
\[
 \Lambda_{\eps}(\rho) := \int_0^{\rho} \min\left\{\lambda_{\eps}(s),
 \, \frac{C_1}{\eps}\right\} \d\rho,
\]
where~$C_0>0$, $C_1>0$ and~$N>1$ are parameters that 
we will need to choose, depending on~$k$, $\alpha$ and~$\beta$.
It can be shown (see \cite[Theorem~2.1, proof of~(2.2)]{Jerrard}) that 
\begin{equation*} 
 \lambda_{\eps}(\rho) \geq 
 \frac{1}{\rho} \left(1 - C \frac{\eps^\nu}{\rho^\nu}\right) \!,
\end{equation*}
where~$C>0$ only depends on~$k$, $C_0$ and~$\nu := 1/(N-1)>0$.
As a consequence, $\lambda_\eps(\rho)\geq C_1/\eps$ if~$\rho\geq C_2\eps$,
for some constant~$C_2>0$ depending on~$k$, $C_0$, $C_1$ and~$N$.
Therefore, after integration we obtain that
\begin{equation} \label{lb-log}
 \Lambda_{\eps}(\rho) \geq 
 \log\frac{\rho}{\eps} - C
\end{equation}
for any~$\rho\geq 0$, where the constant~$C$ only depends on~$k$, $\alpha$, $\beta$.
We have the following analogue of \cite[Proposition~3.2]{Jerrard}.

\begin{lemma} \label{lemma:annulus}
 Let~$\eps\leq \rho_1\leq\rho_2$ and let
 $u\in W^{1,k}(B^k_{\rho_2}\setminus B^k_{\rho_1}, \, \R^m)$ be smooth.
 Suppose that $\hc(u, \, \partial B^k_\rho) = \sigma$ 
 for any $\rho\in (\rho_1, \, \rho_2)$. Then, there holds
 \[
  E_\eps(u, \, B^k_{\rho_2}\setminus B^k_{\rho_1}) \geq
  \abs{\sigma}_*\left(\Lambda_\eps\left(\frac{\rho_2}{\abs{\sigma}_*}\right) 
  - \Lambda_\eps\left(\frac{\rho_1}{\abs{\sigma}_*}\right)\right) \! .
 \]
\end{lemma}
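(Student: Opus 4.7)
The proof will follow the radial slicing strategy introduced by Jerrard~\cite{Jerrard} for the Ginzburg-Landau case, adapted to our vector-valued target $\NN$. The plan is to slice the annulus by concentric $(k-1)$-spheres, derive a pointwise-in-$\rho$ lower bound on the energy carried by each sphere, and integrate over radii.

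First I would pass to polar coordinates $x = \rho\omega$, $\omega \in \SS^{k-1}$, discard the radial component of the gradient in~\eqref{G_eps} and rescale $\nabla_T v(\rho\omega) = \rho^{-1}\nabla_\omega \tilde{v}(\omega)$ (with $\tilde{v}(\omega) := v(\rho\omega)$ and $\tilde{s}(\omega) := s(\rho\omega)$) to reduce the problem to showing
\[
 E_\eps(u, B^k_{\rho_2} \setminus B^k_{\rho_1}) \geq \int_{\rho_1}^{\rho_2} I(\rho) \, d\rho,
\]
where
\[
 I(\rho) := \frac{1}{\rho}\int_{\SS^{k-1}} \frac{\tilde{s}^k |\nabla_\omega \tilde{v}|^k}{k} d\omega + \frac{\beta \rho^{k-1}}{\eps^k} \int_{\SS^{k-1}} (1 - \tilde{s})^2 d\omega.
\]

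The heart of the argument will be the per-radius estimate $I(\rho) \geq \min\{\lambda_\eps(\rho/|\sigma|_*),\ C_1/\eps\}$ for a.e.~$\rho \in (\rho_1, \rho_2)$. Setting $\mu := \infess_{\omega \in \SS^{k-1}} \tilde{s}(\omega)$, I would first exploit the hypothesis $\hc(u, \partial B^k_\rho) = \sigma$ to recover a well-defined map $\tilde{v}_* \colon \SS^{k-1} \to \NN$ in the (free) homotopy class $\sigma$, by extending $\tilde{v}$ across the small-$s$ regions of $\partial B^k_\rho$ using the essential components of $\{s \leq 1/2\}$ contained in $B^k_\rho$ (which by the very definition~\eqref{hc} of $\hc$ carry the missing topology). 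The definitions~\eqref{I_min}--\eqref{group_norm} of $|\cdot|_*$ then give the gradient bound
\[
 \frac{1}{\rho}\int_{\SS^{k-1}} \frac{\tilde{s}^k |\nabla_\omega \tilde{v}|^k}{k}\, d\omega \geq \frac{\mu^k |\sigma|_*}{\rho} = \frac{\mu^k}{\rho/|\sigma|_*}.
\]
For the potential term I would use $(1 - \tilde{s})^2 \geq C(1-\mu)^2$ on a neighbourhood of an (essential) minimiser of $\tilde{s}$, controlling the sphere-measure of this neighbourhood by reinserting the previously-discarded tangential $|\nabla s|$-contribution via a Sobolev/coarea argument, to produce a lower bound of the form $C(1-\mu)^N/\eps$ for a suitable exponent $N > 1$. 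Infimising the resulting sum over $\mu \in [0, 1]$ produces exactly $\lambda_\eps(\rho/|\sigma|_*)$; the cap $C_1/\eps$ is obtained by a direct trivial estimate (bounding each of the two terms of $I(\rho)$ by the full energy density slice of $u$).

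Integrating the per-slice bound and changing variables $r = \rho/|\sigma|_*$ finally yields
\[
 \int_{\rho_1}^{\rho_2} \min\{\lambda_\eps(\rho/|\sigma|_*),\, C_1/\eps\}\, d\rho = |\sigma|_* \bigl[\Lambda_\eps(\rho_2/|\sigma|_*) - \Lambda_\eps(\rho_1/|\sigma|_*)\bigr],
\]
which is the claimed inequality. The hard part will be making the ``extension of $\tilde{v}$'' precise on the subsets of $\partial B^k_\rho$ where $\tilde{s}$ dips below $1/2$: there, $v = \pi \circ u$ is not even pointwise well-defined, and one must carefully exploit both the structure of $\hc$ (going through the essential components of $\{s \leq 1/2\}$, which are $\csubset B^k_\rho$) and the continuity of $s$ to produce a genuine test map $\SS^{k-1} \to \NN$ in the class $\sigma$ whose $L^k$-gradient is controlled by that of $u$. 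This is the step that encodes the interplay between the gradient and potential contributions captured by $\lambda_\eps$.
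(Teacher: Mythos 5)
Your overall strategy is right — slice by concentric $(k-1)$-spheres, derive a per-sphere Jerrard-type lower bound, integrate in $\rho$, and change variables to produce $\Lambda_\eps$. The inequality $\frac{1}{k}\int_{\partial B^k_\rho}|\nabla_T v|^k \geq E_{\min}(\sigma)/\rho$ from the scaling of~\eqref{I_min}, together with $E_{\min}(\sigma)\geq|\sigma|_*$, is indeed how the topological constant $|\sigma|_*$ enters.

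However, your central per-sphere step contains a genuine gap. You set $\mu = \infess_{\SS^{k-1}}\tilde s$ and propose to produce an $\NN$-valued map $\tilde v_*$ in the class $\sigma$ for \emph{every} value of $\mu$, by ``extending $\tilde v$ across the small-$s$ regions'' of the sphere. This is not how the estimate is obtained, and it does not work. The argument in the paper uses a clean dichotomy at $\mu=1/2$: if $\mu>1/2$ then $\phi\circ u>1/2$ everywhere on $\partial B^k_\rho$, so $v=\pi\circ u$ is already a well-defined smooth map $\SS^{k-1}\to\NN$ whose homotopy class equals $\hc(u,\partial B^k_\rho)=\sigma$ and \emph{no extension is needed}; whereas if $\mu\leq 1/2$ one abandons the topological estimate entirely and uses the crude quantitative bound $G_\eps(s,v;\partial B^k_\rho)\geq C_1/\eps$ (Jerrard's Lemma 2.3), which holds simply because $s$ being small somewhere forces a large energy contribution irrespective of $\sigma$. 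In either case one obtains $G_\eps(s,v;\partial B^k_\rho)\geq\min\{\lambda_\eps(\rho/|\sigma|_*),\,C_1/\eps\}$, and the final $\Lambda_\eps$ bound follows by integration.

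Your extension idea fails for two independent reasons. First, in the $\mu>1/2$ case the map you want to extend is already globally defined, so the construction is moot. Second, in the $\mu\leq1/2$ case, the regions of $\partial B^k_\rho$ where $\tilde s$ is small carry no control over $\nabla(\pi\circ u)$ (indeed $\pi\circ u$ may not even be defined there), so your displayed gradient lower bound $\frac{1}{\rho}\int_{\SS^{k-1}}\tfrac{1}{k}\tilde s^k|\nabla_\omega\tilde v|^k\geq \mu^k|\sigma|_*/\rho$ — which implicitly replaces $\tilde v$ by the extended $\tilde v_*$ — cannot be established: the extension could have arbitrarily large gradient and the weight $\tilde s^k$ does nothing to kill it. Moreover, when $\partial B^k_\rho$ meets $\{s\leq 1/2\}$, the quantity $\hc(u,\partial B^k_\rho)$ is by definition~\eqref{hc} computed from essential components compactly contained in $B^k_\rho$, not from any trace of $v$ on $\partial B^k_\rho$, so there is no canonical map on the sphere carrying that class. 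The clean dichotomy is not a shortcut past this ``hard part''; it is the mechanism that makes the argument go through.
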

\begin{proof}
 First of all, given $\rho>0$ and a map
 $v\in W^{1,k}(\partial B^k_\rho, \, \NN)$ in
 the homotopy class~$\sigma\in\GN$, there holds
 \begin{equation} \label{start}
  \frac{1}{k}\int_{\partial B^k_\rho} \abs{\nabla v}^k 
  \geq \frac{E_{\min}(\sigma)}{\rho}
 \end{equation}
 where $E_{\min}(\sigma)$ is defined by~\eqref{I_min}.
 This inequality follows immediately from the definition of $E_{\min}(\sigma)$,
 combined with a scaling argument.
 
 Now, for any~$\rho\geq\eps>0$ and any smooth~$u\colon\partial B^k_\rho\to\R^m$
 such that $\mu := \min_{\partial B^k_\rho} \phi\circ u > 1/2$, there holds
 \begin{equation*} 
  G_\eps(s, \, v; \, \partial B^k_\rho) \geq 
  \frac{E_{\min}(\sigma)}{\rho}\mu^k + \frac{C_0}{\eps}(1 - \mu)^N.
 \end{equation*}
 Here $G_\eps$ is defined by~\eqref{G_eps}, $s:=\phi\circ u$, 
 $v:=\pi\circ u$, and~$\sigma$ denotes the homotopy class of $v$ on~$\partial B^k_\rho$.
 The constants~$C_0$, $N$ are suitably chosen at this stage.
 The proof of this claim follow by repeating, almost word by word,
 the arguments in~\cite[Theorem~2.1]{Jerrard}; the only difference is that 
 we need to apply~\eqref{start} instead of~\cite[Lemma~2.4]{Jerrard}.
 Due to~\eqref{I_norm}, we obtain
 \begin{equation} \label{lb-circle1}
  G_\eps(s, \, v; \, \partial B^k_\rho) \geq 
   \frac{\mu^k}{\rho/\abs{\sigma}_*}  + \frac{C_0}{\eps}(1 - \mu)^N
  \geq \lambda_{\eps}\left(\frac{\rho}{\abs{\sigma}_*}\right) \! .
 \end{equation}
 On the other hand, in case~$0\leq\mu\leq 1/2$, \cite[Lemma~2.3]{Jerrard}
 implies that
 \begin{equation} \label{lb-circle2}
  G_\eps(s, \, v; \, \partial B^k_\rho) \geq \frac{C_1}{\eps}
 \end{equation}
 for some~$C_1>0$ that depends on~$k$, $C_0$ and~$N$. Therefore,
 by integrating the inequalities~\eqref{lb-circle1}--\eqref{lb-circle2}
 with respect to~$\rho$, we deduce that
 \[
  \begin{split}
   G_\eps(s, \, v; \, B^k_{\rho_2}\setminus B^k_{\rho_1}) \geq
   \int_{\rho_1}^{\rho_2} \min\left(\lambda_{\eps}
      \left(\frac{\rho}{|\sigma|_*}\right), \, \frac{C_1}{\eps}\right) \d\rho
   = |\sigma|_* \int_{\rho_1/|\sigma|_*}^{\rho_2/|\sigma|_*} \min\left(\lambda_{\eps}
      (s), \, \frac{C_1}{\eps}\right) \d s
  \end{split}
 \]
 and, thanks to Lemma~\ref{lemma:cone}, the lemma follows.
\end{proof}

We also have an analogue of \cite[Proposition~3.3]{Jerrard}.

\begin{lemma} \label{lemma:SE}
 Suppose that~$u\in W^{1,k}(\Omega, \, \R^m)$ is smooth and
 that~$S_E\csubset\Omega$. Then, there exists a finite collection 
 of closed, pairwise disjoint balls~$(B_i)_{i=1}^p$, of radii~$\rho_i\geq\eps$,
 such that $S_E\subseteq \cup_{i=1}^p B_i$, 
 $B_i\cap S_E\neq\emptyset$ for any~$i$, and
 \[
  E_\eps(u; \, B_{\rho_i}\cap\Omega) \geq \frac{C_1}{\eps}\rho_i. 
 \]
\end{lemma}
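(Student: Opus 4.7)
My plan follows the ball construction in the spirit of Jerrard~\cite{Jerrard}. The essential input is the uniform local lower bound
\[
 E_\eps(u; B(x, \eps) \cap \Omega) \geq c_0 \qquad \textrm{for every } x \in S_E,
\]
with $c_0>0$ a constant depending only on $\NN$, $k$ and $f$. To establish it, note that $x$ lies in a connected component $\tilde S$ of $S_E$ carrying nontrivial topological charge $\sigma_{\tilde S}\in\GN\setminus\{0\}$, so $\abs{\sigma_{\tilde S}}_* \geq c>0$ by the discreteness property~\eqref{discrete_norm}. I would split according to the size of $\tilde S$: when $\mathrm{diam}(\tilde S) \geq \eps$, the connectedness of $\tilde S$ ensures that (after possibly re-centering $x$) every concentric sphere $\partial B(x, r)$ with $r \in (0, \eps/2)$ meets $\tilde S \subseteq S$, and the circle-wise bound $G_\eps(s, v; \partial B(x, r)) \geq C_1/\eps$ (the case $\mu \leq 1/2$ in the proof of Lemma~\ref{lemma:annulus}) integrates in $r$ to the desired constant. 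When $\mathrm{diam}(\tilde S) < \eps$, I would extract positivity from a combination of the potential energy stored inside $\tilde S$ and the gradient cost of the transition layer $\{1/2 < s < 3/4\}$, via a coarea/Modica--Mortola type argument.

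With the local estimate in hand, I cover $S_E$ by $\{B(x, \eps): x \in S_E\}$, which lies inside $\Omega$ for $\eps$ sufficiently small (since $S_E \csubset \Omega$), and extract a finite subcover by compactness. The Vitali covering lemma then provides a disjoint subfamily $\{B(x_\alpha, \eps)\}_{\alpha=1}^N$ whose $5$-expansions $\{B(x_\alpha, 5\eps)\}_{\alpha=1}^N$ still cover $S_E$. I then perform a merging step: replace each $B(x_\alpha, \eps)$ by $B(x_\alpha, 5\eps)$, and iteratively combine any two overlapping balls $B(y, r)$, $B(z, s)$ into their smallest enclosing ball, which has radius at most $r + s$ (since $\abs{y-z} \leq r + s$). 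The procedure terminates in finitely many steps and produces a pairwise disjoint family $(B_i)_{i=1}^p$ with radii $\rho_i \geq 5\eps \geq \eps$ that covers $S_E$; each $B_i$ contains at least one original center $x_\alpha \in S_E$, so $B_i \cap S_E \neq \emptyset$.

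The final energy bound then follows from the sum-of-radii rule: a merged ball $B_i$ absorbing $k_i$ of the original disjoint balls $B(x_\alpha, \eps)$ has radius $\rho_i \leq 5\eps\, k_i$, while by the local estimate and the disjointness of the absorbed balls,
\[
 E_\eps(u; B_i \cap \Omega) \geq \sum_{\alpha\colon B(x_\alpha, \eps) \subseteq B_i} E_\eps(u; B(x_\alpha, \eps) \cap \Omega) \geq k_i c_0 \geq \frac{c_0 \rho_i}{5\eps}.
\]
Setting $C_1 := c_0/5$ yields the claimed inequality. The main obstacle is the local estimate in the small-component case $\mathrm{diam}(\tilde S) < \eps$: the annulus bound from Lemma~\ref{lemma:annulus} at scale $\eps$ only gives $\abs{\sigma_{\tilde S}}_*\,\Lambda_\eps(\eps/\abs{\sigma_{\tilde S}}_*) \gtrsim \log(1/\abs{\sigma_{\tilde S}}_*) - C$, which can be non-positive. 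Positivity must be extracted by a finer argument balancing the potential term $\int (1-s)^2/\eps^k$ against the gradient term $\int\abs{\nabla s}^k$ on $B(x, \eps)$; a coarea analysis on the level sets of $s$, combined with the relative isoperimetric inequality (and exploiting $s(x) \leq 1/2$ at the center together with the non-triviality of the topological degree around the small component), should deliver the required uniform constant $c_0$.
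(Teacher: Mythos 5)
Your ball construction — cover $S_E$ by $\eps$-balls, extract a Vitali subfamily, merge overlapping expansions and track the sum-of-radii rule — is essentially the construction the paper uses: once the right per-component lower bound is in hand, the paper simply says ``the rest of the proof follows exactly as in Jerrard,'' and your merging mechanics implement that. The problem is the local estimate $E_\eps(u; B(x,\eps)\cap\Omega)\geq c_0$, which you correctly flag as ``the main obstacle'' in the small-diameter case and leave unresolved. The Modica--Mortola route you sketch does not close it: if $\mathrm{diam}(\tilde S)\ll\eps$, both the potential contribution $\eps^{-k}\int(1-s)^2$ and the gradient contribution $\int|\nabla s|^k$ concentrated in a tiny neighbourhood of $\tilde S$ can be made arbitrarily small — there is no scale-free positivity coming from the scalar field $s$ alone. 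The uniform constant must be extracted from the topological charge, not from the transition layer of $s$.

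That is precisely what the paper does, and it is the genuinely different (and essential) step. The paper proves the component-wise inequality
\[
 \int_{\tilde S}\abs{\nabla u}^k \gtrsim \abs{\hc(u,\,\partial\tilde S)}_*
\]
by combining two ingredients tied to the operator $\S$: first, $\abs{\hc(u,\partial\tilde S)}_* = \abs{\I(\S_y(u),\llbracket\tilde S\rrbracket)}_* \leq \M(\S_y(u)\mres\tilde S)$ for a.e.\ small $y$; second, the mass bound~\eqref{S:mass} (property~(P$_2$)), $\int_{B^*}\M(\S_y(u)\mres\tilde S)\,\d y \lesssim \int_{\tilde S}\abs{\nabla u}^k$, from which an averaging argument produces a single good $y$. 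This inequality is scale-free — it holds regardless of $\mathrm{diam}(\tilde S)$ — and, by the discreteness of the norm~\eqref{discrete_norm}, it yields $\int_{\tilde S}\abs{\nabla u}^k \geq c > 0$ for every essential component, from which your local estimate follows immediately (since $\tilde S\subseteq B(x,\eps)$ when $\mathrm{diam}(\tilde S)<\eps$). Without this input — i.e.\ without invoking the topological singular set and its mass estimate — the small-component case is not controlled, so the proof as proposed has a real gap.
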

\begin{proof}
 We claim that, if~$\tilde{S}$ is a connected component of~$S_E$
 such that~$\tilde{S}\csubset\Omega$, then 
 \begin{equation} \label{SE1}
  \int_{\tilde{S}} \abs{\nabla u}^k \gtrsim \abs{\hc(u, \, \partial \tilde{S})}_* \! .
 \end{equation}
 This inequality parallels \cite[Lemma~3.2]{Jerrard}; once \eqref{SE1}
 is established, the rest of the proof follows exactly as in~\cite{Jerrard}.
 The definition~\eqref{hc} of~$\hc$ and~\eqref{0chain}
 imply that 
 \begin{equation} \label{SE2}
  \abs{\hc(u, \, \partial \tilde{S})}_* = 
  \abs{\I(\S_y(u), \, \llbracket \tilde{S}\rrbracket)}_* 
  \leq \M(\S_y(u)\mres \tilde{S})
 \end{equation}
 for a.e.~$y\in\R^m$ such that~$|y|\leq\eta_0$. On the other hand, 
 \eqref{S:mass-app} gives that 
 \[
  \int_{\R^m} \M(\S_y(u)\mres \tilde{S})\, \d y \lesssim
  \int_{\tilde{S}} \abs{\nabla u}^k,
 \]
 so there exists (a set of positive measure of) $y$ such that $|y|\leq\eta_0$
 and $\M(\S_y(u)\mres \tilde{S}) \lesssim\int_{\tilde{S}}|\nabla u|^k$. Then, 
 \eqref{SE1} follows from~\eqref{SE2}.
\end{proof}

Lemma~\ref{lemma:SE} and the definition of~$\Lambda_\eps$
imply that
\begin{equation} \label{SE}
 E_\eps(u; \, B_{i}) \geq \abs{\hc(u, \, \partial B_i)}_*
 \Lambda_\eps \, \left(\frac{\rho_i}{\abs{\hc(u, \, \partial B_i)}_*}\right) 
 \qquad \textrm{for any } i.
\end{equation}

The last step in the proof of Proposition~\ref{prop:lowerbounds}
is the so-called ``ball construction'' \cite[Proposition~4.1]{Jerrard}. 
If~$u$ satisfies~\eqref{bd-close-N} for some~$r>0$ then,
by choosing~$\delta_0 = \delta_0(\NN) < \theta_0$ sufficiently 
small, we obtain as a consequence
\begin{equation} \label{bd-s}
 |s(x)|\geq \frac{1}{2} \qquad \textrm{for any }
 x\in\Omega \textrm{ such that } \dist(x, \, \partial\Omega)<r.
\end{equation}
Moreover, we can assume without loss of generality that~$u$ satisfies
\begin{equation} \label{upper_bd}
 E_\eps(u) \leq 
 \abs{\hc(u, \, \partial\Omega)}_*\abs{\log\eps} + C
\end{equation}
for some~$\eps$-independent constant~$C$, for otherwise 
Proposition~\ref{prop:lowerbounds} holds trivially.

\begin{lemma} \label{lemma:ballconstruction}
 There exists a constant~$\eps_0>0$ such that the following statement holds.
 Let~$u\in W^{1,k}(\Omega, \, \R^m)$ be a smooth function
 that satisfies~\eqref{bd-s} for some~$r>0$ and~\eqref{upper_bd}.
 For any~$\tau > 0$ and any~$\eps\in (0, \, 1/2)$ such that
 \begin{equation} \label{sigma-eps}
  4\tau\abs{\hc(u, \, \partial\Omega)}_* < r, \qquad 
  \eps\abs{\log\eps}\abs{\hc(u, \, \partial\Omega)}_*\leq \eps_0,
 \end{equation}
 there exists a finite collection of closed ball~$(\tilde{B}_i)_{i=1}^q$,
 of radii~$r_i$, that satisfy the following properties:
 \begin{enumerate}[label=(\roman*)]
  \item the interiors of the balls are pairwise disjoint;
  \item $S_E\csubset \cup_{i=1}^q \tilde{B}_i$ and~$\tilde{B}_i\cap S_E\neq\emptyset$ for any~$i$;
  \item letting~$s:= \min_{i} r_i/|\hc(u, \, \partial \tilde{B}_i)|_*$, we have
  \[
   E_\eps(u, \, \tilde{B}_i\cap\Omega) \geq \frac{r_i}{s} \Lambda_\eps(s);
  \]
  \item $\tau/2 \leq s \leq\tau$;
  \item $\abs{\hc(u, \, \partial\Omega)}_* = 
  \sum_{i=1}^q |\hc(u, \, \partial \tilde{B}_i)|_*$.
 \end{enumerate}
\end{lemma}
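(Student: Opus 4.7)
The plan is to adapt the classical vortex ball construction of Sandier~\cite{Sandier} and Jerrard~\cite{Jerrard} to the present setting of flat chains with coefficients in $(\GN,|\cdot|_*)$, replacing the topological degree by the approximate homotopy class~$\hc$ defined by~\eqref{hc}. The additivity of~$\hc$ under disjoint unions, together with the triangle inequality for~$|\cdot|_*$, is what makes the construction go through; this relies crucially on~$\GN$ being Abelian, as guaranteed by~\eqref{hp:N}.

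I would start from the initial family of pairwise disjoint closed balls $(B_i^{(0)})_{i=1}^p$ produced by Lemma~\ref{lemma:SE}: they cover~$S_E$, have radii $\rho_i^{(0)}\geq\eps$, and satisfy the seed bound $E_\eps(u,\,B_i^{(0)}\cap\Omega)\geq C_1\rho_i^{(0)}/\eps$. I would then evolve this family through a one-parameter collection $\mathcal{B}(s)$ of balls, indexed by a growth parameter~$s$, in which each ball~$B$ has radius~$r(B)\geq s\,|\hc(u,\,\partial B)|_*$. As $s$ increases, balls grow continuously; whenever two balls~$B'$, $B''$ come into contact, they are merged into a single ball~$B$ containing both, with $\hc(u,\,\partial B) = \hc(u,\,\partial B') + \hc(u,\,\partial B'')$ (this is where Abelianness is used) and radius equal to the sum of the premerge ones, which preserves the inequality $r(B)\geq s\,|\hc(u,\,\partial B)|_*$ by the triangle inequality. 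The initial value $s_0$ is chosen so that $\rho_i^{(0)} = s_0\,|\hc(u,\,\partial B_i^{(0)})|_*$ for the ball minimising the ratio $\rho_i^{(0)}/|\hc(u,\,\partial B_i^{(0)})|_*$; a direct comparison of Lemma~\ref{lemma:SE} with the upper bound~\eqref{upper_bd} and the smallness assumption~\eqref{sigma-eps} yields $s_0\geq\eps$, which is the lower bound required for Lemma~\ref{lemma:annulus} to apply on every growth annulus. The process is stopped when the minimum of $r(B)/|\hc(u,\,\partial B)|_*$ over the current collection first reaches~$\tau$.

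Properties~(i), (ii) and~(v) are preserved along the construction by design. For~(iii), I would apply Lemma~\ref{lemma:annulus} on each annulus swept by a single ball between consecutive merge events (the approximate homotopy class is constant on such annuli) and telescope the resulting lower bounds across merges, using the triangle inequality for~$|\cdot|_*$ to control the effect of each merge and the seed estimate $C_1\rho_i^{(0)}/\eps$ at the initial stage. The concavity-type bound on~$\Lambda_\eps$ coming from~\eqref{lb-log} is then what allows the telescoped estimate to be rewritten in the form stated in~(iii), with $s = \min_i r_i/|\hc(u,\,\partial \tilde{B}_i)|_*$. The hypothesis $4\tau|\hc(u,\,\partial\Omega)|_*<r$ in~\eqref{sigma-eps} guarantees, together with property~(v), that every final ball has radius at most $\tau|\hc(u,\,\partial\Omega)|_*<r/4$, and hence, by~\eqref{bd-s}, is compactly contained in~$\Omega$.

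The main obstacle I expect is the lower bound $s\geq\tau/2$ in~(iv). A merge event happening just as the growth parameter approaches~$\tau$ can abruptly change $r/|\hc|_*$ for the newly created ball, and one needs to show that the \emph{minimum} of this ratio over the final collection cannot drop by more than a factor of~$2$ at such an event. Since at a merge the new radius equals the sum $r_1+r_2$ of the premerge ones while $|\hc|_*$ is at most~$|\sigma_1|_*+|\sigma_2|_*$, the ratio $r/|\hc|_*$ can only grow at the merging ball; the delicate point is to control how this interacts with balls \emph{other} than the one being merged and to ensure the stopping rule produces a configuration in the window $[\tau/2,\tau]$. Coordinating this discrete bookkeeping with the continuous energy accumulation in~(iii), as in Jerrard's original proof, appears to be where the proof is most technical.
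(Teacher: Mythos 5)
Your plan is exactly the one the paper takes: the paper's proof of Lemma~\ref{lemma:ballconstruction} consists only of the observation that it follows by repeating the arguments of Jerrard's Proposition~4.1 (see also the remark in~\cite{ABO2}), with Lemmas~\ref{lemma:annulus}, \ref{lemma:SE} and the estimate~\eqref{SE} playing the role of the degree-based inequalities. Your observation that the ratio $r/|\hc(u,\,\partial B)|_*$ can only increase at a merge --- since the new radius is the sum of the old ones while $|\cdot|_*$ is subadditive --- is precisely what makes Jerrard's bookkeeping go through in the group-valued setting, so the point you flag about~(iv) is indeed where the technical work lives, but it is not a gap in strategy and is handled exactly as in~\cite{Jerrard}.
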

Lemma~\ref{lemma:ballconstruction} follows by repeating the 
arguments of~\cite[Proposition~4.1]{Jerrard} (see 
also~\cite[Remark at p.~22]{ABO2}), and using 
Lemmas~\ref{lemma:annulus}, \ref{lemma:SE} and~\eqref{SE}. 

\begin{proof}[Proof of Proposition~\ref{prop:lowerbounds}]
 We assume that~$u$ is smooth, 
 satisfies~\eqref{bd-s} (as a consequence of our 
 assumption~\eqref{bd-close-N}) and~\eqref{upper_bd}. We apply 
 Lemma~\ref{lemma:ballconstruction} and use the fact that, by definition of~$s$,
 $|\hc(u, \, \partial \tilde{B}_i)|_*\leq r_i/s$ for any~$i$:
 \[
  \begin{split}
  E_\eps(u, \, \Omega) &\stackrel{(i)-(iii)}{\geq} 
  \sum_{i=i}^p \frac{r_i}{s} \Lambda_\eps(s) \geq 
  \sum_{i=i}^p \abs{\hc(u, \, \partial \tilde{B}_i)}_* \Lambda_\eps(s)
  \stackrel{(v)}{=} \abs{\hc(u, \, \partial\Omega)}_* \Lambda_\eps(s)\\
  &\stackrel{(iv)}{\geq} \abs{\hc(u, \, \partial\Omega)}_* 
  \Lambda_\eps\left(\frac{\tau}{2}\right) 
  \stackrel{\eqref{lb-log}}{\geq} 
  \abs{\hc(u, \, \partial\Omega)}_* \log\frac{\tau}{2\eps} - C.
  \end{split}
 \]
 The constant~$C$ here only depends on~$k$, $\alpha$, $\beta$.
 Now, we choose $\tau := r/(8|\hc(u, \, \partial\Omega)|_*)$
 (which is admissible in view of~\eqref{sigma-eps}).
 Taking~\eqref{discrete_norm} into account, we obtain the desired estimate
 in case~$u$ is smooth. Now the proposition follows
 by a density argument.
\end{proof}

\section{Technical results about flat chains}

Throughout this appendix, we consider chains with coefficients in
a normed Abelian group~$(\G, \, |\cdot|)$ such that
\begin{equation} \label{G_discrete}
 \inf_{g\in\G\setminus\{0\}} \abs{g} > 0.
\end{equation}
This assumption is satisfied by~$(\GN, \, |\cdot|_*)$,
due to Proposition~\ref{prop:group_norm}.

\subsection{Approximation results for flat chains}
\label{sect:approximation}

We give the proof of the approximation results,
Proposition~\ref{prop:approx-noappendix} 
and~\ref{prop:approx_mult_noappendix}, we have used 
in Section~\ref{sect:reduction}.
For convenience, we recall the statements here.
Let~$\Sg\subseteq\G$ be a set of generators for~$\G$. We assume that,
for any~$g\in\G$, there exist $g_1, \, \ldots, \, g_p\in\Sg$
such that
\begin{equation} \label{hp:mathfrakS}
 g = \sum_{i=1}^p g_i \qquad
 \textrm{ and } \qquad \abs{g} = \sum_{i=1}^p \abs{g_i}.
\end{equation}
The set defined by~\eqref{generators} satisfies this
assumption, by Proposition~\ref{prop:group_norm}.

\begin{prop} \label{prop:approx_mult}
 Let $S\in\M_{n}(\R^{n+k}; \, \G)$ be a polyhedral chain.
 Let~$W_{\Sg}\subseteq\R^{n+k}$ be an open set, with polyhedral boundary,
 such that $\partial W_{\Sg}$ is transverse to~$\spt S$
 (i.e., there exist triangulations of~$\partial W_{\Sg}$
 and~$\spt S$ such that any simplex of the triangulation of~$\partial W_{\Sg}$
 is transverse to any simplex of the triangulation of~$\spt S$).
 Then, there exists a sequence of polyhedral 
 $(n+1)$-chains~$R_j$, supported in $\overline{W_{\Sg}}$,
 such that the following hold:
 \begin{enumerate}[label=(\roman*)]
  \item $S+\partial R_j \to S$, with respect 
  to the~$\F$-norm, as~$j\to+\infty$;
  \item $\M(S+\partial R_j)\to\M(S)$ as~$j\to+\infty$;
  \item for any~$j$, $(S + \partial R_j)\mres\partial W_{\Sg} = 0$;
  \item for any~$j$, the chain $(S + \partial R_j)\mres W_{\Sg}$ takes multiplicities 
  in the set~$\Sg\subseteq\GN$ defined by~\eqref{generators-intro}.
 \end{enumerate}
\end{prop}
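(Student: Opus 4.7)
The plan is to build $R_j$ by, for each $n$-simplex $K$ of $S\mres W_{\Sg}$, replacing the single polyhedron with a cluster of slightly translated copies, each carrying one generator from $\Sg$ arising from a distinguished decomposition of the multiplicity~$\sigma_K$. The ``gap'' between the original~$K$ and its translates will be filled by a thin $(n+1)$-dimensional prism, whose boundary, added to~$S$, implements the replacement.

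First I would triangulate $\spt S$ into $n$-simplices $\{K\}$ with constant multiplicities, refining so that (after possibly incorporating a triangulation of $\partial W_{\Sg}$) each simplex is either contained in $\overline{W_{\Sg}}$ or disjoint from $W_{\Sg}$; write $S = S_1 + S_2$ with $S_1 := S\mres W_{\Sg} = \sum_K \sigma_K \llbracket K\rrbracket$. Using the hypothesis~\eqref{hp:mathfrakS}, for each such~$K$ I fix a decomposition $\sigma_K = g_{K,1} + \cdots + g_{K,p_K}$ with $g_{K,i}\in\Sg$ and $|\sigma_K| = \sum_i |g_{K,i}|$. Next, pick a unit vector $\nu_K$ normal to~$K$ and parameters $0 < t^{(j)}_{K,1} < \cdots < t^{(j)}_{K,p_K} < \eta_j$ with $\eta_j\to 0$, set $K_i^{(j)} := K + t^{(j)}_{K,i}\nu_K$, and let $P^{(j)}_{K,i}$ be the $(n+1)$-prism $K\times[0,t^{(j)}_{K,i}]$ oriented so that $\partial\llbracket P^{(j)}_{K,i}\rrbracket = \llbracket K_i^{(j)}\rrbracket - \llbracket K\rrbracket + L^{(j)}_{K,i}$, where $L^{(j)}_{K,i}$ is the lateral $n$-chain carried by $\partial K\times[0,t^{(j)}_{K,i}]$. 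Define
\[
 R_j := \sum_K \sum_{i=1}^{p_K} g_{K,i}\,\llbracket P^{(j)}_{K,i}\rrbracket.
\]

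Using $\sum_i g_{K,i} = \sigma_K$, a direct computation gives
\[
 S + \partial R_j = S_2 + \sum_{K,i} g_{K,i}\,\llbracket K_i^{(j)}\rrbracket + \mathcal{L}_j,
\]
where $\mathcal{L}_j := \sum_{K,i} g_{K,i} L^{(j)}_{K,i}$ has multiplicities $\pm g_{K,i}$. Property~(ii) then follows from $\sum_i|g_{K,i}|\,\H^n(K_i^{(j)}) = |\sigma_K|\H^n(K)$ together with $\M(\mathcal{L}_j)\lesssim \eta_j\,\sum_K|\sigma_K|\H^{n-1}(\partial K)\to 0$; (i) follows from $\F(S+\partial R_j - S) \le \M(R_j) + \M(\mathcal{L}_j) = O(\eta_j)$; and (iv) is immediate since all multiplicities lie in $\Sg\cup(-\Sg)$, and $\Sg$ is closed under negation by $E_{\min}(-\sigma) = E_{\min}(\sigma)$ (Proposition~\ref{prop:group_norm} and~\eqref{inversion}).

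The main technical obstacle will be to ensure the boundary condition~(iii), $(S+\partial R_j)\mres\partial W_{\Sg} = 0$, together with $\spt R_j\subseteq\overline{W_{\Sg}}$. The transversality of $\partial W_{\Sg}$ to $\spt S$ yields $S\mres\partial W_{\Sg}=0$ and forces every $n$-simplex $K$ of $S_1$ to meet $\partial W_{\Sg}$ in a set of dimension at most $n-1$. After further refining the triangulation near $\partial W_{\Sg}$, I would choose each $\nu_K$ in the (nonempty) open half of the normal plane to~$K$ whose vectors point into $W_{\Sg}$ along $K\cap\partial W_{\Sg}$; such a half exists precisely because $K$ is transverse to $\partial W_{\Sg}$. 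For generic choices of $\nu_K$ and $t^{(j)}_{K,i}$, the translates $K_i^{(j)}$ and the lateral faces intersect $\partial W_{\Sg}$ transversally, hence in sets of dimension at most $n-1$, so their restriction as $n$-chains to $\partial W_{\Sg}$ vanishes; and for $\eta_j$ small enough the prisms~$P^{(j)}_{K,i}$ stay in $\overline{W_{\Sg}}$. This yields all four properties and completes the construction.
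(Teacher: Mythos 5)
Your construction has a genuine gap at property~(iv), and the culprit is the single normal direction $\nu_K$ shared by all the prisms $P^{(j)}_{K,1},\dots,P^{(j)}_{K,p_K}$ attached to one simplex~$K$. Because $t^{(j)}_{K,1}<\dots<t^{(j)}_{K,p_K}$, the lateral faces $L^{(j)}_{K,i}$ (carried by $\partial K\times[0,t^{(j)}_{K,i}]\,\nu_K$) are \emph{nested}, not disjoint. Hence on the innermost strip $\partial K\times[0,t^{(j)}_{K,1}]\,\nu_K$ the chain $\mathcal{L}_j$ has multiplicity $\pm(g_{K,1}+\dots+g_{K,p_K})=\pm\sigma_K$, and on the intermediate strip $\partial K\times[t^{(j)}_{K,i-1},t^{(j)}_{K,i}]\,\nu_K$ it has multiplicity $\pm(g_{K,i}+\dots+g_{K,p_K})$. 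These partial sums are arbitrary elements of~$\G$ and need not lie in~$\Sg$ (already with $\G=\Z$, $\Sg=\{0,\pm1\}$, $\sigma_K=2=1+1$, the inner strip carries multiplicity~$2\notin\Sg$). So ``(iv) is immediate since all multiplicities lie in $\Sg\cup(-\Sg)$'' is false as stated.

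The paper's proof sidesteps this by two complementary devices. First, it uses \emph{distinct} normal directions $y^{K,1},\dots,y^{K,p}$ for the $p$ generators arising from $\sigma_K$, so that the translates (and any lateral contributions) attached to the same $K$ never overlap in an $n$-dimensional set. Second, and more decisively, it replaces your rigid translations by the tapered maps $h^{K,i}(t,x'):=x'+t\min\{\delta,\gamma\dist(x',\partial K)\}\,y^{K,i}$, which collapse $\partial K$ pointwise: the lateral boundary $h^{K,i}_*\bigl(\llbracket[0,1/j]\rrbracket\times\partial\llbracket K\rrbracket\bigr)$ is carried by $\partial K$, an $(n-1)$-dimensional set, and therefore vanishes as an $n$-chain. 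Thus $\mathcal{L}_j$ simply does not appear, and one gets $\partial R_j=\sum_{K,i}\sigma_{K,i}\,h^{K,i}(1/j,\cdot)_*\llbracket K\rrbracket - S\mres W_{\Sg}$ directly. This also automatically confines everything to the diamonds $U(K,\delta,\gamma)\subseteq W_{\Sg}$, dispensing with the genericity argument you needed for the support and boundary conditions. If you wish to salvage your rigid-translation approach you would at minimum need distinct directions $\nu_{K,i}$ per generator, plus a separate argument that none of the lateral prisms $\partial K\times[0,t^{(j)}_{K,i}]\,\nu_{K,i}$ ever intersects any other lateral prism, any translate $K_{i'}^{(j')}$, or $\spt S$ in dimension~$n$; the tapering makes all of this moot.
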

Proposition~\ref{prop:approx_mult} implies Proposition~\ref{prop:approx_mult_noappendix}.
\begin{proof}
 Since~$\partial W_{\Sg}$ is transverse to~$\spt S$,
 the intersection~$\spt S \cap\partial W_{\Sg}$ has dimension~$n-1$ 
 at most and hence, $S\mres\partial W_{\Sg} = 0$.
 By triangulating~$S$, we can write~$S\mres W_{\Sg}$ as a finite sum
 \[
  S\mres W_{\Sg} = \sum_{K} \sigma_K\llbracket K\rrbracket,
 \]
 where~$\sigma_K\in\GN$ and the~$K$'s are closed $n$-simplices,
 whose interiors are contained in~$W_{\Sg}$ and pairwise disjoint.
 We fix positive parameters~$\delta$, $\gamma$ and,
 for any~$n$-simplex~$K$ of~$S\mres W_{\Sg}$, we consider the
 set~$U(K, \, \delta, \, \gamma)$ defined by~\eqref{U-diamond}.
 We choose~$\delta$, $\gamma$ small enough, so that
 the interiors of the~$U(K, \, \delta, \, \gamma)$'s are
 pairwise disjoint and contained in~$W_{\Sg}$.
 By assumption~\eqref{hp:mathfrakS}, we can write 
 $\sigma_K = \sum_{i=1}^p \sigma_{K,i}$ where $\sigma_{K,i}\in\Sg$ and
 \begin{equation} \label{mult1}
  \abs{\sigma_K} = \sum_{i=1}^p \abs{\sigma_{K,i}} .
 \end{equation}
 Take distinct vectors~$y^{K,1}, \, \ldots, \, y^{K,p}\in\R^{n+k}$ that are orthogonal 
 to~$K$ and satisfy $|y^{K,1}| = \ldots = |y^{K,p}| = 1$.
 For each~$i\in\{1, \, \ldots, \, p\}$, 
 we define $h^{K,i}\colon [0, \, 1]\times K\to \R^{n+k}$ by
 \[
  h^{K,i}(t, \, x^\prime) := x^\prime + t\min\left\{\delta , \,
  \gamma\dist(x^\prime, \, \partial K)\right\} y^{K,i}
  \qquad \textrm{for any } (t, \, x^\prime)\in [0, \, 1]\times K.
 \]
 For any integer~$j\geq 1$, we define
 \[
  R_j := \sum_K \sum_{i=1}^p \sigma_{K,i} \ h^{K,i}_{*}( 
  \llbracket[0, \, 1/j]\rrbracket\times \llbracket K\rrbracket)
 \]
 (see Figure~\ref{fig:ratp}).
 \begin{figure}[t]
	\centering
    \includegraphics[height=.23\textheight]{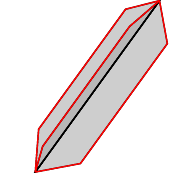}
    
    \bigskip
	\caption{The chain~$R_j$, in case~$n=1$, $k=2$ and~$S$
	consists of a segment only, $S = \sigma_K\llbracket K \rrbracket$.
	The chain~$S$ is in black, $R_j$ is in gray, and
	$S + \partial R_j$ is in red.}
	\label{fig:ratp}
\end{figure}
 The chain~$R_j$ is polyhedral, because 
 the~$h^{K,i}$'s are piecewise affine,
 and supported in~$\overline{W_{\Sg}}$. 
 The support of~$R_j$ may intersect~$\partial W_{\Sg}$ 
 only along its $(n-1)$-skeleton, so
 $(\partial R_j)\mres\partial W_{\Sg} = 0$.
 We compute the mass of~$R_j$.
 Since the maps~$h^{K,i}$ are Lipschitz, and their Lipschitz constant
 only depends on~$\gamma$, which is fixed,
 the area formula implies
 \begin{equation*} 
  \M(R_j) \lesssim \sum_{K} \sum_{i=1}^p \abs{\sigma_{K,i}}_*
  \H^{n+1}([0, \, 1/j]\times K)
  \stackrel{\eqref{mult1}}{\le} j^{-1} \, \M(S\mres W_{\Sg})\to 0
  \qquad \textrm{as } j\to+\infty.
 \end{equation*}
 Thus, (i) follows. Now, we compute the boundary of~$R_j$.
 For each simplex~$K$ and each~$i$, we have
 $h^{K,i}(t, \, x^\prime) = x^\prime$ if~$x^\prime\in\partial K$
 and~$h^{K,i}(0, \, x^\prime) = x^\prime$ for any~$x^\prime\in K$.
 As a consequence,
 \[
  \begin{split}
   \partial h^{K,i}_{*}\left(\llbracket[0, \, 1/j]\rrbracket\times \llbracket K\rrbracket\right) 
   &= h^{K,i}_{*}\left(\llbracket1/j\rrbracket\times \llbracket K\rrbracket
   - \llbracket 0\rrbracket\times \llbracket K\rrbracket\right) - 
   h^{K,i}_{*}\left(\llbracket[0, \, 1/j]\rrbracket\times \partial\llbracket K\rrbracket\right) \\
   &= h^{K,i}(j^{-1}, \, \cdot)_{*}\llbracket K\rrbracket - \llbracket K\rrbracket.
  \end{split}
 \]
 By multiplying this identity by~$\sigma_{K,i}$,
 and taking the sum over~$i$, $K$,
 we obtain
 \begin{equation} \label{partialRj}
  \partial R_j = \sum_K \sum_{i=1}^p \sigma_{K,i} \,  
  h^{K,i}(j^{-1}, \, \cdot)_{*}\llbracket K\rrbracket - S\mres W_{\Sg}.
 \end{equation}
 In particular, $S\mres W_{\Sg} + \partial R_j = (S + \partial R_j)\mres W_{\Sg}$
 takes multiplicities in~$\Sg$.
 Finally, by applying the area formula to~\eqref{partialRj},
 we deduce
 \[
  \M(S\mres W_{\Sg} + \partial R_j) \to 
  \sum_K \sum_{i=1}^p \abs{\sigma_{K,i}}_* \, \H^n(K) 
  \stackrel{\eqref{mult1}}{=} \M(S\mres W_{\Sg})
  \qquad \textrm{as } j\to+\infty,
 \]
 and~(ii) follows.
\end{proof}

Let~$\Omega\subseteq\R^{n+k}$ be a domain
and let~$S\in\M_n(\overline{\Omega}; \, \G)$. 
Recall that~$S$ is called
locally polyhedral if, for any compact set 
$K\subseteq\Omega$, there exists a polyhedral 
chain~$T$ such that $(S - T)\mres K = 0$. 
We write~$S_0\sim_{\overline{\Omega}} S_1$ if there exists 
$R\in\M_{n+1}(\overline{\Omega}; \, \G)$ such that $S_1 = S_0 + \partial R$.

\begin{prop} \label{prop:approx}
 Let~$\Omega\subseteq\R^{n+k}$ be a bounded, Lipschitz domain.
 Let~$S_0\in\M_n(\overline{\Omega}; \, \G)$ be a locally polyhedral 
 chain such that~$S_0\mres\partial\Omega = 0$. 
 Let~$S\in\M_n(\overline{\Omega}; \, \G)$ be such that $S \sim_{\overline{\Omega}} S_0$.
 Then, there exists a sequence of polyhedral $(n+1)$-chains~$R_j$,
 with compact support in~$\Omega$, such that
 $S_0 + \partial R_j\to S$ (with respect to the $\F$-norm)
 and $\M(S_0 + \partial R_j)\to\M(S)$ as~$j\to+\infty$.
\end{prop}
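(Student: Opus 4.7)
The plan is to push $R$ strictly inside $\Omega$ by a family of near-identity bi-Lipschitz maps, then apply a standard polyhedral approximation to the pushed-in chain, and finally exploit the hypothesis $S_0\mres\partial\Omega = 0$ to upgrade flat convergence to mass convergence.

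First I would construct, using the Lipschitz assumption on $\Omega$, a family of bi-Lipschitz homeomorphisms $\phi_t\colon\R^{n+k}\to\R^{n+k}$ (for $0 < t < t_0$) such that: (a) $\phi_t = \mathrm{id}$ on $\Omega_t := \{x\in\Omega\colon\dist(x,\partial\Omega)>t\}$; (b) $\phi_t(\overline{\Omega})\csubset\Omega$; (c) the Lipschitz constants $L_t$ of $\phi_t$ and $\phi_t^{-1}$ tend to $1$ as $t\to 0$. Such a family is obtained by flowing for a short time along a Lipschitz vector field pointing transversally into $\Omega$ near $\partial\Omega$, suitably cut off. For each $t$, the chain $\phi_{t*}R$ has finite mass, finite-mass boundary $\phi_{t*}(S - S_0)$ (since $\partial R = S - S_0$ has finite mass), and is compactly supported in $\Omega$. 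By polyhedral approximation for flat chains with coefficients in a normed Abelian group satisfying~\eqref{G_discrete} (see~\cite{Fleming}), I choose polyhedral $(n+1)$-chains $P_{t,\delta}$, compactly supported in $\Omega$, with $\M(P_{t,\delta} - \phi_{t*}R) + \M(\partial P_{t,\delta} - \phi_{t*}\partial R) < \delta$. Along a diagonal sequence $t_j \to 0$, $\delta_j \to 0$, I set $R_j := P_{t_j,\delta_j}$, which are polyhedral with compact support in $\Omega$.

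For flat convergence I use the identity
\[
 (S_0 + \partial R_j) - S = (\partial P_{t_j,\delta_j} - \phi_{t_j*}\partial R) + \partial(\phi_{t_j*}R - R),
\]
which gives $\F(S_0 + \partial R_j - S) \leq \delta_j + \M(\phi_{t_j*}R - R)$. Since $\phi_{t_j} = \mathrm{id}$ on $\Omega_{t_j}$, one has $\M(\phi_{t_j*}R - R) \leq (1 + L_{t_j}^{n+1})\, \M(R\mres(\overline{\Omega}\setminus\Omega_{t_j}))$, which vanishes as $t_j\to 0$ because $R$ has finite mass. For the mass bound, the decomposition
\[
 S_0 + \partial R_j = \phi_{t_j*}S + (S_0 - \phi_{t_j*}S_0) + (\partial P_{t_j,\delta_j} - \phi_{t_j*}\partial R)
\]
yields
\[
 \M(S_0 + \partial R_j) \leq L_{t_j}^n\,\M(S) + (1 + L_{t_j}^n)\,\M(S_0\mres(\overline{\Omega}\setminus\Omega_{t_j})) + \delta_j,
\]
where I have used that $S_0 - \phi_{t_j*}S_0$ is supported in the collar $\overline{\Omega}\setminus\Omega_{t_j}$ (since $\phi_{t_j}$ is the identity on $\Omega_{t_j}$). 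The hypothesis $S_0\mres\partial\Omega = 0$ together with $\M(S_0) < +\infty$ implies $\M(S_0\mres(\overline{\Omega}\setminus\Omega_{t_j})) = \M(S_0\mres(\Omega\setminus\Omega_{t_j}))\to 0$, whence $\limsup_j\M(S_0+\partial R_j) \leq \M(S)$. Combined with the lower semi-continuity of $\M$ along the flat convergence established above, $\M(S_0+\partial R_j)\to\M(S)$.

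The hard part is precisely this mass convergence, and the hypothesis $S_0\mres\partial\Omega = 0$ is essential for it: without that assumption, $S_0$ could concentrate mass on $\partial\Omega$, and then $S_0 - \phi_{t*}S_0$ would retain persistent mass in every collar of $\partial\Omega$, defeating the push-in argument. The construction of the maps $\phi_t$ for a general Lipschitz domain and the polyhedral approximation theorem with simultaneous control of boundary mass in a discrete-norm coefficient group are standard.
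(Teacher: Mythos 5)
Your push-in construction is essentially the same idea as the paper's Lemma~\ref{lemma:compactspt} (which the paper borrows from~\cite[Proposition~8.6]{ABO2}): it is correct, the near-identity bi-Lipschitz collar maps with $L_t\to 1$ exist for bounded Lipschitz domains, and your mass estimates — in particular the use of $\M(S_0\mres(\overline\Omega\setminus\Omega_{t_j}))\to 0$, which is exactly where $S_0\mres\partial\Omega=0$ enters — are right. The gap is in the polyhedral approximation step. You require polyhedral $(n+1)$-chains $P_{t,\delta}$ with \emph{both} $\M(P_{t,\delta}-\phi_{t*}R)<\delta$ \emph{and} $\M(\partial P_{t,\delta}-\phi_{t*}\partial R)<\delta$, and you cite~\cite{Fleming}. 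Fleming's density and approximation theorems give $\F$-approximation together with mass \emph{bounds} $\M(P)\leq\M(T)+\delta$, $\M(\partial P)\leq\M(\partial T)+\delta$, and mass-approximation when the boundary is \emph{already polyhedral and fixed}; they do not give simultaneous mass-approximation of a chain and its (non-polyhedral) boundary. What you are invoking is a strong approximation theorem of Federer--Almgren type (cf.\ Federer 4.2.22) transplanted to flat chains over a discrete-norm group, and that is not a standard citation — it would have to be proved, e.g.\ via the splitting $\GN\simeq\Z^p\oplus T$ and Federer's theorem coordinate by coordinate. Without it, the term $\M(\partial P_{t_j,\delta_j}-\phi_{t_j*}\partial R)$ in your mass decomposition is not under control, and the upper bound $\limsup_j\M(S_0+\partial R_j)\leq\M(S)$ does not follow.

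Notice also that you never use the hypothesis that $S_0$ is \emph{locally polyhedral}. That hypothesis is precisely what lets the paper avoid strong approximation: after pushing $R$ to compact support (Lemma~\ref{lemma:compactspt}), one picks a polyhedral open set $U$ with $\spt R\csubset U\csubset\Omega$; then $S$ and $S_0$ agree near $\partial U$, so $\partial(S\mres U)=\partial(S_0\mres U)$ is \emph{polyhedral}, and one may invoke Fleming's Theorem~7.7 to mass-approximate $S\mres U$ by polyhedral $T_j$ with the \emph{exact} boundary $\partial T_j=\partial(S\mres U)$ — no boundary mass error at all. The residual flat error is then absorbed by the deformation theorem (Lemma~\ref{lemma:def}). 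To repair your proof you would either need to supply the strong approximation theorem you are using, or, more in keeping with the statement of the proposition, replace the blanket approximation of $\phi_{t*}R$ by a localized approximation of $S$ inside a polyhedral $U$ that exploits the local polyhedrality of $S_0$ as above.
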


Proposition~\ref{prop:approx-noappendix} follows from
Proposition~\ref{prop:approx}, with the help
of~\eqref{S:cobord} and Lemma~\ref{lemma:u*}.
We split the proof of Proposition~\ref{prop:approx} into 
several lemmas. The first one is a straightforward consequence
of the deformation theorem for flat chains;
we provide a proof for completeness.

\begin{lemma} \label{lemma:def}
 Let~$q\in\{0, \, \ldots, \, n+k-1\}$,
 $T\in\M_q(\R^{n+k}; \, \G)$ anq~$\eta > 0$ be given.
 Suppose that~$T$ is compactly supported and~$\partial T$ is polyhedral.
 Then, there exist a polyhedral $q$-chain~$P$ 
 and a finite-mass chain~$C\in\M_{q+1}(\R^{n+k}; \, \G)$,
 supported in the $\eta$-neighbourhood of~$\spt T$, that satisfy
 \begin{gather*}
  T = P + \partial C, \\
  \M(P) \lesssim \M(T) + \eta \, \M(\partial T), 
   \qquad \M(C) \lesssim \eta \, \M(T).
 \end{gather*}
\end{lemma}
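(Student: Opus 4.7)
The lemma is a version of the classical Federer--Fleming deformation theorem, valid for flat chains with coefficients in a normed Abelian group satisfying~\eqref{G_discrete}; this extension is due to Fleming~\cite{Fleming} and White~\cite{White-Rectifiability}. The statement differs slightly from the usual form, in which one obtains a decomposition $T = P + \partial Q + S$ with a residual term $S$ arising from $\partial T$. My plan is to absorb this residual term into the polyhedral part, exploiting the hypothesis that $\partial T$ is already polyhedral.

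I would first fix a cubical grid $\GG$ of size $h = c\eta$, where $c = c(n+k) > 0$ is a small dimensional constant, and with origin chosen so that $\spt(\partial T)$ is disjoint from the dual $(n+k-q-1)$-skeleton $\Sigma$ of~$\GG$. Such a choice of origin is generic and can be arranged by an averaging argument analogous to Lemma~\ref{lemma:grid}. Let $\psi\colon\R^{n+k}\setminus\Sigma\to R_q$ be the Lipschitz deformation retraction onto the $q$-skeleton~$R_q$ of $\GG$, constructed by iterated radial projection from the barycenters of cells of dimension~$>q$, as in~\cite[Lemma~3.8]{ABO2}; the map~$\psi$ is piecewise affine on a suitable subdivision of each cell and satisfies $|\nabla\psi(x)|\lesssim h\,\dist(x,\Sigma)^{-1}$. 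Setting $F(t,x) := (1-t)x + t\,\psi(x)$ and applying the homotopy formula (\cite[Eq.~(6.3), p.~172]{Fleming}), one obtains
\[
 T \;=\; \psi_{*} T \;+\; \partial\bigl(-F_{*}(\llbracket 0,1\rrbracket\times T)\bigr) \;-\; F_{*}(\llbracket 0,1\rrbracket\times\partial T).
\]
I would define $P_0 := \psi_{*} T$, $\ S := -F_{*}(\llbracket 0,1\rrbracket\times\partial T)$, and $C := -F_{*}(\llbracket 0,1\rrbracket\times T)$, so that $T = P_0 + S + \partial C$.

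Then $P_0$ is polyhedral, being supported on $R_q$ with constant multiplicities on each $q$-cell (given by the intersection number of $T$ with the dual cell), and the standard Federer--Fleming mass estimates yield $\M(P_0)\lesssim\M(T)$ and $\M(C)\lesssim h\,\M(T)\lesssim\eta\,\M(T)$. The key observation is that $S$ is polyhedral too: since $\partial T$ is polyhedral and its support avoids $\Sigma$, the product chain $\llbracket 0,1\rrbracket\times\partial T$ lies in a region where $F$ is piecewise affine (after further subdivision if needed), so $F_{*}(\llbracket 0,1\rrbracket\times\partial T)$ is a polyhedral $q$-chain with $\M(S)\lesssim h\,\M(\partial T)\lesssim\eta\,\M(\partial T)$. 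Setting $P := P_0 + S$ produces a polyhedral $q$-chain with $T = P + \partial C$ and $\M(P)\leq\M(P_0)+\M(S)\lesssim\M(T)+\eta\,\M(\partial T)$, as required. For the support condition, I would use that $\psi$ maps each cell of $\GG$ into itself, so the supports of $P$ and $C$ lie in the union of cells of $\GG$ that meet $\spt T$; each such cell has diameter $h\sqrt{n+k}$, and choosing $c$ so that $h\sqrt{n+k}\leq\eta$ places these supports in the $\eta$-neighbourhood of $\spt T$.

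The main technical obstacle is the simultaneous choice of grid origin so as to guarantee (i)~$\spt(\partial T)\cap\Sigma=\emptyset$, which ensures $S$ is well-defined and polyhedral, and (ii)~the claimed bounds on $\M(P_0)$ and $\M(C)$, which are obtained as averages of slicewise mass bounds in the translation parameter. Both conditions are generic in the translation parameter~$a\in[0,h)^{n+k}$: the first is a transversality condition of positive codimension, and the second follows from a Fubini-type averaging argument analogous to the one used in Lemma~\ref{lemma:grid}. Once these are arranged, the validity of the deformation estimates for flat chains with coefficients in $(\G, |\cdot|)$ satisfying~\eqref{G_discrete} is standard and proceeds by the slicing techniques developed in~\cite{Fleming, White-Rectifiability}.
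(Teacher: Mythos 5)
Your argument is correct, but where the paper treats the deformation theorem as a black box, you re-derive it from scratch. The paper's proof simply applies the Federer--Fleming deformation theorem (\cite[Theorem~7.3]{Fleming} or \cite[Theorem~1.1]{White-Deformation}) to obtain $T = A + B + \partial C$ with~$A$ polyhedral, $\M(A)\lesssim\M(T)+\eta\,\M(\partial T)$, $\M(B)\lesssim\eta\,\M(\partial T)$, $\M(C)\lesssim\eta\,\M(T)$, all supported in the $\eta$-neighbourhood of $\spt T$; it then invokes the refinement \cite[Theorem~1.1.(7)]{White-Deformation}, which guarantees that the error chain~$B$ may be taken polyhedral whenever~$\partial T$ is, and sets $P := A + B$. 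Your proposal reconstructs exactly what is packaged inside those citations: the retraction~$\psi$ onto the $q$-skeleton of a well-chosen grid, the homotopy formula $T = \psi_*T - F_*(\llbracket 0,1\rrbracket\times\partial T) + \partial\bigl(-F_*(\llbracket 0,1\rrbracket\times T)\bigr)$, and the observation that, once the grid translate is chosen so that $\spt(\partial T)$ misses the dual skeleton~$\Sigma$, the boundary residual $F_*(\llbracket 0,1\rrbracket\times\partial T)$ is polyhedral because~$F$ is piecewise affine on a neighbourhood of $\llbracket 0,1\rrbracket\times\spt(\partial T)$. The conclusion and the decomposition $P = \psi_*T - F_*(\llbracket 0,1\rrbracket\times\partial T)$ coincide with the paper's $P = A+B$. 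What your route buys is self-containment and, arguably, transparency about why the residual is polyhedral; what it costs is the need to verify, for a single grid translate, both the transversality condition $\spt(\partial T)\cap\Sigma=\emptyset$ and the averaged mass bounds -- you correctly note that both are generic, but this simultaneous choice deserves to be stated explicitly (transversality fails only on a null set, the mass bounds hold outside a set of translates of small measure, so the intersection is nonempty). The paper's citation-based version is shorter precisely because White's Theorem~1.1.(7) already records this verification.
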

\begin{proof}
 We apply the deformation theorem (see e.g. \cite[Theorem~7.3]{Fleming}
 or \cite[Theorem~1.1]{White-Deformation}) to~$T$.
 We find a polyhedral $q$-chain $A$, a finite-mass $q$-chain $B$
 and a finite-mass $(q+1)$-chain~$C$
 that satisfy the following properties:
 \begin{enumerate}[label=(\alph*)]
  \item $T = A + B + \partial C$;
  \item $\M(A)\lesssim \M(T) + \eta\,\M(\partial T)$, 
  $\M(B)\lesssim\eta\,\M(\partial T)$ and~$\M(C)\lesssim\eta\,\M(T)$;
  \item $A$, $B$, $C$ are supported in the $\eta$-neighbourhood of~$\spt T$.
 \end{enumerate}
 Since we have assumed that~$\partial T$ is polyhedral,
 we can take~$B$ to be polyhedral, too
 (see e.g.~\cite[Theorem~1.1.(7)]{White-Deformation}). Then,
 the chains~$P:=A+B$ and~$C$ have all the required properties.
\end{proof}

\begin{lemma} \label{lemma:compactspt}
 Let~$\Omega\subseteq\R^{n+k}$ be a bounded, Lipschitz domain.
 Let~$S_0\in\M_n(\overline{\Omega}; \, \G)$ be such that~$S_0\mres\partial\Omega= 0$.
 Let~$R\in\M_{n+1}(\overline{\Omega}; \, \G)$ be
 such that~$\M(\partial R)<+\infty$.
 Then, there exists a sequence of chains
 $R_j\in\M_{n+1}(\overline{\Omega}; \, \G)$,
 compactly supported in~$\Omega$, such that
 $\partial R_j\to\partial R$ (with respect to the~$\F$-norm)
 and $\M(S_0 + \partial R_j)\to\M(S_0 + \partial R)$ as~$j\to +\infty$.
\end{lemma}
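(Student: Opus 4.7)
The plan is to obtain $R_j$ as push-forwards $R_j := (\Phi_j)_{*}R$ under a carefully chosen family of Lipschitz maps $\Phi_j\colon\overline{\Omega}\to\overline{\Omega}$ that shrink $\overline{\Omega}$ into a compact subset of $\Omega$. I want $\Phi_j$ to satisfy (i) $\|\Phi_j - \mathrm{id}\|_\infty\to 0$ and $\Lip(\Phi_j)\to 1$, (ii) $\Phi_j(\overline{\Omega})\csubset\Omega$, and (iii) $\Phi_j = \mathrm{id}$ outside a shrinking neighbourhood of $\partial\Omega$. To build such $\Phi_j$, I would exploit the Lipschitz regularity of $\partial\Omega$: using a partition of unity subordinate to a finite atlas of Lipschitz boundary charts, I would construct a Lipschitz, strictly inward-pointing vector field $V$ on a one-sided neighbourhood of $\partial\Omega$; then, multiplying by a Lipschitz cutoff $\chi\colon\overline{\Omega}\to[0,1]$ that equals $1$ near $\partial\Omega$ and vanishes on $\{\dist(\cdot,\partial\Omega)\geq\eta\}$, I would set $\Phi_j(x):= x + \tfrac{1}{j}\chi(x)V(x)$. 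Properties~(i)--(iii) then hold for $j$ sufficiently large, and the candidate $R_j = (\Phi_j)_{*}R \in \M_{n+1}(\overline{\Omega};\G)$ is compactly supported in $\Omega$ by~(ii); moreover $R_j$ remains polyhedral when $R$ is, since $\Phi_j$ is piecewise-$C^1$.

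The flat convergence $\partial R_j\to\partial R$ would follow from the homotopy formula applied along $h_j(t,x):=(1-t)x+t\Phi_j(x)$. Using $\partial^2 R = 0$, this gives
\[
\partial R_j - \partial R = (\Phi_j)_{*}\partial R - \partial R
= \partial\,h_{j*}\bigl(\llbracket[0,1]\rrbracket\times\partial R\bigr),
\]
so that $\F(\partial R_j - \partial R) \leq \M\bigl(h_{j*}(\llbracket[0,1]\rrbracket\times\partial R)\bigr) \lesssim \|\Phi_j-\mathrm{id}\|_\infty\,\M(\partial R)\to 0$, using that $\Lip(h_j)$ is uniformly bounded and $\M(\partial R)<+\infty$.

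The heart of the argument is the mass convergence. I would work from the decomposition
\[
S_0 + \partial R_j = (\Phi_j)_{*}(S_0 + \partial R) + \bigl(S_0 - (\Phi_j)_{*}S_0\bigr).
\]
For the second summand, the hypothesis $S_0\mres\partial\Omega = 0$ and $\M(S_0)<+\infty$ imply that for every $\epsilon>0$ there exists a compact $K\csubset\Omega$ with $\M(S_0\mres(\overline{\Omega}\setminus K))<\epsilon$. By~(iii), $\Phi_j$ is the identity on an open neighbourhood of $K$ for $j$ large, so $(\Phi_j)_{*}(S_0\mres K) = S_0\mres K$ and hence
\[
S_0 - (\Phi_j)_{*}S_0 = S_0\mres(\overline{\Omega}\setminus K) - (\Phi_j)_{*}\bigl(S_0\mres(\overline{\Omega}\setminus K)\bigr)
\]
has mass at most $(1 + \Lip(\Phi_j)^n)\epsilon \leq 3\epsilon$ for $j$ large. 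This yields $\M(S_0 - (\Phi_j)_{*}S_0)\to 0$, which in particular gives $(\Phi_j)_{*}S_0\to S_0$ in the flat norm. Combined with $(\Phi_j)_{*}\partial R\to\partial R$ in the flat norm (already shown), we deduce $(\Phi_j)_{*}(S_0+\partial R)\to S_0+\partial R$ in flat norm. Lower semicontinuity then gives $\M(S_0+\partial R)\leq\liminf\M((\Phi_j)_{*}(S_0+\partial R))$, while $\M((\Phi_j)_{*}(S_0+\partial R))\leq\Lip(\Phi_j)^n\M(S_0+\partial R)\to\M(S_0+\partial R)$ by~(i); combining with the vanishing of $\M(S_0-(\Phi_j)_{*}S_0)$ and lower semicontinuity applied to $S_0+\partial R_j\to S_0+\partial R$ yields the required mass convergence.

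The main obstacle I anticipate is securing $\Lip(\Phi_j)\to 1$ in~(i) for a merely Lipschitz domain: a naïve construction (for instance, a bi-Lipschitz retraction onto a slightly smaller Lipschitz subdomain) typically only gives uniformly bounded Lipschitz constants, which would downgrade the crucial upper bound to $\limsup\M(S_0+\partial R_j)\leq L^n\M(S_0+\partial R)$ for some $L>1$, insufficient for the conclusion. The ``vector field plus scaling'' prescription $\Phi_j = \mathrm{id} + (1/j)\chi V$ is precisely what forces $\Lip(\Phi_j)\leq 1+C/j$. A secondary subtlety worth flagging is that push-forward does not commute with Borel restriction in general, which is why~(iii) must be arranged so that $\Phi_j$ is the identity on an \emph{open} neighbourhood of $K$, not merely on $K$ itself.
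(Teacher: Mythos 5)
Your proof is correct and follows the same line as the one the paper relies on: the paper simply defers to the proof of~\cite[Proposition~8.6]{ABO2}, which is precisely the inward Lipschitz deformation $\Phi_j$ with $\Lip(\Phi_j)\to 1$, combined with the hypothesis $S_0\mres\partial\Omega=0$ to annihilate the error term $S_0-(\Phi_j)_{*}S_0$ in the mass estimate. The decomposition $S_0+\partial R_j=(\Phi_j)_{*}(S_0+\partial R)+\bigl(S_0-(\Phi_j)_{*}S_0\bigr)$, the homotopy formula for the flat convergence of $\partial R_j$, and the interplay of lower semicontinuity with the $\Lip(\Phi_j)^n$ upper bound are exactly the ingredients of the cited argument, transported to flat chains as the paper indicates.
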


The proof of Lemma~\ref{lemma:compactspt} is identical to 
that of~\cite[Proposition~8.6]{ABO2}.
In~\cite{ABO2}, the authors work in the setting of currents;
however, the arguments used in the proof of Proposition~8.6 
carry over to the setting of flat chains, thanks to the results 
in~\cite[Sections~5 and~6]{Fleming}.

\begin{lemma} \label{lemma:approxcspt}
 Let~$\Omega\subseteq\R^{n+k}$ be a bounded, Lipschitz domain.
 Let~$S_0\in\M_n(\overline{\Omega}; \, \G)$ be a \emph{locally polyhedral} chain
 such that~$S_0\mres\partial\Omega = 0$, and
 let~$S\in\M_n(\overline{\Omega}; \, \G)$ be such that~$S \sim_{\overline{\Omega}} S_0$.
 Then, there exists a sequence of \emph{locally polyhedral} chains
 $S_j\in\M_n(\overline{\Omega}; \, \G)$ with the following properties:
 \begin{enumerate}[label=(\roman*)]
  \item $\F(S_j-S)\to 0$ as~$j\to+\infty$;
  \item $\M(S_j)\to\M(S)$ as~$j\to+\infty$;
  \item for any~$j$, we can write~$S_j = S_0 + \partial R_j$
  for some finite-mass $(n+1)$-chain~$R_j$ with compact support in~$\Omega$.
 \end{enumerate}
\end{lemma}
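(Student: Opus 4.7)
The plan is to combine Lemma~\ref{lemma:compactspt} with a polyhedral approximation of the cycle $T := \partial R = S - S_0$ in the mass norm, followed by a small isoperimetric correction to preserve the cobordism structure. First, Lemma~\ref{lemma:compactspt} and a diagonal argument reduce the proof to the case in which $R$ has $\spt R \csubset \Omega$; set $K := \spt R$, so that $T$ is a finite-mass $n$-cycle supported in the compact set $K \subset \Omega$.

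The central step is to approximate $T$ by polyhedral $n$-cycles $B_j$, supported in a fixed open neighbourhood of $K$ compactly contained in $\Omega$, with respect to the mass norm:
\[
 \M(B_j - T) \to 0 \qquad \textrm{as } j \to +\infty.
\]
This is the main technical ingredient, and where the most care is needed. It relies on the rectifiability of finite-mass flat chains with coefficients in a discrete normed Abelian group (a theorem of White, applicable thanks to the discreteness condition~\eqref{G_discrete}) together with the polyhedral density of rectifiable chains in the mass norm; to preserve the cycle property, a small polyhedral correction, produced via the isoperimetric inequality, is subtracted from a generic polyhedral mass-approximant so as to cancel its (flat-small) boundary.

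Given such a sequence $B_j$, the difference $B_j - T$ is a finite-mass $n$-cycle whose mass tends to $0$. Since $n < n+k$, the isoperimetric inequality in $\R^{n+k}$ produces a finite-mass $(n+1)$-chain $Y_j$ with $\partial Y_j = B_j - T$, $\M(Y_j) \lesssim \M(B_j - T)^{(n+1)/n} \to 0$, and $\spt Y_j$ contained in an $\eps_j$-neighbourhood of $\spt(B_j - T)$ with $\eps_j \to 0$; for $j$ large enough, $\spt Y_j \csubset \Omega$. Setting $R_j := R + Y_j$ and $S_j := S_0 + \partial R_j = S_0 + B_j$, we obtain a finite-mass $(n+1)$-chain $R_j$ compactly supported in $\Omega$ whose boundary $B_j$ is polyhedral; consequently $S_j$ is locally polyhedral, as the sum of the locally polyhedral chain $S_0$ and the polyhedral chain $B_j$, and properties~(i) and~(ii) both follow from
\[
 \F(S_j - S) \leq \M(S_j - S) = \M(B_j - T) \to 0.
\]

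The main obstacle is the polyhedral cycle approximation of $T$ in the mass norm, with compactly supported approximants: this is a non-trivial Geometric Measure Theory result for flat chains with coefficients in a discrete group, requiring a careful combination of rectifiability, polyhedral density in mass norm, and a boundary-correction argument via the isoperimetric inequality.
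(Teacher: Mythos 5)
The central claim that $T := S - S_0$ can be approximated \emph{in mass} by compactly supported polyhedral $n$-cycles $B_j$ is false in general, and this is where the argument breaks down. Rectifiability (via White's theorem under~\eqref{G_discrete}) gives that $T$ is the mass-limit of $C^1$-chains $f_{*}P$ (Lipschitz or $C^1$ images of polyhedra), \emph{not} of polyhedral chains. These two classes do not coincide in the mass topology: a curved rectifiable set cannot be mass-approximated by polyhedra. For instance, if $T$ is the unit circle in $\R^2$ with unit multiplicity and $B_j$ are inscribed polygons, then $T - B_j$ is supported on $\spt T \cup \spt B_j$ with (generically) nonzero multiplicity on almost all of both, so $\M(T - B_j) \to 4\pi \neq 0$ even though $\M(B_j)\to\M(T)$ and $\F(B_j - T)\to 0$. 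Mass-density of polyhedra holds only in the flat norm (Fleming, Theorem~5.6), not in mass.

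Downgrading your claim to $\F(B_j - T)\to 0$ does not salvage the proof. The isoperimetric step still works (for cycles the flat norm is the minimal filling mass), and property~(i) survives, but property~(ii) breaks: mass is only lower semicontinuous under flat convergence, so you get $\M(S) \leq \liminf \M(S_j)$ but no matching upper bound. Even if you arrange $\M(B_j)\to\M(T)$, there is no control on $\M(S_0 + B_j)$: the chains $S_0$ and $T$ may partially overlap with cancelling multiplicities, and a polyhedral approximant $B_j$ of $T$ need not reproduce that cancellation, so $\M(S_0 + B_j)$ can stay strictly above $\M(S_0 + T) = \M(S)$. This is precisely why the paper does \emph{not} approximate $T = S - S_0$: it instead picks a polyhedral open set $U$ with $\spt R \csubset U \csubset \Omega$, observes that $\partial(S\mres U) = \partial(S_0\mres U)$ is itself polyhedral because $S_0$ is locally polyhedral, and applies Fleming's Theorems~5.6 and~7.7 to get polyhedral $T_j$ with the \emph{same} polyhedral boundary, flat-converging to $S\mres U$, \emph{and} with $\M(T_j)\to\M(S\mres U)$. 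The chain $S_j$ is then set equal to $T_j$ (plus a mass-small polyhedral correction) inside $U$ and exactly equal to $S$ outside $U$; this localised replacement gives the $\limsup$ mass bound by additivity and circumvents the $S_0$--$B_j$ interaction problem entirely. The hypothesis that $S_0$ be locally polyhedral is what makes $\partial(S\mres U)$ polyhedral, and that is the structural input your global approach to $T$ fails to exploit.
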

\begin{proof}
 By assumption, there exists~$R\in\M_n(\overline{\Omega}; \, \G)$ such that
 $S = S_0 + \partial R$.  
 Thanks to Lemma~\ref{lemma:compactspt} and a diagonal
 argument, we can assume without loss of generality that $R$
 is compactly supported in~$\Omega$. For any positive~$t$, let 
 $\Omega_t := \{x\in\Omega\colon\dist(x, \, \partial\Omega)>t\}$.
 We take a positive number~$t_0$ such that $\spt R\subseteq\Omega_{2t_0}$,
 and an open set~$U$, with polyhedral boundary, such that
 $\Omega_{2t_0}\csubset U \csubset\Omega_{t_0}$. 
 Because $S$ and~$S_0$ differ by a boundary, we have 
 \[
  \begin{split}
   \partial(S\mres U) + \partial(S\mres (\R^{n+k}\setminus U)) = \partial S 
   = \partial S_0 = \partial(S_0\mres U) + \partial(S_0\mres (\R^{n+k}\setminus U)).
  \end{split}
 \]
 However, $S$ and~$S_0$ agree out of~$U$, so 
 $\partial (S\mres U) = \partial(S_0\mres U)$. 
 In particular, since~$S_0$ is locally polyhedral
 in~$\Omega$ and~$U$ is polyhedral, 
 $\partial (S\mres U)$ is a polyhedral chain.
 Thanks to, e.g., \cite[Theorem~5.6 and~7.7]{Fleming}, there exists a
 sequence of polyhedral $n$-chains~$T_j$ that $\F$-converges to~$S\mres U$,
 satisfies $\spt T_j \subseteq \Omega_{t_0}$ for any $j$ and
 \begin{equation} \label{acspt0}
  \partial T_j=\partial (S\mres U) \quad \textrm{for any } j \in\N,
  \qquad \M(T_j)\to\M(S\mres U) \quad \textrm{as }j\to+\infty.
 \end{equation}
%
 By definition of the $\F$-norm, there exist sequences~$P_j\in\M_{n+1}(\R^{n+k}; \, \G)$
 and $Q_j\in\M_{n}(\R^{n+k}; \, \G)$ such that
 \begin{gather}
  S\mres U - T_j = \partial P_j + Q_j \qquad 
   \textrm{for any } j \label{acspt1} \\
  \M(P_j) \to 0, \quad \M(Q_j) \to 0 
   \qquad \textrm{as } j\to +\infty. \label{acspt2}
 \end{gather}
 We do not know a priori whether the chains~$P_j$, $Q_j$ are 
 supported in~$\Omega$, so we perform a truncation argument. 
 Define
 \[
  P_{j,t} := (\partial P_j)\mres\Omega_t
   - \partial (P_j\mres\Omega_t) 
 \]
 for~$t\in (0, \, t_0)$ and~$j\in\N$.
 By applying Fatou's lemma and~\cite[Theorem~5.7]{Fleming}, we obtain that
 \[
  \int_0^{t_0} \liminf_{j\to+\infty} \M(P_{j,t}) \, \d t \leq 
  \liminf_{j\to+\infty} \int_0^{t_0} \M(P_{j,t}) \, \d t \leq 
  \liminf_{j\to+\infty} \M(P_j) \stackrel{\eqref{acspt2}}{=} 0.
 \]
 Therefore, for a.e.~$t\in (0, \, t_0)$ there exists a (non-relabelled) 
 subsequence $j\to+\infty$ such that $\M(P_{j,t})\to 0$.
 By taking the restriction of~\eqref{acspt1} to~$\Omega_t$,
 we obtain
 \begin{equation} \label{acspt3}
  S\mres U - T_j 
  = \partial ( \ \underbrace{P_j\mres\Omega_t}_{=:P^\prime_j} \ ) 
  + \ \underbrace{P_{j, t} + Q_t\mres\Omega_t}_{=:Q^\prime_j}.
 \end{equation}
 By construction, $P^\prime_j$ and~$Q^\prime_j$ are supported 
 in~$\overline{\Omega_t}\subseteq\Omega$, and there holds
 \begin{equation} \label{acspt4}
  \M(P^\prime_j) \to 0, \quad \M(Q^\prime_j) \to 0 
   \qquad \textrm{as } j\to +\infty.
 \end{equation}
 Moreover, by taking the boundary of both sides of~\eqref{acspt3}, we deduce that 
 \begin{equation*} 
  \partial Q^\prime_j 
  = \partial (S\mres U) - \partial T_j \stackrel{\eqref{acspt0}}{=} 0.
 \end{equation*}
 By applying Lemma~\ref{lemma:def} to~$Q_j^\prime$, we 
 find a decomposition
 \begin{equation} \label{acspt5}
  Q^\prime_j = Q_j^{\prime\prime} + \partial C_j,
 \end{equation}
 where 
 \begin{enumerate}[label=(\alph*)]
  \item $Q_j^{\prime\prime}$ is a polyhedral $n$-chain
  such that $\M(Q_j^{\prime\prime})\lesssim \M(Q_j^\prime)$;
  \item $C_j$ is a $(n+1)$-chain of finite mass 
  and~$\M(C_j)\lesssim j^{-1}\M(Q_j^\prime)$;
  \item $Q_j^{\prime\prime}$ and~$C_j$ are supported in~$\overline{\Omega_{t_0-1/j}}$.
 \end{enumerate}
 From~(a), (b) 
 and~\eqref{acspt4},  we deduce that
 \begin{equation} \label{acspt6}
  \M(C_j)\to 0, \quad \M(Q_j^{\prime\prime})\to 0 
  \qquad \textrm{as } j\to+\infty.
 \end{equation}
 Now, we define 
 \begin{equation} \label{acspt6.5}
  S_j := T_j + Q_j^{\prime\prime} + S\mres(\R^{n+k}\setminus U).
 \end{equation}
 By construction, $S_j$ is locally polyhedral. We have
 \begin{equation} \label{acspt7}
  \begin{split}
   S_j - S 
   = T_j + Q_j^{\prime\prime} - S\mres U
   \stackrel{\eqref{acspt3}}{=}
   Q_j^{\prime\prime} - \partial P_j^\prime - Q_j^\prime
   \stackrel{\eqref{acspt5}}{=} - \partial (P^\prime_j + C_j)
  \end{split}
 \end{equation}
 and hence, $\F(S_j-S)\to 0$ due to~\eqref{acspt4} and~\eqref{acspt6}.
 By the lower semi-continuity of the mass, we deduce that
 $\M(S)\leq\liminf_{j\to+\infty}\M(S_j)$. On the other hand, 
 if we apply the triangle inequality to~\eqref{acspt6.5}
 and use the identity
 $\M(S) = \M(S\mres U) + \M(S\mres(\R^{n+k}\setminus U))$, we obtain
 \[
  \begin{split}
   \M(S_j) - \M(S) \leq \M(T_j) + \M(Q_j^{\prime\prime}) - \M(S\mres U).
  \end{split}
 \]
 The right hand side converges to zero as~$j\to+\infty$,
 due to~\eqref{acspt0} and~\eqref{acspt6}. Thus, we deduce that
 $\limsup_{j\to+\infty}\M(S_j)\leq\M(S)$, and hence $\M(S_j)\to\M(S)$ as~$j\to+\infty$.
 Finally, we define $R_j := R - P^\prime_j - C_j$. Then,
 \eqref{acspt7} gives~$S_j - S_0 = \partial R_j$ and the lemma follows.
\end{proof}

\begin{proof}[Proof of Proposition~\ref{prop:approx}]
 Let~$S_0$, $S$ be given, as in the statement.
 By applying Lemma~\ref{lemma:approxcspt}, we find a sequence of 
 locally polyhedral chains $S_j\in\M_{n}(\overline{\Omega}; \, \G)$
 and a sequence of finite-mass~$(n+1)$-chains~$\tilde{R}_j$, 
 compactly supported in~$\Omega$,
 such that $S_j\to S$ in the~$\F$-norm, 
 $\M(S_j)\to\M(S)$ and $S_j = S_0 + \partial \tilde{R}_j$ 
 for any~$j$. Since~$S_0$, $S_j$ are locally polyhedral in~$\Omega$, 
 $\partial \tilde{R}_j$ is polyhedral. We apply Lemma~\ref{lemma:def} 
 to each $\tilde{R}_j$. We find polyhedral $(n+1)$-chains~$R_j$,
 compactly supported in~$\Omega$, and $(n+2)$-chains~$C_j$ of finite mass,
 such that
 \begin{gather*}
  \tilde{R}_j = R_j + \partial C_j.
 \end{gather*}
 Then, $S_j = S_0 + \partial(R_j + \partial C_j) = S_0 + \partial R_j$,
 and the proposition follows.
\end{proof}

\subsection{A characterisation of the mass of a rectifiable chain}
\label{sect:mass_chains}

For any linear subspace~$L\subseteq\R^{n+k}$, 
we let~$\pi_L\colon\R^{n+k}\to L$ be the orthogonal projection onto~$L$.
A $n$-chain of class~$C^1$ is a chain~$S$ that can be written in the form~$S = f_{*}P$,
with~$f$ a map of class~$C^1$ and~$P$ a polyhedral chain.
The set of rectifiable $n$-chains is defined as the closure 
of $n$-chains of class~$C^1$ with respect to the~$\M$-norm.

\begin{lemma} \label{lemma:mass_chain} 
 Let~$S\in\M_n(\R^{n+k}; \, \G)$ be a rectifiable $n$-chain. Then, 
 \[
  \M(S) = \sup_{(U_i, \, L_i)_{i\in\N}} \,
  \sum_{i=0}^{+\infty} \M(\pi_{L_i,*}(S\mres U_i)),
 \]
 where the supremum is taken over all sequences of
 pairwise disjoint open sets $U_i$ and $n$-planes $L_i\subseteq\R^{n+k}$.
\end{lemma}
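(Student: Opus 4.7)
The plan is to prove the two inequalities separately. The easy direction, $\M(S) \geq \sup_{(U_i,L_i)} \sum_i \M(\pi_{L_i,*}(S\mres U_i))$, follows from two standard facts: since each orthogonal projection $\pi_{L_i}\colon\R^{n+k}\to L_i$ is $1$-Lipschitz, one has $\M(\pi_{L_i,*}T)\leq\M(T)$ for any finite-mass chain $T$ (a general property of the pushforward of flat chains under Lipschitz maps, cf.~\cite{Fleming, White-Rectifiability}); and since the $U_i$ are pairwise disjoint, $\sum_i \M(S\mres U_i)\leq \M(S)$ by the measure-theoretic character of the mass for rectifiable chains.

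The work lies in the reverse inequality. Since $S$ is rectifiable, it is determined by an $\H^n$-rectifiable set $E\subseteq\R^{n+k}$ of finite $\H^n$-measure and an $\H^n$-measurable multiplicity $\theta\colon E\to\G$, with $\M(S)=\int_E |\theta(x)|\,\d\H^n(x)$. At $\H^n$-a.e.\ $x\in E$ the approximate tangent plane $L_x:=\mathrm{Tan}^n(E,x)$ exists and $x$ is a Lebesgue point of $\theta$. By a Lusin-type reduction, for any $\eps>0$ I can discard a subset of $E$ of $\H^n$-measure less than $\eps/\sup|\theta|$ (note that $|\theta|$ may be bounded above on a large subset because $|\cdot|$ is positive and bounded below on $\G\setminus\{0\}$ by~\eqref{G_discrete}, so almost all of the mass is contained in a region where $|\theta|$ takes values in a bounded set) and assume that $E$ is a finite union of $C^1$-graphs and that $\theta$ is uniformly continuous on $E$.

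For each such ``good'' point $x\in E$, I will choose a small radius $r=r(x)>0$ such that the ball $U_x:=B_r(x)$ satisfies:
\begin{enumerate}[label=(\alph*)]
\item $E\cap U_x$ is contained in a single $C^1$-graph over $L_x$ with Lipschitz constant at most $\eps$, so that $\pi_{L_x}$ restricted to $E\cap U_x$ is bi-Lipschitz, almost an isometry, and in particular injective;
\item the multiplicity $\theta$ on $E\cap U_x$ is within $\eps$ of $\theta(x)$ (for the Lusin-approximating $\theta$);
\item $\M(S\mres U_x)\leq (1+\eps)\,|\theta(x)|\,\H^n(E\cap U_x)$ and $\H^n(\pi_{L_x}(E\cap U_x))\geq (1-\eps)\,\H^n(E\cap U_x)$.
\end{enumerate}
The critical point is that (a)--(c) together force $\pi_{L_x,*}(S\mres U_x)$ to be a rectifiable $n$-chain on $L_x$ carried essentially by $\pi_{L_x}(E\cap U_x)$, with multiplicity almost equal to $\theta(x)$: because $\pi_{L_x}$ is nearly injective on $E\cap U_x$, there is no cancellation of multiplicities, so
\[
 \M(\pi_{L_x,*}(S\mres U_x)) \geq (1-C\eps)\,|\theta(x)|\,\H^n(E\cap U_x)\geq (1-C\eps)\,\M(S\mres U_x) - C\eps.
\]
Then apply the Vitali covering theorem to the fine cover $\{B_{r(x)}(x)\}$ to extract a countable, pairwise disjoint subfamily $\{U_i=B_{r_i}(x_i)\}$ whose union covers $\H^n$-almost all of the good portion of $E$. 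Setting $L_i:=L_{x_i}$ and summing,
\[
 \sum_i \M(\pi_{L_i,*}(S\mres U_i)) \geq (1-C\eps)\bigl(\M(S) - C\eps\bigr),
\]
and letting $\eps\to 0$ finishes the proof.

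The main obstacle is establishing item~(c) above, namely that pushing forward by $\pi_{L_x}$ essentially preserves mass on $U_x$. This is where the rectifiable-chain structure and the group-coefficient setting interact most delicately: one must know that at a generic tangent point, $S$ genuinely looks like $\theta(x)\llbracket E\cap U_x\rrbracket$ up to small error in mass, and that projection by a near-isometry transfers multiplicities without overlap or cancellation. The key inputs are the rectifiable structure theorem for flat chains with coefficients in a discrete normed group (\cite{Fleming, White-Rectifiability}), together with standard real-analysis ingredients (approximate tangents, Lusin and Egorov, Vitali covering); discreteness of $|\cdot|$ on $\GN$ via~\eqref{G_discrete} ensures that $|\theta|$ is essentially bounded on large subsets and that no pathology arises from infinitely many distinct multiplicity values accumulating in mass.
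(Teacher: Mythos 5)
Your easy inequality is exactly the paper's (1-Lipschitz projections plus disjointness of the $U_i$), but for the hard inequality you take a genuinely different route from the paper, and that route has a gap at a specific place.

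Your approach goes directly through the rectifiable structure theorem: $S$ is carried by a rectifiable set $E$ with a $\G$-valued multiplicity $\theta$, and you try to find a Vitali cover of $E$ by balls $U_x$ on which $\pi_{L_x}$ is ``nearly injective'' on $E\cap U_x$, using Lusin/Egorov reductions and approximate tangent planes. The paper instead exploits the second form of the rectifiable structure theorem: any rectifiable $n$-chain can be approximated in $\M$ by a $C^1$-chain $f_*P$, with $P$ polyhedral and $f$ a $C^1$-diffeomorphism. It then triangulates each convex piece of $P$ finely enough that $\nabla f$ is essentially constant on each simplex, takes $L_i:=\nabla f(x_i)(L)$ for a base point $x_i$ in each simplex, and compares masses by the area formula; here the injectivity of $\pi_{L_i}\circ f$ on each small simplex is automatic because $f$ is a global diffeomorphism and $\nabla f$ is uniformly continuous. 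Your approach is arguably more canonical (it works directly with the set--multiplicity data), while the paper's is shorter because the $C^1$-chain parametrisation hands you global injectivity for free.

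The gap is in your item~(a). It is \emph{not} true that at $\H^n$-a.e.\ $x\in E$ there is a radius $r>0$ such that $E\cap B_r(x)$ is contained in a \emph{single} $C^1$-graph over $L_x$, even after passing to a finite union of $C^1$-graphs by a Lusin reduction. Two $C^1$-graphs $\Gamma_1$, $\Gamma_2$ can be tangent along a fat Cantor set: every point $x$ of that set lies in $\overline{\Gamma_2}$, so $E\cap B_r(x)$ contains a piece of $\Gamma_2$ for every $r>0$, and that set of bad points has positive $\H^n$-measure. Thus $\pi_{L_x}$ is genuinely not injective on $E\cap U_x$, and you cannot directly conclude that $\M(\pi_{L_x,*}(S\mres U_x))\geq(1-C\eps)\,\M(S\mres U_x)$ by ``no cancellation.'' The correct (and fixable) statement is measure-theoretic: at a.e.\ $x$, the portion of $E\cap B_r(x)$ coming from graphs other than the dominant one has mass $o(r^n)$, so the cancellation error, summed over a Vitali family of disjoint balls, can be made at most $\eps\M(S)$. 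You flag this difficulty in your last paragraph but wave at it as ``standard real-analysis ingredients'' without carrying it out; as written, claim~(a) and the resulting injectivity assertion are false, and the estimate in~(c) does not follow. The paper avoids the whole issue by never decomposing $E$ into overlapping graphs: the single diffeomorphism $f$ provides an injective parametrisation, so a fine triangulation plus the area formula gives the mass comparison with no density argument needed.
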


If the coefficient group satisfies~\eqref{G_discrete},
as is the case for~$\G = \pi_{k-1}(\NN)$, then \emph{any} chain
of finite mass is rectifiable, by White's Rectifiability Theorem
\cite[Theorem~7.1]{White-Rectifiability}. Therefore, 
Lemma~\ref{lemma:mass_chain} implies
Lemma~\ref{lemma:mass_chain-nointro}.

\begin{proof}
 Let~$(U_i)_{i\in\N}$ be a sequence of
 pairwise disjoint open sets, and let~$(L_i)_{i\in\N}$ 
 be a sequence of $n$-planes in~$\R^{n+k}$.
 For any~$i$, the projection $\pi_{L_i}$ is a $1$-Lipschitz
 map and hence $\M(\pi_{L_i,*}(S\mres U_i))\leq \M(S\mres U)$
 (see e.g. \cite[Eq.~(5.1)]{Fleming}). Since the~$U_i$'s
 are assumed to be pairwise disjoint, we obtain
 \begin{equation} \label{step0}
  \sum_{i=0}^{+\infty} \M(\pi_{L_i,*}(S\mres U_i)) \leq
  \sum_{i=0}^{+\infty} \M(S\mres U_i) \leq \M(S).
 \end{equation}
 This proves one of the inequalities. To prove the opposite inequality,
 we first suppose that~$S$ is a $C^1$-polyhedron, 
 then a $C^1$-chain, and finally
 we extend the result to an arbitrary rectifiable chain.
 We denote by $\mathrm{int}\, A$ the interior of a set 
 $A\subseteq\R^{n+k}$, and by~$\mathrm{diam}\, A$ its diameter.
 
 \setcounter{step}{0}
 \begin{step}[$S$ is a~$C^1$-polyhedron]
  We suppose that $S=f_*(\sigma\llbracket K\rrbracket)$, 
  where~$\sigma\in\G$, $K$ is a convex, compact $n$-polyhedra, and
  $f\colon\R^{n+k}\to\R^{n+k}$ is a $C^1$-diffeomorphism.
  Let~$\eta > 0$ be arbitrarily fixed. Since $f$ is~$C^1$
  and~$K$ is compact, there exists~$\rho>0$ such that
  \begin{equation} \label{step1-1}
   \norm{\nabla f(x) - \nabla f(y)} \leq\eta
   \qquad \textrm{if } (x, \, y)\in K\times K
   \textrm{ and } \abs{x-y}\leq\rho,
  \end{equation}
  where $\|\cdot\|$ denotes the operator norm on the space 
  of real $(n+k)\times (n+k)$-matrices.
  Let~$(T_i)_{i=1}^q$ be a collection of~$n$-simplices that
  triangulate~$K$, such that
  \begin{equation} \label{step1-2}
   \max_{1\leq i\leq q} \mathrm{diam}\,T_i \leq\rho.
  \end{equation}
  Let~$V_i := \mathrm{int}\, U(T_i, \, \rho/2, \, \rho/2)$
  where $U(T_i, \, \rho/2, \, \rho/2)$ is
  defined as in~\eqref{U-diamond}, and $U_i:=f(V_i)$.
  The $U_i$'s are pairwise disjoint open sets,
  because $f$ is a diffeomorpism. Let~$L$ be the~$n$-plane
  passing through the origin that is parallel to~$K$.
  For any~$i$, we choose a point~$x_i\in\mathrm{int}\, T_i$
  and we define $L_i := \nabla f(x_i)(L)$. The $L_i$'s are indeed 
  $n$-planes, because $\nabla f(x_i)$ is an invertible linear map.
  For any~$x\in K\cap T_i$ and any~$y\in L$, we have
  \[
   \begin{split}
    \abs{(\pi_{L_i}\circ\nabla f)(x)\,y - \nabla f(x)\,y} &\leq 
    \abs{(\pi_{L_i}\circ\nabla f)(x_i)\,y - \nabla f(x_i)\,y}
    + 2\norm{\nabla f(x_i) - \nabla f(x)} \abs{y} \\
    &\stackrel{\eqref{step1-1}-\eqref{step1-2}}{\leq} 2\eta \abs{y}.
   \end{split}
  \]
  Therefore, by applying the area formula we obtain
  \[
   \begin{split}
    \abs{\M(\pi_{L_i,*}(S\mres U_i)) - \M(S\mres U_i)} &\leq
    \abs{\sigma}_* \abs{\H^n((\pi_{L_i,*}\circ f)(K\cap V_i))
      - \H^n(f(K\cap V_i))} \\
    &\leq C\eta\abs{\sigma}_* \H^n(K\cap V_i)
   \end{split}
  \]
  for some constant~$C$ depending only on~$n$, $k$. This implies
  \[
   \begin{split}
    \sum_{i=1}^q \M(\pi_{L_i,*}(S\mres U_i)) &\geq 
    \sum_{i=1}^q \M(S\mres U_i) - 
       C\eta \abs{\sigma}_* \sum_{i=1}^q \H^n(K\cap V_i) \\
    &\geq \M(S\mres \cup_i U_i) - C\eta \abs{\sigma}_* \H^n(K),
   \end{split}
  \]
  where~$\eta$ is arbitrarily small.
  To complete the proof in this case, it only remains
  to notice that $\M(S) = \M(S\mres \cup_i U_i)$,
  because $\H^n(K\setminus\cup_i V_i) = 0$ and
  $\H^n(S\setminus\cup_i U_i)=0$ by the area formula.
 \end{step}
 
 \begin{step}[$S$ is a $C^1$-chain]
  We suppose that $S=f_*P$, where~$P$ is a polyhedral~$n$-chain and
  $f\colon\R^{n+k}\to\R^{n+k}$ is a $C^1$-diffeomorphism.
  This case follows easily from the previous one, by additivity.
  Indeed, let us write $S = \sum_{j=1}^p \sigma_j\llbracket K_j\rrbracket$
  with~$\sigma_j\in\G$, $K_j$ a convex, compact $n$-polyhedra. 
  Given  positive parameters $\delta$, $\gamma$, let 
  $W^j := f(\mathrm{int}\,U(K_j, \, \delta, \, \gamma))$.
  For~$\delta$, $\gamma$ small enough,
  the $W^j$ have pairwise disjoint interiors.
  Let~$\eta>0$ be fixed. By applying Step~1, for any~$j$
  we find a sequence~$(V_i^j)_{i\in\N}$ of pairwise disjoint open sets
  and a sequence~$(L_i^j)_{i\in\N}$ of $n$-planes such that
  \begin{equation} \label{step-2}
   \sum_{i=0}^{+\infty} \M(\pi_{L_i^j,*}
   (f_*(\sigma_j\llbracket K_j\rrbracket)\mres U_i^j))
   \geq \M(f_*(\sigma_j\llbracket K_j\rrbracket)) - \eta.
  \end{equation}
  We define $U_i^j := V_i^j\cap W^j$.
  The $U_i^j$'s are pairwise disjoint open sets.
  We have $\H^n(K_j\setminus\mathrm{int}\,
  U(K_j, \, \delta, \, \gamma)) = 0$
  and hence, by the area formula,
  $\M(f_*(\sigma_j\llbracket K_j\rrbracket)\mres
  (\R^{n+k}\setminus W^j)) = 0$.
  Therefore, we obtain
  \[
   \sum_{i,j} \M(\pi_{L_i^j,*}
   (f_*(\sigma_j\llbracket K_j\rrbracket)\mres U_i^j))
   = \sum_{i,j}  \M(\pi_{L_i^j,*}
   (f_*(\sigma_j\llbracket K_j\rrbracket)\mres V_i^j))
   \stackrel{\eqref{step-2}}{\geq} \M(S) - p\eta
  \]
  and the lemma is proved also in this case, because
  $\eta$ may be taken arbitrarily small.
 \end{step}
 
 \begin{step}[$S$ is a rectifiable chain]
  Since~$S$ is rectifiable, for any~$\eta>0$ there exist
  a polyhedral $n$-chain~$P$ and a $C^1$-diffeomorphism 
  $f\colon\R^{n+k}\to\R^{n+k}$ such that
  \begin{equation} \label{step3-1}
   \M(S - f_*P)\leq\eta
  \end{equation}
  (see e.g. \cite[1.2 at p.~169]{White-Rectifiability}
  and the references therein).
  By Step~2, there exists a sequence $(U_i, \, L_i)_{i\in\N}$ such that
  \begin{equation} \label{step3-2}
   \sum_{i=0}^{+\infty} \M(\pi_{L_i,*}(f_*P\mres U_i))
   \geq \M(f_*P) - \eta.
  \end{equation}
  Let us set~$Q := S - f_*P$. Then, using the linearity of~$\pi_{L_i,*}$,
  $\cdot\, \mres U_i$, and the triangle inequality for~$\M$,
  we obtain
  \[
   \begin{split}
    \sum_{i=0}^{+\infty} \M(\pi_{L_i,*}(S\mres U_i)) 
    &\geq \sum_{i=0}^{+\infty} \M(\pi_{L_i,*}(f_*P\mres U_i))
    - \sum_{i=0}^{+\infty} \M(\pi_{L_i,*}(Q\mres U_i)) \\
    &\stackrel{\eqref{step0}, \, \eqref{step3-2}}{\geq} \M(f_*P) - \eta - \M(Q)
    \stackrel{\eqref{step3-1}}{\geq} \M(S) - 3\eta
   \end{split}
  \]
  so the lemma follows. \qedhere
 \end{step}
\end{proof}

\end{appendix}

\bibliographystyle{plain}
\bibliography{singular_set}

\end{document}